\newcommand{\reals}{\mathbb{R}}
\newcommand{\nin}{\not\in}
\newcommand{\eps}{\varepsilon}
\newcommand{\deh}{\textup{d}}
\newcommand{\st}{\ \text{s.t.}\ }
\newcommand{\eg}{\text{e.g.}\ }
\newcommand{\ie}{\text{i.e.}\ }
\newcommand{\resp}{\text{resp.}\ }
\newcommand{\const}{C_\#}
\newcommand{\diam}{\textup{diam}\,}
\newcommand{\inv}{^{-1}}
\numberwithin{equation}{section}
\newcommand{\separator}{}
\renewcommand{\separator}{\\[-20pt]{\center··{\color{lunarized-red}––}··\\}}
\setlist[enumerate,1]{label=\textup{(\alph*)}}
\setlist[enumerate,2]{label=\textup{\roman*.}}
\providecommand\@dotsep{5}
\newcommand\cA{{\mathcal A}}
\newcommand\cC{{\mathcal C}}
\newcommand\cD{{\mathcal D}}
\newcommand\cE{{\mathcal E}}
\newcommand\cF{{\mathcal F}}
\newcommand\cG{{\mathcal G}}
\newcommand\cH{{\mathcal H}}
\newcommand\cO{{\mathcal O}}
\newcommand\cM{{\mathcal M}}
\newcommand\cW{{\mathcal W}}
\newcommand\bG{{\mathbb G}}
\newcommand\bP{{\mathbb P}}
\newcommand\bR{{\mathbb R}}
\newcommand\bS{{\mathbb S}}
\newcommand\bT{{\mathbb T}}
\newcommand\bZ{{\mathbb Z}}
\newcommand\fL{{\mathfrak{L}}}
\newcommand\ve{\varepsilon}
\newcommand\vei{\ve^{-1}}
\newcommand{\invr}{^{-1}}
\newcommand\trim{\varsigma}
\newcommand\vf{\varphi}
\newcommand\nc[1]{_{\cC^{#1}}}
\newcommand{\supp}{\textup{supp}\,}
\providecommand{\R}{\bR}
\providecommand{\T}{\bT}
\newcommand{\unstableConeConst}{\chi^{u}}
\newcommand{\centreConeConst}{\chi^{c}}
\newcommand{\fakeCentreConst}{\varrho}
\newcommand{\CentreExpansion}{\upsilon}
\newcommand{\Const}{C_{\#}}
\newcommand{\plaque}{\mathbb{K}}
\newcommand{\bplaque}{\bar{\plaque}}
\newcommand{\wplaque}{\widetilde{\plaque}}
\newcommand{\plaqueSet}{\mathfrak{K}}
\newcommand{\pplaqueSet}{\bar{\mathfrak{K}}}
\newcommand{\scc}[1]{\gamma^{(#1)}} 
\newcommand{\pscc}[1]{\bar{\gamma}^{(#1)}}
\newcommand{\Stdc}{\Sigma}
\newcommand{\pStdc}{\bar\Sigma}
\newcommand{\Dconst}{\mathfrak{D}}
\newcommand{\Stdd}{\mathcal D}
\newcommand{\Stdp}{\fL}
\newcommand{\stdf}{\mathcal{L}}
\newcommand{\pStdp}{\bar{\Stdp}}
\newcommand{\stdr}{K}
\newcommand{\fstdr}{\mathcal\stdr}
\newcommand{\Wc}{{\cW}^{\textup{c}}}
\newcommand{\Econst}{{\mathfrak{E}}}
\newcommand{\Rough}{{R}}
\newcommand{\fmm}{\nu}
\newcommand{\narr}{z}
\newcommand{\shor}{Z}
\newcommand{\minshor}{\underline{Z}}
\newcommand{\rough}{r}
\newcommand{\stdrough}{r_{*}}
\newcommand{\bandh}{h}
\newcommand{\patchheight}{\Delta}
\newcommand{\centreMaxExp}{\Lambda_\textrm{c}}
\newcommand{\height}{\textup{height}\,}
\newcommand{\foli}{\eta}
\newcommand{\folis}{\eta^{*}}
\newcommand{\chvar}{\Phi_\ve}
\newcommand{\regular}{regular}
\newcommand{\clock}{\clocka_{0}}
\newcommand{\clocka}{T}
\newcommand{\clockN}{\clockaN_{0}}
\newcommand{\clockaN}{N}
\newcommand{\logstep}{V}
\newcommand{\lyapFnExp}{\gamma}
\newcommand{\lyapFnDrift}{B}
\newcommand{\Leb}{\operatorname{Leb}}
\newcommand{\lowerBoundWeight}{\mathfrak c}
\newcommand{\Co}{\textrm{C}}
\newcommand{\TV}{\textrm{TV}}
\DeclareFontFamily{U}{wncy}{}
\DeclareFontShape{U}{wncy}{m}{n}{<->wncyr10}{}
\DeclareSymbolFont{mcy}{U}{wncy}{m}{n}
\DeclareMathSymbol{\El}{\mathord}{mcy}{"4C}
\DeclareMathSymbol{\el}{\mathord}{mcy}{"6C}
\DeclareMathAlphabet{\mathantt}{OT1}{antt}{li}{it}
\DeclareMathAlphabet{\mathpzc}{OT1}{pzc}{m}{it}
\renewcommand{\st}{:}
\author{Jacopo De Simoi}
\address{Jacopo De Simoi\\
  Department of Mathematics\\
  University of Toronto\\
  40 St George St. Toronto, ON, Canada M5S 2E4}
\email{{\tt jacopods@math.utoronto.ca}}
\urladdr{\href{http://www.math.utoronto.ca/jacopods}{http://www.math.utoronto.ca/jacopods}}
\author{Kasun Fernando}
\address{Kasun Fernando\\
  Department of Mathematics\\
   Brunel University London\\
    Uxbridge UB8 3PH, UK}
\email{{\tt kasun.fernando@brunel.ac.uk}}
\author{Nicholas Fleming-V{\'a}zquez}
\address{Nicholas Fleming-V{\'a}zquez\\
	Department of Mathematics\\
	University of Toronto\\
	40 St George St. Toronto, ON, Canada M5S 2E4}
\email{{\tt nicholas.fleming@utoronto.ca}}
\title[Mostly Expanding Fast-Slow Systems]{Statistical Properties of
  Mostly Expanding\\ Fast-Slow Partially Hyperbolic Systems}
\newcommand{\ignore}[1]{}
\begin{document}
\begin{abstract} {\ifx\XeTeXversion\undefined \relax \else \normalsize
    \small 
  \fi We consider a class of fast-slow $\cC^{4}$ partially hyperbolic
  systems on $\bT^2$ given by $\ve$-perturbations of maps
  $F(x,\theta)=(f(x,\theta),\theta)$ where $f(\cdot,\theta)$ are
  $\cC^{4}$ expanding maps of the circle. For sufficiently small $\ve$
  and an open set of perturbations we prove existence and uniqueness
  of a physical measure and exponential decay of correlations for
  sufficiently smooth observables with explicit almost optimal bounds
  on the decay rate.  Our result complements previous work by De
  Simoi–Liverani, which studies the case of mostly contracting centre.}
\end{abstract}
\maketitle

\section{Introduction and statement of our result}
Fast-slow (or, more generally, multi-scale) systems appear naturally
in many physical contexts and applications, and constitute an
incredibly diverse and abundant class of deterministic dynamical
systems which appear to exhibit a rich stochastic behaviour.  On the
other hand, the coupling between different timescales constitutes a
formidable difficulty in understanding such systems from the point of
view of their long-term dynamics.  As a consequence, their fine
stochastic properties are –in general– quite difficult to establish.

It is also known that if one wishes to obtain good statistical
properties of a dynamical system, it is desirable that the system
presents some degree of hyperbolicity.  In the context of fast-slow
systems, the natural assumption to make is partial hyperbolicity: the
fast component of the dynamics takes place along the stable / unstable
directions, whereas the slow component develops along the centre
directions.

Partially hyperbolic systems have a long history of results concerning
their geometric properties and stable ergodicity, starting with
~\cite{MPS94,pugh-shub97}, but their stronger statistical properties
have only been studied in more recent years, first for group
extensions of Anosov maps and flows (see
e.g.~\cite{Che98,Dima98,Dima-group,Liverani04,tsujii_zhang}), maps
whose centre direction is mostly contracting or mostly expanding (see
e.g.~\cite{Dima-mostly-contracting,buzzi2025strongpositiverecurrenceexponential,alves-pinheiro-mostly-expanding-poly-doc,alves-xin-mostly-expanding-exp-doc})
and –even more recently– specifically for fast-slow systems (see
e.g.~\cite{MR3556527,castorrini-liverani,bonetto2025synchronizationaveragingpartiallyhyperbolic}).
Fast-slow systems are, on the other hand, usually studied within the
framework known as \emph{averaging theory} (see
e.g.~\cite{DimaAveraging,Dima12,Dimaliverani,MR3842060,canestrari2023heatequationdeterministicdynamics}).
Averaging is a powerful tool that can be used to obtain limit theorems
at fixed time scales, but in order to obtain information about the
asymptotics of our dynamics (typically encoded in physical measures
and their statistical properties) one needs to obtain information at
arbitrary long time scales.

This paper walks along the route traced by Liverani--De Simoi in~\cite{MR3556527}, in which
they embark in this endeavour by studying a particularly simple
(but far from trivial) situation (see~\eqref{eq:map} below) and obtain
fine statistical properties of such systems.  More
precisely,~\cite{MR3556527} shows existence of finitely many physical
measures for an open class of partially hyperbolic slow-fast local
diffeomorphisms, and proves decay of correlations for sufficiently
smooth observables with relatively sharp bounds.  The main assumptions
on the class that are necessary in the argument of~\cite{MR3556527}
are related to some a-priori control on the centre Lyapunov exponent
of the system.  The argument in fact only works for so-called
\emph{mostly contracting} systems (see
Definition~\ref{def:mostly-contracting-expanding}).

This paper serves as a natural companion of~\cite{MR3556527}: we
obtain results about existence (and uniqueness) of physical measures,
decay of correlations with explicit bounds on the rates under the
complementary \emph{mostly expanding} assumption (again see
Definition~\ref{def:mostly-contracting-expanding}).

\subsection{A first version of our main result}
Let $\bT = \bR/\bZ$; for $\ve>0$ let us consider the map
$F_\ve\in\cC^{4}(\bT^2,\bT^2)$ defined by
\begin{align}\label{eq:map}
  F_\ve(x,\theta)=F_0(x,\theta)+\ve F_1(x,\theta)\mod 1
\end{align}
where
\begin{align*}
  F_0(x,\theta)&=(f(x,\theta),\theta) &F_1(x,\theta)&=(g(x,\theta),\omega(x,\theta))
\end{align*}
are both $\cC^{4}$ maps.  In the sequel we will denote with
$\pi_{1}:\bT^{2}\to\bT$ (\resp $\pi_2:\bT^{2}\to\bT$) the projection
onto the first (\resp second) coordinate.  We assume that
$f_\theta=f(\cdot,\theta)$ is an expanding map for each
$\theta\in\bT^1$; moreover, by possibly replacing $F_\ve$ with a
suitable iterate, we will further assume that
$\partial_x f\ge\lambda>3$.  Furthermore, since we take $\ve$ to be
fixed, although small, we will assume \ignore{without loss of
  generality }that $g=0$ by incorporating $\ve g$ into $f$.  The aim
of the research project initiated in~\cite{MR3556527} is to obtain
statistical properties for generic perturbation of $F_{0}$; we
henceforth consider $F_{0}$ to be fixed once and for all.

Observe that $F_{0}$ is a local diffeomorphism (necessarily not
invertible), hence the same holds for $F_{\ve}$ if $\ve$ is
sufficiently small.  Indeed, the system above is a fast-slow system,
since the slow variable $\theta$ needs $\cO(\vei)$ iterations to
undergo a non-negligible change.  Let us briefly explain how to
implement \emph{averaging} to study our system by recalling some ideas
from~\cite{DL, DLPV}: since $f_\theta$ is a family of expanding maps of
the circle, there exists a unique family of absolutely continuous
$f_\theta$-invariant probability measures whose densities we denote by
$\rho_\theta$.  Since $F_{\ve}$ is $\cC^{4}$, it follows (see
e.g.~\cite[Section 8]{GL06}) that $\rho_\theta$ is a $\cC^3$-smooth
family of densities of class $\cC^3$. Let us now define:
\begin{align*}
\bar\omega(\theta)=\int_{\bT^1}\omega(x,\theta)\rho_\theta(x) \deh
  x;
\end{align*}
the above discussion implies that $\bar\omega\in\cC^3(\bT)$.  The
function $\bar\omega$ can be regarded as an averaged forcing for the
slow variable $\theta$.  More precisely, let
$(x_n,\theta_n)=F^n_\ve(x_0,\theta_0)$: we can regard
$(x_{n},\theta_{n})$ as random variables with respect to some
distribution $\nu_{0}$ of initial conditions $(x_{0},\theta_{0})$.  We
can moreover define the interpolation of $\theta_{n}$ as a piecewise
linear continuous function: for any $t\in\reals_{\ge 0}$ let
\begin{align}\label{eq:interpolating-definition}
  \theta_\ve(t)= \theta_{[\ve^{-1}t]}+ (\ve^{-1}t-[\ve^{-1}t])(\theta_{[\ve^{-1}t]+1}-\theta_{[\ve^{-1}t]}).
\end{align}

Let us fix $T > 0$ and $\theta_{0}\in\bT$.  It is shown
in~\cite[Theorem 2.1]{DL} that, for any fixed $T>0$, if the initial
distribution $\nu_{0}$ equals $\mu_{0}\times\delta_{\theta_{0}}$,
where $\mu_{0}$ is  an arbitrary measure on $\bT$
that is absolutely continuous with respect to Lebesgue, then
$\theta_{\ve}$, as a random element of $ \cC^0([0,T],\T)$, converges
in probability as $\ve\to 0$ to the unique solution of the ODE
\begin{equation}\label{avgODE}
  \frac{d\theta}{dt}=\bar \omega(\theta),\,\,\, \theta(0)=\theta_0.
\end{equation}
We call the solution to~\eqref{avgODE} the \emph{averaged system}.

\begin{rmk}
  For $\omega$ generic, the $\cC^{3}$-function $\bar\omega$ has an
  even number of zeros; half of them will have positive derivative
  (and identify \emph{sources} for the averaged dynamics), while the
  other half will have negative derivative (and identify \emph{sinks}
  for the averaged dynamics).

  There are three qualitatively different scenarios:
  \begin{enumerate}
  \item $\bar\omega$ has no zeros;
  \item $\bar\omega$ has exactly one pair of zeros;
  \item $\bar\omega$ has more than one pair of zeros.
  \end{enumerate}
\end{rmk}
The case in which there are no zeros is considerably more complicated
than the case in which there are zeros.  The case in which there are
several pairs of zeros is marginally more complicated, but technically
quite cumbersome to carry out.  In this paper, we choose to study only
case (b) above, but see Section~\ref{sec:conclusions} for some further
comments on the other cases.  We proceed to define the open set:
\begin{align*}
  \Omega_{1} = \{\omega\in \cC^{4}(\bT^{2},\bT) \st \textrm{$\bar\omega$ has exactly one pair of
non-degenerate zeros}\}.
\end{align*}
We are now ready to state the first version of our Main Theorem; a
more precise version will be provided later as
Theorem~\ref{thm:doc_mostly_expanding}.  The theorem below is obtained
by combining the results in~\cite{MR3556527,castorrini-liverani} with
the results obtained in this paper.
\begin{mthm}
  There exists a $\cC^{4}$-open and dense set
  $\Omega^{*}_{1}\subset\Omega_{1}$ such that if $F_{\ve}$ is as above
  with $\omega\in\Omega^{*}_{1}$, the following holds.  For all
  $\ve>0$ sufficiently small, the map $F_\ve$ admits a unique physical
  measure $\nu_\ve$.  Moreover $\nu_{\ve}$ is absolutely continuous
  with respect to Lebesgue and enjoys exponential decay of
  correlations with rate $c_\ve>0$. That is, there exists $C_1,C_2>0$
  such that for any observables $A,B\in \cC^2(\bT^2)$,
  \begin{align}\label{eq:decay-of-correlations-weaker}
    \big| \Leb(A\cdot B\circ F_\ve^n) &- \Leb(A)\nu_\ve(B)\big| \leq C_1\|A\|_{\cC^2}\|B\|_{\cC^2} \exp(-c_\ve n).
  \end{align}
  Finally, the rate of decay of correlations satisfies the following
  bound:
  \begin{equation}\label{e_lowerBoundRate-weaker}
    c_\ve \ge C_2\ve/\log\vei.
  \end{equation}
\end{mthm}
\begin{rmk}
  The bound on the rate given by~\eqref{e_lowerBoundRate-weaker} is
  nearly optimal: we expect the optimal rate to be
  $c_{\ve} = \Const\ve$.  See Section~\ref{sec:conclusions} for further
  comments about possible refinements of this bound.
\end{rmk}
\subsection{A more precise statement}
We now proceed to examine more closely the system~\eqref{eq:map}, so
that we can discuss our result in more detail and frame it in the
context of the current literature.  As briefly mentioned above, the
system is partially hyperbolic: more precisely, as will be discussed
in Section~\ref{sec:Hyper}, there exist forward-invariant (unstable)
and backward-invariant (centre) cone fields.  Vectors in the unstable
cone are expanded by the dynamics; however, since $F_{\ve}$ is not
invertible, it does not typically possess forward-invariant
directions.  In Subsection~\ref{sec:centre_direction}, we will see
that the map $F_\ve$ admits instead  a backward-invariant centre
distribution for any $\ve$ sufficiently small.  For the map $F_{0}$,
this distribution is generated by a vector field of the form
$(s_*(x,\theta),1)$.  Vectors belonging to the centre distribution are
expanded or contracted by the dynamics according to the sign of the
function $\psi_{*}$, that is the directional derivative of $\omega$ in
the centre direction:
\begin{equation}\label{psi_star_def}
	\psi_{*}(x,\theta) = \partial_x \omega(x,\theta)s_{*}(x,\theta) + \partial_\theta \omega(x,\theta).
\end{equation}

Since $\omega\in\Omega_{1}$, the function $\bar\omega(\theta)$ has
only two zeros: we denote them by $\theta_{\pm}$, so that
$\pm\bar\omega'(\theta_{\pm}) > 0$.  Note that $\theta_{-}$ is a sink
for the averaged dynamics and $\theta_{+}$ is a source.  The averaging
principle suggests that most orbits will spend the majority of the
time close to the sink.  In fact, the dynamics for long time scales is
localized (in a very precise sense) around $\theta_{-}$ (see e.g.~\cite[Proposition 2]{DLPV}).

We can now introduce a definition
\begin{mydef}\label{def:mostly-contracting-expanding}
  Let $F_{\ve}$ as in~\eqref{eq:map} with $\omega\in\Omega_{1}$, and
  let
  $\bar{\psi}_*(\theta)=\int_{\bT^1}\psi_{*}(x,\theta)\rho_\theta(x)
  \deh x$.
  The system $F_{\ve}$ is said to be
  \begin{itemize}
  \item \emph{mostly contracting} if $\bar\psi_{*}(\theta_{-}) < 0$;
  \item \emph{mostly expanding} if $\bar\psi_{*}(\theta_{-}) > 0$.
  \end{itemize}
\end{mydef}
Mostly contracting systems are characterized by the fact that, near
the sink, centre vectors are on average contracted, whereas for mostly
expanding systems, centre vectors are on average expanded.  It is
clear that for an open and dense set of $\omega\in\Omega_{1}$, the
system $F_{\ve}$ is  either mostly contracting or mostly expanding.

We now proceed to state explicitly another open and dense condition
that will allow to give a precise statement of our main result.  Let
us first recall a few useful definitions: an observable
$\phi\in\cC^0(\bT^1)$ is said to be a \emph{coboundary $($with respect
  to a map $f:\bT^{1}\to\bT^{1})$} if there exists
$\beta\in\cC^0(\bT^1)$ so that
\begin{align*}
  \phi=\beta-\beta\circ f.
\end{align*}
Two observables $\phi,\psi\in\cC^0(\bT^1)$ are said to be
\emph{cohomologous $($with respect to $f)$} if their difference
$\phi-\psi$ is a coboundary $($with respect to $f)$.
\begin{enumerate}[label=(A\arabic*),start = 0]
  \item\label{a_noNearCobo} We assume that for each
  $\theta\in\bT^1$, each $(a,b)\in\bR^{2}\setminus\{0\}$, the function
  $x\mapsto a\omega(x,\theta)+b\psi_{*}(x,\theta)$ is not cohomologous
  to a constant with respect to $f_\theta$.
\end{enumerate}
We can now define $\Omega_{1}^{*}$ to be the set of $\omega$ such
that~\ref{a_noNearCobo} holds \ignore{(the proof of this condition
  being open and dense will be found below)} and $F_{\ve}$ is either
mostly contracting or mostly expanding.

If $F_{\ve}$ is mostly contracting, then our Main Theorem is a
consequence of the main results in~\cite{MR3556527}
and~\cite{castorrini-liverani} (for what concerns the absolute
continuity of the physical measure); hence in this paper we will need
to consider only the mostly expanding case.  We are now finally ready
to write a precise statement of our main result.
\begin{thm}\label{thm:doc_mostly_expanding}
  Assume that $\omega\in\Omega_{1}^{*}$ and $F_{\ve}$ is mostly
  expanding.  Then for $\ve>0$ sufficiently small, the map $F_\ve$ admits
  a unique physical measure $\nu_\ve$.  Moreover $\nu_{\ve}$ is
  absolutely continuous with respect to Lebesgue and enjoys
  exponential decay of correlations with rate $c_\ve>0$. That is,
  there exists $C_1,C_2>0$ such that for any functions $A\in \cC^2$,
  $B \in L^\infty(\Leb)$,
  \begin{align}\label{eq:decay-of-correlations}
    \big| \Leb(A\cdot B\circ F_\ve^n) &- \Leb(A)\nu_\ve(B)\big| \leq C_1\|A\|_{\cC^2}\|B\|_{L^\infty} \exp(-c_\ve n).
  \end{align}
  Finally, the rate of decay of correlations satisfies the following
  bounds:
  \begin{equation}\label{e_lowerBoundRate}
    c_\ve \ge C_2\ve/\log\vei
  \end{equation}
 \end{thm}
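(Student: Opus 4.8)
The plan is to build a functional-analytic framework adapted to the mostly-expanding situation and to run a spectral-gap / Lasota--Yorke argument for a suitably weighted transfer operator. First I would exploit the structure of the system: by the averaging principle (cf.\ \cite[Proposition 2]{DLPV}) and the localization of long-time dynamics near $\theta_{-}$, most of the mass of any physical measure concentrates in a neighbourhood of the sink. Near $\theta_{-}$ the averaged centre exponent $\bar\psi_{*}(\theta_{-})$ is strictly positive, so along typical orbits the centre direction is, on average, expanded at rate $\approx\ve\bar\psi_{*}(\theta_{-})$; this means the map is (non-uniformly) expanding in \emph{all} directions on the relevant region of phase space. The first concrete step is therefore to make this precise via a large-deviation / Birkhoff-type estimate for the centre cocycle $\psi_{*}$ over the fast dynamics (using decorrelation for the expanding maps $f_\theta$, which in turn uses \ref{a_noNearCobo} to rule out degeneracies), producing a ``hyperbolic times'' or ``good return'' structure with controlled statistics on a window of size $\asymp \log\vei$ iterates, which is the source of the $\ve/\log\vei$ rate.

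Second, I would set up the transfer operator $\cL_\ve$ of $F_\ve$ acting on an anisotropic space of distributions — concretely, distributions that are regular (of some Hölder/$\cC^1$ order) along the unstable cone and merely measures / $L^1$ along the backward-invariant centre direction $(s_*,1)$, i.e.\ the kind of space used by De Simoi--Liverani but with the roles of ``regular'' and ``dual'' directions swapped to reflect expansion rather than contraction. On this space I would prove a uniform (in $n$ and $\ve$) Lasota--Yorke inequality $\|\cL_\ve^n u\|_{\mathrm{strong}} \le C\,\sigma^n \|u\|_{\mathrm{strong}} + C\|u\|_{\mathrm{weak}}$, where the contraction factor $\sigma<1$ comes from the fast expansion in the unstable direction on a single step and from the centre expansion on the $\log\vei$-window established in step one; the weak norm would be $L^1(\Leb)$ or the dual of $\cC^2$, which is exactly what \eqref{eq:decay-of-correlations} requires (observables $B\in L^\infty$). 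Combining Lasota--Yorke with compactness of the inclusion strong $\hookrightarrow$ weak and a Hennion--Ionescu-Tulcea-type argument yields quasi-compactness; positivity and a Doeblin-type / topological-transitivity argument (the averaged dynamics has a single sink, and the fast dynamics mixes the fibres) force simplicity of the leading eigenvalue $1$ and triviality of the peripheral spectrum, hence a spectral gap. The leading eigenprojection produces the unique a.c.\ physical measure $\nu_\ve$, and the spectral gap gives \eqref{eq:decay-of-correlations} with $c_\ve = -\log(\text{second spectral radius})>0$.

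Third, I would track the $\ve$-dependence of the constants to extract \eqref{e_lowerBoundRate}. The single-step unstable contraction in the strong norm is uniformly $\le \lambda^{-1}<1/3$, which is harmless, but the centre direction is only expanded after $\asymp \log\vei$ steps and the net per-step centre gain is $\asymp \ve$; the ``error'' terms in Lasota--Yorke accumulate over that window with a factor that is polynomial in $\log\vei$. Balancing the geometric decay $\sigma^n$ against these accumulated errors gives a spectral radius of the non-leading part bounded by $1 - \Const\,\ve/\log\vei$, hence $c_\ve \ge \Const\,\ve/\log\vei$. I expect the main obstacle to be precisely this step: controlling the centre direction uniformly. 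Unlike the mostly-contracting case, where centre contraction helps the operator estimates immediately, here one must fight the fact that over short time windows the centre cocycle $\psi_{*}$ can be negative on a set of positive measure, so the Lasota--Yorke gain is not pointwise but only in an averaged, large-deviation sense; making this compatible with the anisotropic norm (which sees pointwise distortion of the centre foliation and its curvature, controlled via the plaque/standard-pair machinery and the regularity of $s_*$ and $\rho_\theta$) is the technical heart of the argument, and is also where the nearly-but-not-quite-optimal logarithmic loss enters.
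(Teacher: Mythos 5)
Your proposal takes a genuinely different route from the paper. The paper does not set up a transfer operator on an anisotropic Banach space; it proves Theorem~\ref{thm:doc_mostly_expanding} by a coupling argument built on a new class of reference measures, \emph{standard patches} — probability measures with controlled densities supported on $\cO(\ve)$-thin centre-unstable rectangles — together with the large-deviation and local CLT machinery for the slow variable imported from \cite{MR3556527}. The explicit rate $c_\ve \ge C_2\ve/\log\vei$ comes out because the coupling clock has to wait $\asymp\vei\log\vei$ iterates before a generic patch family becomes ``proper'' (Proposition~\ref{prop:geometric_drift}) and concentrates near the sink (Lemma~\ref{lem:capture-by-the-sink}), at which point a fixed fraction of mass can be matched (Theorem~\ref{thm:coupling_near_sink}); absolute continuity and uniqueness of $\nu_\ve$ then fall out of the memory-loss estimate and an ergodicity/basin argument, not from a spectral projection. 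Incidentally, you attribute the anisotropic-space strategy to De Simoi--Liverani \cite{MR3556527}, but that paper is also a coupling paper; the transfer-operator route you sketch is the one taken in \cite{castorrini-liverani}.

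The substantive gap in your proposal is Step 3, and it is precisely the step the paper is designed to circumvent. The route you describe — anisotropic space, uniform Lasota--Yorke, Hennion quasi-compactness — has in fact been carried out in \cite{castorrini-liverani}, and it yields existence of finitely many a.c.\ physical measures with exponential decay, but, as the present paper emphasizes, it yields neither a bound on the decay rate nor control on the number of physical measures as $\ve\to 0$. Your claim that ``balancing the geometric decay against accumulated errors gives a spectral radius $\le 1-\Const\,\ve/\log\vei$'' is exactly what is missing in the literature and you give no mechanism for it. The obstruction is that the Lasota--Yorke contraction factor on the strong norm does not see the centre direction pointwise: over a single step the centre cocycle can be negative, and the ``average'' expansion $\approx 1+\ve\bar\psi_*(\theta_-)$ only materializes after $\asymp\vei\log\vei$ steps, during which a polynomially-in-$\vei$-large family of inverse branches contributes to the operator norm. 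Turning that into a quantitative spectral gap, uniform in $\ve$ in the sense you need, is not a matter of book-keeping — it is an open problem, and it is why the authors swapped the operator for a coupling argument in which the $\log\vei$ loss has a transparent probabilistic origin. Separately, your ``Doeblin/topological transitivity'' reasoning for uniqueness is underdeveloped: in the generic multi-sink case the spectral approach genuinely produces several physical measures, so uniqueness here must use the single-sink, mostly-expanding structure concretely (the paper does so via Corollary~\ref{cor:convenient} and the coupling near $\theta_-$), not via abstract positivity of the operator.
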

  \subsection{Remarks and comments about our assumptions}
We begin by commenting on the \emph{mostly expanding condition}.  This
condition implies that the centre Lyapunov exponent with respect to
any ergodic physical measure $\nu$ is positive (see
Remark~\ref{rem:lyap-exponent}).  This is counter-intuitive, since one
would na\"ively expect that near a sink only contraction can take
place.  We refer the reader to \cite[Section 7]{DLPV} for more details
on this paradoxical behaviour, although the key observation is that
the centre foliation for a mostly expanding system such as ours is
necessarily non-absolutely continuous.
\begin{rmk}
  Observe moreover that if $F_{0}$ is a skew-product, then $ s_* = 0$
  and $\partial_\theta\rho_\theta=0$; in
  particular~\eqref{psi_star_def} implies that
  $\bar{\psi}_*(\theta)=\bar{\omega}'(\theta)$ and therefore
  $\bar{\psi}_{*}(\theta_{-}) < 0$, which is to say that the system is
  mostly contracting.  In order for $F_{\ve}$ to be mostly expanding,
  it hence  needs to be sufficiently far away from any skew product.
\end{rmk}
Observe moreover that if the system is mostly expanding, it is always
possible to normalize the system\footnote{ In fact, under the
  rescaling
\begin{align*}
  \omega \mapsto \bar \psi_* (\theta_-)^{-1}\omega \,\, \text{and}\,\, \varepsilon \mapsto \bar \psi_{*}(\theta_{-}) \varepsilon,
\end{align*}
we see that the product $\varepsilon \omega$ remains unchanged while
$\psi_* \mapsto \bar \psi_*(\theta_-)^{-1}\psi_*$.} in such
a way that
\begin{enumerate}[label=(A\arabic*),resume]
  \item\label{a_almostTrivial}
  $\bar\psi_{*}(\theta_{-}) = 1$.
\end{enumerate}
We will take~\ref{a_almostTrivial} to be a standing assumption
throughout the paper.

We now comment on strategies that can be used to check if
condition~\ref{a_noNearCobo} holds.
\begin{rmk} If~\ref{a_noNearCobo} fails for some $\theta$, then there
  exist real numbers $a_{\theta}, b_{\theta}$ and $c_\theta$ such that
  for any $f_{\theta}$-invariant measure $\mu$, the average
  $\mu(a_{\theta}\omega(\cdot,\theta)+b_{\theta}\psi_{*}(\cdot,\theta))
  = c_\theta$.  Hence, in order to check that the condition is
  satisfied, it is sufficient to find, for each $\theta$, three
  periodic orbits of $f_\theta$ with the property that the differences
  of the averages of $(\omega(\cdot,\theta),\psi_{*}(\cdot,\theta))$
  span $\bR^{2}$.  It is not difficult to check that this condition is
  open and dense.
\end{rmk}
\ignore{\begin{rmk}By~\eqref{NstepCenter}, we have $\|s_*\| \le \frac{\|\partial_\theta f\|}{\lambda - 1}$. Using this, we define
  \[
    \widetilde{\psi}(\theta) = \int_{\bT^1} \partial_\theta \omega(x, \theta) \rho_\theta(x) \deh x - \frac{\|\partial_x \omega\| \|\partial_\theta f\|}{\lambda - 1}.
  \]
  Then $\bar{\psi}_*(\theta) \geq \widetilde{\psi}(\theta)$, so checking that $\widetilde{\psi}(\theta_-)>0$ provides an elementary sufficient condition for $\bar{\psi}_*(\theta_-)$ to be positive.
\end{rmk}
}

\subsection{Remarks and comments about our result}
The paper~\cite{castorrini-liverani} shows existence of finitely many
physical measures that are absolutely continuous with respect to
Lebesgue for any system of the form~\eqref{eq:map} and generic
$\omega$; the same paper also proves that any such measure enjoys
exponential decay of correlations; however, it provides no bound on the
rate of decay of correlations, or on the number of physical measures as
$\ve\to 0$.

In order to obtain more precise results it seems necessary to impose
further assumptions, regarding for instance the number of sinks /
sources of the averaged system and the conditions on the centre
Lyapunov exponents implied by the mostly contracting / expanding
assumptions.  This is the path followed by this paper as well
as~\cite{MR3556527}.  Both papers hinge on the limit theorems
developed in~\cite{MR3842060} for fast-slow systems to obtain concrete
bounds on the rate of decay of correlations.  As mentioned in the
introductory paragraphs, the main result in~\cite{MR3556527} deals
with the mostly contracting situation, while in this paper we deal
with the mostly expanding case.  Another difference is
that~\cite{MR3556527} allows the presence of many sinks for the
averaged dynamics, although under the rather artificial assumption that
every one of them is mostly contracting.  The purpose of this paper is
to show how to deal with the mostly expanding situation, and we chose
to implement the strategy in the simpler situation of only one sink
(see Section~\ref{sec:conclusions}).

Both this paper and~\cite{MR3556527} use a coupling argument to
obtain a concrete bound on the rate of decay of correlations,
whereas~\cite{castorrini-liverani} use a more traditional (and perhaps
more elegant) approach involving the transfer operator for $F_{\ve}$.
However, we wish to underscore a that the nature of the coupling
involved in the proof of Theorem~\ref{thm:doc_mostly_expanding} and
of~\cite[Main Theorem]{MR3556527} is substantially different.

In the mostly contracting situation, one couples measures that are
supported on unstable curves (standard pairs) along the mostly
contracting centre directions.  In this paper we instead couple
measures that are supported on two-dimensional (centre-unstable)
rectangles.  Such measures (we call them \emph{standard patches}, see
Section~\ref{sec:std-pairs-patches}) constitute the main technical
novelty introduced in this paper.  We believe that such techniques can
be extended and employed successfully in a variety of different
scenarios.
\subsection{An outline of the paper}
Section~\ref{sec:Hyper} recalls the necessary notions of hyperbolicity
and show some technical results and estimate that apply to our system.
Section~\ref{sec:std-pairs-patches} first recalls the notion and
properties of standard pairs, and then introduces the class of
measures called \emph{standard patches}, which will be used to prove
our main result.  Next, in Section~\ref{sec:averaging} we recall and
adapt the results about the averaged motion obtained in~\cite[Section
7]{MR3556527} to the current setting.
Section~\ref{sec:patch-families} proves the key invariance properties
of standard patches; the most important observation is that, due to
the effectively random nature of the dynamics along the centre
direction, we will be able to obtain a notion of invariance that only
holds \emph{on average}.  Section~\ref{sec:coupling} then presents the
coupling argument that concludes the proof of
Theorem~\ref{thm:doc_mostly_expanding}.  Finally,
Section~\ref{sec:conclusions}, contains a few comments regarding
concrete future directions, and Appendix~\ref{sec:second-deriv-bounds}
presents some necessary technical results that are used in the study
of dynamical properties of standard patches.
\subsection{Some conventions used throughout the paper}%
\label{sec:conventions}%
We conclude this introductory section by listing some notational
conventions used in the paper.  We will denote with $\Const$
an arbitrary positive constant, whose value may change from one
instance to the next, even in the same expression.  It is understood
that the actual values of the constant $\Const$ might depend on $f$
and $\omega$, but would never depend on $\ve$.

Given $U\subset\bR^{n}$ and $V\subset\bR^{m}$, a (sufficiently smooth)
function $g:U\to V$ and $p\in U$, we will denote with $d_{p}g$ the
differential of $g$ at $p$ viewed as a linear functional on $\bR^{n}$;
$\|d_{p}g\|$ will denote the norm (in the operator sense) of the
functional.  Similarly, $H_{p}g$ will denote the Hessian of $g$ at the
point $p$ viewed as a bilinear functional (\ie an operator from
$\bR^{n}$ to the space of linear functionals); $\|H_{p}g\|$ thus
denotes the operator norm.  Finally we let
$\|dg\|_{\infty} = \sup_{p\in U}\|d_{p}g\|$ (and likewise for $H$).
\subsection*{Acknowledgements}
JDS and KF have been partially funded by the NSERC Discovery grant
Fast–Slow Dynamical systems 172513; JDS also acknowledges partial
support of the University of Toronto Connaught New Researcher Award.

The authors are grateful to Carlangelo Liverani and Dmitry Dolgopyat
for the many inspiring discussions on the topic.
\section{Hyperbolicity}\label{sec:Hyper}
First, we observe that the system is partially hyperbolic. Note that,
 \begin{align}\label{DerivF}
dF_\ve = \begin{pmatrix}
\partial_x f & \partial_\theta f\\
\ve \partial_x\omega & 1+\ve \partial_\theta \omega
\end{pmatrix}.
\end{align}
Observe that for $\ve = 0$ we have $\det dF_{\ve} = \partial_{x}f$,
hence $F_{0}$ is a local diffeomorphism; we will always assume
$\ve$ to be so small that $F_{\ve}$ is also a local diffeomorphism.

We now define an unstable cone which is invariant under $dF_\eps$ and
a centre cone which is invariant under $dF_\eps^{-1}$.  Choosing
$\unstableConeConst,\centreConeConst >0$ appropriately we can
respectively define the unstable cone and the centre cone by
\begin{align*}
  \cC_{u,\unstableConeConst}= \{(\alpha,\beta)\in \R^2: |\beta| \leq \ve \unstableConeConst |\alpha|\}, \quad\cC_{c,\centreConeConst}= \{(\alpha,\beta)\in \R^2: |\alpha| \leq \centreConeConst |\beta|\}.
\end{align*}
Notice that the centre cone and the unstable cones are everywhere
uniformly transversal.

To make the appropriate choices of $\unstableConeConst,\centreConeConst>0$ note that for all $p=(x,\theta)\in \bT^2$,
\begin{align}\label{1StepExp}
d_pF_\ve(1,\ve u) &=(\partial_x f(p)+\ve u \partial_\theta f(p), \ve \partial_x\omega(p)+\ve u +\ve^2 u \partial_\theta \omega(p)) \nonumber \\ &= \partial_xf(p)\left(1+ \ve\frac{\partial_\theta f(p)}{\partial_xf(p)}u\right)\big(1, \ve \Xi_p^+(u)\big)
\end{align}
where
\begin{align}\label{eq:def_Xi}
\Xi^{+}(p,u) : = \frac{\partial_x \omega(p)+(1+\ve \partial_\theta \omega(p))u}{\partial_xf(p)+\ve \partial_\theta f(p) u}.
\end{align}
Taking
$M=\max \{\|\partial_x\omega\|, \|\partial_\theta \omega\|,
\|\partial_\theta f\|\}$, we have that when $|u|\leq \unstableConeConst$ where
$\unstableConeConst \leq (\ve M)^{-1} $,
\begin{align*}
 |\Xi^+(p,u)| \leq \frac{M+1+\unstableConeConst}{\lambda - 1}.
\end{align*}
Hence choosing $\frac{M+1}{\lambda -2}\le \unstableConeConst \le (\eps M)\inv$ (which can be done only
when $\ve \leq \frac{\lambda -2 }{M(M+1)}$) ensures that
$d_pF_\ve(\cC_{u,\unstableConeConst}) \subset \cC_{u,\unstableConeConst}$. Consequently, the complementary cone $\cC_{u,\unstableConeConst}$ satisfies
$d_pF_\ve^{-1}(\cC_{u,\unstableConeConst}^\complement) \subset
\cC_{u,\unstableConeConst}^\complement$. Since $\cC_{c,\centreConeConst}$ is equal to the closure of $\cC_{u,(\eps \centreConeConst)\inv}^\complement$, it follows that $\cC_{c,\centreConeConst}$
is invariant under $dF_\eps\inv$ whenever $M \le \centreConeConst \le (\frac{\eps(M+1)}{\lambda -2})\inv$.
From now on we fix $\centreConeConst = M$; we will choose $\unstableConeConst$ later.

\nomenclature[0aa]{$\centreConeConst$}{Constant appearing in the definition of the centre cone}
\nomenclature[0ab]{$\unstableConeConst$}{Constant appearing in the definition of the unstable cone}

Observe that~\eqref{1StepExp} implies that $dF_{\eps}$ expands vectors in the unstable cone: for all $p \in \bT^2$ and $v\in \cC_{u,\unstableConeConst}$, we have
\begin{equation}\label{eq:unstable_cone_expansion}
	|\pi_1 d_p F_{\eps}v|\ge \partial_x f(p)(1-\Const \eps)|\pi_1 v|> 3|\pi_1 v|
\end{equation}
for $\eps$ sufficiently small.

A vector field in $\bT^{2}$ is said to be an \emph{unstable vector
  field} (\resp \emph{centre vector field}) if it lies in the unstable
cone (\resp centre cone) at each point.  A smooth curve in $\T^{2}$ is
said to be an \emph{unstable curve} (\resp \emph{centre curve}) if its
tangent vector lies in the unstable cone (\resp \emph{centre cone}) at
each point.
\subsection{The centre direction}\label{sec:centre_direction}
By the backward invariance of the centre cone, it is possible to
define an invariant centre subspace distribution.  Since centre vector
fields belong to the centre cone, and the centre cone is oriented
along the vertical direction, any centre vector field is a multiple of
a field $p\mapsto(s(p),1)$, where $|s(p)| < \centreConeConst$; we call $s(p)$ the
associated \emph{slope field}; observe that a slope field uniquely
identifies a one-dimensional subspace distribution in the centre cone.
We now show the existence of an invariant slope field; we proceed in
two steps.
We begin by considering $\ve = 0$ and thus the map $F_0$; notice that
by~\eqref{DerivF} we have:
\begin{align}\label{DerivF0}
  dF_0 = \begin{pmatrix}
    \partial_x f & \partial_\theta f\\
    0 & 1
\end{pmatrix}.
\end{align}
and therefore the differential $dF_{0}$ preserves the second
coordinate of any vector.  For any $(x,\theta)\in\bT^{2}$ and any
$n\ge 0$, consider the iterate $d_pF_{0}^{n}$; and define (since
$d_pF_{0}$ is invertible) the sequence of slope fields $s_{n}$ as
follows:
\begin{align*}
  (s_{n}(p),1) = [d_pF_{0}^{n}]\inv (0,1)
\end{align*}
with $|s_n| \leq \centreConeConst$. Then
\begin{align*}
  d_{F_0(p)}F^{n-1}_0 \circ d_pF_0 (s_n(p),1) &=  d_{F_0(p)}F^{n-1}_0(s_{n-1}(F_0(p)),1),
\end{align*}
which yields
\begin{align}\label{SlopeRec}
  d_pF_0 (s_n(p),1) &= (s_{n-1}(F_0(p)),1).
\end{align}
From~\eqref{DerivF0} and~\eqref{SlopeRec}, we thus obtain:
\begin{align}\label{NstepRecur}
s_n(p) = \frac{s_{n-1}(F_0(p)) - \partial_\theta f(p)}{\partial_xf(p)};
\end{align}
the latter implies
\begin{align}\label{NstepCenter}
  s_n(p) &= -\sum_{k=0}^{n-1} \frac{\partial_\theta f(F^k_0(p))}%
           {\partial_{x}f(F^{k}_0(p))\cdots \partial_{x}f(p)},
\end{align}
where the $k^{th}$ term in the sum is bounded by
$\|\partial_\theta f\|\lambda^{-(k+1)}$. From this it is clear that the slope fields $s_n$
converge uniformly to a slope field $s_*$ exponentially fast;
$s_*(x,\theta)$ thus identifies an invariant centre distribution for
$F_0$.
\begin{lem}\label{lem:s_*_holder}
	The function $s_*$ is $\eta$-H\"older for sufficiently small $\eta$.
\end{lem}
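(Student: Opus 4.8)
The plan is to run the recursion~\eqref{NstepRecur} at the level of H\"older seminorms (equivalently, to estimate term by term in the series obtained from~\eqref{NstepCenter} by letting $n\to\infty$). Fix a prospective exponent $\eta\in(0,1)$ and let $H_n$ denote the $\eta$-H\"older seminorm of the slope field $s_n$ on $\bT^2$; since $s_0\equiv 0$ we have $H_0=0$. Writing $s_n(p)=\bigl(s_{n-1}(F_0(p))-\partial_\theta f(p)\bigr)/\partial_x f(p)$ and subtracting the same expression at $p'$, I would split the difference into three pieces: the term $\bigl(s_{n-1}(F_0 p)-s_{n-1}(F_0 p')\bigr)/\partial_x f(p)$, the term $s_{n-1}(F_0 p')\bigl(\partial_x f(p)^{-1}-\partial_x f(p')^{-1}\bigr)$, and the term $\partial_\theta f(p)/\partial_x f(p)-\partial_\theta f(p')/\partial_x f(p')$.

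For the first piece, since $F_0$ is Lipschitz with constant $L:=\|dF_0\|_\infty$ and $\partial_x f\ge\lambda$, one gets the bound $(L^\eta/\lambda)\,H_{n-1}\,|p-p'|^\eta$. The other two pieces are controlled by a fixed $C_0\,|p-p'|^\eta$: indeed $f\in\cC^4$ makes $\partial_\theta f$, $\partial_x f$, $1/\partial_x f$ and hence $\partial_\theta f/\partial_x f$ Lipschitz, so $\eta$-H\"older with a constant depending only on $f$ on the compact $\bT^2$, and $|s_{n-1}|\le\centreConeConst$ is uniformly bounded. This yields $H_n\le (L^\eta/\lambda)H_{n-1}+C_0$, with $C_0$ and $L$ depending on $f$ only (nothing here depends on $\ve$, since $F_0$ is the $\ve=0$ map). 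The one place where ``sufficiently small $\eta$'' is used is the requirement $L^\eta<\lambda$, i.e.\ $\eta<\log\lambda/\log L$; this interval is nonempty because $L\ge\lambda>3>1$. For such $\eta$ the affine map $H\mapsto(L^\eta/\lambda)H+C_0$ is a contraction, so $H_n\le C_0/(1-L^\eta/\lambda)=:H_*$ for all $n$. Thus every $s_n$ is $\eta$-H\"older with common seminorm bound $H_*$, and since the excerpt already establishes $s_n\to s_*$ uniformly, the bound passes to the limit and $s_*$ is $\eta$-H\"older.

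I do not anticipate a real obstacle: this is the standard ``expansion versus contraction'' estimate for the invariant slope section of an expanding skew product. The only care needed is to keep the additive constant $C_0$ and the Lipschitz constant $L$ independent of $n$ (and of $\ve$), and to record the admissible range cleanly as $\eta<\log\lambda/\log\|dF_0\|_\infty$. As an alternative one can bypass the recursion and estimate directly in the series $s_*(p)=-\sum_{k\ge 0}\partial_\theta f(F_0^k p)\big/\bigl(\partial_x f(F_0^k p)\cdots\partial_x f(p)\bigr)$, splitting the sum at $k\approx \log|p-p'|^{-1}/\log L$, using the Lipschitz estimate on each summand for small $k$ and the uniform bound $\|\partial_\theta f\|\lambda^{-(k+1)}$ on the tail; this produces the same exponent $\eta=\log\lambda/\log\|dF_0\|_\infty$.
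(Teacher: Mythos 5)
Your proposal is correct and is essentially the paper's proof: both work directly from the recursion~\eqref{NstepRecur}, split $s_n(p)-s_n(p')$ in exactly the same three pieces (the paper packages this split via the interpolation inequality $|AB|_\eta\le|A|_\eta\|B\|_\infty+\|A\|_\infty|B|_\eta$ and $|A\circ g|_\eta\le|A|_\eta\|g\|_{\cC^1}^\eta$), arrive at the affine recursion $H_n\le\lambda^{-1}\|F_0\|_{\cC^1}^\eta H_{n-1}+\Const$ which contracts once $\eta$ is small enough, and pass the uniform seminorm bound to the limit via the uniform convergence $s_n\to s_*$ already established.
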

\begin{proof}
  For $\eta\in (0,1]$ and $A:\T^2 \to \T$, let
  $|A|_\eta = \sup_{p\ne q}|A(p)-A(q)|/d(p,q)^\eta$. Note the
  interpolation inequality
  $|AB|_{\eta}\le |A|_\eta\|B\|_\infty+\|A\|_\infty|B|_\eta$ and
  $|A\circ g|\le |A|_\eta \|g\|_{\cC^1}^\eta$ for all
  $A,B\in \cC^\eta(\T^2,\T)$ and $g\in \cC^1(\T^2,\T^2)$. Thus
  by~\eqref{NstepRecur},
	\begin{align*}
		|s_n|_{\eta}&\le \lambda^{-1}\|F_0\|_{\cC^1}^\eta |s_{n-1}|_{\eta} +\left|\frac{1}{\partial_x f}\right|_\eta \|s_{n-1}\|_\infty+\left|\frac{\partial_\theta f}{\partial_x f}\right|_\eta \\
		&\le \tfrac12 |s_{n-1}|_\eta + \|s_{n-1}\|_\infty\Const + \Const
	\end{align*}
	for $\eta$ small enough, so $|s_n|_\eta$ is uniformly bounded. Since $s_n\to s_*$ in $\cC^0$, it follows that $s_* \in \cC^\eta$.
\end{proof}

Next, we consider $F_\ve$. By the invariance of the centre cone,
similarly to what was done earlier, we can define
\begin{equation}\label{epsnStepCenter}
\Upsilon^\ve_{n}(p)(s^\ve_{n}(p),1) = [d_pF_{\ve}^{n}]\inv (0,1),
\end{equation}
where $|s^\ve_n|<\centreConeConst$.  Notice that in this case, $\Upsilon_{n}^{\ve}$
is not identically one.  From the above, we obtain, denoting
$p_{k} = F_{\ve}^{k}(p)$ for $k\ge 0$:
\begin{equation}\label{EpsSlopeRec}
d_pF_\ve (s^\ve_n(p_{0}),1) = \frac{\Upsilon^\ve_{n-1}(p_{1}) }{\Upsilon^\ve_{n}(p_{0})}(s^\ve_{n-1}(p_{1}),1)
\end{equation}
which along with \eqref{DerivF} implies
\begin{equation}\label{NstepExp}
\frac{\Upsilon^\ve_{n-1}(p_{1}) }{\Upsilon^\ve_{n}(p_{0})} = 1 + \ve (\partial_\theta\omega(p_{0})+\partial_x\omega(p_{0})s^\ve_n(p_{0}))
\end{equation}
and
\begin{equation}\label{EpsCNstepRecur}
  s^\ve_n(p_{0}) = \frac
  {(1+\ve\partial_\theta\omega(p_{0}))s^\ve_{n-1}(p_{1})-\partial_\theta f(p_{0})}
  {\partial_x f(p_{0})-\ve \partial_x\omega(p_{0})s^\ve_{n-1}(p_{1})} =: \Xi^-(p_{0},s^\ve_{n-1}(p_{1})).
\end{equation}

Using \eqref{NstepRecur} and \eqref{EpsCNstepRecur}, one can conclude
\begin{align*}
  |s_n(p)-s^\ve_n(p)| \le \Const n\ve.
\end{align*}
Also, a direct computation (see~\cite[p.167-168]{MR3556527}) gives that for sufficiently small $\ve$,
there exist $\sigma \in (0,1)$ such that if $s$ is so that $|s| <
\centreConeConst$, then
\begin{align*}
  \left|\frac{\partial}{\partial s}\Xi^-(p,s)\right| \leq \sigma.
\end{align*}
This implies that for all $n$,
$|s^\ve_n(p)-s^\ve_{n-1}(p)| \leq \Const\sigma^n$ and hence, the slope
fields $s^\ve_n$ converge uniformly at an exponential rate to a slope
field $s^\ve_*$; which identifies an invariant centre distribution for
$F_{\ve}$. The slope field $s_{*}^{\ve}$ is, a priori, only continuous
in $(s,\theta)$; we will study the integrability properties
of the slope field $s_{*}^{\ve}$ in
Subsection~\ref{sec:local-centre-manif}.

Picking some $n \asymp \log \ve^{-1}$, and using the exponential
convergence,
\begin{align*}
|s^\ve_*(p)-s_*(p)| &\leq |s^\ve_*(p)-s^\ve_{n}(p)| + |s_n(p)-s^\ve_n(p)| +|s_n(p)-s^*(p)| \\ &\le \, \Const(\ve + \ve \log \ve^{-1} + \ve) \, \le \, \Const\ve \log \ve^{-1}.
\end{align*}
Combining this estimate,~\eqref{NstepExp} and the definition of
$\psi_{*}$, we conclude:
\begin{equation}
	\begin{alignedat}{1}
		\frac{\Upsilon^\ve_{n-1}(p_{1}) }{\Upsilon^\ve_{n}(p_{0})} &= 1+\ve\psi_*(p_0) + \ve \partial_x w(p_0)(s^\ve_n(p_0)-s_*(p_0))\\
		&= 1+\ve\psi_*(p_0)+\ve \partial_x w(p_0)[(s^\ve_n(p_0)-s^\eps_*(p_0))+(s^\eps_*(p_0)- s_*(p_0))]
	\end{alignedat}
	\label{eq:1Step-vs-Upsilon}
\end{equation}
Taking $n\to \infty$ in \eqref{EpsSlopeRec}, we observe that the one
step expansion along the centre direction, $(s^\ve_* ,1)$ is given by
\begin{align}\label{eq:1StepExpansionEstimate}
  \CentreExpansion(p_0)=\lim_{n\to \infty} \frac{\Upsilon^\ve_{n-1}(p_{1}) }{\Upsilon^\ve_{n}(p_{0})} =1+\ve\psi_*(p_0)+ \cO(\ve^2 \log \ve^{-1} ).
\end{align}
\nomenclature[0ac]{$\CentreExpansion$}{One-step expansion along the
  centre direction $(s^\ve_*, 1)$}%
Hence, up to a well-controlled error, the one-step expansion
$\CentreExpansion$ in the centre direction is $(1+\ve\psi_*)$; in
particular, we can fix $\centreMaxExp > 0$ so that, for any $n > 0$
and $p\in\bT^{2}$:
\begin{align}\label{eq:centre-manifold-expansion}
  e^{-\centreMaxExp n\ve}\le |d\pi_{2}d_pF^{n}_\ve
  (s^\ve_*(p),1)| \le e^{\centreMaxExp n\ve}.
\end{align}
We conclude this section with a remark about Lyapunov exponents
\begin{rmk}\label{rem:lyap-exponent}
  Combining the above discussion with~\cite[Section 6]{DLPV}, we have
  that the central Lyapunov exponent with respect to any ergodic
  physical measure $\nu$ is
  \begin{align}\label{CentExp}
    \lambda_{c,\nu} &=\nu(\log(1+\ve\psi_*))+\cO(\ve^2 \log \ve^{-1})\nonumber \\ &= \ve \nu(\psi_*)+\cO(\ve^2 \log \ve^{-1}) \nonumber \\ &=\ve \bar \psi_*(\theta_-)+\ve\int_{\T}\left(\bar\psi_*(\theta_-) - \bar\psi_*(\theta)\frac{1}{\sigma\sqrt{2\pi\ve}} e^{-\frac{ (\theta - \theta_-)^2}{2\ve\sigma^2}}\right){d}\theta +\cO(\ve^{3/2})\nonumber\\ &=\ve\bar\psi_{*}(\theta_{-}) + o(\ve),
  \end{align}
  where we used~\cite[Proposition 4]{DLPV} to approximate an ergodic
  physical measure (concentrated at the sink $\theta_{-}$) by a
  Gaussian centred at $\theta_{-}$ and variance $\cO(\eps)$ up to an
  error $\cO(\sqrt{\ve})$.
  Therefore, $\lambda_{c,\nu}>0$ for sufficiently small $\ve >0$.
\end{rmk}
\subsection{The unstable direction}

Next, we focus on the unstable direction. Note that by the forward invariance of the unstable cone, we can define
\begin{align}\label{epsnStepUnstable}
  d_pF^n_\ve(1,0) &=: \Gamma_n^\ve(p)(1,\ve w^\ve_n(p)).
\end{align}
where $|w^{\ve}_n|<\unstableConeConst$.
Therefore,
\begin{align*}
  \Gamma_{n+1}^\ve(p)(1,\ve w^\ve_{n+1}(p) )%
  &=d_p F^{n+1}_\ve(1,0) = [d_{p_{n}} F_\ve]\circ [d_p F_\ve^{n}](1,0) \\%
  &= \Gamma_{n}^\ve(p) [d_{p_{n}}F_\ve ] (1,\ve w^\ve_{n}(p) ).
\end{align*}
Using \eqref{1StepExp},
\begin{align}\label{UnstableExp}
  \frac{\Gamma_{n+1}^\ve(p)}{\Gamma_{n}^\ve(p) } %
  &= \partial_x f(p_{n})\left(1+\ve \frac{\partial_\theta f(p_{n})}{\partial_x f(p_{n})} w^\ve_{n}(p)\right)
\end{align}
Also,
\begin{align}\label{RecurUnstable}
w^\ve_{n+1}(p) &= \frac{\partial_x\omega(p_{n})+(1+\ve\partial_\theta
                 \omega(p_{n}))w^\ve_{n}(p)}{\partial_x f(p_{n}) + \ve
                 \partial_\theta f(p_{n}) w^\ve_{n}(p) } =\Xi^+(p_{n},w^\ve_{n}(p))
\end{align}
We will use this fact later.

\begin{lem}[Lyapunov exponents]\label{lem:lyapunov}
  Let $\ve$ be sufficiently small and assume $0\le n < \Const\vei$;
  then for any $v$:
\begin{align}\label{eq:bound-differential-fine}
    \Const[\Upsilon_{n}^{\ve}(p)]\inv\|v\|\le \|d_{p} F^{n}_{\ve}v\|\le
    \Const\Gamma_{n}^{\ve}(p)\|v\|.
 \end{align}
In particular,
\begin{align}\label{eq:bound-differential}
    \Const(1-\Const\ve)^{n}\|v\|\le \|d_{p} F^{n}_{\ve}v\|\le
    \Const(1+\Const\ve)^{n}\prod_{j = 0}^{n-1}\partial_{x}f(F^{j}_{\ve}p)\|v\|
 \end{align}
\end{lem}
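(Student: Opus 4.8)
The plan is to decompose an arbitrary vector $v$ along the unstable and centre directions and track each component separately using the one-step expansion factors $\Gamma_n^\ve$ and $\Upsilon_n^\ve$ introduced above. Since the unstable cone $\cC_{u,\unstableConeConst}$ and the centre cone $\cC_{c,\centreConeConst}$ are everywhere uniformly transversal, we may write $v = a\,(1,\ve w^\ve_\infty(p)) + b\,(s^\ve_*(p),1)$ for some coefficients $a,b$ with $\max(|a|,|b|)\le \Const\|v\|$, where $(1,\ve w^\ve_\infty(p))$ spans the (backward-invariant reinterpretation of the) unstable direction and $(s^\ve_*(p),1)$ spans the invariant centre distribution from Subsection~\ref{sec:centre_direction}. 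The image of the unstable component has norm comparable to $\Gamma_n^\ve(p)$ by~\eqref{epsnStepUnstable}, while the image of the centre component has norm comparable to $[\Upsilon_n^\ve(p)]^{-1}$ by~\eqref{epsnStepCenter}; transversality of the two cones (which is preserved by $d_pF_\ve^n$ since both cones are invariant in their respective time directions) means there is no cancellation, so $\|d_pF_\ve^n v\|$ is squeezed between $\Const^{-1}$ and $\Const$ times $\max(\Gamma_n^\ve(p)|a|,\,[\Upsilon_n^\ve(p)]^{-1}|b|)$. This immediately yields~\eqref{eq:bound-differential-fine}: the upper bound follows since $\Gamma_n^\ve(p)\ge \Const^{-1}$ (indeed $\Gamma_n^\ve$ grows, being a product of terms close to $\partial_x f\ge\lambda>3$) and $[\Upsilon_n^\ve(p)]^{-1}\le\Const\Gamma_n^\ve(p)$ for $n\le\Const\vei$, while the lower bound follows since $\Gamma_n^\ve(p)\ge\Const^{-1}[\Upsilon_n^\ve(p)]^{-1}$ is false in general but $\max(\Gamma_n^\ve,[\Upsilon_n^\ve]^{-1})\ge\Const^{-1}[\Upsilon_n^\ve]^{-1}$ trivially.

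For~\eqref{eq:bound-differential} I would then estimate $\Gamma_n^\ve$ and $\Upsilon_n^\ve$ explicitly. From~\eqref{UnstableExp} we have $\Gamma_n^\ve(p) = \prod_{j=0}^{n-1}\partial_x f(p_j)\bigl(1+\ve\tfrac{\partial_\theta f(p_j)}{\partial_x f(p_j)}w^\ve_j(p)\bigr)$; since $|w^\ve_j|<\unstableConeConst$ and $\partial_x f\ge\lambda>3$, each correction factor lies in $[1-\Const\ve,\,1+\Const\ve]$, so $\Gamma_n^\ve(p)\le (1+\Const\ve)^n\prod_{j=0}^{n-1}\partial_x f(p_j)$, giving the upper bound in~\eqref{eq:bound-differential}. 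For the lower bound, from~\eqref{NstepExp} the ratios $\Upsilon^\ve_{n-1}(p_1)/\Upsilon^\ve_n(p_0) = 1+\ve(\partial_\theta\omega(p_0)+\partial_x\omega(p_0)s^\ve_n(p_0))$ lie in $[1-\Const\ve,1+\Const\ve]$ (using $|s^\ve_n|<\centreConeConst$), so telescoping gives $[\Upsilon_n^\ve(p)]^{-1}\ge (1-\Const\ve)^n$ (after normalizing $\Upsilon_0^\ve = 1$); combined with $\Gamma_n^\ve(p)\ge\Const^{-1}(1-\Const\ve)^n$ — which also follows since $\partial_x f\ge 3$ and the corrections are $1+\bigo\ve$ — and the previous paragraph, we get $\|d_pF_\ve^n v\|\ge\Const(1-\Const\ve)^n\|v\|$.

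The main obstacle is the transversality-preservation argument that makes the two-component decomposition non-cancelling: I need to verify that $d_pF_\ve^n$ maps the unstable cone into the unstable cone (forward invariance, already established) and that, crucially, the angle between $d_pF_\ve^n(\text{unstable vector})$ and $d_pF_\ve^n(s^\ve_*(p),1)$ stays bounded below, so that $\|au' + bv'\|\asymp\max(|a|\|u'\|,|b|\|v'\|)$ with constants uniform in $n$ and $p$. This follows because $d_pF_\ve^n(s^\ve_*(p),1)$ is still a centre vector (the centre distribution being $dF_\ve$-covariant by~\eqref{EpsSlopeRec}) while $d_pF_\ve^n(1,\ve w^\ve_\infty)$ is still unstable, and the two cones are uniformly transversal — but one must be slightly careful that this transversality constant does not degrade with $\ve$ (it does not, since the unstable cone has aperture $\bigo\ve$ while the centre cone is a fixed cone around the vertical, so they are in fact \emph{increasingly} transversal as $\ve\to 0$). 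A minor additional point is handling the coefficient bound $\max(|a|,|b|)\le\Const\|v\|$ uniformly in $\ve$, which again uses that the spanning vectors $(1,\ve w^\ve_\infty(p))$ and $(s^\ve_*(p),1)$ are uniformly transversal.
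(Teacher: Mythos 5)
Your overall plan — decompose $v$ along a pair of uniformly transversal directions and track each component with $\Gamma_n^\ve$ and $[\Upsilon_n^\ve]\inv$ — is the right idea and is in the same spirit as the paper's argument. But there are two concrete problems.

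First, the decomposition you propose uses $(1,\ve w^\ve_\infty(p))$ as the unstable spanning vector. This object is not defined and cannot be made to play the role you want: since $F_\ve$ is a non-invertible local diffeomorphism, there is no forward-invariant unstable \emph{direction} at $p$, only the unstable cone. The field $w_n^\ve(p)$ records the slope of $d_pF_\ve^n(1,0)$ \emph{at the image point} $F_\ve^n p$, so any limit as $n\to\infty$ lives at infinity along the orbit, not at $p$. The paper's decomposition avoids this entirely by using the fixed vector $(1,0)$: one has the exact identities $d_pF^n_\ve(1,0)=\Gamma_n^\ve(p)(1,\ve w^\ve_n(p))$ and $d_pF^n_\ve(s^\ve_n(p),1)=[\Upsilon_n^\ve(p)]\inv(0,1)$, with no additional error terms. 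Using $(s_*^\ve(p),1)$ in place of $(s_n^\ve(p),1)$ also introduces avoidable error factors (comparable to those in Lemma~\ref{lem:fake_centre_exp}), whereas $s_n^\ve$ is exact for $n$ steps.

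Second, the lower bound is where your sketch is weakest. You ask the reader to believe that $\|d_pF_\ve^n v\|$ is comparable to $\max(\Gamma_n^\ve|a|,[\Upsilon_n^\ve]\inv|b|)$ (``no cancellation''), then derive the lower bound from $\max(\Gamma_n^\ve,[\Upsilon_n^\ve]\inv)\ge \Const\inv[\Upsilon_n^\ve]\inv$. Two remarks. (i) Along the way you assert that ``$\Gamma_n^\ve\ge\Const\inv[\Upsilon_n^\ve]\inv$ is false in general'' — this is backwards: for $\ve$ small one actually has $\Gamma_n^\ve(p)>\lambda^{n/2}[\Upsilon_n^\ve(p)]\inv$, which the paper uses for the upper bound. (ii) The ``no cancellation'' comparability is not trivial: the image $a\Gamma_n^\ve(1,\ve w_n^\ve)+b[\Upsilon_n^\ve]\inv(0,1)$ can have a near-vanishing second coordinate when $a\ve\Gamma_n^\ve w_n^\ve\approx -b[\Upsilon_n^\ve]\inv$, and you need to show the first coordinate then dominates; this can be made to work, but it requires an explicit case analysis that your proposal does not supply. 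The paper sidesteps this entirely with a cleaner trick: for the lower bound it decomposes the \emph{target} vector along $(0,1)$ and $(1,\ve w_n^\ve(p))$ and bounds $\|[d_pF_\ve^n]\inv\|\le\Const\,\Upsilon_n^\ve(p)$ directly, which immediately gives $\|d_pF_\ve^n v\|\ge\Const[\Upsilon_n^\ve(p)]\inv\|v\|$ without any cancellation issue. Switching to this ``bound the inverse'' device would both fix the gap and shorten your argument considerably. The passage from~\eqref{eq:bound-differential-fine} to~\eqref{eq:bound-differential} via telescoping~\eqref{UnstableExp} and~\eqref{NstepExp} is fine as you wrote it.
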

\begin{proof}
Given $v \in \reals^2$, write $v=v_1(1,0)+v_2(s^\eps_n(p),1)$. Then
\begin{align*}
d_{p} F^{n}_{\ve}v &= v_1\Gamma_n^\ve(p)(1,\ve w^\ve_n(p)) +v_2 [\Upsilon^\eps_n(p)]^{-1}(1,0).
\end{align*}
Choosing $\eps$ small,\footnote{In fact, the factor $\lambda^{n/2}$ can be made as close as we want to $\lambda^{n}$ by choosing $\eps$ sufficiently small.} $\Gamma^\eps_n(p)>\lambda^{n/2}[\Upsilon^\eps_n(p)]^{-1} $ and
\begin{align*}
  \|d_{p} F^{n}_{\ve}v\| \leq (|v_1|\sqrt{1+\eps^2w^\eps_n(p)^2}+|v_2|)\Gamma^\eps_n(p)\leq \Const\Gamma_{n}^{\ve}(p)\|v\|.
\end{align*}

Next, given $v \in \reals^2$, write $v=v_1(0,1)+v_2(1, \eps  w^\ve_n(p))$. Then
\begin{align*}
[d_{p}F^{n}_{\ve}]^{-1}v &= v_1 [\Upsilon^\eps_n(p)](s^\eps_n(p),1)+ v_2[\Gamma_n^\ve(p)]^{-1}(1,0).
\end{align*}
and
\begin{align*}
\|[d_{p}F^{n}_{\ve}]^{-1}v \| \leq  (|v_1 |\sqrt{1+s^\eps_n(p)^2}+ |v_2|)\Upsilon^\eps_n(p) \leq \Const\Upsilon^\eps_n(p) \|v\|
\end{align*}
for sufficiently small $\eps$. This gives
\begin{align*}
\|d_{p}F^{n}_{\ve} v \|\geq \Const [\Upsilon^\eps_n(p)]^{-1} \|v\|
\end{align*}
So, we have \eqref{eq:bound-differential-fine}. \eqref{eq:bound-differential} follows from \eqref{eq:bound-differential-fine} due to \eqref{UnstableExp} and \eqref{NstepExp}.
\end{proof}
\subsection{Local centre manifolds}\label{sec:local-centre-manif}
In this subsection we collect some results about integrability of the
centre slope field $s_{*}^{\ve}$.  We say that a centre slope field
$s$ is \emph{locally uniquely $C^{r}$-integrable} if for any $p\in\bT$
there exists a $C^{r}$ (centre) curve $\cW_{s}(p)$ and $\alpha(p) > 0$
so that every piecewise $C^{1}$ curve $\gamma:(-1,1)\to\bT$ with
$\gamma(0) = p$, $\dot\gamma(t) \propto (s(p),1)$ and
$\textrm{height}(\gamma) < \alpha(p)$ is contained in $\cW_{s}(p)$.
Local $C^{r}$ integrability is guaranteed for sufficiently smooth
slope fields by classical ODE results (\eg every $s_{n}^{\ve}$ is
locally $C^{r}$ integrable); however, as it often happens in partially
hyperbolic dynamics, the invariant centre slope field
$p\mapsto(s_{*}^{\ve}(p),1)$ enjoys very poor smoothness properties.
Despite this inconvenience, it holds however true that the invariant
centre slope field is locally uniquely integrable; this follows from
classical results~\cite{MR1972227}, as proved in~\cite[Section
7]{DLPV}; we extract the results that are relevant for this paper and
summarize them in the next theorem, which is a rephrasing
of~\cite[Theorem 6 and 7]{DLPV}
\begin{thm}\label{thm:unique-integrability-centre-mfolds}
  The invariant centre distribution $s_{*}^{\ve}$ is locally uniquely
  $C^{4}$-integrable; the integral leaves are compact, and
  homeomorphic to $\bT^{1}$.
\end{thm}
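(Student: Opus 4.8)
The statement is a transcription to our setting of~\cite[Theorems 6 and 7]{DLPV}, so the plan is to check that the hypotheses of those results hold for $F_{\ve}$ and then invoke them; for the reader's convenience I also recall the mechanism of the proof, which ultimately rests on the classical unique-integrability results of~\cite{MR1972227}. All the needed input is already contained in Section~\ref{sec:Hyper}: $F_{\ve}$ is a $\cC^{4}$ local diffeomorphism; the unstable cone $\cC_{u,\unstableConeConst}$ is $dF_{\ve}$-invariant and every $v\in\cC_{u,\unstableConeConst}$ satisfies $|\pi_{1}d_{p}F_{\ve}v|>3|\pi_{1}v|$ by~\eqref{eq:unstable_cone_expansion}; the centre cone $\cC_{c,\centreConeConst}$ is $dF_{\ve}^{-1}$-invariant and uniformly transversal to $\cC_{u,\unstableConeConst}$; and along the invariant slope field $s^{\ve}_{*}$ the centre expansion over $n$ steps lies in $[e^{-\centreMaxExp n\ve},e^{\centreMaxExp n\ve}]$ by~\eqref{eq:centre-manifold-expansion}. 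Thus the unstable bundle strictly dominates the centre bundle, and the gap between the two rates gives the $\cC^{4}$-bunching inequality $3^{-1}e^{4\centreMaxExp\ve}<1$ once $\ve$ is small enough. Together with $F_{\ve}\in\cC^{4}$, this is exactly the condition under which~\cite{MR1972227} produces a locally unique $\cC^{4}$ integral leaf at each point.

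The construction is the Hadamard--Perron graph transform run along the backward dynamics, since the centre cone is the one invariant under $dF_{\ve}^{-1}$. Fix $p$ and work in a chart near $p$ adapted to the two transversal cone fields. On the complete metric space of $\cC^{1}$ centre graphs through $p$ of height below a small $\alpha(p)$, the local inverse branches of $F_{\ve}$ pull back graphs to graphs --- they map centre curves to centre curves by backward invariance of $\cC_{c,\centreConeConst}$ --- and the induced backward graph transform is a uniform $\cC^{0}$-contraction, with contraction factor governed by $3^{-1}$; hence it has a unique fixed curve $\cW_{s^{\ve}_{*}}(p)$, whose tangent field is $(s^{\ve}_{*},1)$ by construction. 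Lifting the transform to $4$-jets of graphs and applying the fibre-contraction theorem --- the contraction on the jet fibres being precisely $3^{-1}e^{4\centreMaxExp\ve}<1$ --- upgrades $\cW_{s^{\ve}_{*}}(p)$ to class $\cC^{4}$.

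The remaining, and delicate, point is the uniqueness clause. A $\cC^{1}$ curve tangent to a merely continuous centre slope field is not in general determined by a single point on it --- already for $F_{0}$ the invariant field $s_{*}$ is only H\"older (Lemma~\ref{lem:s_*_holder}), and $s^{\ve}_{*}$ is no better --- so the Picard--Lindel\"of uniqueness theorem does not apply and one must argue dynamically: iterating a piecewise-$\cC^{1}$ centre curve through $p$ of height $<\alpha(p)$ backward, its extent in the unstable direction is contracted at rate $\le 3^{-1}$, strictly faster than its extent in the centre direction can grow (rate $\le e^{\centreMaxExp\ve}$), so the curve is trapped in the nested thin neighbourhoods on which $\cW_{s^{\ve}_{*}}(p)$ is the only centre graph; this is carried out in~\cite[\S7]{DLPV}. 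Finally, a maximal integral curve is everywhere transversal to the circles $\{\theta=c\}$, since its tangent $(s^{\ve}_{*},1)$ has $|s^{\ve}_{*}|\le\centreConeConst$; that such a leaf is in fact a compact curve homeomorphic to $\bT^{1}$ is the content of~\cite[Theorem 7]{DLPV}, to which we refer. I expect the uniqueness step to be the only genuine obstacle; everything else is either recorded in Section~\ref{sec:Hyper} or standard invariant-manifold theory.
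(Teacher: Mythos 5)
Your proposal is correct and matches the paper's approach: the paper offers no proof of this theorem, stating it explicitly as a rephrasing of \cite[Theorems 6 and 7]{DLPV} (resting on the classical unique-integrability results of \cite{MR1972227}), which are precisely the references you invoke. Your account of the backward graph transform, the $\cC^{4}$-bunching inequality $3^{-1}e^{4\centreMaxExp\ve}<1$, the dynamical argument for uniqueness, and the topology of the leaves is an accurate (and helpfully more detailed) sketch of the machinery that, in the paper itself, is invoked by pure citation.
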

\begin{rmk}
  The resulting foliation in integral leaves (centre manifolds) has in
  general very poor smoothness properties; in particular, in our
  setting it will not be absolutely continuous.
\end{rmk}
A centre curve is called a \emph{local centre manifold} if it is a
subcurve of an integral leaf of $s_{*}^{\ve}$ with the property that
its projection on the second coordinate is an interval of length
less\footnote{ The constraint on the length is not essential, but it
  is indeed convenient.} than $1/2$.  We will denote local centre
manifolds by the symbol $\Wc$, and we will denote by
$\height(\Wc) = |\pi_{2}\Wc|$ the length of the interval $\pi_{2}\Wc$.
\nomenclature[1_1]{$\Wc$}{Local centre manifold}%
The result stated above implies that for any point $p\in\bT^{2}$ there
exists a “unique” local centre manifold (of positive length) passing
through $p$; uniqueness here is intended in the sense that the
intersection of any two local centre manifolds passing through $p$ is
itself a local centre manifold passing through $p$.

Let us now control the $n$-step expansion along local centre manifolds
for $n = \cO(\eps\inv)$. Recall from~\eqref{eq:1StepExpansionEstimate}
that the one-step expansion along the centre direction $(s^\ve_* ,1)$,
which we denote by $\CentreExpansion$, is approximately equal to
$1+\eps \psi_*$. Since the function $\psi_*$ is typically not smooth
(although it is H\"older continuous by Lemma~\ref{lem:s_*_holder}), we find it
convenient to define a regularized function $\psi$ that approximates
$\psi_*$.

Recall that
$\psi_*(p)=\partial_\theta\omega(p)+\partial_x\omega(p)s_*(p)$ and let
$\psi_n(p) = \partial_\theta\omega(p)+\partial_x\omega(p)s_n(p)$.
\nomenclature[0ad]{$\fakeCentreConst$}{Constant that appears in
  estimate on the growth of centre manifolds
  (Lemma~\ref{lem:expansion-centre-mfolds})} Pick
$0<\fakeCentreConst < \frac{1}{4}$ small (to be determined below) and
$n_0$ such that $\|\psi_{n_0}-\psi_*\| < \fakeCentreConst$. We define
$\psi = \psi_{n_0}$ and
\begin{align}\label{eq:zeta_n_def}
	\zeta_n &= \eps \sum_{k=0}^{n-1} \psi\circ F^k_\eps.
\end{align}
\begin{rmk}\label{rmk:psi_regularity}
  Since $F_\eps \in \cC^4$, the formula for $s_n$
  in~\eqref{NstepCenter} implies that $\psi\in \cC^3$. Let
  $\bar{\psi}(\theta)=\int_{\bT^1}\psi(x,\theta)\rho_\theta(x) \deh
  x$. Then by~\ref{a_almostTrivial}, we have that
  $\bar{\psi}(\theta_-)\ge \bar{\psi}_*(\theta_-)-\fakeCentreConst\ge
  \frac{3}{4}$.  We moreover need to ask $\fakeCentreConst$ to be so
  small that~\ref{a_noNearCobo} holds when substituting $\psi_{*}$
  with $\psi_{n_{0}}$.
\end{rmk}
For later use, we also define the interpolation $\zeta_{\ve}(t)$ as we
did for $\theta_{\ve}$; once again, we will regard $\zeta_{\ve}$ as a
random element with values in in $\mathcal C^{0}(\bR_{+},\bR)$.
\begin{lem}\label{lem:expansion-centre-mfolds}
  Recall the definition of $\centreMaxExp$ given
  above~\eqref{eq:centre-manifold-expansion} and the
  definition~\eqref{eq:1StepExpansionEstimate} of $\CentreExpansion$,
  then
	\begin{enumerate}
		\item  for any $p\in \bT^2$:
		\begin{align*}
			e^{-\centreMaxExp n\ve}\le
          \prod_{k=0}^{n-1}\CentreExpansion\circ F^k_\eps(p)\le e^{\centreMaxExp n\ve}.
		\end{align*}
        \end{enumerate}
        Let $T>0$ and let $n\le T\eps\inv$. Then, for any local centre
        manifold $\Wc$,
		\begin{align*}
          &\textup{(b)}&\inf_{\Wc}\prod_{k=0}^{n-1}\CentreExpansion\circ F_\eps^k &\ge \exp\bigg(\sup_{\Wc}\zeta_n-C_T(\height(\Wc)+\eps\log\eps^{-1})-T\fakeCentreConst\bigg);\\
          &\textup{(c)}&
			\sup_{\Wc}\prod_{k=0}^{n-1}\CentreExpansion\circ F_\eps^k &\le \exp\bigg(\inf_{\Wc}\zeta_n+C_T(\height(\Wc)+\eps\log\eps^{-1})+T\fakeCentreConst\bigg).
		\end{align*}
\end{lem}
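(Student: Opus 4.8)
The plan is to prove part (a) directly from the one-step estimate, then obtain (b) and (c) by carefully comparing the product $\prod_{k=0}^{n-1}\CentreExpansion\circ F_\eps^k$ with $\exp(\zeta_n)$ along a local centre manifold, using the fact that $\Wc$ is an integral leaf of $s_*^\ve$ so that orbits of points on $\Wc$ stay uniformly close to orbits of the "reference point" whose slope direction is tracked.

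First I would handle (a). By~\eqref{eq:centre-manifold-expansion} and the definition of $\CentreExpansion$ as the limit in~\eqref{eq:1StepExpansionEstimate}, we have that $\prod_{k=0}^{n-1}\CentreExpansion\circ F_\eps^k(p)$ is precisely $|d\pi_2 d_pF^n_\ve(s^\ve_*(p),1)|$ — this is because $(s^\ve_*,1)$ is the invariant centre direction, so its image under $dF^k_\ve$ is a positive multiple of $(s^\ve_*\circ F^k_\eps,1)$, and the second-coordinate stretching factors multiply telescopically. Hence (a) is exactly~\eqref{eq:centre-manifold-expansion}, restated.

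For (b) and (c), the key is to pass from $\CentreExpansion$ to the regularized quantity $\psi$. Taking logs, $\log\prod_{k=0}^{n-1}\CentreExpansion\circ F_\eps^k(p) = \sum_{k=0}^{n-1}\log\CentreExpansion(F_\eps^k p)$. Using~\eqref{eq:1StepExpansionEstimate}, $\log\CentreExpansion = \eps\psi_* + \cO(\eps^2\log\eps^{-1})$ pointwise, and summing over $n\le T\eps^{-1}$ terms the error is $\cO(\eps\log\eps^{-1})$. Next replace $\psi_*$ by $\psi = \psi_{n_0}$ at a cost of at most $\|\psi-\psi_*\|<\fakeCentreConst$ per step, contributing at most $T\fakeCentreConst$ over the whole orbit. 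So $\log\prod_{k=0}^{n-1}\CentreExpansion\circ F_\eps^k(p) = \eps\sum_{k=0}^{n-1}\psi(F_\eps^k p) + \cO(\eps\log\eps^{-1}) + \cO(T\fakeCentreConst)$, and the first term is exactly $\zeta_n(p)$. It remains to compare $\zeta_n$ evaluated at different points of $\Wc$: if $p,q\in\Wc$, then since $\Wc$ is a piece of a centre manifold with $\height(\Wc)<1/2$, the orbits $F_\eps^k p$ and $F_\eps^k q$ stay on the forward image of $\Wc$, which by part (a) has height at most $e^{\centreMaxExp T}\height(\Wc)$; moreover the "horizontal" separation of $F_\eps^k p$ and $F_\eps^k q$ stays controlled because $\Wc$ is a centre curve (slope bounded by $\centreConeConst$) and the dynamics is uniformly hyperbolic transverse to the centre — so after applying $F_\eps^k$ the image curve remains a short centre curve. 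Since $\psi\in\cC^3$ by Remark~\ref{rmk:psi_regularity}, $|\psi(F_\eps^k p)-\psi(F_\eps^k q)|\le \Const\,d(F_\eps^k p,F_\eps^k q)\le \Const\,\height(\Wc)$, uniformly in $k$, whence $|\zeta_n(p)-\zeta_n(q)|\le \eps\cdot n\cdot\Const\height(\Wc)\le C_T\height(\Wc)$. Combining, $\inf_\Wc \log\prod \CentreExpansion\circ F_\eps^k \ge \sup_\Wc\zeta_n - C_T\height(\Wc) - C_T\eps\log\eps^{-1} - T\fakeCentreConst$, which is (b); (c) is the symmetric statement.

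The main obstacle is the last comparison step: controlling the diameter of $F_\eps^k\Wc$ uniformly for $k$ up to $T\eps^{-1}$. Naively one loses a factor $\lambda^k$ from the unstable expansion, which is catastrophic; the point is that $\Wc$ is a \emph{centre} curve (tangent to the backward-invariant centre cone), so its forward images do not expand in the unstable direction — rather, a centre curve of small height stays a centre curve of comparably small height, the expansion in the $\pi_2$ direction being only $e^{\centreMaxExp n\ve}\le e^{\centreMaxExp T}$. Making this precise requires showing that $F_\eps^k$ maps a centre curve to a centre curve (so the image has bounded slope and height controlled by part (a)), which follows from backward invariance of the centre cone together with the local unique integrability of $s_*^\ve$ from Theorem~\ref{thm:unique-integrability-centre-mfolds}: the forward image $F_\eps^k\Wc$ lies inside the (unique) local centre manifold through $F_\eps^k p$. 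Once the geometric control on $\diam F_\eps^k\Wc$ is in hand, the rest is the bookkeeping of the three error terms ($\eps^2\log\eps^{-1}$ from the $\CentreExpansion$-vs-$\psi_*$ approximation, $\fakeCentreConst$ from the $\psi_*$-vs-$\psi$ regularization, and $\height(\Wc)$ from the Hölder/Lipschitz variation of $\psi$ along the orbit), each summed over $\cO(T\eps^{-1})$ iterates.
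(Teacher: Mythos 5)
Your proposal is correct and takes essentially the same approach as the paper. The only cosmetic difference is in the final step: the paper bounds $\sup_{\Wc}\zeta_n-\inf_{\Wc}\zeta_n$ by computing the derivative $d\zeta_n(s_*^\ve,1)$ via the chain rule and part (a), then integrating over $\Wc$, whereas you bound $|\psi(F_\eps^k p)-\psi(F_\eps^k q)|$ directly through the diameter of $F_\eps^k\Wc$; both hinge on the invariance of $s_*^\ve$, part (a), and the Lipschitz regularity of $\psi$, so the two arguments are the same computation phrased differently.
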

\begin{rmk}
  We will not use part (c) of this lemma in the present article;
  however, we include it for completeness. Note that parts (b) and (c)
  provide sharper estimates than part (a) only when $n$ is of order
  $\eps\inv$.
\end{rmk}
\begin{proof}
	Part (a) follows immediately from~\eqref{eq:centre-manifold-expansion}.

	Let $p\in \bT^2$ and write $p_k =
    F_\eps^k(p)$. By~\eqref{eq:1StepExpansionEstimate},
    $\|\CentreExpansion-1-\eps \psi \|\le \Const \eps^2 \log
    \eps\inv+\fakeCentreConst\ve$. Thus for $\eps$ sufficiently
    small,
	\begin{align}
		\sum_{k=0}^{n-1}\log \CentreExpansion(p_k) &\ge \sum_{k=0}^{n-1}(\CentreExpansion(p_k) - 1 - \Const \eps^2)
		\ge \sum_{k=0}^{n-1}(\eps\psi(p_k) - \fakeCentreConst\eps  - \Const \eps^2 \log \eps\inv)\notag\\
		&\ge \zeta_n(p) - T\fakeCentreConst - T\Const \eps\log \eps\inv\label{eq:nStepPtwiseBd}
	\end{align}
	and similarly
	\begin{equation}\label{eq:nStepPtwiseUpperBd}
		\sum_{k=0}^{n-1}\log \CentreExpansion(p_k) \le \zeta_n(p) + T\fakeCentreConst + T\Const \eps\log \eps\inv.
	\end{equation}
	Notice that:
	\begin{align*}
		\sup_{\Wc}\zeta_{n}-\inf_{\Wc}\zeta_{n}\le \|d\zeta_{n}\|\,\height(\Wc);
	\end{align*}
	we now proceed to obtain an upper bound on $\|d\zeta_{n}\|$: since
    $dF_\eps(s_*^\eps,1) = \CentreExpansion\cdot (s^{\eps}_* \circ
    F_\eps, 1)$, we have
	\begin{align*}
      d\zeta_n(s_*^\eps,1) = \eps\sum_{k=0}^{n-1}d\psi\circ F_\eps^k\, dF_\eps^k (s_*^\eps,1) = \eps\sum_{k=0}^{n-1} \prod_{j=0}^{k-1}\CentreExpansion\circ F_\eps^j \, d\psi\circ F_\eps^k(s_*^\eps\circ F_\eps^k,1).
	\end{align*}
	Hence by part (a) of this lemma we obtain that
	\begin{align}\label{eq:dzeta_n_bd}
		|d\zeta_n(s_*^\eps,1)|\le \eps\sum_{k=0}^{n-1} e^{\centreMaxExp k \eps}\|d\psi\|\sqrt{|\centreConeConst|^2 + 1} \le \Const \frac{\eps e^{\centreMaxExp T}}{1 - e^{\centreMaxExp \eps}}\le C_{T}.
	\end{align}
	Part (b) of this lemma then follows by combining~\eqref{eq:nStepPtwiseBd} and~\eqref{eq:dzeta_n_bd}. Similarly, part (c) follows by combining~\eqref{eq:nStepPtwiseUpperBd} and~\eqref{eq:dzeta_n_bd}.
\end{proof}
\begin{lem}\label{lem:fake_centre_exp}
	Let $T>0$. Then there exists $C_T>0$ such that for any local centre manifold $\Wc$ and any $0\le n\le T\eps\inv$, we have
	\begin{equation*}
		\inf_{\Wc}\,[\Upsilon^\ve_{n}]^{-1} \ge \exp\bigg(\sup_{\Wc}\zeta_n-C_T(\height(\Wc)+\eps\log\eps^{-1})-T\fakeCentreConst\bigg).
	\end{equation*}
\end{lem}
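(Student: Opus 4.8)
The plan is to relate $[\Upsilon_n^\ve]^{-1}$ to the product of one-step centre expansions $\prod_{k=0}^{n-1}\CentreExpansion\circ F_\ve^k$ that was already controlled in Lemma~\ref{lem:expansion-centre-mfolds}(b), and then simply invoke that lemma. The key identity is~\eqref{eq:centre-manifold-expansion} together with the observation that $\CentreExpansion(p)=\lim_{m\to\infty}\Upsilon_{m-1}^\ve(p_1)/\Upsilon_m^\ve(p_0)$ is (up to the controlled error in~\eqref{eq:1StepExpansionEstimate}) the genuine one-step stretch factor of the invariant centre direction $(s_*^\ve,1)$. Telescoping~\eqref{EpsSlopeRec}, or equivalently taking the limit form of it, one gets that the $n$-step inverse expansion $[\Upsilon_n^\ve(p)]^{-1}$ — which by~\eqref{epsnStepCenter} measures the stretching of $(0,1)$ under $[d_pF_\ve^n]^{-1}$, hence the contraction of the centre direction under the forward iterate — is comparable (with multiplicative constants independent of $\ve$ and $n$ for $n\le T\vei$) to $\prod_{k=0}^{n-1}\CentreExpansion\circ F_\ve^k(p)$. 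Indeed, from~\eqref{eq:bound-differential-fine} applied to $v=(s_*^\ve(p),1)$ one has $\|d_pF_\ve^n(s_*^\ve(p),1)\|\asymp [\Upsilon_n^\ve(p)]^{-1}$ is false in general — rather the correct comparison is via $\pi_2$: since $\CentreExpansion$ was defined precisely as the $\pi_2$-stretch of the centre direction (see~\eqref{eq:centre-manifold-expansion}), we have $\prod_{k=0}^{n-1}\CentreExpansion\circ F_\ve^k(p)=|d\pi_2\, d_pF_\ve^n(s_*^\ve(p),1)|$, and on the other hand this equals $[\Upsilon_n^\ve(p)]^{-1}$ exactly, because applying $d_pF_\ve^n$ to $(s_*^\ve(p),1)=\Upsilon_n^\ve(p)\cdot[d_pF_\ve^n]^{-1}(0,1)\big/\Upsilon_n^\ve(p)$ — more precisely, inverting~\eqref{epsnStepCenter} gives $d_pF_\ve^n$ applied to the centre direction lands along $(0,1)$ scaled by $[\Upsilon_n^\ve(p)]^{-1}$ only after composing with the appropriate slope change, so one must track the slope fields $s_n^\ve$ carefully.

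Concretely, I would argue as follows. From~\eqref{epsnStepCenter} we have $[d_pF_\ve^n]^{-1}(0,1)=\Upsilon_n^\ve(p)(s_n^\ve(p),1)$, hence $d_pF_\ve^n(s_n^\ve(p),1)=[\Upsilon_n^\ve(p)]^{-1}(0,1)$; taking $\pi_2$ gives $|d\pi_2\,d_pF_\ve^n(s_n^\ve(p),1)|=[\Upsilon_n^\ve(p)]^{-1}$. Since $s_n^\ve\to s_*^\ve$ uniformly at exponential rate and the differential $d_pF_\ve^n$ has norm at most $\Const(1+\Const\ve)^n\prod\partial_xf\le\Const\lambda^n$ type bounds (Lemma~\ref{lem:lyapunov}), replacing $s_n^\ve$ by $s_*^\ve$ changes $d\pi_2\,d_pF_\ve^n$ of the centre vector by a factor $1+o(1)$ as long as $n$ stays $\cO(\vei)$ — here one uses that the centre direction is the \emph{least-expanded} direction, so the error term, which a priori could be amplified by the full $\lambda^n$, is in fact controlled: the difference $d_pF_\ve^n\big((s_n^\ve-s_*^\ve)(p),0\big)$ has the vector $(1,0)$ direction, amplified by $\Gamma_n^\ve$, but multiplied by $|s_n^\ve(p)-s_*^\ve(p)|\le\Const\sigma^n$, and one checks $\Gamma_n^\ve\sigma^n$ stays bounded (or small) — this requires $\sigma$ to beat $\lambda$, which it does not in general, so instead I would bound things relative to $[\Upsilon_n^\ve]^{-1}$ itself and close the estimate by absorbing into the $C_T(\height(\Wc)+\ve\log\ve^{-1})$ error. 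In fact the cleanest route avoids $s_n^\ve$ entirely: use that $[\Upsilon_n^\ve(p)]^{-1}=\prod_{k=0}^{n-1}\Upsilon_{k+1}^\ve\cdot[\Upsilon_k^\ve]^{-1}$-type telescoping of~\eqref{EpsSlopeRec}/\eqref{NstepExp}, which gives $[\Upsilon_n^\ve(p)]^{-1}=\prod_{k=0}^{n-1}\big(1+\ve\psi_*(p_k)+\cO(\ve^2\log\ve^{-1})\big)$ directly — but this is exactly $\prod_{k=0}^{n-1}\CentreExpansion\circ F_\ve^k(p)$ up to the same $\cO(\ve^2\log\ve^{-1})$ error per step that already appears in the proof of Lemma~\ref{lem:expansion-centre-mfolds}.

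Therefore I would structure the proof as: (i) telescope~\eqref{EpsSlopeRec} to obtain $[\Upsilon_n^\ve(p)]^{-1}=\prod_{k=0}^{n-1}\big(\Upsilon_{n-k-1}^\ve(p_{k+1})/\Upsilon_{n-k}^\ve(p_k)\big)$, and combine with~\eqref{NstepExp} and the estimate~\eqref{eq:1Step-vs-Upsilon}–\eqref{eq:1StepExpansionEstimate} to get, for $n\le T\vei$, $\big|\log[\Upsilon_n^\ve(p)]^{-1}-\log\prod_{k=0}^{n-1}\CentreExpansion\circ F_\ve^k(p)\big|\le \Const\,n\,\ve^2\log\ve^{-1}\le C_T\,\ve\log\ve^{-1}$ — here the only subtlety is that the error in~\eqref{eq:1StepExpansionEstimate} involves $s_n^\ve$ for the finite-step objects versus $s_*^\ve$ in the limit, but the discrepancy $|s_n^\ve-s_*^\ve|$ at the relevant step is $\cO(\sigma^{n-k})$ and summing against $\ve\|\partial_x\omega\|$ contributes only $\cO(\ve)$ total; (ii) apply Lemma~\ref{lem:expansion-centre-mfolds}(b) to bound $\inf_{\Wc}\prod_{k=0}^{n-1}\CentreExpansion\circ F_\ve^k$ from below by $\exp(\sup_{\Wc}\zeta_n-C_T(\height(\Wc)+\ve\log\ve^{-1})-T\fakeCentreConst)$; (iii) absorb the $C_T\ve\log\ve^{-1}$ error from step (i) into the existing $C_T(\height(\Wc)+\ve\log\ve^{-1})$ term. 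The main obstacle is step (i): making precise that the per-step multiplicative factor $\Upsilon_{n-k-1}^\ve(p_{k+1})/\Upsilon_{n-k}^\ve(p_k)$ — which genuinely depends on the finite horizon $n$ through $s_{n-k}^\ve$ — can be replaced by the horizon-independent $\CentreExpansion(p_k)$ with an error that telescopes to something negligible on the $\vei$ timescale; once that is done, the lemma follows purely by quoting Lemma~\ref{lem:expansion-centre-mfolds}(b).
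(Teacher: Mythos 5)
Your step (i)–(iii) strategy — telescope $\log[\Upsilon_n^\ve(p)]^{-1}$ into a sum of one-step ratios, use~\eqref{eq:1Step-vs-Upsilon} to compare each ratio to $\log\CentreExpansion$ with a per-step error $\cO(\ve\sigma^k)$ that sums to $\cO(\ve)$, then invoke Lemma~\ref{lem:expansion-centre-mfolds}(b) (equivalently,~\eqref{eq:nStepPtwiseBd} and~\eqref{eq:dzeta_n_bd}) and absorb the $\cO(\ve)$ into the $C_T\ve\log\vei$ slack — is exactly the paper's proof. The exploratory detour about $\Gamma_n^\ve\sigma^n$ and $\pi_2$-projections is unnecessary, but you correctly abandon it, and the final argument is sound.
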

\begin{proof}
	By \eqref{eq:1Step-vs-Upsilon}, for all $q\in \bT^2$ and $k\ge 1$, we have
	\begin{align*}
		\frac{\Upsilon^\ve_{k-1}(F_\eps(q)) }{\Upsilon^\ve_{k}(q)} &= \CentreExpansion(q)+\ve \partial_x w(q)[s^\ve_k(q)-s^\eps_*(q)].
	\end{align*}
	Now $\CentreExpansion(q)\ge 1-\Const\eps$ and $\|s^\eps_k -s^\eps_* \|\le \Const\sigma^k$ so for all $\eps$ sufficiently small,
	\begin{align*}
		\log\frac{\Upsilon^\ve_{k-1}(F_\eps(q)) }{\Upsilon^\ve_{k}(q)}\ge \log \CentreExpansion(q) - \Const \eps \sigma^k
	\end{align*}
	Hence by taking $q = F_\eps^{n-k}(p)$ and summing over $1\le k\le n$ we obtain that
	\begin{align*}
		\log\Upsilon^\ve_{n}(p)^{-1} \ge \sum_{k=1}^n (\log \CentreExpansion\circ F_\eps^{n-k}(p) - \Const\eps \sigma^k)\ge \sum_{i=0}^{n-1} \log \CentreExpansion\circ F_\eps^{i}(p)- \Const \eps.
	\end{align*}
	The proof of the lemma follows by combining this bound with~\eqref{eq:nStepPtwiseBd} and~\eqref{eq:dzeta_n_bd}.
\end{proof}

\section{Standard pairs and standard patches}\label{sec:std-pairs-patches}
In order to establish statistical properties of $F_{\ve}$, we
introduce a particular class of probability measures, that we call
\emph{standard patches}.  Such measures are inspired by the class of
\emph{standard pairs} introduced by Dolgopyat~\cite{Dima-mostly-contracting} in the early 2000s.  The
idea behind these measures is that on the one hand they should be
sufficiently localized so that they can be used for conditioning,
while on the other hand they should be sufficiently rich so that they
can be used to derive limit theorems.  In the system under our
consideration, standard pairs are measures supported on sufficiently
short unstable curves and described by a sufficiently regular density
with respect to Lebesgue measure on the curve.  Standard patches, on
the other hand, capture the “mostly expanding” feature of our
dynamics, and can be –for now vaguely– depicted as an $\ve$-thickening
of standard pairs.

In Subsections~\ref{sec:std_pairs} and~\ref{sec:std_pairs_dynamics} we
recall from~\cite{MR3556527} the appropriate definitions of standard
pairs and their dynamical properties.
Subsection~\ref{sec:standard-patches} is devoted to introducing the
definition of standard patches and proving some of their dynamical
properties.
\subsection{Standard curves and pairs:
  definitions}\label{sec:std_pairs}
\nomenclature[1aa]{$\scc{i}$}{Constants appearing in the definition of
  standard curves}%
\nomenclature[1ab]{$\pscc{i}$}{Constants appearing in the definition
  of prestandard curves}%
\nomenclature[1ac]{$\Stdc$}{Set of standard curves}%
\nomenclature[1ad]{$\pStdc$}{Set of prestandard curves}%
\nomenclature[1ae]{$\narr$}{Quantity describing the \emph{narrowness} of
  a standard curve}

Fix $\delta > 0$ and $\trim > 0$ to be determined later and let
$\narr \ge 2$.  A closed interval $I\subset\bT$ is called a
\emph{$\narr$-interval} if $|I|\in[\delta/\narr,\delta]$; a
$\narr$-interval is said to be \emph{trimmed} if
$|I|\in [\delta e^{\trim}/\narr, \delta e^{-\trim}]$.  A curve in
$\bT^{2}$ is said to be a (trimmed) $\narr$-curve if it projects
bijectively by $\pi_{1}$ onto a (trimmed) $\narr$-interval\footnote{\
  We introduce here the notion of a trimmed curve to allow some
  fuzziness in the definition of a $\narr$-curve; see also
  Footnote~\ref{fn:justify-trimmed}}.

Let $\scc1,\scc2$ and $\scc3$ be positive real
numbers; then we define
\begin{align*}
  \Stdc_{\narr}(\scc1,\scc2,\scc3)=\Big\{G \in\cC^{3}(I,\T)&:
  I\textrm{ is a $z$-interval},
  \|G^{(1)}\|_{\infty} \leq \ve \scc1, \\&%
  \|G^{(2)}\|_{\infty} \leq \ve \scc2,%
  \|G^{(3)}\|_{\infty} \leq \ve \scc3\Big\}.
\end{align*}
Let us now fix two sets of constants $(\scc1,\scc2,\scc3)$ and
$(\pscc1,\pscc2,\pscc2)$ with $0 < \scc{i} < \pscc{i}$ to be
determined later and define the shorthand notation:
\begin{align*}
  \Stdc_{\narr} &= \Stdc_{\narr}(\scc1,\scc2,\scc3) &
  \pStdc_{\narr} &= \Stdc_{\narr}(\pscc1,\pscc2,\pscc3).
\end{align*}
Given $G\in\Stdc_{\narr}$, we introduce the function
$\bG:x\mapsto(x,G(x))$.  The image of $\bG$ (\ie the graph
$\{(x,G(x))\}_{x\in I}$ of $G \in \Stdc_{\narr}$) is called a
$\narr$-\emph{standard curve};\footnote{ Note that a $\narr$-standard
  curve is in particular a $\narr$-curve} the function $G$ is called
its \emph{associated function}.  The parameter $\narr$ controls how
\emph{narrow} a curve in the class $\Stdc_{\narr}$ is allowed to be:
the larger $\narr$, the narrower a curve; of course
$\Stdc_{\narr}\subset\Stdc_{\narr'}$ if $\narr < \narr'$.  A
$z$-standard curve is said to be trimmed if it is a trimmed $z$-curve.
%
%
We define similarly \emph{prestandard curves} (and \emph{trimmed
  prestandard curves}) by replacing $\Stdc_{\narr}$ with
$\pStdc_{\narr}$ in all the above
definitions.\\

\ignore{Two standard curves are said to be \emph{stacked} if their
  projections to the $x$-axis coincide and $\Delta$-stacked if they
  are stacked and $\Delta$ close in $\cC^1$
  topology.
}%

\nomenclature[1ba]{$\Dconst$}{Constant appearing in the norm control
  of standard densities on curves}
\nomenclature[1bb]{$\rough$}{Parameter associated to the
  \emph{roughness} of a standard density on curves}
\nomenclature[1bc]{$\Stdd$}{Set of standard densities on curves} Fix
$\Dconst > 0$ to be specified later; for any $\rough>0$, we define the
set of $\rough$-\emph{standard probability densities} on a standard
curve $G$ as
\begin{align*}
  \Stdd_{\rough}(G) &= \{\rho \in \cC^2(I,\R_{ > 0}):\
                      \|\rho\|_{L^1} =1, \|\rho^{(1)}/\rho\|_{\infty}\leq
                      \rough,\|\rho^{(2)}/\rho\|_{\infty}\leq \Dconst\,\rough \}.
\end{align*}
  The parameter $\rough$ controls how \emph{rough} a density in the
class $\Stdd_{\rough}$ is allowed to be; the larger $\rough$, the
rougher the density.
\nomenclature[1ca]{$\ell$}{standard pair}
\nomenclature[1cc]{$\Stdp$}{Collection of all standard pairs}

Define a $(\narr,\rough)$-\emph{standard pair} $\ell$ as a pair
$(\bG,\rho)$ given by $\bG$, the graph of $G \in \Stdc_{\narr}$, and a
density $\rho \in \Stdd_{\rough }(G)$; we denote with
$\Stdp_{\narr,\rough}$ be the collection of all
$(\narr,\rough)$-standard pairs.  A standard pair is said to be
\emph{trimmed} if the associated standard curve is trimmed. Note that
each $\ell = (\bG,\rho) \in \Stdp_{\narr,r}$ induces a Borel
probability measure on $\T^2$ as follows: for any continuous
real-valued function $g$ on $\T^{2}$ we let
\begin{align*}
  \mu_{\ell}(g) = \int_I g(x,G(x))\rho(x)\, dx,
\end{align*}
where $I = \pi_{1} \bG$.  We also introduce a slight abuse of notation
by calling $\supp \ell = \supp \mu_{\ell}$. The set
$\Stdp_{\narr,\rough}$ of $(\narr,\rough)$-standard pairs can be
identified as a space of smooth functions: it is thus naturally a
measurable space with the Borel $\sigma$-algebra. If $\bG:I\to\bT^2$
and $\rho:I\to\bR_{ > 0}$ are defined as above, let $\hat\bG$ and
$\hat\rho$ be defined by precomposing $\bG$ and $\rho$ respectively
with the unique affine orientation-preserving transformation that maps
$[0,1]$ onto $I$.

\nomenclature[1cb]{$\stdf$}{standard family}%

The symbol $\stdf$ will denote a \emph{family of}
$(\narr,\rough)$-\emph{standard pairs}, that is, a \emph{random}
$(\narr,\rough)$-standard pair.\footnote{ These objects are also
  called standard families in the literature.  Since in the sequel we
  will use families of standard pairs and family of standard patches,
  we prefer to be more explicit in the wording. } More precisely,
$\stdf$ denotes a Lebesgue probability space
$(\mathcal{A},\mathcal{F}, \nu)$ together with a
$\mathcal{F}$-measurable map
$\ell: \mathcal A \to \Stdp_{\narr,\rough}$.  A standard pair-valued
function is thus $\cF$-measurable if both maps
$(\alpha,s)\mapsto \hat\bG_{\alpha}(s)$ and
$(\alpha,s)\mapsto \hat\rho_{\alpha}(s)$ are jointly measurable. In
particular, for any Borel set $E\subset\bT^2$, the function
$\alpha\mapsto\mu_{\ell_{\alpha}}(E)$ is $\cF$-measurable.
Each family of standard pairs
$\stdf = ((\mathcal{A},\mathcal{F}, \nu),\ell)$ induces a Borel
probability measure on $\bT^{2}$ defined by:
\begin{align*}
  \mu_{\stdf}(g) = \int_{\mathcal A} \mu_{\ell(\alpha)}(g)d\nu[\alpha].
\end{align*}
For example, given a sequence of standard pairs, $\ell_i$, and weights
$0< c_{i} \leq 1$ so that $\sum_{i} c_{i} =1$, the associated family of
standard pairs $\stdf$ induces the measure:
\begin{align*}
  \mu_{\stdf}(g) = \sum_{i} c_{i}\int_{I_i}g(x,G_i(x))\rho_i(x)\, dx.
\end{align*}
A Borel probability measure $\mu$ on $\T^2$ is said to \emph{admit a
  disintegration as a family of $(\narr,\rough)$-standard pairs} if
there exist a family of $(\narr,\rough)$-standard pairs $\stdf$ such that
$\mu_{\stdf} = \mu$.%
\ignore{ A family of standard
  pairs $\stdf$ is said to be \emph{finite} if the associated space
  $\mathcal A$ is a finite set.}  We can likewise define families of
\emph{trimmed} standard pairs.  Finally, we define \emph{prestandard
  pairs} (and \emph{families of prestandard pairs}) by replacing
$\Stdc_{\narr}$ with $\pStdc_{\narr}$ in the above definitions.  We
denote with $\pStdp_{\narr, \rough}$ the set of
$(\narr,\rough)$-prestandard pairs.

\subsection{Standard curves and pairs:
dynamics}\label{sec:std_pairs_dynamics}
We now proceed to describe the behaviour of $(\narr,\rough)$-standard
pairs under the dynamics of $F_\ve$.  The following proposition
amounts to a consolidation of~\cite[Proposition 5.2, Remarks 5.6, 5.7
and 5.8]{MR3556527} and some minor improvements.
\begin{prp}[Dynamics of standard pairs]\label{proposition:invariance}
  Choosing $\delta > 0$ sufficiently small, there exist constants
  $\pscc1$, $\pscc2$, $\pscc3$, $\scc1$, $\scc2$, $\scc3$ and $\trim$
  so that the following holds for any sufficiently small $\ve >
  0$. For any $\narr\ge2$ and any $\narr$-prestandard curve
  $G\in\pStdc_{\narr}$:
  \begin{enumerate}
  \item the image $F_{\ve}\bG$ can be partitioned (mod 0) into
    finitely many trimmed $\narr'$- curves with
    \begin{align*}
        \narr' &= \max\left\{\frac45\narr, 2\right\}.
    \end{align*}
  \item any element of a partition (mod 0) of $F_{\ve}\bG$ into
    $\narr'$-curves is a $\narr'$-standard curve.
  \end{enumerate}
  Moreover, choosing $\Dconst$ sufficiently large there exists $\stdrough > 0$ so that for any $\narr\ge 2$,
  $\rough > 0$, any $(\narr,\rough)$-prestandard pair
  $\ell = (\bG,\rho) \in \pStdp_{\narr,\rough}$:
  \begin{enumerate}
     \setcounter{enumi}{2}
  \item any partition (mod 0) of $F_{\ve}\bG$ into $\narr'$-standard
    curves $\{\bG_{j}\}$ induces a disintegration of
    $F_{\ve*}\mu_\ell$ as family of $(\narr',\rough')$-standard pairs
    $\{(\bG_{j}, \rho_{j})\}$, where we can take
  \begin{align*}
    \rough' &= \frac13 \rough + \stdrough.
  \end{align*}
\end{enumerate}
\end{prp}
Before giving the proof of the proposition, let us introduce the
notion of \emph{\regular{}} standard pairs.  Such pairs were called
\emph{proper} in~\cite{MR3556527}, but we prefer to avoid the
confusion with the notion of properness given in~\cite{Chernov}, that
will be adapted later for our purposes.
\begin{mydef}\label{def:regular-pairs}
  A pair $\ell$ is said to be \emph{\regular{}} if it is a
  $(2,3\stdrough/2)$-standard pair.
\end{mydef}
\begin{rmk}[Invariance of \regular{} standard
  pairs]\label{rem:regular-pair-attracting}
  It is immediate to check that, given the choice of constants in the
  above definition of \emph{\regular{}} pair,
  Proposition~\ref{proposition:invariance} implies that (families of)
  \regular{} standard pairs are \emph{invariant}, in the sense that
  the push-forward of a (family of) \regular{} standard pair can be
  disintegrated as a family of \regular{} standard pairs.

  The proposition also shows that the dynamics eventually brings any
  $(\narr,\rough)$-prestandard pair to be \regular{}.  More precisely:
  if $\ell\in \pStdp_{\narr,\rough}$, then for
  $n\sim\max\{\log \narr,\log \rough\}$ we have that
  $F^{n}_{\ve *}{\mu_{\ell}}$ admits a disintegration as a family of
  trimmed \regular{} standard pairs.
\end{rmk}
\begin{rmk}
  The proof of Proposition~\ref{proposition:invariance} found below is
  a re-writing of the proof of~\cite[Proposition 5.2]{MR3556527} with
  emphasis on different aspects.  We decided to reproduce it here for
  completeness but also to mark the difference with the \emph{random}
  version of the invariance proposition that will be later presented
  as Proposition~\ref{lem:invariance_patches}.
\end{rmk}
At this point we can finally define (recall the definition of $M$
given below~\eqref{eq:def_Xi})
  \begin{align*}
    \unstableConeConst = \max\left\{\frac{M+1}{\lambda-2},\pscc1 \right\};
  \end{align*}
  in particular, notice that according to the above choice, any
  prestandard curve is unstable.  {Let us also observe that any
  unstable curve that projects by $\pi_{1}$ onto a proper subinterval
  of $\bT$ indeed projects \emph{bijectively} onto such
  subinterval.}
\begin{proof}[Proof of Proposition~\ref{proposition:invariance}]
  Recall $\ell=(\bG,\rho)\in\pStdp_{\narr,\rough}$ is a
  $(\narr,\rough)$-prestandard pair.
  Let us introduce the shorthand notation $f_{\bG}= f\circ\bG$ and
  $\omega_{\bG}= \omega\circ\bG$. Since any prestandard curve is an
  unstable curve,~\eqref{eq:unstable_cone_expansion} implies
  $f'_{\bG}=\pi_x dF_{\eps}\circ \bG \bG'>3$.  Let $I = \pi_{1}\bG$;
  provided that $\delta$ has been chosen small enough, $f_\bG$ maps
  $I$ injectively onto some interval $J\subset \bT$ of length
  $|J| \le 1/2$.  The image $F_{\ve}\bG$ is thus a graph of some
  function over $J$.

  In order to prove item (a), it thus suffices to show how to
  partition (mod 0) the interval $J$ into trimmed $z'$--intervals.  We
  can do this in several ways; for instance we can proceed as follows:
  define
\begin{align*}
  n = \left\lceil \frac{|J|}{\delta e^{-\trim}}\right\rceil;
\end{align*}
if $n = 1$ there is no need for partitioning, otherwise we will cut
$J$ into $n$ sub-intervals of equal length $|J|/n$.  We now show that
such sub-intervals are trimmed $z'$-intervals, \ie:
\begin{align*}
  |J|/n\in [\delta e^{\trim}/\narr',\delta e^{-\trim}].
\end{align*}
If $n = 1$ we have $|J| > \tilde\lambda|I| > 3\delta/\narr$, and we
are done, provided that $\trim$ is sufficiently small; otherwise, if
$n\ge3$ we have
\begin{align*}
  \frac{|J|}n > \frac{n-1}n\delta e^{-\trim}\ge
  \frac23\delta e^{-\trim} > \frac12\delta e^{\trim},
\end{align*}
provided that $\trim$ is chosen sufficiently small.  Finally, if
$n = 2$, we have either $|J|/2 > \frac12\delta e^{\trim}$ (hence we
are done), or otherwise $|J| \le \delta e^{\trim}$, which implies
$|I| < \frac13 \delta e^{\trim}$, which only can happen if
$\narr > \frac{11}4$.  But
$|J| > \frac12\delta e^{-\trim} > \frac5{11}\delta e^{\trim}$ if
$\trim$ is sufficiently small.  In all cases we have
\begin{align*}
  |J|/n\ge \min\{5/(4\narr),1/2\} \delta e^{\trim} \ge \delta e^{\trim}/\narr'.
\end{align*}
We now proceed to the proof of item (b); denote by $\{J_j\}$ a
partition of $J$ into $z'$-intervals.
Let us introduce the convenient notation $\vf=f_{\bG}\invr:J\to I$
and let us denote with $\vf_{j}$ the restriction
$\vf_{j} = \vf|_{J_{j}}$.  Elementary calculus yields the following
expressions for the derivatives of $\vf$:
\begin{align}\label{e_estimatesVf}
  \vf'   &= \frac1{f_\bG'}\circ\vf &
  \vf''  &= -\frac{f_\bG''}{f_\bG'^3}\circ\vf &
  \vf''' &= \frac{3f_\bG''^2-f_\bG'''f_\bG'}{f_\bG'^5}\circ\vf.
\end{align}
Let us now define $\bar G(x):=G(x)+\ve\omega_\bG(x)$ and let
$G_{j}= \bar G\circ\vf_{j}$: by design, $\bG_{j}$ is a $z'$-curve for
any $j$.  We now proceed to show that
$G_{j}\in\Stdc_{\narr'}(\scc1,\scc2,\scc3)$ for appropriate choices of
$\scc1, \scc2, \scc3,\pscc1, \pscc2$ and $\pscc3$, thus concluding the
proof of item (b).  Differentiating the above definitions and
using~\eqref{e_estimatesVf} we obtain
  \begin{subequations}\label{e_C1}
    \begin{align}
      G_j' &=%
      \frac{\bar G'}{f'_\bG}\circ\vf_j\label{e_c1'}\\%
      G_j'' &=%
      \frac{\bar G''}{f'^2_\bG}\circ\vf_j%
      -G_j'\cdot\frac{f_\bG''}{f_\bG'^2}\circ\vf_j\label{e_c1''}\\%
      G_j''' &=%
      \frac{\bar G'''}{f'^3_\bG}\circ\vf_j%
      -3 G''_j\cdot\frac{f_\bG''}{f_\bG'^2}\circ\vf_j-
      G'_j\cdot\frac{f_\bG'''}{f_\bG'^3}\circ\vf_j\label{e_c1'''}%
    \end{align}
  \end{subequations}
  First, notice that for any sufficiently smooth function $A$ on
  $\bT^2$, the definition of prestandard curve allows to conclude
  that:
  \begin{subequations}\label{e_AG}
    \begin{align}
      \|(A\circ\bG)'\|  &\leq \|A\|\nc1 (1+\ve \pscc1)\label{e_AG'}\\
      \|(A\circ\bG)''\| &\leq \|A\|\nc2\left[(1+\ve \pscc1)^2+\ve\pscc2\right]\label{e_AG''}\\
      \|(A\circ\bG)'''\| &\leq \|A\|\nc3\left[(1+\ve\pscc1+\ve\pscc2)^3+\ve\pscc3\right]\label{e_AG'''}.
    \end{align}
  \end{subequations}
  \ignore{ \begin{subequations}\label{e_AG}
    \begin{align}
      \|(A\circ\bG)'\|  &\leq \|d A\| (1+\ve \pscc1)\label{e_AG'}\\
      \|(A\circ\bG)''\| &\leq \ve\|d A\|\pscc2+\|\deh A\|\nc1(1+\ve \pscc1)^2\label{e_AG''}\\
      \|(A\circ\bG)'''\| &\leq \ve\|d A\| \pscc3+\|\deh A\|\nc2(1+\ve(\pscc1+\pscc2))^3\label{e_AG'''}.
    \end{align}
  \end{subequations}}%
Using~\eqref{e_c1'}, the definition of $\bar G$ and~\eqref{e_AG'} we
obtain, for small enough $\ve$:
  \begin{align*}
    \|G'_j\| &\leq \left\|\frac{ G' + \ve\omega_\bG'}{f_\bG'}\right\|
    \leq\frac13 \ve\left[\pscc1+\|\omega\|\nc1(1+\ve\pscc1)\right] \\
    &\leq\frac23\ve \pscc1+ \ve\|\omega\|\nc1/3.
  \end{align*}
  Notice that choosing $\pscc1 = 2\|\omega\|\nc1$ and
  $\scc1 =5\pscc1/6$ guarantees that $\|G_{j}'\|\le\ve\scc1$.  Similar arguments can be carried on for the higher
  derivatives; namely:\footnote{ Observe
  that the choices for $\pscc1$ and $\scc1$ only depend on
  $F_{\eps}$, and thus can be absorbed in a constant $\const$.}
  \begin{align*}
    \|G''_j\|&\leq \frac19 \left[\|\bar G''\|+\const\ve\right] \\%
    &\leq \frac19\ve \left[(1+\ve\|\omega\|\nc2)\pscc2 + \const\right]
      < \frac29\ve\pscc2 +\const\ve,%
  \end{align*}
  where in the last inequality we chose $\ve$ small enough.  As
  before, the above inequality implies the existence of $\pscc2$ and
  $\scc2$ that only depend on $F_{\ve}$ so that
  $\|G_{j}''\|\le\ve\scc2$.
 Finally, for the third derivative:
 \begin{align*}
   \|G'''_j\|&\leq \frac2{27}\ve\pscc3 + \const\ve,
  \end{align*}
  from which we gather the existence of $\pscc3$ and $\scc3$
  satisfying the requirements, concluding the proof of item (b).

  In order to prove item (c), let $(\bG_{j})$ denote a partition of
  the image curve into $z'$-standard curves; then we can write:
\begin{align*}
  F_{\ve*}\mu_\ell(g)=\mu_{\ell}(g\circ F_{\ve})&=\int_{I} g(f_\bG(x),\bar
                                                  G(x))\rho(x) \deh x\\&=\int_{J} g(x, \bar
  G(\vf(x)))
  \cdot\rho(\vf(x))\vf'(x)\deh x =\\
                                                &= \sum_j\fmm_{j}\int_{J_{j}} g(x, G_j(x))
                                                  \cdot\rho_j(x)\deh x=
  \sum_j \fmm_{j}\mu_{(\bG_{j},\rho_{j})}(g),
  \end{align*}
  where $\rho_{j} = \fmm_{j}^{-1} \cdot \rho\circ\vf_{j}\cdot\vf_{j}'$
  and $\fmm_{j} = \int_{J_{j}} \rho(\vf_{j}(x))\vf'_{j}(x)\deh x$.
  Observe that, by construction, we have $\sum_j \fmm_{j}=1$.
  Differentiating the definition of $\rho_{j}$ and
  using~\eqref{e_estimatesVf} we obtain:
  \begin{subequations}\label{e_C2}
    \begin{align}
      \frac{\rho_j'}{\rho_j}&=\frac{\rho'}{\rho\cdot f'_\bG}\circ\vf_j-\frac{f_\bG''}{f_\bG'^2}\circ\vf_j\label{e_c2'}\\
      \frac{\rho_j''}{\rho_j}&=\frac{\rho''}{\rho\cdot
        f'^2_\bG}\circ\vf_j-3\frac{\rho'_j}{\rho_j}\cdot
      \frac{f_\bG''}{f_\bG'^2}\circ\vf_j -\frac{f_\bG'''}{f_\bG'^3}\circ\vf_j.\label{e_c2''}
    \end{align}
  \end{subequations}
  In particular we have:
  \begin{align*}
    \left\|\frac{\rho'_j}{\rho_j}\right\|&\leq \frac13 \rough + \Const&
                                                                        \left\|\frac{\rho''_j}{\rho_j}\right\|&\leq \frac19 (\Dconst+\Const) \rough + \Const.
  \end{align*}
  We can then fix $\Dconst$ so large that the second equation reads
  \begin{align*}
    \left\|\frac{\rho''_j}{\rho_j}\right\|&\le \frac29 \Dconst \rough + \Const
                                            \le \Dconst \left(\frac29 \rough+\const\right).
  \end{align*}
  Choosing $\rough' = 1/3\rough+\Const$ we conclude that
  $\rho_{j}\in \Stdd_{r'}(G_{j})$, which yields item (c) and concludes
  the proof of the proposition.
\end{proof}
\subsection{Standard patches}
\label{sec:standard-patches}
We now begin to introduce the definition of \emph{standard patches} by
defining their support; the support of standard pairs is given by the
class of standard curves, whereas the support of standard patches is
given by a class of sets which we call \emph{standard rectangles} and
that we will define below.  Such rectangles are tubular neighbourhoods
of standard curves along the centre direction; since the natural scale
along the centre direction is $\cO(\ve)$, we expect the natural
smoothness scale of invariant densities along the centre direction
also to be of $\cO(\ve)$.  This is the reason to define standard
rectangles as $\cO(\ve)$-thickening of standard curves along the
centre direction.

\nomenclature[2aa]{$\patchheight$}{Maximal height of a standard
  rectangle}%
\nomenclature[1__]{$\delta$}{Maximal length of a standard curve}%
\nomenclature[2ab]{$\shor$}{Quantity describing the shortness of a
  standard rectangle}%
\nomenclature[2abb]{$\stdr$}{(Pre)standard rectangle} Let
$\stdr\subset\bT^{2}$ be a compact region diffeomorphic to $[0,1]^{2}$
that is bounded by the union of two prestandard curves (top and
bottom) and two local centre manifolds (left and right).  For any
$p\in\stdr$, we denote with $\Wc_{\stdr}(p)$ the maximal local centre
manifold passing through $p$ and contained in $\stdr$.
Theorem~\ref{thm:unique-integrability-centre-mfolds} implies that,
unless $p$ belongs to the left or right boundary curves,
$\Wc_{\stdr}(p)$ cannot cross them.  Moreover, $\Wc_{\stdr}(p)$ is a
centre curve; since the the centre cone is transverse to the unstable
cone, $\Wc_{\stdr}(p)$ must therefore intersect both the top and
bottom boundary curves at its endpoints.  In particular, by the
definition of prestandard curve, for any $p,q\in \stdr$ we have
\begin{align}\label{eq:difference-in-height-for-Wc}
  |\height \Wc_{\stdr}(p)-\height\Wc_{\stdr}(q)| < 2\pscc1\delta\ve
\end{align}
and also
	\begin{equation}\label{eq:height_of_rect_upper_bd}
	|\pi_2 \stdr|<(\patchheight+2\pscc1\delta)\ve.
  \end{equation}
\begin{mydef}\label{def:std-rectangles}
  Let us fix $\patchheight > 0$, $\minshor \ge 2$ to be determined
  later\footnote{ $\minshor$ will be determined at the end of this
    section, and $\patchheight$ in Section~\ref{sec:coupling}.} and satisfying the following relation
  \begin{align}\label{eq:std-patches-are-thick-enough}
    \minshor^{-1}\patchheight \ge 10\delta\pscc1.
  \end{align}
  For $\narr \in [2,100]$ and $\shor \ge \minshor$, such a region
  $\stdr$ is called a $(\narr,\shor)$-\emph{prestandard rectangle}
  (\resp $(\narr,\shor)$-\emph{standard rectangle}) if:
  \begin{enumerate}
  \item the top and bottom curves are $\narr$-prestandard curves
    (\resp $\narr$-standard curves).
  \item for any $p\in \stdr$, we have:
    $\height\Wc_{\stdr}(p) \in [\patchheight\ve/\shor,
    \patchheight\ve]$.
  \end{enumerate}
\end{mydef}
The value of $\patchheight$ will be determined at the end of
Section~\ref{sec:coupling}.
We begin by explicitly stating some simple properties of the geometry
of (pre)standard rectangles; the proof is immediate but we write it
down for completeness.
\begin{lem}\label{lem:straight_rect_in_std_rect}
  For sufficiently small $\ve$, the following holds: let $\stdr$ be a
  $(\narr,\shor)$-prestandard rectangle; then there exist intervals
  $I,J\subset\bT$ such that
  $|I|\ge \narr\inv \delta - 2\patchheight \centreConeConst\eps$,
  $|J|\ge (\patchheight/\shor - 2\pscc1 \delta)\eps$,
  $I\times J\subset \stdr$,
  $\sup_{p\in \stdr}d(\pi_1 p,I)\le \patchheight \centreConeConst\eps$
  and $\sup_{p\in \stdr}d(\pi_2 p,J)\le \pscc1 \eps \delta $.
  Moreover:
  \begin{align}\label{eq:bound-on-Lebesgue-patch}
    \frac{\delta\patchheight\ve}{2\narr Z} \le \Leb \stdr \le 2\,\delta\patchheight\ve.
  \end{align}
  An analogous statement holds for standard rectangles (replacing
  $\pscc1$ with $\scc1$).
\end{lem}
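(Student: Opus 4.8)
The plan is to argue directly from the graph structure of the four boundary arcs of $\stdr$. Write the bottom boundary as the graph of $G_{0}\in\pStdc_{\narr}$ over an interval $I_{0}$ and the top boundary as the graph of $G_{1}\in\pStdc_{\narr}$ over $I_{1}$, so that $|I_{0}|,|I_{1}|\in[\narr\inv\delta,\delta]$ and $\|G_{0}'\|_{\infty},\|G_{1}'\|_{\infty}\le\ve\pscc1$. The left and right boundary arcs are local centre manifolds, hence centre curves, hence graphs $x=\gamma(\theta)$ with $|\gamma'|<\centreConeConst$; by Definition~\ref{def:std-rectangles}(b) each of them has height at most $\patchheight\ve$, so its $\pi_{1}$-range is an interval of length at most $\centreConeConst\patchheight\ve$. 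Since the left arc joins the left endpoints of $I_{0}$ and $I_{1}$ (on the respective boundary curves), and the right arc their right endpoints, the corresponding endpoints of $I_{0}$ and $I_{1}$ differ pairwise by at most $\centreConeConst\patchheight\ve$, and in particular $I_{0}\cap I_{1}$ is an interval of length $\ge\narr\inv\delta-2\centreConeConst\patchheight\ve$.

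I would take $I$ to be the interval of those $x\in I_{0}\cap I_{1}$ for which $\{x\}\times\bT$ meets neither the left nor the right boundary arc; from the previous paragraph $|I|\ge\narr\inv\delta-2\centreConeConst\patchheight\ve$ and $\sup_{p\in\stdr}d(\pi_{1}p,I)\le\centreConeConst\patchheight\ve$. Fix $x$ in the interior of $I$. Running the maximal centre manifold through $(x,G_{0}(x))$ up to the top curve — which, as observed just before the lemma, it must reach — it arrives at a point $(x',G_{1}(x'))$ with $|x-x'|\le\centreConeConst h$ and $G_{1}(x')-G_{0}(x)=h:=\height\Wc_{\stdr}(x,G_{0}(x))\ge\patchheight\ve/\shor$, whence
\begin{align*}
  G_{1}(x)-G_{0}(x)\ \ge\ h-\ve\pscc1|x-x'|\ \ge\ (1-\ve\pscc1\centreConeConst)\,h\ \ge\ \tfrac{\patchheight\ve}{2\shor}
\end{align*}
for $\ve$ small. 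Since $\stdr$ is a topological disk whose four boundary arcs are graphs (top and bottom over $\pi_{1}$, the sides over $\pi_{2}$) and, for this $x$, $\{x\}\times\bT$ meets $\partial\stdr$ only at $(x,G_{0}(x))$ and $(x,G_{1}(x))$, it follows that $\stdr\cap(\{x\}\times\bT)=\{x\}\times[G_{0}(x),G_{1}(x)]$, an interval of length $\ge\patchheight\ve/(2\shor)$.

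The Lebesgue bounds follow from these slice estimates via Fubini, $\Leb\stdr=\int_{\pi_{1}\stdr}|\stdr\cap(\{x\}\times\bT)|\,dx$. For the upper bound, $|\pi_{1}\stdr|\le\delta+\Const\ve$, each slice has length at most $|\pi_{2}\stdr|<(\patchheight+2\pscc1\delta)\ve$ by~\eqref{eq:height_of_rect_upper_bd}, and since~\eqref{eq:std-patches-are-thick-enough} together with $\minshor\ge2$ gives $2\pscc1\delta\le\patchheight/10$, the product is $\le2\delta\patchheight\ve$ for $\ve$ small. For the lower bound, restrict the integral to $I$ and use the slice estimate: $\Leb\stdr\ge|I|\,(1-\ve\pscc1\centreConeConst)\,\patchheight\ve/\shor\ge\delta\patchheight\ve/(2\narr\shor)$ for $\ve$ small; it is essential here to use the per-slice \emph{relative} bound $G_{1}-G_{0}\ge(1-\Const\ve)h$, valid uniformly in $\shor$, rather than the cruder $\Leb\stdr\ge|I\times J|$. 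Finally, take $J:=[\max_{I}G_{0},\min_{I}G_{1}]$: then $I\times J\subset\stdr$ directly from the slice description, and $\sup_{p\in\stdr}d(\pi_{2}p,J)\le\ve\pscc1\delta$ since $\max_{I}G_{0}$ and $\min_{I}G_{1}$ differ from the global minimum of $G_{0}$ and the global maximum of $G_{1}$ by at most the respective oscillations $\le\ve\pscc1\delta$; for the length bound, let $x_{2}\in I$ minimize $G_{1}|_{I}$ and run the centre manifold through $(x_{2},G_{1}(x_{2}))$ down to the bottom curve, reaching it over some $x_{2}'$ with $G_{1}(x_{2})-G_{0}(x_{2}')=:h'\ge\patchheight\ve/\shor$, so that $\max_{I}G_{0}\le G_{0}(x_{2}')+\ve\pscc1\delta$ and hence $|J|\ge h'-\ve\pscc1\delta\ge(\patchheight/\shor-2\pscc1\delta)\ve$. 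The standard-rectangle statement is verbatim with $\scc1$ in place of $\pscc1$.

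As the paper notes, the proof is immediate and no step is a genuine obstacle; the only two points requiring a little care are (i) justifying that a vertical slice of the disk $\stdr$ is the single interval $\{x\}\times[G_{0}(x),G_{1}(x)]$ — which uses that all four boundary arcs are graphs in the appropriate variable and that the side arcs have small $\pi_{1}$-extent — and (ii) keeping the $\cO(\ve)$ error terms consistent so that the stated constants ($2$ in both Lebesgue bounds, $2\pscc1\delta$ in the bound on $|J|$, $\centreConeConst\patchheight\ve$ in the bound on $\sup_{p}d(\pi_{1}p,I)$) actually emerge, which for the Lebesgue lower bound uniformly over $\shor\ge\minshor$ forces the use of the relative per-slice estimate.
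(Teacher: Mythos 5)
Your proof is correct, and it rests on exactly the same geometric input as the paper's: the $\pi_1$-extent of each lateral centre manifold is at most $\centreConeConst\patchheight\ve$, the $\pi_2$-oscillation of each bounding (pre)standard curve is at most $\pscc1\ve\delta$, and condition~\eqref{eq:std-patches-are-thick-enough} absorbs the $2\pscc1\delta$ loss. The one place you diverge is the mechanism for concluding $I\times J\subset\stdr$: the paper shrinks $\pi_1\stdr\times\pi_2\stdr$ to an open box $U$ that misses $\partial\stdr$, and then uses a one-line connectedness argument ($U$ is connected, meets $\stdr$, avoids $\partial\stdr$, hence $U\subset\stdr$), whereas you establish the explicit slice description $\stdr\cap(\{x\}\times\bT)=\{x\}\times[G_0(x),G_1(x)]$ for $x$ in the interior of $I$ and read the inclusion off directly. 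Your version is more hands-on and also happens to prove the stronger slice structure, but it amounts to the same estimates.

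One small inaccuracy: your closing parenthetical claim that the per-slice relative bound $G_1-G_0\ge(1-\Const\ve)h$ is \emph{essential} for the Lebesgue lower bound, and that the cruder $\Leb\stdr\ge|I||J|$ would not suffice uniformly in $\shor$, is not right. With the bound $|J|\ge(\patchheight/\shor-2\pscc1\delta)\ve$ you yourself prove, condition~\eqref{eq:std-patches-are-thick-enough} gives $\patchheight/\shor-2\pscc1\delta\ge\frac45\patchheight/\shor$ uniformly for $\shor\ge\minshor$, so $|I||J|\ge\frac58\narr\inv\delta\cdot\frac45\patchheight\ve/\shor=\frac{\delta\patchheight\ve}{2\narr\shor}$ for $\ve$ small, which is exactly the stated bound (and indeed this is how the paper concludes). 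The per-slice argument is a perfectly good alternative, just not a necessary one.
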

\begin{proof}
  First, {note that since $\ve$ is small enough and $z\le 100$ we have
    $\narr\inv \delta - 2\patchheight \centreConeConst\eps > 0$;
    likewise,~\eqref{eq:std-patches-are-thick-enough} implies that
    $\patchheight/\shor - 2\pscc1\delta \ge \frac45
    \patchheight/\shor>0$}.  Write
  $R=[a,b]\times [c,d]= \pi_1\stdr \times \pi_2 \stdr$. Let
  $\Wc_0,\Wc_1$ denote the centre manifolds that bound $\stdr$ on the
  left and right, respectively. Then
  $|\pi_2 \Wc_i|\le \patchheight \eps$ so
  $|\pi_1 \Wc_i|\le \centreConeConst \patchheight \eps$. Since
  $a\in \pi_1 \Wc_0$, $b\in \Wc_1$ we have
  $\pi_1\Wc_0\subset [a,a+\centreConeConst \patchheight \eps]$ and
  $\pi_1\Wc_1 \subset [b-\centreConeConst \patchheight \eps,b]$. Let
  $\bG_0$, $\bG_1$ denote the prestandard curves that bound the bottom
  and top of $\stdr$, respectively. Then
  $|\pi_2\bG_i|\le \pscc1 \eps\delta$ so similary
  $\pi_2 \bG_0 \subset [c,c+\pscc1 \eps\delta]$ and
  $\pi_2 \bG_1 \subset [d-\pscc1 \eps\delta,d]$. Thus
  $U=(a+\centreConeConst \patchheight \eps,b-\centreConeConst
  \patchheight \eps)\times (c+\pscc1 \eps\delta,d-\pscc1 \eps\delta)$
  does not intersect $\partial \stdr$. Since $U$ is connected and
  clearly intersects $\stdr$, it follows that $U\subset \stdr$.  We
  conclude by taking $I$ and $J$ such that $I\times J = \bar U$.  The
  bound on the Lebesgue measure follows immediately by the above
  discussion.
\end{proof}
\nomenclature[2ad]{$\fstdr$}{(Pre) standard foliated rectangle} We now
refine the above definition by introducing \emph{standard
  foliations} of standard rectangles.  Let $\foli:\stdr\to[0,1]$
be a $C^{3}$-smooth function without critical points; then the
connected components of the level sets of $\foli$ yield a foliation of
$\stdr$.
\begin{mydef}\label{def:std-foliations}
  A function $\foli$ as above is called a \emph{standard foliation of
    the $(\narr,\shor)$-standard rectangle} $\stdr$ if the following
  conditions hold:
  \begin{enumerate}
  \item for any $\folis\in[0,1]$, the level sets
    $\{p\in \stdr\st \foli(p) = \folis\}$ are $z$-standard curves.
  \item a point $p\in\stdr$ belongs to the bottom (\resp top) curve if
    and only if $\foli(p) = 0$ (\resp $\foli(p) = 1$).
  \end{enumerate}
  Similarly, we define $(\narr,\shor)$-prestandard foliations of a
  prestandard rectangle as above, replacing “standard curves” with
  “prestandard curves”.  (Pre)standard rectangles together with a
  (pre)standard foliation are called \emph{(pre)standard foliated
    rectangles} and will be denoted with $\fstdr = (\stdr,\foli)$.
\end{mydef}
We will find convenient to denote with $\bG_{\fstdr,\folis}$
the (pre)stan\-dard curve corresponding to the level set
$\foli = \folis$.
Given any prestandard foliated rectangle $\fstdr = (\stdr,\foli)$ and
a maximal centre manifold $\Wc_{\stdr}$, observe that, since standard
curves are transversal to centre manifolds, $\foli$ serves as a
parametrization of $\Wc_{\stdr}$.

We will now proceed to describe (in
Lemmata~\ref{lem:T-adapted-rects},~\ref{lem:invariance_std_rects}
and~\ref{lem:create-foliation}) the evolution of (pre)standard
foliated rectangles with the dynamics;
before doing so it is however necessary to introduce a definition.
Standard rectangles will become taller or shorter depending on whether
the centre direction is expanding or contracting; since we need to
keep the height of rectangle below a certain threshold, rectangles
should be cut once they reach a certain height.  In fact we find more
convenient to preemptively cut them if they have the chance to grow
too tall in the near future.  This leads to the definition of
$m$-adapted rectangle given below.  Recall the definition of
$\centreMaxExp$ given above~\eqref{eq:centre-manifold-expansion}.
\begin{mydef}
  Let $m\ge 0$; a $(\narr,\shor)$-(pre)standard foliated rectangle
  $\fstdr = (\stdr,\foli)$ is said to be \emph{$m$-adapted} if, for any
  maximal local centre manifold $\Wc_{\stdr}$, we have:
  \begin{align}\label{eq:short-enough}
	\height\Wc_{\stdr} \le e^{-\centreMaxExp m\eps}\patchheight \ve.
  \end{align}
\end{mydef}
Observe that a $(\narr,\shor)$-(pre)standard rectangle with
$Z < e^{\centreMaxExp m\ve}$ will necessarily fail to be $m$-adapted.
\begin{lem}\label{lem:T-adapted-rects}
  Let $T > 0$ and $\fstdr = (\stdr,\foli)$ be an arbitrary
  $(\narr,\shor)$-(pre)standard foliated rectangle.  If
  $\shor > 4e^{\centreMaxExp T}$, there exists a finite collection
  $\{\fstdr_{j} = (\stdr_{j},\foli_{j})\}_{j\in\mathcal J}$ of
  $(T\vei)$-adapted $(\narr,\shor)$-(pre)standard foliated rectangles
  such that $\bigcup_{j}\stdr_{j} = \stdr$.  Moreover there exists
  affine maps $\psi_{j}:[0,1]\to[0,1]$ so that
  $\psi_{j}\circ\foli_{j} = \foli|_{\stdr_{j}}$.
\end{lem}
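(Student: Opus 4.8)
The plan is to subdivide $\stdr$ along the centre foliation into finitely many sub-rectangles, each of which is $(T\vei)$-adapted, i.e.\ each of whose maximal centre manifolds has height at most $e^{-\centreMaxExp T}\patchheight\ve$. The function $\foli:\stdr\to[0,1]$ already provides a parametrization of each maximal centre manifold $\Wc_{\stdr}$, so the natural way to cut is to partition the parameter interval $[0,1]$ into finitely many subintervals $[a_{j},a_{j+1}]$ and set $\stdr_{j} = \foli^{-1}([a_{j},a_{j+1}])$. By the definition of a standard foliation, the level sets $\{\foli = a_{j}\}$ are themselves $\narr$-(pre)standard curves, so each $\stdr_{j}$ is again bounded by two (pre)standard curves (top and bottom) and two local centre manifolds (the restrictions of the left/right boundary centre manifolds of $\stdr$, or the original top/bottom curve pieces at the ends), hence is again a region of the type in Definition~\ref{def:std-rectangles}. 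The affine maps $\psi_{j}$ are simply the orientation-preserving affine bijections $[0,1]\to[a_{j},a_{j+1}]$, and $\foli_{j} := \psi_{j}^{-1}\circ(\foli|_{\stdr_{j}})$ is a standard foliation of $\stdr_{j}$ with $\psi_{j}\circ\foli_{j} = \foli|_{\stdr_{j}}$.

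Next I would control the heights. For a maximal centre manifold $\Wc_{\stdr_{j}}(p)$ inside $\stdr_{j}$, its $\pi_{2}$-length is the portion of $\pi_{2}\Wc_{\stdr}(p)$ lying between the level sets $\foli = a_{j}$ and $\foli = a_{j+1}$; since $\foli$ restricted to $\Wc_{\stdr}(p)$ is a $C^{3}$ diffeomorphism onto $[0,1]$ with derivative bounded away from $0$ and $\infty$ (uniformly, because $\foli$ has no critical points on the compact set $\stdr$ and its level curves are standard curves hence uniformly transverse to the centre cone), we have $\height\Wc_{\stdr_{j}}(p) \le \kappa\,(a_{j+1}-a_{j})\,\height\Wc_{\stdr}(p) \le \kappa\,(a_{j+1}-a_{j})\,\patchheight\ve$ for a constant $\kappa$ depending only on $\foli$ and the cone constants. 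Actually it is cleaner to avoid any dependence on $\foli$: observe that for fixed $p$ the map $\folis\mapsto \height\bigl(\foli^{-1}([0,\folis])\cap\Wc_{\stdr}(p)\bigr)$ is continuous, increasing, zero at $\folis=0$ and equal to $\height\Wc_{\stdr}(p)\le\patchheight\ve$ at $\folis=1$; but the value of $a_{j+1}-a_{j}$ that guarantees $(T\vei)$-adaptedness uniformly in $p$ does in general depend on the modulus of continuity of $\foli$. Since the statement only asks for a \emph{finite} collection, this is harmless: it suffices to choose a partition $0=a_{0}<a_{1}<\dots<a_{N}=1$ fine enough that, for every $p\in\stdr$ and every $j$, $\height\Wc_{\stdr_{j}}(p)\le e^{-\centreMaxExp T}\patchheight\ve$, which is possible by uniform continuity of $(p,\folis)\mapsto\height\bigl(\foli^{-1}([0,\folis])\cap\Wc_{\stdr}(p)\bigr)$ on the compact domain together with $e^{-\centreMaxExp T}>0$. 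I would also need to confirm the hypothesis $\shor > 4e^{\centreMaxExp T}$ is used precisely to ensure the resulting rectangles still satisfy condition (b) of Definition~\ref{def:std-rectangles}, i.e.\ the lower bound $\height\Wc_{\stdr_{j}}(p)\ge\patchheight\ve/\shor$: indeed, by~\eqref{eq:difference-in-height-for-Wc} all maximal centre manifolds of $\stdr$ have comparable heights, so once $\stdr$ itself is $(\narr,\shor)$-standard we can refine the partition so that each $\stdr_{j}$ has heights in $[\patchheight\ve/\shor, e^{-\centreMaxExp T}\patchheight\ve]$ — the interval is non-empty exactly because $\shor>e^{\centreMaxExp T}$ (the factor $4$ giving room for the variation~\eqref{eq:difference-in-height-for-Wc} and for the discretization).

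The main obstacle I anticipate is the bookkeeping around the lower height bound and the fact that different maximal centre manifolds inside the \emph{same} $\stdr_{j}$ may have genuinely different heights: one must choose the cut points $a_{j}$ so that, simultaneously for all $p$, the height of $\Wc_{\stdr_{j}}(p)$ stays in the admissible window $[\patchheight\ve/\shor,\, e^{-\centreMaxExp T}\patchheight\ve]$. The estimate~\eqref{eq:difference-in-height-for-Wc}, which bounds the spread of heights by $2\pscc1\delta\ve$, together with the thickness assumption~\eqref{eq:std-patches-are-thick-enough} (so that $\patchheight\ve/\shor$ is comfortably larger than this spread whenever $\shor$ is not too large relative to $\patchheight/(\delta\pscc1)$), is what makes this work; I would carry out the cutting by first splitting off, if necessary, an initial/final piece and then using equal-parameter cuts in between, exactly as in the $1$-dimensional interval-subdivision argument in the proof of Proposition~\ref{proposition:invariance}(a). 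Everything else is routine: the $C^{3}$ regularity of the boundary curves and of $\foli$ is inherited by restriction, and finiteness of $\mathcal J$ is automatic since the number of cut points is bounded in terms of $T$, $\patchheight$, $\shor$ and the modulus of continuity of $\foli$.
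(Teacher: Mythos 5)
Your proposal takes essentially the same approach as the paper: subdivide $\stdr$ along the centre foliation by partitioning the parameter interval $[0,1]$, observe that the level sets of $\foli$ are $\narr$-(pre)standard curves so the sub-rectangles are again bounded by (pre)standard curves and centre manifolds, use~\eqref{eq:difference-in-height-for-Wc} to control the spread of heights within each sub-rectangle, and use the slack coming from $\shor > 4e^{\centreMaxExp T}$ to guarantee the lower height bound.

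The one place where the paper is more explicit: rather than invoking uniform continuity of $(p,\folis)\mapsto\height\bigl(\foli^{-1}([0,\folis])\cap\Wc_{\stdr}(p)\bigr)$ to argue that a fine enough partition exists, the paper fixes a single reference centre manifold $\Wc_{\stdr}$ witnessing the failure of $(T\vei)$-adaptedness, cuts \emph{it} into $N = \lceil 2\height\Wc_{\stdr}/(e^{-\centreMaxExp T}\patchheight\ve)\rceil$ pieces of \emph{equal height}, and then uses the $\foli$-parameters $\folis_j$ of the cut points. This yields the pieces' heights lying explicitly in $[\frac13 e^{-\centreMaxExp T}\patchheight\ve, \frac12 e^{-\centreMaxExp T}\patchheight\ve]$; transferring to arbitrary centre manifolds via~\eqref{eq:difference-in-height-for-Wc} and~\eqref{eq:std-patches-are-thick-enough} moves this to $[\frac14, \frac34]e^{-\centreMaxExp T}\patchheight\ve$, i.e.\ each $\stdr_j$ is a $(\narr, 4e^{\centreMaxExp T})$-(pre)standard rectangle. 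This avoids any appeal to the modulus of continuity of $\foli$ and makes the role of the constant $4$ transparent (it absorbs the $\lceil\cdot\rceil$ slack and the height spread~\eqref{eq:difference-in-height-for-Wc}), which is a cleaner route to the same conclusion than the compactness argument you sketch. Your explanation of why $\shor > 4e^{\centreMaxExp T}$ is needed is correct, though you should be careful that the analogy to Proposition~\ref{proposition:invariance}(a) requires equal-\emph{height} cuts along a reference centre manifold, not equal-\emph{parameter} cuts, since $\partial_\theta\foli$ can vary.
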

\begin{proof}
  We write down the proof in the case of prestandard foliated
  rectangles; the proof for standard foliated rectangles follows by
  an identical argument.  First of all, observe that
  if $\stdr$ is already $(T\vei)$-adapted, we are done; otherwise,
  there exists a centre manifold $\Wc_{\stdr}$ such that
  $\height \Wc_{\stdr} > e^{-\centreMaxExp T}\patchheight \ve$.  We
  partition $ \Wc_{\stdr}$into $N$ subcurves $\Wc_{j}$ of equal
  height, where
  $N = \lceil 2\,\height\Wc_{\stdr}/(e^{-\centreMaxExp T}\patchheight
  \ve)\rceil$, so that
	\begin{align}\label{eq:bound-partitioned-centre}
		\frac13 e^{-\centreMaxExp T}\patchheight\ve\le\height\Wc_{j}\le
		\frac12 e^{-\centreMaxExp T}\patchheight\ve.
	\end{align}
	Let $(\folis_{j})_{j = 0}^{N}$ be the parameters corresponding to
    the endpoints of the subcurves $\Wc_{j}$; for $j = 0,\cdots,N-1$
    define
    $\stdr_{j} = \{p\in\stdr:\foli(p)\in[\folis_{j},\folis_{j+1}]\}$
    and let $\foli_{j}:\stdr_{j}\to[0,1]$ be the following affine
    rescaling of the restriction $\foli|_{\stdr_{j}}$:
	\begin{align*}
		\foli_{j}(p) = \frac{\foli(p)-\folis_{j}}{\folis_{j+1}-\folis_{j}}.
	\end{align*}
    Of course $\bigcup_{j}\stdr_{j} = \stdr$, and choosing
    $\psi_{j}(s) = ({\folis_{j+1}-\folis_{j}})s+\folis_{j}$ yields
    the desired relation between $\foli_{j}$ and $\foli$.  We now show that any
    such $(\stdr_{j},\foli_{j})$ is a $(T\vei)$-adapted
    $(\narr,\shor)$-prestandard foliated rectangle.  First of all, the
    boundary of $\stdr_{j}$ is the union of the two $z$-prestandard
    curves $\bG_{\fstdr,\folis_{j}}$ and $\bG_{\fstdr,\folis_{j+1}}$
    and two centre manifolds (corresponding to the restriction to
    $\stdr_{j}$ of the centre manifolds bounding $\stdr$).  Let
    $\Wc_{\stdr_{j}}$ be an arbitrary maximal local centre manifold in
    $\stdr_{j}$, then by~\eqref{eq:difference-in-height-for-Wc} we
    have $|\height\Wc_{\stdr_{j}}-\height\Wc_{j}| < 2\ve\delta\pscc1$,
    and using~\eqref{eq:bound-partitioned-centre},%
    ~\eqref{eq:std-patches-are-thick-enough} and our assumption on
    $\shor$ we conclude:
	\begin{align}
      \frac14 e^{-\centreMaxExp T}\patchheight\ve\le\height\Wc_{\stdr_j}\le
      \frac34 e^{-\centreMaxExp T}\patchheight\ve.
	\end{align}
    We conclude that each $\stdr_{j}$ is a
    $(\narr,4e^{\centreMaxExp T})$ prestandard rectangle (and thus a
    $(\narr,\shor)$-prestandard rectangle since
    $\shor > 4e^{\centreMaxExp T}$).  Moreover~\eqref{eq:short-enough}
    holds with $m = T\vei$; it is then immediate to check that
    $(\stdr_{j},\foli_{j})$ is a $(\narr,\shor)$-prestandard foliated
    rectangle.
\end{proof}
\begin{lem}\label{lem:invariance_std_rects}
  Let $m\ge 1$ and $\fstdr = (\stdr,\foli)$ be an $m$-adapted
  $(\narr,\shor)$-prestandard foliated rectangle.  Then, there exists
  a finite collection of $(m-1)$-adapted $(\narr',\shor')$-standard
  foliated rectangles $(\bar\stdr_{j},\bar\foli_{j})$, where
  \begin{align*}
    \narr' &= \max\left\{\frac45 \narr, 2\right\},&
    \shor' &= e^{\centreMaxExp\ve}\shor,
  \end{align*}
  satisfying the following properties:
  \begin{enumerate}
  \item for any $j$ there exists a diffeomorphism
    $\varphi_j:\bar\stdr_{j}\to\stdr$ such that
    $F_{\ve}\circ\varphi_j$ is the identity and
    $\bar\foli_{j} = \foli\circ\varphi_j$.
  \item the rectangles $\bar\stdr_{j}$ form a partition (mod 0) of
    $F_{\ve} \stdr$ and $\varphi_{j}\stdr_{j}$ form a partition (mod
    0) of $\stdr$.
  \end{enumerate}
\end{lem}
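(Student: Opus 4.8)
The plan is to obtain the rectangles $\bar\stdr_j$ as the images under $F_\ve$ of sub‑rectangles of $\stdr$ cut out by a carefully chosen finite family of local centre manifolds. Two preliminary facts will drive the argument. First, taking $n\to\infty$ in~\eqref{EpsSlopeRec} (see~\eqref{eq:1StepExpansionEstimate}) gives $d_pF_\ve(s^\ve_*(p),1)=\CentreExpansion(p)\,(s^\ve_*(F_\ve p),1)$, so $F_\ve$ carries pieces of integral leaves of $s^\ve_*$ to pieces of integral leaves; together with Theorem~\ref{thm:unique-integrability-centre-mfolds} and the fact (from the $n=1$ case of~\eqref{eq:centre-manifold-expansion}) that $\pi_2$ grows along $s^\ve_*$‑leaves at the rate $\CentreExpansion\in[e^{-\centreMaxExp\ve},e^{\centreMaxExp\ve}]$, the $F_\ve$‑image of a local centre manifold lying in $\stdr$ (hence of height $\le\patchheight\ve$) is again a local centre manifold, of height equal to the original one times a factor in $[e^{-\centreMaxExp\ve},e^{\centreMaxExp\ve}]$. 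Second, by Proposition~\ref{proposition:invariance} a prestandard curve maps onto a graph over an interval of length $\le 1/2$, while $|\pi_2\stdr|=\cO(\ve)$ by~\eqref{eq:height_of_rect_upper_bd}; hence, provided $\delta$ is small, $F_\ve$ is injective on $\stdr$ and $F_\ve\stdr$ is again a region bounded by curves of the two relevant types.

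For the construction, write $\bG_0$ (bottom, over the $\narr$‑interval $I_0$) and $\bG_1$ (top) for the boundary prestandard curves of $\stdr$. Since the left and right boundaries are local centre manifolds and centre cones satisfy $|\alpha|\le\centreConeConst|\beta|$, the $\pi_1$‑extent of any local centre manifold in $\stdr$ is $\le\centreConeConst\patchheight\ve$; in particular $\bG_1$ and, more generally, each level curve $\bG_{\fstdr,\folis}$ lies over a base interval whose endpoints are within $\centreConeConst\patchheight\ve$ of those of $I_0$, and (using $|\pi_2\stdr|=\cO(\ve)$) all of these curves agree with $\bG_0$ up to $\cO(\ve)$ in $\cC^1$. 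Now apply the cutting recipe from the proof of Proposition~\ref{proposition:invariance}(a) to $f_{\bG_0}(I_0)$, producing trimmed $\narr'$‑intervals with successive endpoints $y_{0,0}<\cdots<y_{0,N}$ (so $y_{0,0},y_{0,N}$ are the endpoints of $f_{\bG_0}(I_0)$), and set $x_{0,j}=f_{\bG_0}^{-1}(y_{0,j})$. Let $\hat\Wc_j=\Wc_\stdr(\bG_0(x_{0,j}))$; by transversality of the centre and unstable cones together with Theorem~\ref{thm:unique-integrability-centre-mfolds} each $\hat\Wc_j$ joins $\bG_0$ to $\bG_1$ without meeting the others, and $\hat\Wc_0,\hat\Wc_N$ are the left and right boundaries of $\stdr$. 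These curves slice $\stdr$ (mod $0$) into sub‑rectangles $\stdr'_0,\dots,\stdr'_{N-1}$, with $\stdr'_j$ bounded by $\hat\Wc_j,\hat\Wc_{j+1}$ and sub‑curves of $\bG_0,\bG_1$. I would then set $\bar\stdr_j=F_\ve\stdr'_j$, let $\varphi_j$ be the inverse of the diffeomorphism $F_\ve|_{\stdr'_j}$, and put $\bar\foli_j=\foli\circ\varphi_j$; by the second preliminary fact, $F_\ve\circ\varphi_j=\id$, the $\bar\stdr_j$ partition $F_\ve\stdr$ (mod $0$), and the sets $\varphi_j\bar\stdr_j=\stdr'_j$ partition $\stdr$ (mod $0$).

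It then remains to check that each $(\bar\stdr_j,\bar\foli_j)$ is an $(m-1)$‑adapted $(\narr',\shor')$‑standard foliated rectangle. The bottom curve of $\bar\stdr_j$ is $F_\ve$ of the sub‑curve of $\bG_0$ over the trimmed $\narr'$‑interval $J_{0,j}:=[y_{0,j},y_{0,j+1}]$; the top curve, and every level set $\{\bar\foli_j=\folis\}$, is $F_\ve$ of a sub‑curve of the $\narr$‑prestandard curve $\bG_1$, \resp $\bG_{\fstdr,\folis}$, over a base interval whose endpoints are within $\centreConeConst\patchheight\ve$ of $x_{0,j},x_{0,j+1}$, hence of length $(1+\cO(\ve))|J_{0,j}|$, which lies in $[\delta/\narr',\delta]$ once $\ve$ is small relative to $\trim$ — i.e. a $\narr'$‑interval. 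Thus all of these are $\narr'$‑curves, so by Proposition~\ref{proposition:invariance}(b) they are $\narr'$‑standard curves; moreover $\bar\foli_j$ vanishes exactly on the bottom and equals $1$ exactly on the top curve, and $\bar\foli_j=\foli\circ\varphi_j$ is $\cC^3$ and critical‑point‑free since $\foli$ is and $\varphi_j$ is a ($\cC^4$) diffeomorphism. The left and right boundaries $F_\ve\hat\Wc_j$ and $F_\ve\hat\Wc_{j+1}$ are local centre manifolds by the first preliminary fact. Finally, any maximal local centre manifold of $\bar\stdr_j$ is $F_\ve\Wc$ for a full maximal local centre manifold $\Wc$ of $\stdr$ passing through $\stdr'_j$; since $\stdr$ is $m$‑adapted and a $(\narr,\shor)$‑rectangle, $\height\Wc\in[\patchheight\ve/\shor,\,e^{-\centreMaxExp m\ve}\patchheight\ve]$, so by the first preliminary fact
\[
\height F_\ve\Wc\in\big[e^{-\centreMaxExp\ve}\patchheight\ve/\shor,\ e^{-\centreMaxExp(m-1)\ve}\patchheight\ve\big]=\big[\patchheight\ve/\shor',\ e^{-\centreMaxExp(m-1)\ve}\patchheight\ve\big]\subseteq\big[\patchheight\ve/\shor',\patchheight\ve\big],
\]
where $m\ge 1$ is used in the last inclusion; this is simultaneously the height requirement in the definition of a $(\narr',\shor')$‑rectangle and the statement that $\bar\stdr_j$ is $(m-1)$‑adapted. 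Together with $\shor'=e^{\centreMaxExp\ve}\shor>\minshor$ and $\narr'\in[2,100]$ (as $\narr\in[2,100]$), this completes the verification.

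The step I expect to require the most care is the compatibility of the cuts in the previous two paragraphs: the slicing centre manifolds $\hat\Wc_j$ are dictated by the partition of the \emph{bottom} curve alone, and one must ensure that the base intervals they induce on \emph{every} other level curve — in particular on the top curve — still land in the (non‑trimmed) $\narr'$‑range. This is precisely where $\ve$ must be small compared to $\trim$, so that the $\cO(\ve)$ distortion coming from the $\cO(\ve)$ $\pi_1$‑width of centre manifolds and the $\cO(\ve)$ $\cC^1$‑spread of the level curves is absorbed by the slack between ``$\narr'$‑interval'' and ``trimmed $\narr'$‑interval''; it is also the reason $\delta$ must be taken small enough for $F_\ve|_\stdr$ to be injective. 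The remaining derivative estimates are the routine ones already carried out in Proposition~\ref{proposition:invariance}.
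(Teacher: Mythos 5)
Your proposal is correct and follows essentially the same construction as the paper: partition $F_\ve\bG_0$ into trimmed $\narr'$‑standard curves via Proposition~\ref{proposition:invariance}(a), slice $\stdr$ along the maximal centre manifolds through the pre‑image endpoints, push forward, and control the $\pi_1$‑widths of the resulting level curves by the $\cO(\ve)$ $\pi_1$‑extent of centre manifolds, absorbing the error into the trim slack, with heights controlled by Lemma~\ref{lem:expansion-centre-mfolds}(a). The one cosmetic difference is that you transport the endpoint perturbation from the pre‑image side through $F_\ve$ to bound the image interval, whereas the paper bounds the image endpoints directly using the height of the image centre manifolds $\bar\Wc_j$; both give the same $\cO(\ve)$ distortion.
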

\begin{proof}
  To ease notation, for any $\folis\in[0,1]$ we denote with
  $\bG_{\folis}= \bG_{\fstdr,\folis}$; in particular we have
  $\bG_{0} = \bG_{\fstdr,0}$ (and likewise
  $\bG_{1} = \bG_{\fstdr,1}$).
  Proposition~\ref{proposition:invariance}(a) implies that the image
  $F_{\ve}\bG_{0}$ can be partitioned into finitely many trimmed
  $z'$-standard curves, where
  \begin{align*}
    \narr' = \max\left\{\frac45\narr,2\right\};
  \end{align*}
  let us denote such curves by $(\bar\bG_{0,j})_j$, and by $\bG_{0,j}$
  the subcurves of $\bG_{0}$ such that
  $F_{\ve}\bG_{0,j} = \bar\bG_{0,j}$.  Let us now consider the finite
  collection of maximal local centre manifolds contained in $\stdr$
  passing through the endpoints of $\bG_{0,j}$.  By unique
  integrability (Theorem~\ref{thm:unique-integrability-centre-mfolds})
  such manifolds are disjoint and, as noticed earlier, they will
  terminate on the top bounding prestandard curve $\bG_{1}$,
  partitioning it into subcurves that we call $\bG_{1,j}$; such
  subcurves, together with the centre manifolds attached to their
  endpoints, partition (mod 0) $\stdr$ into subsets $\stdr_{j}$.
  Consider $\bar\stdr_{j} = F_{\ve}\stdr_{j}$; since each $\stdr_j$ is
  diffeomorphic to $[0,1]^{2}$ (the left and right boundary centre
  manifolds being disjoint), and $F_{\ve}$ restricted to a
  neighbourhood of $\stdr$ is a diffeomorphism,\footnote{ Observe that
    given our choice of $\delta$ in the proof of
    Proposition~\ref{proposition:invariance}, $F_{\ve}K$ is
    contractible, so since $F_{\ve}$ is a local diffeomorphism, it is
    automatically a diffeomorphism restricted to a neighbourhood of
    $K$} we conclude that each $\bar\stdr_{j}$ is also diffeomorphic
  to $[0,1]^{2}$.  We can thus define
  $\varphi_{j} = F_{\ve}|_{\stdr_{j}}^{-1}$. %
  Notice that, by construction, if $\Wc_{\bar\stdr_{j}}$ is a maximal
  local centre manifold in $\bar\stdr_{j}$, then
  $\varphi_{j}\Wc_{\bar\stdr_{j}}$ is a maximal local centre manifold
  in $\stdr$.  Examining $\bar\stdr_{j}$, we observe that it is
  bounded below by $\bar\bG_{0,j}$, above by the curve
  $\bar\bG_{1,j} = F_{\ve}\bG_{1,j}$ and on the sides by centre
  manifolds (that are the images of the maximal local centre manifolds
  in $\stdr$) Let us now define
  $\bar\foli_{j} = \foli\circ\varphi_{j}$; it is immediate to check
  that the functions $\bar\foli_{j}$ satisfy item (b) in the
  definition of a standard foliation.  In order to check item (a), we
  need to show that each level set of $\foli_{j}$ is indeed a
  $z'$-standard curve.

  Fix $\folis\in[0,1]$ and consider the standard curve
  $\bG_{\folis}\subset \stdr$.  The image $F_{\ve}\bG_{\folis}$ is
  partitioned by the collection $(\bar\stdr_{j})_{j}$ into subcurves
  $\bar\bG_{\folis,j} = F_{\ve}\bG_{\folis}\cap\bar\stdr_{j}$ that
  --by construction-- are the level set corresponding to
  $\bar\foli_{j} = \folis$.  By
  Proposition~\ref{proposition:invariance}(b), such curves are
  $z'$-standard once we show that they are $z'$-curves.

  Denote by $a_{\folis,j}, b_{\folis,j}$ the endpoints of the
  interval $I_{\folis,j} = \pi_{1}\bar\bG_{\folis,j}$; observe that the point
  $\bar\bG_{\folis,j}(a_{\folis,j})$ belongs to the local centre
  manifold connecting $\bar\bG_{0}(a_{0,j})$ and
  $\bar\bG_{1}(a_{1,j})$: let us denote it with $\bar\Wc_{j}$.  By
  Lemma~\ref{lem:expansion-centre-mfolds}(a) we gather that
  $\height\bar\Wc_{j} < 2 \patchheight\ve$, and since centre manifolds
  belong to the centre cone:
  \begin{align*}
    |a_{0,j} - a_{\folis,j}| <2 \centreConeConst \patchheight\ve.
  \end{align*}
Applying the same argument to the other endpoints we conclude that
\begin{align*}
  |b_{0,j}-a_{0,j}| - 4\centreConeConst\patchheight\ve < |b_{\folis,j}-a_{\folis,j}| < |b_{0,j}-a_{0,j}| + 4\centreConeConst\patchheight\ve.
\end{align*}
Since $\bar\bG_{0,j}$ is a \emph{trimmed} $\narr'$-standard curve and
observing that the definition of standard rectangle implies $z \le 100$
(and thus $z'\le 80$) we conclude that, assuming that $\ve$ is so small
that
\begin{align*}
  4\centreConeConst\patchheight\ve < \frac{(e^{\trim}-1)\delta}{80},
\end{align*}
then $\bar\bG_{\folis,j}$ is a
$\narr'$-curve\footnote{\label{fn:justify-trimmed} This part of the
  argument justifies the need for the definition of trimmed curves.}.
This fact concludes the proof that $\bar\foli_{j}$ is a standard
foliation for each $j$.

We now proceed to bound the height of local centre manifolds.  Let
$\Wc_{\bar\stdr_{j}}$ be a maximal local centre manifold in
$\bar\stdr_{j}$; in particular $\varphi_{j}\Wc_{\bar\stdr_{j}}$ is a
maximal local centre manifold $\Wc\subset\stdr$.  Since $\stdr$ is an
$m$-adapted $(\narr,\shor)$-prestandard rectangle, we have
\begin{align*}
  Z^{-1}\patchheight\ve\le \height\Wc\le e^{-\centreMaxExp m\ve}\patchheight\ve
\end{align*}
Using the definition of $\shor'$ and
Lemma~\ref{lem:expansion-centre-mfolds}(a) we can thus conclude that:
\begin{align*}
  \shor'^{-1}\patchheight\ve = %
  \shor^{-1}e^{-\centreMaxExp\ve}\patchheight\ve\le%
  \height \Wc_{\bar\stdr_{j}}\le%
  e^{\centreMaxExp\ve}  e^{-\centreMaxExp m\ve}\patchheight\ve = %
  e^{-\centreMaxExp (m-1)\ve}\patchheight\ve.
\end{align*}
This implies that $\bar\stdr_{j}$ is a
$(\narr',\shor')$-standard foliated rectangle, and concludes the proof of
the lemma.
\end{proof}
Given an arbitrary standard rectangle $\stdr$, it is not necessarily
true that $\stdr$ can be foliated into a foliated standard rectangle.
However –as proved in the next lemma– it is always possible to
construct a prestandard foliation for $\stdr$.  Recall the conventions
for the differential and Hessian operators outlined in
Section~\ref{sec:conventions}.
\begin{lem}\label{lem:create-foliation}
  Let $\stdr$ be a $(\narr,\shor)$-standard rectangle.  Then $\stdr$
  can be partitioned into foliated $(3\narr,\shor)$-prestandard
  rectangles $\fstdr_{j} = (\stdr_{j},\foli_{j})$ that satisfy the
  following estimates:
  \begin{align*}
  	\|d\log\partial_{\theta}\foli\|_{\infty}&\le \Const\shor&
  	\|H\log\partial_{\theta}\foli\|_{\infty}&\le \Const(\shor+\shor^2).
  \end{align*}
  \end{lem}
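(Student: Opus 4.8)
The plan is to build each foliation $\foli_{j}$ by linearly interpolating, in the $\theta$-direction, between the graph functions of the bottom and top boundary curves of $\stdr$, and then to read the two estimates off an explicit formula. First I would record the elementary geometry: the bottom curve $\bG_{0}$ of $\stdr$ is a $\narr$-standard curve, so $I_{0}:=\pi_{1}\bG_{0}$ has length $\ge\delta/\narr$, and likewise the top curve's domain $I_{1}$; since the left and right boundary curves of $\stdr$ are local centre manifolds (graphs over $\theta$-intervals of length $\le\patchheight\ve$ with slope $\le\centreConeConst$), the endpoints of $I_{0}$ agree with those of $I_{1}$ up to $O(\ve)$ and $\pi_{1}\stdr$ sits in an $O(\ve)$-neighbourhood of $I_{0}\cup I_{1}$ (see Lemma~\ref{lem:straight_rect_in_std_rect}). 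Then I would cut $\stdr$ along at most two well-chosen local centre manifolds into at most three sub-rectangles $\stdr_{j}$; this step is purely geometric, its sole purpose being to arrange that on each piece the interpolation below yields a genuine region bounded by two prestandard curves and two centre manifolds, and it is also the reason the narrowness parameter may degrade from $\narr$ to $3\narr$ --- after such a cut each piece still has horizontal extent $\ge\delta/(3\narr)$ (up to the usual $O(\ve)$ fuzziness absorbed by trimmed curves). Note that condition (b) of Definition~\ref{def:std-rectangles} refers only to the \emph{true} centre manifolds, which the choice of foliation does not affect, so it is inherited automatically by every $\stdr_{j}$.

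On a generic piece $\stdr_{j}$ --- bottom curve the graph of $G_{0}$ over $I_{0}$, top curve the graph of $G_{1}$ over $I_{1}$, with $\|G_{i}^{(k)}\|_{\infty}\le\ve\scc{k}$ --- I would extend $G_{0}$ and $G_{1}$ past their endpoints by their order-three Taylor polynomials to $\cC^{3}$ functions $\hat G_{0},\hat G_{1}$ on a common interval $\hat I\supseteq\pi_{1}\stdr_{j}$. Since one extends only over the $O(\ve)$-wide corner regions, $\|\hat G_{i}^{(k)}\|_{\infty}\le\ve\scc{k}(1+\Const\ve)<\ve\pscc{k}$ for $\ve$ small, using $\scc{k}<\pscc{k}$. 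I would then set
\[
  \foli_{j}(x,\theta)=\frac{\theta-\hat G_{0}(x)}{\hat G_{1}(x)-\hat G_{0}(x)}.
\]
The one genuinely needed pointwise estimate is $\hat G_{1}(x)-\hat G_{0}(x)\ge\tfrac12\patchheight\ve/\shor$ for all $x\in\hat I$: for $x$ interior to $I_{0}\cap I_{1}$ the difference $G_{1}(x)-G_{0}(x)$ differs by $O(\ve^{2})$ from the $\pi_{2}$-length of the local centre manifold $\Wc_{\stdr}(p)$ through any $p$ with $\pi_{1}p=x$, which is $\ge\patchheight\ve/\shor$ by Definition~\ref{def:std-rectangles}(b), and the bound propagates over all of $\hat I$ because $\hat G_{1}-\hat G_{0}$ varies by $O(\ve^{2})$. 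Granting this, $\foli_{j}\in\cC^{3}$ has no critical points ($\partial_{\theta}\foli_{j}=(\hat G_{1}-\hat G_{0})^{-1}\ne0$), sends the bottom curve to $0$ and the top to $1$, and its level set $\{\foli_{j}=\folis\}$ is the graph of $(1-\folis)\hat G_{0}+\folis\hat G_{1}$, whose first three derivatives are $\le\ve\pscc{k}$; hence each level set is a $3\narr$-prestandard curve and $(\stdr_{j},\foli_{j})$ is a $(3\narr,\shor)$-prestandard foliated rectangle in the sense of Definition~\ref{def:std-foliations}, with $\bigcup_{j}\stdr_{j}=\stdr$.

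For the estimates, the point is that $\partial_{\theta}\foli_{j}=(\hat G_{1}(x)-\hat G_{0}(x))^{-1}$ depends on $x$ only, hence so does $\log\partial_{\theta}\foli_{j}=-\log(\hat G_{1}-\hat G_{0})$; therefore $d\log\partial_{\theta}\foli_{j}$ and $H\log\partial_{\theta}\foli_{j}$ each have a single nonvanishing component and
\[
  \|d\log\partial_{\theta}\foli_{j}\|_{\infty}
  =\sup\Bigl|\frac{\hat G_{1}'-\hat G_{0}'}{\hat G_{1}-\hat G_{0}}\Bigr|,
  \qquad
  \|H\log\partial_{\theta}\foli_{j}\|_{\infty}
  \le\sup\Bigl|\frac{\hat G_{1}''-\hat G_{0}''}{\hat G_{1}-\hat G_{0}}\Bigr|
  +\sup\Bigl|\frac{\hat G_{1}'-\hat G_{0}'}{\hat G_{1}-\hat G_{0}}\Bigr|^{2}.
\]
Inserting $|\hat G_{i}^{(k)}|\le2\ve\scc{k}$ and the lower bound $\hat G_{1}-\hat G_{0}\ge\tfrac12\patchheight\ve/\shor$, the numerators are $O(\ve)$ and the denominators are $\gtrsim\patchheight\ve/\shor$, so the first two suprema above are $\le\Const\shor$ and the last is $\le\Const\shor^{2}$; this gives $\|d\log\partial_{\theta}\foli_{j}\|_{\infty}\le\Const\shor$ and $\|H\log\partial_{\theta}\foli_{j}\|_{\infty}\le\Const(\shor+\shor^{2})$, exactly as claimed, the $\shor^{2}$ arising solely from squaring the gradient of $\log(\hat G_{1}-\hat G_{0})$.

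The analytic part is routine; the hard part will be the geometric reduction in the first paragraph. One must cut $\stdr$ so that each piece is genuinely bounded by two prestandard curves and two centre manifolds and so that $\foli_{j}$ maps it onto $[0,1]$ compatibly with that boundary --- in particular $\foli_{j}$ must be monotone along each bounding centre manifold and must not overshoot $[0,1]$ on the corner regions where $\partial\stdr$ follows a centre manifold rather than $\bG_{0}$ or $\bG_{1}$. This is delicate because the $\theta$-height of $\stdr$ is only $O(\ve)$ while centre manifolds may have $O(1)$ slope, so these corner regions are only $O(\ve)$ wide; handling them forces taking $\ve$ small depending on $\shor$ and $\patchheight$ (harmless, since on the $O(\vei)$ time scales used elsewhere these parameters stay bounded), and it is also where one must verify that the lower bound $\hat G_{1}-\hat G_{0}\gtrsim\patchheight\ve/\shor$ persists after the Taylor extension rather than only on $I_{0}\cap I_{1}$.
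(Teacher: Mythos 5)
Your proposal follows the paper's proof essentially verbatim: linear interpolation between the order-$3$ Taylor extensions of the bounding curves, the same pointwise lower bound on $\tilde G_1-\tilde G_0$ obtained by comparing it to the height of the local centre manifold $\Wc_{\stdr}(p)$, and a preliminary cut into subrectangles when the base is wider than $\delta$. The ``geometric reduction'' you flag as the hard part is dispatched in the paper by a single cut along the centre manifold through the midpoint of $\bG_0$ (giving at most two pieces, not three) with no more verification of the corner-region behaviour than you supply, so your closing self-doubt overstates the difficulty.
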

\begin{proof}
  Let us denote with $\mathbb G_i(x)=(x,G_i(x)), x \in [a_i,b_i]$,
  $i=0,1$, the prestandard curves bounding $\stdr$ from below and
  above, respectively; let $[a,b] = \pi_{1}\stdr$.  Observe that by
  definition of the centre cone and by item (b) in the definition of
  prestandard rectangle, we can conclude that
  $|a-a_i| < \centreConeConst\patchheight\ve$ and
  $|b-b_{i}| < \centreConeConst \patchheight\ve$ for any $i = 0,1$.

  Let us assume first that $|b-a| < \delta$; then, we extend
  $\mathbb G_i,\, i=0,1$ to a $C^{3}$-smooth function on $[a,b]$ using
  the Taylor polynomial of order 3; for instance, for $x\in(b_i,b)$,
  we set:
  \begin{align*}
    \tilde G_i(x) = \sum_{k = 0}^{3}\frac1{k!}G^{(k)}_{i}(b_i)(x-b_i)^{k}.
  \end{align*}
  Since $|b-b_{i}| < \centreConeConst \patchheight\ve$, we can ensure --by
  choosing $\ve$ sufficiently small-- that the extensions
  $\tilde{{\mathbb G}}_i=(x, \tilde G_i(x)),\, i=0,1$ are prestandard.
  Now, given $p = (x,\theta)\in\stdr$, we define
  \begin{align*}
    \foli(p) &= \frac{\theta-\tilde G_{0}(x)}%
    {\tilde G_{1}(x)-\tilde G_{0}(x)}.
  \end{align*}
  Observe that $\foli$ is $C^{3}$ and has no critical points; for any
  $\folis \in [0,1]$, the level sets $\foli(p) = \folis$ are the
  graphs of the functions
  \begin{equation}\label{NewLeaf}
    \tilde G_{\folis}(x) = \folis\tilde G_1(x)+(1-\folis)\tilde G_0(x).
  \end{equation}
  Since $\tilde{{\mathbb G}}_0, \tilde{{\mathbb G}}_1$ are
  prestandard,
  $\|\tilde G'_i \|\leq \eps \pscc1, \|\tilde G''_i\|\leq \eps \pscc2$
  and $\|\tilde G'''_i\|\leq \eps \pscc3$ for
  $i=0,1$. Equation~\eqref{NewLeaf} implies that $\tilde \bG_{\folis}$
  is prestandard for any $\folis\in[0,1]$.  The length of each leaf is
  at least
  $\narr^{-1}\delta-2\centreConeConst\patchheight\ve >
  (3\narr)^{-1}\delta$ assuming $\ve$ to be small enough.  By the
  above observations we conclude that $\foli$ is a foliation of
  $\stdr$ into $3z$-prestandard curves.

  We now turn to bounding the norms
  $\|d \log \partial_\theta \foli\|_{\infty}$ and
  $\|H \log \partial_\theta \foli\|_{\infty}$.  Note that
  $\log \partial_\theta \foli = -\log(\tilde G_1 - \tilde G_0)$, that
  is independent on $\theta$.  It therefore suffices to consider
  $\partial_x \log \partial_\theta \foli$ and
  $\partial_{xx} \log \partial_\theta \foli$.  Elementary calculus
  yields
  \begin{align*}
  	\partial_x \log \partial_\theta \foli = -\frac{\tilde G'_1 - \tilde G'_0}{\tilde G_1 - \tilde G_0}, \qquad \partial_{xx}\log \partial_\theta \foli = \left(\frac{\tilde G'_1 - \tilde G'_0}{\tilde G_1 - \tilde G_0}\right)^2 - \frac{\tilde G''_1 - \tilde G''_0}{\tilde G_1 - \tilde G_0} .
  \end{align*}
  To proceed, we find a lower bound on $|\tilde G_1 - \tilde G_0|$. Let $p=(x,\theta)\in K$ and for $i=0,1$ let $p_i = \tilde\bG_{i}(x_i)$ denote the point where $\Wc_{\stdr}(p)$ intersects the graph of $\tilde\bG_i$. Since $\tilde\bG_{i}$ is prestandard and $\Wc_{\stdr}(p)$ is a centre curve, we have
  \begin{align*}
  	|\tilde G_i(x_i) - \tilde G_i(x)|\le \eps\pscc1 |x_i-x|\le \eps\pscc1\centreConeConst \ \height\Wc_{\stdr}(p).
  \end{align*}
  Since $\height\Wc_{\stdr}(p)=|\tilde G_1(x_1)-\tilde G_0(x_0)|$, it follows that
  \[ |\tilde G_1(x) - \tilde G_0(x)|\ge (1-2\eps\pscc1\centreConeConst)\height\Wc_{\stdr}(p)\ge (1-2\eps\pscc1\centreConeConst)\shor\inv\patchheight\ve. \]
  Now $\|\tilde G'_1 - \tilde G'_0\|\le 2\eps\pscc1 $ and $\|\tilde G''_1 - \tilde G''_0\|\le 2\eps\pscc2 $. Thus for $\eps$ sufficiently small we obtain that $\|\partial_x \log \partial_\theta\foli\|\le \Const \shor$ and $\|\partial_{xx} \log \partial_\theta\foli\|\le \Const(\shor^2 + \shor)$, as required. This concludes the proof of the lemma in the
  case where $|b-a| < \delta$.

  If, on the other hand, $|b-a|\ge\delta$ we split $\stdr$ in two
  narrower subpatches $\stdr'$ and $\stdr''$.  The splitting can be
  done in many ways: for instance, $\stdr$ can be cut along the centre
  manifold passing through the mid-point of $\bG_{0}$ into the left
  subrectangle $\stdr'$ and the right subrectangle $\stdr''$.  We us
  obtain two $(5\narr/2,\shor)$-prestandard rectangles whose
  projection is narrower than $\delta$.  We can thus apply the above
  argument to each of the sub-rectangles; notice that in this case the
  lower bound on the length of each leaf is
  $2/(5\narr)\delta-2\centreConeConst\patchheight\ve$, that is still larger
  than $(3\narr)^{-1}\delta$ for sufficiently small $\ve$.

\end{proof}
We now introduce the notion of a standard density on a (pre)standard
rectangle $\stdr$: in order to do that, we find convenient to
introduce a change of variables that scales standard rectangles to be
of size $O(1)$.  Define the auxiliary transformation:
$\chvar:\T\times\bR/(\vei\bZ) \to \T^2$ by $\chvar(x,y) = (x,\eps
y)$. %
\nomenclature[z1]{$\chvar$}{Change of variable stretching the slow
  variable by $\vei$}%
Let $\stdr\subset \bT^{2}$ be a (pre)standard rectangle; fix
$\Econst > 0$ to be specified later and let $\Rough > 0$; %
\nomenclature[2ac]{$\Econst$}{Constant involved in the definition of a
  standard density on a (pre)standard rectangle} \nomenclature[2ad]{$\Rough$}{Quantity
  describing the roughness of a standard density on a (pre)standard rectangle} we define the the set of
$\Rough$-\emph{standard probability densities on $\stdr$} as follows:
\begin{align*}
  \Stdd_{\Rough}(\stdr) = \{\rho\in\cC^{2}(\stdr,\reals_{ > 0}) \st
  \|\rho\|_{L^{1}} = 1, &\|d(\log\rho\circ\chvar)\|_{\infty} < \Rough,\\&
  \|H(\log\rho\circ \chvar)\|_{\infty} < \Econst  \Rough\}.\
\end{align*}
First, we state a simple fact about standard densities which will be
useful in the sequel
\begin{lem}\label{lem:split_density_leb}
  Let $\stdr$ be a $(\narr,\shor)$-prestandard rectangle and
  $\rho\in\Stdd_{\Rough}(\stdr)$ be a $\Rough$-standard density.
  Assuming $\patchheight$ to be large enough, for any $p \in \stdr$,
	\begin{equation}\label{eq:patch_density_bds}
      \frac{\exp(-2\patchheight \Rough)}{\Leb(\stdr)}\le \rho(p)\le \frac{\exp(2\patchheight \Rough)}{\Leb(\stdr)}.
	\end{equation}
	Moreover, we can write
    $\rho = \tau \frac{1}{\Leb(\stdr)}+(1-\tau)\tilde \rho$ where
    $\tau = \frac12 \exp(-2\patchheight \Rough)$ and
    $\tilde \rho \in \Stdd_{\Rough'}(\stdr)$ with
    $\Rough' = 2\Rough + 4\Econst\inv\Rough^2$.
  \end{lem}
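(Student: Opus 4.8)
The plan is to establish the pointwise density bounds~\eqref{eq:patch_density_bds} first and then to obtain the convex decomposition as an essentially algebraic consequence. Since $\|\rho\|_{L^{1}}=1$, there exist $q_{\pm}\in\stdr$ with $\rho(q_{-})\le\Leb(\stdr)\inv\le\rho(q_{+})$, so it suffices to bound the oscillation $|\log\rho(p)-\log\rho(q)|$ for arbitrary $p,q\in\stdr$. Write $u=\log\rho\circ\chvar$, a $\cC^{2}$ function on the rescaled rectangle $\chvar\inv\stdr$; the hypothesis $\rho\in\Stdd_{\Rough}(\stdr)$ gives $\|du\|_{\infty}<\Rough$ there, so for any piecewise-$C^{1}$ curve $\gamma\subset\chvar\inv\stdr$ joining $\chvar\inv p$ to $\chvar\inv q$ we have $|\log\rho(p)-\log\rho(q)|\le\Rough\cdot\ell(\gamma)$, where $\ell(\gamma)$ denotes the length of $\gamma$ in the rescaled coordinates. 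Everything thus reduces to producing such a $\gamma$ with $\ell(\gamma)\le2\patchheight$.

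\textbf{Constructing the path — the main obstacle.} I would route from $\chvar\inv p$ to $\chvar\inv q$ through the bottom curve $\bG_{0}$ or the top curve $\bG_{1}$ of $\stdr$, whichever is cheaper: follow $\chvar\inv$ of the maximal local centre manifold $\Wc_{\stdr}(p)$ from $p$ down to $\bG_{0}$ (\resp up to $\bG_{1}$), then travel along $\chvar\inv\bG_{0}$ (\resp $\chvar\inv\bG_{1}$) to the foot of $\Wc_{\stdr}(q)$, then follow $\chvar\inv\Wc_{\stdr}(q)$ back up to $q$. Recall that $\Wc_{\stdr}(p)$ and $\Wc_{\stdr}(q)$ each run from $\bG_{0}$ to $\bG_{1}$ inside $\stdr$ by transversality of the centre and unstable cones, so $\gamma$ stays in $\chvar\inv\stdr$. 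Along any centre curve $\pi_{2}$ is strictly monotone and the slope is at most $\centreConeConst$, so in the rescaled coordinates the length of the $\Wc_{\stdr}(p)$-segment is at most $(1+\centreConeConst\ve)$ times the rescaled $\pi_{2}$-extent it covers. Writing $a_{p},b_{p}$ for the rescaled $\pi_{2}$-extents from $p$ to $\bG_{0}$ and to $\bG_{1}$ along $\Wc_{\stdr}(p)$, the definition of a prestandard rectangle gives $a_{p}+b_{p}=\height\Wc_{\stdr}(p)/\ve\le\patchheight$, and likewise $a_{q}+b_{q}\le\patchheight$. Choosing the cheaper route, the centre-manifold contribution is thus at most $(1+\centreConeConst\ve)\min(a_{p}+a_{q},\,b_{p}+b_{q})\le(1+\centreConeConst\ve)\patchheight$, while the segment along $\bG_{0}$ or $\bG_{1}$ has rescaled $\pi_{1}$-extent at most $\delta$ and, since $\|G_{i}^{(1)}\|\le\ve\pscc1$, rescaled $\pi_{2}$-extent at most $\pscc1\delta$, contributing at most $(1+\pscc1)\delta$. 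Hence $\ell(\gamma)\le(1+\centreConeConst\ve)\patchheight+(1+\pscc1)\delta\le2\patchheight$ once $\ve$ is small and $\patchheight$ is large enough to absorb $(1+\pscc1)\delta$ — this is precisely where the hypothesis on $\patchheight$ enters. Combined with the two-point reduction this yields $|\log\rho(p)-\log\rho(q)|\le2\patchheight\Rough$, which is~\eqref{eq:patch_density_bds}. I expect the only genuinely delicate step to be making this routing argument rigorous — checking that the arcs lie in $\stdr$ and controlling their lengths after rescaling; the rest is bookkeeping.

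\textbf{The decomposition.} Put $c=\tau\,\Leb(\stdr)\inv$ with $\tau=\tfrac12\exp(-2\patchheight\Rough)$ and $\tilde\rho=(1-\tau)\inv(\rho-c)$. By~\eqref{eq:patch_density_bds} we have $\rho\ge2c$, hence $\tilde\rho\in\cC^{2}(\stdr,\reals_{>0})$; moreover $\|\tilde\rho\|_{L^{1}}=(1-\tau)\inv(1-\tau)=1$ and $\rho=\tau\,\Leb(\stdr)\inv+(1-\tau)\tilde\rho$ by construction, so it remains to check $\tilde\rho\in\Stdd_{\Rough'}(\stdr)$. Set $u=\log\rho\circ\chvar$ and $\beta=c\,e^{-u}=\tau/(\Leb(\stdr)\,\rho\circ\chvar)$; the bound $\rho\ge2c$ forces $0\le\beta\le\tfrac12$ pointwise. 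Since $\log(\tilde\rho\circ\chvar)=-\log(1-\tau)+\log(e^{u}-c)$ and $\beta$ is a function of $u$ alone, differentiation gives
\[
d\log(\tilde\rho\circ\chvar)=\frac{1}{1-\beta}\,du,\qquad H\log(\tilde\rho\circ\chvar)=-\frac{\beta}{(1-\beta)^{2}}\,du\otimes du+\frac{1}{1-\beta}\,Hu,
\]
the first-order term in the Hessian being symmetric precisely because $\beta=\beta(u)$. Using $(1-\beta)\inv\le2$ and $\beta(1-\beta)^{-2}\le2$ on $[0,\tfrac12]$ one gets $\|d\log(\tilde\rho\circ\chvar)\|_{\infty}\le2\|du\|_{\infty}<2\Rough\le\Rough'$ and $\|H\log(\tilde\rho\circ\chvar)\|_{\infty}\le2\|du\|_{\infty}^{2}+2\|Hu\|_{\infty}<2\Rough^{2}+2\Econst\Rough\le4\Rough^{2}+2\Econst\Rough=\Econst\Rough'$, exactly as required with $\Rough'=2\Rough+4\Econst\inv\Rough^{2}$.
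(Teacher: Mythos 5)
Your argument is correct and follows essentially the same route as the paper: reduce~\eqref{eq:patch_density_bds} to the gradient bound on $\log\rho\circ\chvar$ together with the fact that $\rho$ attains the value $\Leb(\stdr)\inv$ somewhere, and then obtain the convex decomposition by direct calculus on $\log(e^{u}-c)$. The paper's version is terser — it simply invokes $\diam(\chvar\inv\stdr)\le 2\patchheight$ and states that the bound on $\Rough'$ "follows from elementary calculus" — whereas you supply an explicit connecting path (routing through a centre manifold, a boundary standard curve, and another centre manifold), which is in fact slightly more careful since $\chvar\inv\stdr$ is not convex, and you carry out the $\beta$-calculus in full; these are welcome details but not a different proof.
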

\begin{proof}
  The bound on $\|d(\log \rho\circ\chvar)\|_{\infty}$ implies that for
  any $p, p'\in\stdr$ we have:
  \begin{align*}
    \left|\log\frac{\rho(p)}{\rho(p')}\right|\le
    R\cdot\textrm\diam(\chvar\inv \stdr).
  \end{align*}
  By the Intermediate Value Theorem, there exists $p'\in\stdr$ so that
  $\rho({p'}) = 1/\Leb\stdr$, from which we conclude, taking
  $\patchheight$ to be large enough, that~\eqref{eq:patch_density_bds}
  holds.  The bound on $\Rough'$ follows
  from~\eqref{eq:patch_density_bds} and elementary calculus.
\end{proof}

Let $\fstdr = (\stdr,\foli)$ be a foliated $(\narr,\shor)$-prestandard
rectangle and $\rho\in\Stdd_{\Rough}(\stdr)$ be a $\Rough$-standard
probability density on $\stdr$.  We can disintegrate the probability
measure induced by\ $\rho$ along the foliation $\foli$ as follows: for
any continuous function $g:\bT^{2}\to\bR$, we have:
\begin{align*}
\int_{\stdr}g(x,\theta)\rho(x,\theta)dxd\theta
  &=\int_{0}^{1}\nu_{\folis}\left[\int_{I_{\folis}}\rho_{\folis}(x)g(\bG_{\fstdr,\folis}(x))dx \right]d\folis,
\end{align*}
where $I_{\folis} = \pi_{1}(\bG_{\fstdr,\folis})$ and
\begin{align}\label{eq:induced-density}
  \rho_{\folis}(x) &=
                   \frac{\rho(\bG_{\fstdr,\folis}(x))\frac{\partial}{\partial\folis}G_{\fstdr,
                   \folis}(x)}{\nu_{\folis}}, &
  \nu_{\folis} &= \int_{I_{\folis}} \rho(\bG_{\fstdr,\folis}(x))\frac{\partial}{\partial\folis}G_{\fstdr,\folis}(x) dx.
\end{align}
It is natural to explore the relation between the roughness $\Rough$
of the density $\rho$ and the roughness $\rough$ of the disintegration
of the associated measure along the foliation $\foli$.  We pursue this
task in the lemma below.
\begin{lem}\label{lem:disintegration-regularity}
  Let $\fstdr = (\stdr,\foli)$ be a foliated
  $(\narr,\shor)$-prestandard rectangle and
  $\rho\in\Stdd_{\Rough}(\stdr)$ be a $\Rough$-standard density; then
  for any $\folis\in[0,1]$, $\rho_{\folis}$ is an
  $\rough'$-standard density on $G_{\fstdr,\folis}$, where
  \begin{align*}
    \rough' &= \Const \left(\left[\Rough + \|d\log\partial_{\theta}\foli\|_{\infty}\right]^2%
              + \Rough +\|d\log\partial_{\theta}\foli\|_{\infty}+%
              \|H\log\partial_{\theta}\foli\|_{\infty}\right).
  \end{align*}
\end{lem}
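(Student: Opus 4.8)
The plan is to pass to logarithms and differentiate twice along the level curve, reducing the estimate to the chain rule together with careful bookkeeping of the powers of $\eps$. Fix $\folis\in[0,1]$, write $G=G_{\fstdr,\folis}$ and $\bG(x)=(x,G(x))$ for the level curve $\foli=\folis$, and set $N=\log\partial_\theta\foli$. Since $\foli$ has no critical points and its level sets are graphs over the $x$-axis, $\partial_\theta\foli$ never vanishes; as $\foli$ increases from $0$ on the bottom curve to $1$ on the top, $\partial_\theta\foli>0$, so $N\in\cC^2(\stdr)$. Recall from~\eqref{eq:induced-density} that $\rho_{\folis}(x)=\nu_{\folis}\inv\,\rho(\bG(x))\,\partial_{\folis}G_{\fstdr,\folis}(x)$, with $\nu_{\folis}$ independent of $x$; differentiating the identity $\foli(\bG_{\fstdr,\folis}(x))=\folis$ in $\folis$ (with $x$ held fixed) gives $\partial_\theta\foli(\bG(x))\,\partial_{\folis}G_{\fstdr,\folis}(x)=1$, hence
\begin{align*}
  \log\rho_{\folis}(x)=(\log\rho)(\bG(x))-N(\bG(x))-\log\nu_{\folis}.
\end{align*}
In particular $\partial_{\folis}G_{\fstdr,\folis}=(\partial_\theta\foli\circ\bG)\inv>0$, so $\rho_{\folis}>0$; moreover $\rho_{\folis}\in\cC^2$ (because $\rho\in\cC^2(\stdr)$, $\foli\in\cC^3$ and $G\in\cC^3$) and $\|\rho_{\folis}\|_{L^1}=1$ by the choice of $\nu_{\folis}$. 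It therefore remains to estimate $\rho_{\folis}'/\rho_{\folis}=(\log\rho_{\folis})'$ and $\rho_{\folis}''/\rho_{\folis}=(\log\rho_{\folis})''+[(\log\rho_{\folis})']^2$, i.e.\ the first two derivatives of $(\log\rho)\circ\bG$ and of $N\circ\bG$.

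For any $A\in\cC^2(\stdr)$ the chain rule gives, with all $A$-derivatives evaluated at $\bG(x)$,
\begin{align*}
  (A\circ\bG)' &= \partial_xA+(\partial_\theta A)\,G',\\
  (A\circ\bG)'' &= \partial_{xx}A+2(\partial_{x\theta}A)\,G'+(\partial_{\theta\theta}A)\,(G')^2+(\partial_\theta A)\,G''.
\end{align*}
Since $\bG$ is prestandard, $|G'|\le\eps\pscc1$ and $|G''|\le\eps\pscc2$. The crucial point is that the hypothesis $\rho\in\Stdd_{\Rough}(\stdr)$, which controls $\log\rho$ only after the rescaling $\chvar(x,y)=(x,\eps y)$, translates into $|\partial_x\log\rho|<\Rough$, $|\partial_\theta\log\rho|<\Rough\,\eps\inv$, $|\partial_{xx}\log\rho|<\Const\Econst\Rough$, $|\partial_{x\theta}\log\rho|<\Const\Econst\Rough\,\eps\inv$ and $|\partial_{\theta\theta}\log\rho|<\Const\Econst\Rough\,\eps^{-2}$; every negative power of $\eps$ produced by a $\theta$-derivative is then exactly cancelled by the factor $\eps$ carried by $G'$ or $G''$, so each term in the two expansions above is $\bigo{\Rough}$. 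Hence $|((\log\rho)\circ\bG)'|\le\Const\Rough$ and $|((\log\rho)\circ\bG)''|\le\Const\Rough$ (the constants absorbing $\Econst,\pscc1,\pscc2$). Applying the same two identities to $A=N$, now using only $|G'|,|G''|\le 1$ together with $|\partial_xN|,|\partial_\theta N|\le\|d\log\partial_\theta\foli\|_\infty$ and $|\partial_{xx}N|,|\partial_{x\theta}N|,|\partial_{\theta\theta}N|\le\|H\log\partial_\theta\foli\|_\infty$, yields $|(N\circ\bG)'|\le\Const\|d\log\partial_\theta\foli\|_\infty$ and $|(N\circ\bG)''|\le\Const\big(\|d\log\partial_\theta\foli\|_\infty+\|H\log\partial_\theta\foli\|_\infty\big)$.

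Collecting these bounds gives $|\rho_{\folis}'/\rho_{\folis}|\le\Const\big(\Rough+\|d\log\partial_\theta\foli\|_\infty\big)$, and, since $\rho_{\folis}''/\rho_{\folis}=(\log\rho_{\folis})''+[(\log\rho_{\folis})']^2$,
\begin{align*}
  \left|\frac{\rho_{\folis}''}{\rho_{\folis}}\right| &\le \Const\big(\Rough+\|d\log\partial_\theta\foli\|_\infty+\|H\log\partial_\theta\foli\|_\infty\big)\\
  &\qquad{}+ \Const\big(\Rough+\|d\log\partial_\theta\foli\|_\infty\big)^2.
\end{align*}
The squared summand, which is exactly $[(\log\rho_{\folis})']^2$, is the only genuinely quadratic contribution and accounts for the square in the formula for $\rough'$. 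Both right-hand sides have precisely the shape of the expression defining $\rough'$ in the statement (for a suitable value of $\Const$): with the constant in the definition of $\rough'$ taken large enough one gets $\|\rho_{\folis}'/\rho_{\folis}\|_\infty\le\rough'$, and since $\Dconst$ is a fixed large constant and the displayed bound for $\|\rho_{\folis}''/\rho_{\folis}\|_\infty$ has the same structure as $\rough'$, enlarging that constant once more ensures $\|\rho_{\folis}''/\rho_{\folis}\|_\infty\le\Dconst\rough'$; thus $\rho_{\folis}\in\Stdd_{\rough'}(G_{\fstdr,\folis})$. The only genuine obstacle is the cancellation of powers of $\eps$ described above: one must recognise that the $\chvar$-rescaled norm in the definition of $\Stdd_{\Rough}(\stdr)$ is calibrated precisely so that the transverse derivatives of $\log\rho$ — which may be as large as $\bigo{\Rough\,\eps\inv}$ and $\bigo{\Rough\,\eps^{-2}}$ — contribute only $\bigo{\Rough}$ once restricted to a curve of slope $\bigo{\eps}$; the rest is routine calculus.
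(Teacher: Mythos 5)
Your proof is correct and follows essentially the same route as the paper's: pass to logarithms via $\log\rho_\folis=\log\rho\circ\bG_\folis-\log\partial_\theta\foli\circ\bG_\folis-\log\nu_\folis$, then bound the first two derivatives of each term along the curve using the prestandard bounds $|G'_\folis|\le\eps\pscc1$, $|G''_\folis|\le\eps\pscc2$. The only cosmetic difference is that where you unpack the $\chvar$-rescaled hypothesis into explicit $\eps^{-1}$- and $\eps^{-2}$-bounds on the transverse partials of $\log\rho$ and observe the cancellation termwise, the paper packages the same bookkeeping by writing $\log\rho\circ\bG_\folis=(\log\rho\circ\chvar)\circ k_\folis$ with $k_\folis=\chvar^{-1}\circ\bG_\folis$, noting $\|k_\folis\|_{\cC^2}\le\Const$, and applying the Hessian chain-rule Lemma~\ref{lem:hessian_chain_rule}.
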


\begin{proof}
			Since $\fstdr$ is fixed, let us use the shorthand notation
$\bG_{\folis} =\bG_{\fstdr,\folis}$ (and similarly for $G$).  Since
$\bG_{\folis}$ is a level set for the foliation $\foli$, we have
$\frac{\partial}{\partial \folis}G_{\folis} = (\partial_\theta \foli \circ
\bG_{\folis})\inv$. Thus by the above formula for $\rho_{\folis}$
\begin{equation}\label{eq:log_rho_eta}
	\log \rho_{\folis} = \log \rho\circ\bG_{\folis} - \log \partial_\theta \foli \circ \bG_{\folis}	- \log \nu_{\folis}.
\end{equation}
Define
$k_{\folis}(x) = \chvar\inv(\bG_{\folis}(x)) = (x,\eps\inv
G_{\folis}(x))$. Since $G_{\folis}$ is a prestandard curve,
$\|G'_{\folis} \|\leq \eps \pscc1$ and
$\|G''_{\folis}\|\leq \eps \pscc2$. It follows that
$\|k_{\folis}\|_{\cC^2}\le \Const$, hence:
\begin{align*}
  \|(\log \rho \circ \bG_{\folis})' \|_{\infty} &\le
  \|d(\log \rho \circ \chvar)\|_{\infty} \|k'_{\folis}\|_{\infty}\le \Const \Rough.
\end{align*}
Moreover, by Lemma~\ref{lem:hessian_chain_rule},
\begin{align*}
  \|(\log \rho \circ \bG_{\folis})''\|_{\infty} &
  \le \|H(\log \rho \circ \chvar)\|_{\infty}\|k'_{\folis}\|_{\infty}^2 +
  \|d(\log \rho \circ \chvar)\|_{\infty} \|k''_{\folis} \|_{\infty}\\
  &\le \Const\Econst\Rough  + \Const\Rough .
\end{align*}
Similarly, since $\|\bG'_{\folis}\|\le \Const$ and
$\|\bG''_{\folis}\|\le \Const$, we obtain, using again
Lemma~\ref{lem:hessian_chain_rule}, that:
\begin{align*}
	\|(\log \partial_\theta \foli\circ \bG_{\folis})'\|_{\infty}&\le \Const \|d\log \partial_\theta\foli\|_{\infty},\\
	\|(\log \partial_\theta \foli\circ \bG_{\folis})''\|_{\infty}&\le \Const (\|H\log \partial_\theta\foli\|_{\infty}+\|d\log \partial_\theta\foli\|_{\infty})
\end{align*}
By combining the above inequalities with~\eqref{eq:log_rho_eta}, it
follows that
\begin{align*}
  \|(\log \rho_{\folis})'\|_{\infty}&\le \Const (\Rough+\|d\log \partial_\theta\foli\|_{\infty}),\\
  \|(\log \rho_{\folis})''\|_{\infty}&\le \Const(\Rough+\|H\log \partial_\theta\foli\|_{\infty}+\|d\log \partial_\theta\foli\|_{\infty}).
\end{align*}
Finally, the proof of the lemma follows by noting that $\rho'_{\folis}/\rho_{\folis}=(\log\rho_{\folis})'$ and $\rho''_{\folis}/\rho_{\folis} = (\log\rho_{\folis})'' + [(\log \rho_{\folis})']^2$.
\end{proof}
The above lemma justifies the following definition:
\begin{mydef}
  Let $\fstdr = (\stdr, \foli)$ be a prestandard foliated rectangle
  and $\rho$ be a standard density on $\stdr$.  The object
  $\plaque = (\fstdr,\rho)$ is called a
  $((\narr,\shor),(\rough,\Rough))$-\emph{prestandard patch} if
  $\fstdr$ is a $(\narr,\shor)$-prestandard foliated rectangle, $\rho$
  is a $\Rough$-standard density on $\stdr$ and the disintegration
  along the foliation $\foli$ of the measure induced by $\rho$ is
  $r$-standard on every leaf.  \emph{Standard patches} are defined as
  above, replacing “prestandard” with “standard”.
\end{mydef}
\nomenclature[2b]{$\plaque$}{standard patch}
\ignore{ We denote with $\plaqueSet_{(\narr,\shor)}^{(\rough,\Rough)}$
  (\resp $\pplaqueSet_{(\narr,\shor),(\rough,\Rough)}$) the set of
  $((\narr,\shor),(\rough,\Rough))$-standard (\resp -prestandard)
  patches}

A standard patch $\plaque$ induces a Borel probability measure on
$\bT^{2}$ given by:
\begin{align*}
  \mu_{\plaque}(g) = \int_{\stdr}g(x,\theta)\rho(x,\theta)dxd\theta;
\end{align*}
henceforth we abuse notation by writing $\supp \plaque$ instead of
writing $\supp \mu_{\plaque}$.
\ignore{We say that a probability measure $\mu$ \emph{admits a
  $((\narr,\shor),(\rough,\Rough))$-standard patch representation} if
there exists $((\narr,\shor),(\rough,\Rough))$-standard patch
$\plaque$ such that $\mu = \mu_{\plaque}$.}
\begin{rmk}\label{rmk:standard-patches-are-families}
  Let $\plaque$ be an arbitrary
  $((\narr,\shor),(\rough,\Rough))$-standard patch , then
  $\mu_{\plaque}$ naturally admits a representation as a family of
  $(\narr,\rough)$ standard pairs (see
  Lemma~\ref{lem:disintegration-regularity})
\end{rmk}
A patch $\plaque = (\fstdr,\rho)$ is said to be $m$-adapted if so is
$\fstdr$.
\begin{lem}\label{lem:cutting-patches}
  Let $\plaque = (\fstdr,\rho)$ be a
  $((\narr,\shor),(\rough,\Rough))$-standard patch with
  $\shor > 4e^{\centreMaxExp m\ve}$ and let
  $\{\mathcal\stdr_{j} = (\stdr_{j},\foli_{j})\}$ be the collection of
  $m$-adapted foliated standard rectangles obtained by applying
  Lemma~\ref{lem:T-adapted-rects} to $\mathcal \stdr$.  Let
  $c_{j} = \mu_{\plaque}(\stdr_{j})$,
  $\rho_{j} = \frac{\rho|_{\stdr_{j}}}{c_{j}}$ and
  $\plaque_{j} = (\fstdr_{j}, \rho_{j})$.  Then each $\plaque_{j}$ is
  a $((\narr,\shor),(\rough,\Rough))$ $m$-adapted patch and
\begin{align*}
    \mu_{\plaque} = \sum_{j}c_{j}\mu_{\plaque_{j}}.
  \end{align*}
  \end{lem}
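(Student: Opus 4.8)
The plan is to feed the foliated rectangle $\fstdr = (\stdr,\foli)$ into Lemma~\ref{lem:T-adapted-rects} with $T = m\ve$ (assuming $m\ge 1$; the case $m=0$ is trivial, since a $(\narr,\shor)$-standard rectangle is automatically $0$-adapted and one may take the trivial collection $\{\plaque\}$). The hypothesis $\shor > 4e^{\centreMaxExp m\ve}$ is precisely the requirement $\shor > 4e^{\centreMaxExp T}$ of that lemma, so it produces the finite collection $\{\fstdr_{j} = (\stdr_{j},\foli_{j})\}$ of $m$-adapted $(\narr,\shor)$-standard foliated rectangles, together with affine maps $\psi_{j}\colon[0,1]\to[0,1]$ satisfying $\psi_{j}\circ\foli_{j} = \foli|_{\stdr_{j}}$ and $\bigcup_{j}\stdr_{j} = \stdr$. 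As is visible from the proof of Lemma~\ref{lem:T-adapted-rects}, the pieces $\stdr_{j}$ are unions of whole leaves of $\foli$, namely $\stdr_{j} = \{p\in\stdr\st\foli(p)\in[\folis_{j},\folis_{j+1}]\}$ for a partition $0 = \folis_{0} < \cdots < \folis_{N} = 1$, and $\psi_{j}$ is the affine bijection $[0,1]\to[\folis_{j},\folis_{j+1}]$; in particular the leaf of $\foli_{j}$ at parameter $t$ is $\bG_{\fstdr_{j},t} = \bG_{\fstdr,\psi_{j}(t)}$. It then remains to check that each $\plaque_{j} = (\fstdr_{j},\rho_{j})$ is a $((\narr,\shor),(\rough,\Rough))$-standard patch (which is automatically $m$-adapted, since $\fstdr_{j}$ is) and that $\mu_{\plaque} = \sum_{j}c_{j}\mu_{\plaque_{j}}$.

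Since $\rho > 0$ and $\Leb\stdr_{j} > 0$, we have $c_{j} = \mu_{\plaque}(\stdr_{j}) > 0$, so $\rho_{j} = c_{j}\inv\rho|_{\stdr_{j}}$ is well defined and, being a positive constant times the restriction of $\rho$, satisfies $\log(\rho_{j}\circ\chvar) = \log(\rho\circ\chvar) - \log c_{j}$ on $\chvar\inv\stdr_{j}$. Hence $d\log(\rho_{j}\circ\chvar)$ and $H\log(\rho_{j}\circ\chvar)$ agree with the corresponding derivatives of $\rho$ on $\chvar\inv\stdr_{j}$, so the bounds defining $\Stdd_{\Rough}(\stdr)$ are inherited on $\stdr_{j}$; together with $\|\rho_{j}\|_{L^{1}} = c_{j}\inv\mu_{\plaque}(\stdr_{j}) = 1$ this gives $\rho_{j}\in\Stdd_{\Rough}(\stdr_{j})$. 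For the disintegration condition we use that each leaf $\bG_{\fstdr_{j},t} = \bG_{\fstdr,\psi_{j}(t)}$ lies entirely inside $\stdr_{j}$, so restricting the disintegration identity that defines $\rho_{\folis}$ and $\nu_{\folis}$ to $\stdr_{j} = \{p\st\foli(p)\in[\folis_{j},\folis_{j+1}]\}$ and performing the substitution $\folis = \psi_{j}(t)$ (with $\psi_{j}' = \folis_{j+1}-\folis_{j}$ and $\partial_{t}G_{\fstdr_{j},t} = (\folis_{j+1}-\folis_{j})\,\partial_{\folis}G_{\fstdr,\folis}|_{\psi_{j}(t)}$), a direct computation shows that the conditional density of $\mu_{\plaque_{j}}$ on $\bG_{\fstdr_{j},t}$ equals $\rho_{\psi_{j}(t)}$, the original conditional density of $\mu_{\plaque}$, with transverse weights $\nu_{t}^{(j)} = c_{j}\inv(\folis_{j+1}-\folis_{j})\,\nu_{\psi_{j}(t)}$ (these integrate to $1$). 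Since $\plaque$ is a standard patch and $\psi_{j}(t)\in[0,1]$, the density $\rho_{\psi_{j}(t)}$ is $\rough$-standard on $\bG_{\fstdr,\psi_{j}(t)} = \bG_{\fstdr_{j},t}$, so $\plaque_{j}$ meets the disintegration requirement and is a $((\narr,\shor),(\rough,\Rough))$-standard patch.

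Finally, distinct $\stdr_{j}$ meet only along bounding local centre manifolds, which are Lebesgue-null and hence $\mu_{\plaque}$-null (as $\mu_{\plaque}\ll\Leb$); therefore $\mu_{\plaque}(g) = \sum_{j}\int_{\stdr_{j}}g\,\rho\,dx\,d\theta = \sum_{j}c_{j}\,\mu_{\plaque_{j}}(g)$ for every continuous $g$, and taking $g\equiv 1$ yields $\sum_{j}c_{j} = 1$. The only step demanding genuine attention is the change-of-variables computation identifying the conditional densities of $\mu_{\plaque_{j}}$ with those of $\mu_{\plaque}$; everything else is bookkeeping built on top of Lemma~\ref{lem:T-adapted-rects}.
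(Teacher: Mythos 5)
Your proof is correct and follows essentially the same route as the paper's: apply Lemma~\ref{lem:T-adapted-rects} with $T = m\ve$, observe that $\rho_{j}$ trivially inherits the $\Rough$-standard density bounds (being a constant multiple of a restriction of $\rho$, the logarithmic derivatives are unchanged), and use the affine relation $\psi_{j}\circ\foli_{j} = \foli|_{\stdr_{j}}$ in~\eqref{eq:induced-density} to see that the leaf-by-leaf conditional densities of $\plaque_{j}$ agree with those of $\plaque$, hence remain $\rough$-standard. The paper treats the first and last steps as immediate and only records the change-of-variable argument for the disintegration, which is exactly the step you (correctly) single out as the one requiring real attention; your explicit cancellation of the $c_{j}\inv\psi_{j}'(t)$ factors in both numerator and normalizing weight is the right computation and matches the paper's claim that the disintegrated density on $\bG_{\fstdr_j,\folis}$ equals that on $\bG_{\fstdr,\psi_j\folis}$.
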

\begin{proof}
  The fact that $\mu_{\plaque} = \sum_{j}c_{j}\mu_{\plaque_{j}}$ is
  immediate by construction.  The only thing to check is the
  regularity of the disintegration of $\plaque_{j}$ along the
  foliation $\foli_{j}$.  Since $\foli_{j}$ and $\foli$ are related by
  an affine transformation (\ie
  $\psi_{j}\circ \foli_{j} = \foli|_{\stdr_{j}}$),
  inspecting~\eqref{eq:induced-density} we conclude that the
  disintegrated density on any leaf $\bG_{\fstdr_{j},\folis}$ of
  $\fstdr_{j}$ equals the disintegrated density on the corresponding
  leaf $\bG_{\fstdr,\psi_{j}\folis}$ of $\fstdr$.  Since the latter
  belongs to $\Stdd_{r}(G_{{\fstdr,\psi_{j}\folis}})$ by assumption,
  we conclude that the former belongs to
  $\Stdd_{r}(G_{\fstdr_{j},\folis})$, hence each $\plaque_{j}$ is
  $((\narr,\shor),(\rough,\Rough))$-standard.
\end{proof}
The proposition that follows states invariance properties of the class
of standard patches.  We now fix a timescale $\clock > 0$ to be
specified later and, from now on, we denote
$\clockN = \lfloor \clock\vei\rfloor$.
\nomenclature[29a]{$\clock$}{number of timesteps of order $\vei$} The
timescale $\clock$ will be chosen to be long enough with respect to
some features of the averaged dynamics concerning both the slow
variable $\theta$ (see Section~\ref{sec:averaging}) and the centre
foliation (see Section~\ref{sec:patch-families}) and its value will be
determined right after Lemma~\ref{lem:patch_family_a_priori_bd}.  For
definiteness, we will regard $\clock$ as a natural \emph{clock} for
our system, and we will perform all our manipulations at a number of
iterates that are multiples of $\clockN$.

\begin{prp}[Dynamics of standard
  patches]\label{lem:invariance_patches}
  Assume that $\minshor$ in the definition of prestandard rectangles
  and $\Econst$ in the definition of prestandard patches are
  sufficiently large (depending on the fixed $\clock$), then the
  following holds for sufficiently small $\ve$. There exists
  $\Rough_{*} > 0$ such that if $\plaque$ is a
  $((\narr,\shor),({\rough,\Rough}))$ prestandard patch, for any
  $n\le \clockN$:
	\begin{enumerate}
    \item there exists a finite collection $\{\bplaque_{n,j}\}$ of
      $((\narr_{n},\shor_{n}),(\rough_{n},\Rough_{n}))$-standard
      patches such that $F_{\eps*}^n \mu_{\plaque}$ is a convex
      combination $\sum_{j}c_{n,j}\mu_{\bplaque_{n,j}}$ and:
      \begin{align}\label{eq:std_patch_inv_weak_bds}
        \narr_{n} &= \max\left\{\left(\frac45\right)^{n}\narr, 2\right\},&
        \shor_{n} &= e^{\centreMaxExp n\ve}\shor\\
        \rough_{n} &= \left(\frac13\right)^{n}\rough+\frac32\left(1-3^{-n}\right) \rough_{*},&
        \Rough_{n} &= C_{\clock}\Rough+\Rough_{*}.\nonumber
      \end{align}
      In addition, for each $j$ there exists a diffeomorphism
      $\varphi_{n,j}:\supp{\bplaque_{n,j}}\to\stdr$ such that
      $F_{\ve}^{n}\circ\varphi_{n,j}$ is the identity,
      $\{\varphi_{n,j}(\supp{\bplaque_{n,j}})\}_{j}$ forms a
      partition (mod 0) of $\stdr$; finally,
      $\mu_{\plaque}|_{\varphi_{n,j}(\supp{\bplaque_{n,j}})} =
      c_{n,j}\,\varphi_{n,j*}\mu_{\bplaque_{n,j}}$ (in particular this
      implies that
      $c_{n,j} = \mu_{\plaque}(\varphi_{n,j}(\supp{\bplaque_{n,j}}))$).
    \item Moreover, each $\bplaque_{n,j}$ is indeed a
      $((\narr_{n},\shor_{n,j}),(\rough_{n},\Rough_{n,j}))$-standard
      patch, where $\shor_{n,j} = \max\{M_{n,j} \shor,\minshor\}$,
      $\Rough_{n,j} = \max\{M_{n,j}, M_{n,j}^2 + \frac{1}{4}\} \Rough
      + \Rough_{*}$ and
      \begin{align*}
        M_{n,j} = \exp\bigg(-\sup_{\supp{\bplaque_{n,j}}}\zeta_{n}\circ\varphi_{n,j}+C_{\clock}\ve\log\vei
        + \clock\fakeCentreConst + D\bigg),
      \end{align*}
      where $\fakeCentreConst$ was defined above~\eqref{eq:zeta_n_def}
      and $D>0$ is a constant (that can be chosen uniformly in
      $\clock$).
	\end{enumerate}
\end{prp}
\begin{proof}
  Let $\plaque = (\fstdr,\rho)$: we will assume that $\plaque$ is
  $\clockN$-adapted; otherwise Lemma~\ref{lem:cutting-patches}
  guarantees that we can cut $\plaque$ in $\clockN$-adapted patches
  provided that $\minshor > 4e^{\centreMaxExp\clock}$.

  We first prove part (a), apart from the bound on $\Rough_n$, which we establish separately later. It is convenient to
  prove the following stronger statement:
  \begin{lem}\label{sublem:dynamics-standard-patches}
    If $\plaque$ is $m$-adapted, then for any $n \le m$, there exists
    a finite collection
    $\{\bplaque_{n,j} =
    ((\bar\stdr_{n,j},\bar\foli_{n,j}),\bar\rho_{n,j})\}_{j\in\mathcal
      J}$ of $(m-n)$-adapted
    $((\narr_{n},\shor_{n}),(\rough_{n},\Rough^{*}_{n}))$-standard
    patches such that $F_{\ve*}^{n}\mu_{\plaque}$ is a convex
    combination $\sum_{j}c_{n,j}\mu_{\bplaque_{n,j}}$, and $\narr_n$,
    $\shor_n$ and $\rough_n$ are as defined
    in~\eqref{eq:std_patch_inv_weak_bds} and $\Rough_n^{*}<\infty$.
    In addition, for each $j$, there exists a diffeomorphism
    $\varphi_{n,j}:\bar\stdr_{n,j}\to\stdr$ such that
    $F_{\ve}^{n}\circ\varphi_{n,j}$ is the identity and
    $\{\varphi_{n,j}\bar\stdr_{n,j}\}_{j}$ forms a partition (mod 0)
    of $\stdr$. Moreover, $\bar\foli_{n,j}=\foli\circ\varphi_{n,j}$
    and
    $\bar\rho_{n,j} = c_{n,j}^{-1}\cdot\rho\circ\varphi_{n,j}\cdot\det
    d\varphi_{n,j}$, where
    $c_{n,j} =\mu_{\plaque}\,\varphi_{n,j}\bar\stdr_{n,j}$.
  \end{lem}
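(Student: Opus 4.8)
The plan is to prove Lemma~\ref{sublem:dynamics-standard-patches} by induction on $n$, the single inductive step being the combination of two results already established: Lemma~\ref{lem:invariance_std_rects}, which describes how a foliated prestandard rectangle evolves under one application of $F_\ve$, and Proposition~\ref{proposition:invariance}, which describes how standard pairs — in particular the leafwise densities of a patch — evolve under $F_\ve$. The observation that makes the induction run is that, since $\scc{i}<\pscc{i}$ for $i=1,2,3$, every $\narr$-standard curve is in particular a $\narr$-prestandard curve, so every $(\narr,\shor)$-standard foliated rectangle is a $(\narr,\shor)$-prestandard foliated rectangle; hence Lemma~\ref{lem:invariance_std_rects} may be applied afresh to each standard patch produced at the previous step. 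The base case $n=1$ is one application of Lemma~\ref{lem:invariance_std_rects} to the prestandard patch $\plaque$ together with Proposition~\ref{proposition:invariance} (for $n=0$ there is nothing to iterate). For the step from $n$ to $n+1$, with $n+1\le m$, I would take the family $\{\bplaque_{n,j}\}$ given by the inductive hypothesis, each an $(m-n)$-adapted $((\narr_n,\shor_n),(\rough_n,\Rough_n^{*}))$-standard patch, and — since $m-n\ge 1$ — apply Lemma~\ref{lem:invariance_std_rects} to the foliated rectangle $(\bar\stdr_{n,j},\bar\foli_{n,j})$ underlying each $\bplaque_{n,j}$.

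The geometric side of the step is then bookkeeping. For each $j$, Lemma~\ref{lem:invariance_std_rects} produces finitely many $(m-n-1)$-adapted $(\narr',\shor')$-standard foliated rectangles $\bar\stdr_{n+1,(j,k)}$ partitioning (mod $0$) $F_\ve\bar\stdr_{n,j}$, with $\narr'=\max\{\tfrac45\narr_n,2\}$ and $\shor'=e^{\centreMaxExp\ve}\shor_n$; iterating these recursions reproduces $\narr_{n+1}$ and $\shor_{n+1}$ as in~\eqref{eq:std_patch_inv_weak_bds}, and the adaptedness index descends to $m-(n+1)$. The same lemma supplies diffeomorphisms $\varphi'_{j,k}$ with $F_\ve\circ\varphi'_{j,k}=\id$, the pulled-back foliations $\bar\foli'_{j,k}=\bar\foli_{n,j}\circ\varphi'_{j,k}$, and the two partition statements; setting $\varphi_{n+1,(j,k)}=\varphi_{n,j}\circ\varphi'_{j,k}$, $\bar\foli_{n+1,(j,k)}=\bar\foli'_{j,k}$, and chaining the level-$n$ and level-$1$ identities yields $F^{n+1}_\ve\circ\varphi_{n+1,(j,k)}=\id$, $\bar\foli_{n+1,(j,k)}=\foli\circ\varphi_{n+1,(j,k)}$, the partition of $\stdr$ into the sets $\varphi_{n+1,(j,k)}\bar\stdr_{n+1,(j,k)}$, and the identity $\mu_\plaque|_{\varphi_{n+1,(j,k)}\bar\stdr_{n+1,(j,k)}}=c_{n+1,(j,k)}\varphi_{n+1,(j,k)*}\mu_{\bplaque_{n+1,(j,k)}}$ with $c_{n+1,(j,k)}=\mu_\plaque(\varphi_{n+1,(j,k)}\bar\stdr_{n+1,(j,k)})$.

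For the densities, the change-of-variables formula gives that $F_{\ve*}(c_{n,j}\mu_{\bplaque_{n,j}})$ restricted to $\bar\stdr_{n+1,(j,k)}$ has density proportional to $\bar\rho_{n,j}\circ\varphi'_{j,k}\cdot\det d\varphi'_{j,k}$; chaining with the inductive formula for $\bar\rho_{n,j}$ and renormalizing gives the claimed $\bar\rho_{n+1,(j,k)}=c_{n+1,(j,k)}^{-1}\rho\circ\varphi_{n+1,(j,k)}\cdot\det d\varphi_{n+1,(j,k)}$. For the leafwise roughness, by the inductive hypothesis the disintegration of $\mu_{\bplaque_{n,j}}$ along $\bar\foli_{n,j}$ consists of $(\narr_n,\rough_n)$-standard pairs on the leaves $\{\bar\foli_{n,j}=\folis\}$; since $\bar\foli'_{j,k}=\bar\foli_{n,j}\circ\varphi'_{j,k}$, for each $\folis$ the leaf $\{\bar\foli_{n+1,(j,k)}=\folis\}$ equals $F_\ve(\{\bar\foli_{n,j}=\folis\})\cap\bar\stdr_{n+1,(j,k)}$, i.e.\ an element of the partition of the image curve $F_\ve(\{\bar\foli_{n,j}=\folis\})$ induced by $\{\bar\stdr_{n+1,(j,k)}\}_k$; since Lemma~\ref{lem:invariance_std_rects} guarantees these pieces are $\narr_{n+1}$-standard curves, Proposition~\ref{proposition:invariance}(c) applied leaf by leaf gives that the disintegration of $F_{\ve*}\mu_{\bplaque_{n,j}}$ along $\bar\foli_{n+1,(j,k)}$ is $(\narr_{n+1},\rough_n/3+\stdrough)$-standard on every leaf, so $\rough_{n+1}=\rough_n/3+\stdrough$, which iterates to the closed form in~\eqref{eq:std_patch_inv_weak_bds} with $\rough_*=\stdrough$. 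Finally, writing $\log(\bar\rho_{n+1,(j,k)}\circ\chvar)=\bigl(\log\bar\rho_{n,j}\circ\chvar\bigr)\circ\bigl(\chvar\inv\circ\varphi'_{j,k}\circ\chvar\bigr)+\bigl(\log\det d\varphi'_{j,k}\bigr)\circ\chvar+\mathrm{const}$ and bounding its first two derivatives via the chain rule and Lemma~\ref{lem:hessian_chain_rule}, one checks $\Rough_{n+1}^{*}<\infty$: the conjugated branch $\chvar\inv\circ\varphi'_{j,k}\circ\chvar$ is $C^3$ with finite $C^2$ norm on its compact domain because $F_\ve\in\cC^4$ and conjugation by $\chvar$ merely rescales the slow coordinate, and $\det d\varphi'_{j,k}=(\det dF_\ve\circ\varphi'_{j,k})\inv$ is $C^3$ and bounded away from $0$, while $\Rough_n^{*}<\infty$ by induction.

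The step I expect to be the main obstacle is the reconciliation of the two descriptions of the pushed-forward measure: one must check with care that each leaf of the pulled-back foliation $\bar\foli_{n+1,(j,k)}$ really is a subcurve of the $F_\ve$-image of a leaf of $\bar\foli_{n,j}$ — so that Proposition~\ref{proposition:invariance}(c) applies \emph{verbatim} to the disintegrated densities — and that the various normalizing constants $\nu_\folis$ and $c_{n+1,(j,k)}$ line up, so that the disintegration of $\mu_{\bplaque_{n+1,(j,k)}}$ along $\bar\foli_{n+1,(j,k)}$ is precisely the one predicted by Proposition~\ref{proposition:invariance}. By contrast, the sharp $\Rough$-linear bound $\Rough_n=C_{\clock}\Rough+\Rough_*$ claimed in Proposition~\ref{lem:invariance_patches}(a) is deliberately not attempted at this stage — the sublemma asserts only $\Rough_n^{*}<\infty$ — and it will be established afterwards using the differential estimates of Section~\ref{sec:Hyper} (notably Lemma~\ref{lem:lyapunov}) together with the second-derivative bounds of Appendix~\ref{sec:second-deriv-bounds}.
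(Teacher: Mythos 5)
Your proposal follows essentially the same route as the paper: establish the one-step case by combining Lemma~\ref{lem:invariance_std_rects} for the geometry of the image rectangles and diffeomorphisms, change of variables for $\bar\rho_{n,j}$, and Proposition~\ref{proposition:invariance}(c) leaf by leaf for the disintegrated densities, then iterate (using, as you correctly observe, that $\scc{i}<\pscc{i}$ makes standard curves prestandard). The one point you flag as the "main obstacle" — that the disintegration of $\bar\rho_{n+1,(j,k)}$ along $\bar\foli_{n+1,(j,k)}$ coincides, up to normalization, with the density produced by Proposition~\ref{proposition:invariance}(c) on each leaf — is precisely what the paper settles by comparing two change-of-variables expressions for $F_{\ve*}\mu_\plaque(g)$ and invoking continuity in $\folis$, so you have identified the right verification even if you stop short of carrying it out.
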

  \begin{proof}
    We consider the case where $n=1$; the general case can be
    obtained by iterating the argument.  Let
    $(\bar\stdr_j,\bar\foli_j)$ be foliated standard rectangles and
    $\varphi_j:\bar\stdr_j\to \stdr$ be maps that satisfy the
    properties specified in
    Lemma~\ref{lem:invariance_std_rects}. Since
    $\{\varphi_j \bar\stdr_j \}$ partitions $\stdr$ (mod 0), for any
    continuous function $g:\bT^2\to \R$ we have, changing variable:
    \begin{align*}
      F_{\eps*}\mu_{\plaque}(g) &= \sum_j c_j\int_{\bar\stdr_j} g(x,\theta)\bar\rho_j(x,\theta)dx d\theta,
    \end{align*}
    where $c_j =\mu_{\plaque}(\varphi_{j}\bar\stdr_{j})$ and
    $\bar\rho_j =c^{-1}_{j}\cdot \rho \circ \varphi_{j}\cdot\det d\varphi_{j}.$ Hence setting
    $\bplaque_j = ((\bar\stdr_j, \bar\foli_{j}),\bar\rho_j)$ for each
    $j$ yields that
    $F_{\eps*}\mu_{\plaque}=\sum_j c_j \mu_{\bplaque_j}$.

    The foliated rectangles $(\bar\stdr_{j},\bar\foli_{j})$ are, by
    Lemma~\ref{lem:invariance_std_rects}, automatically
    $(m-1)$-adapted $(\narr_{1}, \shor_{1})$-standard rectangles.  It
    remains to bound $\rough_1$. Fix $\folis\in[0,1]$ and consider the
    standard pair
    $ \ell_{\folis} = (\bG_{\fstdr,\folis}, \rho_{\folis}) $.
    Lemma~\ref{lem:invariance_std_rects}(a) further implies that
    the collection of curves $(\bG_{\bar{\fstdr}_{j},\folis})$ forms
    (mod 0) a partition of $F_{\ve}\bG_{\fstdr,\folis}$ into
    $\narr_{1}$-standard curves. Hence by
    Proposition~\ref{proposition:invariance}(c), for each $j$ we can
    write
    $(F_{\eps*} \mu_{\ell_{\folis}})|_{\bar\stdr_j} =
    \alpha_{j,\folis}\mu_{\bar{\ell}_{j,\folis}}$, where
    $\bar{\ell}_{j,\folis}=(\bG_{\bar{\fstdr}_{j},\folis}\,
    ,\breve\rho_{j,\folis})$ is a $(\narr_1,\rough')$-standard pair
    with $\rough' = \frac13 \rough + \stdrough$ and
    $\alpha_{j,\folis} =
    \mu_{\ell_{\folis}}(\varphi_{j}{\bar\stdr_{j}})$.

    We now need to show
    that $\breve\rho_{j,\folis}$ coincides with the density
    $\bar\rho_{j,\folis}$ on
    $I_{j,\folis}=\pi_1 \bG_{\bar{\fstdr}_{j},\folis}$ obtained by
    disintegrating $\bar{\rho}_j$ along the foliation $\bar{\foli}_j$
    for $\bar{\foli}_{j} = \folis$.  This is a simple consequence of
    the change-of-variable formula, and can be checked with the
    following argument: for any continuous function
    $g: \bar{\stdr}_j\to \R$,
    \begin{align*}
      F_{\eps*}\mu_{\plaque}(g) &= 
      \int_0^1 \nu_{\folis} F_{\eps*}\mu_{\ell_{\folis}}(g)d\folis
      = \int_0^1 \nu_{\folis} \alpha_{j,\folis} \left[\int_{I_{j,\folis}}\breve{\rho}_{j,\folis}(x)g(\bG_{\fstdr_j,\folis}(x))dx \right]d\folis\\
                                &= c_j \mu_{\bar{\plaque}_j}(g)= c_j\int_{0}^{1}\bar\nu_{j,\folis}\left[\int_{I_{j,\folis}}\bar\rho_{j,\folis}(x)g(\bG_{\fstdr_j,\folis}(x))dx \right]d\folis,
    \end{align*}
    where
    $\bar\nu_{j,\folis}=\int_{I_{j,\folis}}
    \bar\rho_j(\bG_{\fstdr_j,\folis}(x))\frac{\partial}{\partial\folis}G_{\fstdr_j,\folis}(x)
    dx$. Since $\bar\rho_{j,\folis}$ and $\breve{\rho}_{j,\folis}$ are
    both continuous probability densities that depend continuously on
    $\folis$, and the weights $\nu_{\folis}$, $\alpha_{j,\folis}$ and
    $\bar\nu_{j,\folis}$ depend continuously on $\folis$, it follows
    that $\rho_{j,\folis} = \breve{\rho}_{j,\folis}$ for all
    $\folis\in [0,1]$, as claimed.  This concludes the proof.
  \end{proof}
  In the remainder of this proof, we let $\bplaque_{n,j}$ be as
  defined in the above lemma.  Next we prove the bound on
  $\shor_{n,j}$ stated in (b). Let $\Wc_{\bar\stdr_{n,j}}$ be a
  maximal local centre manifold in $\bar\stdr_{n,j}$.  Since
  $\bar\foli_{n,j}=\foli \circ \varphi_{n,j}$, we conclude that
  $\varphi_{n,j}$ maps the top and bottom curves of
  $\bar{\stdr}_{n,j}$ to (a subcurve of) the top and bottom curves of
  $\stdr$, respectively. It follows that
  $\Wc = \varphi_{n,j}\Wc_{\bar\stdr_{n,j}}$ is a maximal local centre
  manifold in $\stdr$. Hence by
  Lemma~\ref{lem:expansion-centre-mfolds}(b),
	\begin{align}
		\height\Wc_{\bar{\stdr}_{n,j}}&\ge\left( \inf_{\Wc}\prod_{k=0}^{n-1}\CentreExpansion\circ F_\eps^k\right)\, \height\Wc\label{eq:new_height_old_height_comp}\\
		&\ge \exp\bigg(\sup_{\Wc}\zeta_n-C_{\clock}(\height\Wc+\eps\log\eps^{-1})-\clock\fakeCentreConst\bigg)\,\height\Wc.\nonumber
	\end{align}
	Since $\stdr$ is a prestandard rectangle,
	\begin{equation}\label{eq:upper_height_bd_absorbed}
		\height\Wc+\eps \log\vei\le \patchheight \eps +\eps\log \vei\le 2 \eps\log\vei.
	\end{equation}
	for $\eps$ sufficiently small.

	Now let $q \in \bar\stdr_{n,j}$ and let
    $\bG = \bG_{\bar{\fstdr}_{n,j},\bar\foli_{n,j}(q)} \subset
    \bar\stdr_{n,j}$ be the standard curve that contains $q$.  By a
    standard distortion estimate,
    $|\zeta_n(\varphi_{n,j}(q))-\zeta_n(p)|\le C_{\clock} \eps$ for any
    $p\in \varphi_{n,j}(\bG)$. Since $\Wc$ intersects
    $\varphi_{n,j}(\bG)$, it follows that
	\begin{equation}\label{eq:distortion_preimage_rectangles}
		\sup_{\Wc}\zeta_n \ge \sup_{q \in \bar{\stdr}_{n,j}}\zeta_n (\varphi_{n,j}(q))- C_{\clock} \eps.
	\end{equation}
	The required bound on $\shor_{n,j}$ then follows by
    combining~\eqref{eq:new_height_old_height_comp},
    \eqref{eq:upper_height_bd_absorbed}
    and~\eqref{eq:distortion_preimage_rectangles}.

	Finally, we prove the bounds on $\Rough_n$ and $\Rough_{n,j}$
    stated in parts (a) and (b).  By Lemma~\ref{sublem:dynamics-standard-patches}, for fixed $n$
    and $j$ we have that
    $\bar\rho_{n,j} = c^{-1}_{n,j}\cdot\rho\circ \varphi_{n,j}\det
    d\varphi_{n,j}$, where
    $c_{n,j} =\mu_{\plaque}\,\varphi_{n,j}\bar\stdr_{n,j}$. Let
    \begin{align*}
      A_{n,j} &= \log \rho\circ \varphi_{n,j}\circ\chvar, &
      B_{n,j} &= \log \det d\varphi_{n,j}\circ\chvar
    \end{align*}
    so that
    $\log \bar\rho_{n,j}\circ \chvar = A_{n,j}+B_{n,j} - \log c_{n,j}$. By
    Lemma~\ref{lem:C2_bds}\ref{item:log_jacob_C2_bd},
    $\|dB_{n,j}\|_{\infty} \le C_{\clock}$ and
    $\|HB_{n,j}\|_{\infty} \le C_{\clock}$. Also, by
    Lemma~\ref{lem:dFeps_inv_conj_bd},
	\begin{align*}
      \|dA_{n,j}\|_{\infty} \le \| d(\log \rho \circ \chvar)\|_{\infty}\|d(\chvar\inv\circ \varphi_{n,j}\circ \chvar)\|_{\infty}\le D\Rough\, \xi_{n,j},
	\end{align*}
	where
    $\xi_{n,j} =
    \sup_{\varphi_{n,j}\bar{\stdr}_{n,j}}\Upsilon_{n}^\eps$.
    Similarly, by Lemma~\ref{lem:hessian_chain_rule} and
    Lemma~\ref{lem:C2_bds}\ref{item:chvar_hessian_bd},
	\begin{align*}
      \|HA_{n,j}\|_{\infty} &\le \|H(\log \rho \circ \chvar)\|_{\infty}(D\xi_{n,j})^2 + 2\| d(\log \rho \circ \chvar)\|_{\infty}\|H(\chvar\inv\circ \varphi_{n,j}\circ \chvar)\|_{\infty}\\
                   &\le \Econst\Rough D^2 \xi_{n,j}^2 + C_{\clock}\Rough.
	\end{align*}
    by choosing $\Econst\ge 4C_{\clock}$, we obtain that
	\begin{align*}
      \|d(\log \rho_{n,j} \circ \chvar)\|_{\infty}&\le D \xi_{n,j}\Rough+ C_{\clock},&
      \|H(\log \rho_{n,j} \circ \chvar)\|_{\infty} &\le (D^2 \xi_{n,j}^2+\tfrac14)\Econst \Rough + C_{\clock}
	\end{align*}
	so we can take
    $\Rough_{n,j} = \max\{D\xi_{n,j},D^2\xi_{n,j}^2+\frac14\}\Rough +
    C_{\clock}$. The bound for $\Rough_{n,j}$ stated in (a) (independent on
    $j$) follows by noting that $\|\Upsilon_n^\eps\| \le C_{\clock}$ for
    all $n\le \clockN$. The bound for $\Rough_{n,j}$ given in part
    (b) follows by using Lemma~\ref{lem:fake_centre_exp}
    and~\eqref{eq:distortion_preimage_rectangles} to bound
    $\xi_{n,j}$.
\end{proof}
\begin{lem}\label{lem:lower_bd_weights}
  Let $\plaque$ be a $((\narr,\shor),({\rough,\Rough}))$-prestandard
  patch; for any $n\le \clockN$ let $\bplaque_{n,j}$ be a collection
  of standard patches as obtained in
  Proposition~\ref{lem:invariance_patches}, so that
  $F_{\ve *}^{n}\mu_{\plaque} = \sum_{j}c_{n,j}\mu_{\bplaque_{n,j}}$;
  then $c_{n,j} > C_{\Rough,\clock}\lowerBoundWeight^{n}/\shor$, where
  $C_{\Rough,\clock}$ depends on $\Rough$ and $\clock$ and
  $\lowerBoundWeight \in (0,1)$ is a constant that depends only on $f$
  and $\omega$.\
\end{lem}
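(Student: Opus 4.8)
The plan is to bound each weight $c_{n,j}$ directly from below, using the identity $c_{n,j} = \mu_{\plaque}(\varphi_{n,j}(\supp\bplaque_{n,j}))$ recorded in Proposition~\ref{lem:invariance_patches}(a), the pointwise lower bound on standard densities from Lemma~\ref{lem:split_density_leb}, and a crude (hence cheap) upper bound on the Jacobian of $F_\ve^n$. The exponential growth of this Jacobian over $n\le\clockN$ steps is exactly what produces the factor $\lowerBoundWeight^n$.

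Write $\stdr = \supp\plaque$ and $U_{n,j} = \varphi_{n,j}(\supp\bplaque_{n,j}) \subset \stdr$, so $c_{n,j} = \mu_\plaque(U_{n,j})$. Since $\rho$ is an $\Rough$-standard density on the prestandard rectangle $\stdr$, Lemma~\ref{lem:split_density_leb} gives $\rho \ge \exp(-2\patchheight\Rough)/\Leb(\stdr)$ on $\stdr$, hence
\[
  c_{n,j} = \int_{U_{n,j}}\rho\,d\Leb \;\ge\; \frac{\exp(-2\patchheight\Rough)}{\Leb(\stdr)}\,\Leb(U_{n,j}).
\]
Because $F_\ve^n\circ\varphi_{n,j} = \id$, the change of variables $p = \varphi_{n,j}(q)$ gives $\Leb(U_{n,j}) = \int_{\supp\bplaque_{n,j}}|\det d_q\varphi_{n,j}|\,d\Leb(q)$ with $|\det d_q\varphi_{n,j}| = |\det d_{\varphi_{n,j}(q)}F_\ve^n|^{-1}$. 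From~\eqref{DerivF}, $\det dF_\ve = \partial_xf + \ve(\partial_xf\,\partial_\theta\omega - \partial_\theta f\,\partial_x\omega)$, so for $\ve$ small $|\det dF_\ve| \le C_0 := 2\|\partial_xf\|_\infty$ uniformly on $\bT^2$, whence $|\det d_pF_\ve^n| = \prod_{k=0}^{n-1}|\det d_{F_\ve^k(p)}F_\ve| \le C_0^n$. Setting $\lowerBoundWeight := C_0^{-1}\in(0,1)$ — a constant depending only on $f$ and $\omega$ — we obtain $\Leb(U_{n,j}) \ge \lowerBoundWeight^n\,\Leb(\supp\bplaque_{n,j})$.

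To conclude I would plug in the volume estimates of Lemma~\ref{lem:straight_rect_in_std_rect}: on one hand $\Leb(\stdr) \le 2\delta\patchheight\ve$; on the other, since $\supp\bplaque_{n,j}$ is a $(\narr_n,\shor_{n,j})$-standard rectangle with $\narr_n\le\narr\le 100$, the lower bound in~\eqref{eq:bound-on-Lebesgue-patch} gives $\Leb(\supp\bplaque_{n,j}) \ge \delta\patchheight\ve/(200\,\shor_{n,j})$. The remaining ingredient is $\shor_{n,j}\le C_{\clock}\shor$: by Proposition~\ref{lem:invariance_patches}(b), $\shor_{n,j} = \max\{M_{n,j}\shor,\minshor\}$, and since $\|\psi\|_\infty\le\Const$ forces $\|\zeta_n\|_\infty\le\clock\|\psi\|_\infty$ for $n\le\clockN$ while the remaining terms in the exponent of $M_{n,j}$ are bounded by a $\clock$-dependent constant, one gets $M_{n,j}\le C_{\clock}$; combined with $\minshor\le\shor$ (required for $\stdr$ to be a prestandard rectangle) this yields $\shor_{n,j}\le C_{\clock}\shor$. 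Assembling everything,
\[
  c_{n,j} \;\ge\; \frac{\exp(-2\patchheight\Rough)}{2\delta\patchheight\ve}\cdot\lowerBoundWeight^n\cdot\frac{\delta\patchheight\ve}{200\,C_{\clock}\,\shor} \;=\; \frac{C_{\Rough,\clock}\,\lowerBoundWeight^n}{\shor},
\]
with $C_{\Rough,\clock} = \exp(-2\patchheight\Rough)/(400\,C_{\clock})$, which has exactly the asserted dependence on $\Rough$ and $\clock$.

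There is no serious obstacle here; the two points demanding mild care are the a priori bound $\shor_{n,j}\le C_{\clock}\shor$ (resting on $\|\zeta_n\|_\infty\le\Const\clock$ over $n\le\clockN$ steps, i.e.\ on the boundedness of $\psi$) and the bookkeeping that $c_{n,j}$ and the partition $\{U_{n,j}\}$ of $\stdr$ are precisely those supplied by Proposition~\ref{lem:invariance_patches}(a). Note that the Jacobian $|\det dF_\ve^n|$ can be as large as $\approx\lambda^{\clock/\ve}$, which is \emph{not} a constant; this is harmless because it is compensated by the correspondingly many pieces of the partition of $\stdr$ and is entirely absorbed into $\lowerBoundWeight^n$ — no distortion control on $F_\ve^n$ is needed, only the uniform bound on a single step.
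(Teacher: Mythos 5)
Your proof follows essentially the same route as the paper's: change of variables via $\varphi_{n,j}$, the pointwise density lower bound $\rho\ge e^{-2\patchheight\Rough}/\Leb(\stdr)$ from Lemma~\ref{lem:split_density_leb}, and the Lebesgue volume estimates of~\eqref{eq:bound-on-Lebesgue-patch}, with the $\lowerBoundWeight^n$ coming from a uniform one-step Jacobian bound. The only (harmless) deviations are that you bound $|\det dF_\ve^n|\le(2\|\partial_xf\|_\infty)^n$ using multiplicativity of the determinant, where the paper uses the cruder $|\det dF_\ve^n|\le\|dF_\ve\|_\infty^{2n}$ (so your $\lowerBoundWeight$ is slightly larger), and you route the bound $\shor_{n,j}\le C_\clock\shor$ through Proposition~\ref{lem:invariance_patches}(b) and the boundedness of $\zeta_n$, whereas the paper just uses the $j$-independent $\shor_n=e^{\centreMaxExp n\ve}\shor\le e^{\centreMaxExp\clock}\shor$ from part (a).
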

\begin{proof}
  To simplify the notation, let
  $\bar{\stdr}_{n,j} = \supp{\bplaque_{n,j}}$ as in the proof of
  Proposition~\ref{lem:invariance_patches}; then by the change-of-variables
  formula, since $c_{n,j} = \mu_{\plaque}(\varphi_{n,j}\bar{\stdr}_{n,j})$:
	\begin{equation}\label{eq:weight_formula}
      c_{n,j} =\int_{\bar\stdr_{n,j}}\rho \circ \varphi_{n,j} \det
      d\varphi_{n,j}d\Leb \ge \Leb(\bar\stdr_{n,j}) \inf_{\stdr} \rho \|dF_\eps\|_\infty^{-2n}.
	\end{equation}
	On the one hand, by~\eqref{eq:bound-on-Lebesgue-patch}, we have
    $\Leb\stdr\le 2\delta\patchheight\eps$,
    hence~\eqref{eq:patch_density_bds} implies that
	\begin{equation}\label{eq:rho_lower_bd_vei}
      \inf_{\stdr} \rho \ge e^{-2\patchheight R}/ \Leb(\stdr)\ge C_{\Rough}\vei.
	\end{equation}
	On the other hand, again by~\eqref{eq:bound-on-Lebesgue-patch},
    and the bound on $Z_{n}$, we have
    \begin{align*}
      \Leb \bar\stdr_{n,j} &\ge e^{-\centreMaxExp n\ve
      }\frac{\delta\patchheight\ve}{200\shor}
    \end{align*}
    (recall that $\narr\le 100$); by combining this bound
    with~\eqref{eq:weight_formula} and~\eqref{eq:rho_lower_bd_vei} and
    recalling that $n < T\vei$, it follows that
    $c_{n,j}\ge C_{\Rough,T} \|dF_\eps\|_\infty^{-2n}/\shor $, as needed.
\end{proof}
We now fix $\minshor$ and $\Econst$ so that
Proposition~\ref{lem:invariance_patches} holds.
\section{Averaged motion and large deviations}\label{sec:averaging}
In this section we describe the relation between the averaged and the
actual dynamics for sufficiently long time-scales.  To make this
relation quantitative in the mostly expanding scenario that we study
in this paper, we proceed to prove a Lemma analogous to~\cite[Lemma
7.2]{MR3556527} and state some other useful results corresponding to
some Lemmata given in~\cite[Section 7]{MR3556527}.  The proofs are a
relatively straightforward adaptation of the arguments used
in~\cite[Section 7]{MR3556527}.

Recall the notation $\theta_{n}$, $\zeta_{n}$ defined
in~\eqref{eq:zeta_n_def}.  Let $\bandh>0$ be small, and define
$H_\bandh=\{\theta\ :\ |\theta - \theta_-|<\bandh\}$.  Recall also
that we fixed $\clock$ near the end of the previous section; we will
assume $\clock$ to be so large that all statement in this section hold
true.
\begin{lem}\label{lem:large-dev-near-sink}
  Let $\clock$ be sufficiently large, $h$ and $\eps$ sufficiently
  small; then for any \regular{} standard pair $\ell$ supported on
  $\bT\times H_{\bandh}$:
  \begin{align*}
    \mu_{\ell}(\theta_{\clockN} \in H_{3\bandh/4},\,\, \zeta_{\clockN} \geq 9\clock/16) \geq 1-\exp (-\const \eps^{-1}).
  \end{align*}
\end{lem}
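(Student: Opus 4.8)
The plan is to mirror the argument of~\cite[Lemma~7.2]{MR3556527}, adapting it to the mostly expanding regime; the only genuinely new feature is that one must track the accumulated centre expansion $\zeta_{\clockN}$ alongside the slow variable $\theta_{\clockN}$. The underlying mechanism is as follows. Since $\omega\in\Omega_{1}^{*}$, the averaged field $\bar\omega$ has $\bar\omega'(\theta_-)<0$ and $\theta_-$ is its unique sink, so for $\bandh$ small the interval $H_{2\bandh}$ is forward invariant under $\dot\theta=\bar\omega(\theta)$ and every orbit in it converges monotonically to $\theta_-$; by~\cite[Theorem~2.1]{DL} (see also~\cite{MR3842060}) the slow variable $\theta_n$ shadows this flow on timescales $\cO(\vei)$, so for $\clock$ large the corresponding averaged solution has relaxed well inside $H_{\bandh/2}$ by flow-time $\clock$. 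On the other hand $\psi\in\cC^3$ and $\bar\psi$ is continuous with $\bar\psi(\theta_-)\ge\tfrac34$ (Remark~\ref{rmk:psi_regularity}), so after possibly shrinking $\bandh$ we may assume $\bar\psi(\theta)\ge\tfrac{11}{16}$ on $H_{2\bandh}$; hence whenever the orbit stays in $H_{2\bandh}$ the sum $\zeta_n=\eps\sum_{k<n}\psi\circ F_\eps^k$ grows at average rate $\ge\tfrac{11}{16}\eps$ per step, giving $\zeta_{\clockN}\approx\tfrac{11}{16}\clock$, comfortably above $\tfrac9{16}\clock$. It remains to upgrade these heuristics to exponential-in-$\vei$ probability bounds.

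For the confinement and relaxation of $\theta$, I would invoke the block/averaging estimates of~\cite{MR3842060}, adapted along the lines of~\cite[Section~7]{MR3556527}: partition $\{0,\dots,\clockN\}$ into blocks of length $N\asymp\sqrt{\vei}$, and use that the push-forward of a \regular{} standard pair disintegrates again as a family of \regular{} standard pairs (Proposition~\ref{proposition:invariance}), together with exponential mixing of the $\cC^4$ expanding maps $f_\theta$, to show that the conditional increment of $\theta$ over a block equals $N\eps\,\bar\omega(\theta)+\cO(N\eps^2)$ plus a martingale increment with sub-Gaussian tails of scale $\cO(\sqrt N\,\eps)$. With the Lyapunov function $V(\theta)=d(\theta,\theta_-)^2$ one has $\bar\omega(\theta)(\theta-\theta_-)\le -cV(\theta)$ on $H_{2\bandh}$ for some $c>0$, so $\exp(\lambda\vei\,V(\theta_{jN}))$ is an approximate supermartingale for a suitable $\lambda>0$, and the exponential maximal inequality gives
\begin{align*}
  \mu_\ell\!\left(\sup_{n\le\clockN} d(\theta_n,\theta_-)\ge 2\bandh\right)\le\exp(-\const\vei).
\end{align*}
Combining this with the strictly negative drift of $V$ accumulated over flow-time $\clock$ (which, $\clock$ being large, brings the averaged orbit into $H_{\bandh/2}$) and one more application of the same estimate at time $\clockN$, we obtain $\theta_{\clockN}\in H_{3\bandh/4}$ off an exceptional set of $\mu_\ell$-measure $\le\exp(-\const\vei)$.

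Finally, on the event $\Omega_{\mathrm{good}}=\{d(\theta_n,\theta_-)<2\bandh\ \forall\,n\le\clockN\}$ I would estimate $\zeta_{\clockN}$ by the same block decomposition applied to the $\cC^3$ observable $\psi-\bar\psi$, which has zero average against $\rho_\theta$: writing $\zeta_{\clockN}=\eps\sum_{k<\clockN}\bar\psi(\theta_k)+\eps\sum_{k<\clockN}(\psi-\bar\psi)\circ F_\eps^k$, the first sum is $\ge\tfrac{11}{16}\eps\clockN\ge\tfrac{11}{16}\clock-\cO(\eps)$ on $\Omega_{\mathrm{good}}$ (since $\eps\clockN\in[\clock-\eps,\clock]$), while the second is a martingale-plus-negligible-drift term of fluctuation scale $\cO(\sqrt{\clock\eps})$, so
\begin{align*}
  \mu_\ell\!\left(\Big|\eps\sum_{k<\clockN}(\psi-\bar\psi)\circ F_\eps^k\Big|\ge\tfrac1{16}\clock\right)\le\exp(-\const\vei).
\end{align*}
A union bound over the three exceptional events then yields $\theta_{\clockN}\in H_{3\bandh/4}$ and $\zeta_{\clockN}\ge\tfrac{10}{16}\clock-\cO(\eps)\ge\tfrac9{16}\clock$ outside a set of $\mu_\ell$-measure $\le\exp(-\const\vei)$ (for $\eps$ small), as claimed. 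The main obstacle is establishing the uniform sub-Gaussian control of the block increments starting from the merely Hölder, non-Markovian standard-pair dynamics: this is exactly where the averaging machinery of~\cite{MR3842060} and the block arguments of~\cite[Section~7]{MR3556527} do the heavy lifting, and the mostly-expanding adaptation amounts simply to carrying the additional $\cC^3$ observable $\psi$ through the same estimates. Note that condition~\ref{a_noNearCobo} — crucial elsewhere for the sharp decay rate — is not needed here, since for $\zeta_{\clockN}$ only an \emph{upper} bound on the fluctuations is required.
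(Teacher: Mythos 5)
Your plan is correct in spirit but takes a noticeably heavier route than the paper. The paper's proof is short: it introduces the event
\[
  \mathcal{S} = \Big\{p_*\in\supp\ell\ :\ \sup_{t\in[0,\clock]}|\theta_\eps(t,p_*)-\bar\theta(t,\theta_*)| < \bandh/4,\ \ \sup_{t\in[0,\clock]}|\zeta_\eps(t,p_*)-\bar\zeta(t,\theta_*)| < \clock/16\Big\},
\]
invokes the Large Deviation Principle \cite[Theorem~6.1]{MR3556527} once to obtain $\mu_\ell(\mathcal S)\ge 1-\exp(-\const\vei)$, and then deduces both conclusions for every $p\in\mathcal S$ by triangle inequalities, having chosen $\bandh$ so that $\bar\psi>5/8$ on $H_\bandh$ and $\clock$ so that $\bar\theta(\clock,\cdot)(H_\bandh)\subset H_{\bandh/2}$. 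Your proposal instead re-derives the exponential concentration from scratch, via block decompositions, a Lyapunov-function supermartingale for confinement, and a separate martingale estimate for $\zeta$; that is essentially how LDPs of this flavour are proved in \cite{MR3842060,MR3556527}, so you are rebuilding the very theorem the paper simply cites. You correctly flag the uniform sub-Gaussian control of block increments along standard-pair dynamics as the main obstacle; in a write-up one should identify and cite the packaged statement, which already gives simultaneous $\cC^0$-control of the pair $(\theta_\eps,\zeta_\eps)$ on $[0,\clock]$. A small caveat: the paper's argument does explicitly invoke condition~\ref{a_noNearCobo} (via its implication of condition (A1') of \cite{MR3556527}, satisfied with $\psi$ in place of $\psi_*$), since it is a hypothesis of the cited LDP; your remark that the cohomology condition can be dropped for the upper tail is mathematically plausible if one reproves the estimate, but it is not how the paper reasons. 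Otherwise your numerical margins ($\bar\psi\ge 11/16$, fluctuation $\le\clock/16$, hence $\zeta_{\clockN}\ge 10\clock/16>9\clock/16$) differ slightly from the paper's ($\bar\psi>5/8$, same tolerance, giving exactly $9\clock/16$) but are equally sufficient.
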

\begin{proof}
  Let $\bar\theta(\cdot,\theta_{*})$ denote the solution
  to~\eqref{avgODE} with initial condition
  $\bar\theta(0,\theta_{*}) = \theta_{*}$ and
  $\bar\zeta(t,\theta_{*})$ be given by the integral
  \begin{align*}
    \bar\zeta(t,\theta_{*}) &= \int_{0}^{t}\bar\psi(\bar\theta(t',\theta_{*}))dt'.
  \end{align*}
  Recall (see~\eqref{avgODE}) that $\theta_{-}$ is a sink for the
  averaged dynamics; hence, for $\bandh$ is sufficiently small,
  $H_\bandh$ is forward-invariant for the averaged dynamics.  Recall
  moreover (see Remark~\ref{rmk:psi_regularity}) that
  $\bar\psi(\theta_-) \ge 3/4$; by continuity of $\bar \psi$, we can
  thus choose $\bandh$ sufficiently small so that
  $\bar \psi(\theta) > 5/8$ for all $\theta \in H_{\bandh}$.
  Moreover, we can choose $\clock>0$ sufficiently large so that
  $\bar\theta(\clock,\cdot)(H_\bandh)\subset H_{\bandh/2}$.

  Next, let $\ell$ be a regular standard pair supported on
  $\bT\times H_\bandh$; recall the definitions of the interpolations
  $\theta_{\ve}$ and $\zeta_{\ve}$
  (see~\eqref{eq:interpolating-definition} and
  below~\eqref{eq:zeta_n_def}) and define the set
  \begin{align*}
    \mathcal{S}=\Big\{p_{*} =(x_{*}, \theta_{*}) \in\, \text{supp}\, {\ell} \, :&\sup_{t \in [0,\clock]} |\theta_\eps(t,p_{*})-\bar\theta(t,\theta_{*})|<\bandh/4,\\&\sup_{t \in [0,\clock]} |\zeta_\eps(t,p_{*})-\bar\zeta(t,\theta_{*})|<\clock/16 \Big\}.
  \end{align*}
  Recall also that assumption~\ref{a_noNearCobo} is satisfied with $\psi$ in place of $\psi_*$, so $(\omega,\psi)$ satisfies~\cite[condition (A1')]{MR3556527}. Thus by the Large Deviation Principle (\cite[Theorem 6.1]{MR3556527}) we
  conclude that, if $\ve$ is sufficiently small,
  $\mu_\ell(\mathcal{S})\geq 1-\exp(-\const \eps^{-1})$; the lemma
  follows if we show that $\theta_{\clockN}(p) \in H_{3\bandh/4}$ and
  $\zeta_{\clockN}(p)\geq 9\clock/16$ for any $p\in\mathcal S$ and
  sufficiently small $\eps$.

  By definition of $\mathcal{S}$, and our choice of $\clock$ we have
  $\theta_{\clockN}(\mathcal{S})\subset H_{3\bandh/4}$.  Moreover,
  by our assumption on $\bandh$, we have $\bar\zeta(t,\theta_{*}) >
  5t/8$ for any $\theta_{*}\in H_{\bandh}$.  Hence, by definition of
  $\mathcal{S}$ we conclude that for any $p\in\mathcal S$:
  \begin{align*}
    \zeta_{\clockN}(p) > \frac58\clock -\frac{\clock}{16} &\geq
    \frac{9\clock}{16}. \qedhere
  \end{align*}
\end{proof}
Next, we proceed to describe the $\vei\log\vei$-timescale dynamics of
standard pairs: more precisely, for $\logstep > 0$ we will study the
dynamics after $\lfloor \logstep \log\vei\rfloor\clockN$ iterates,
\nomenclature[3a]{$\logstep$}{Number of time steps of size $\log\vei$}
The first lemma below is a restatement of~\cite[Lemma 7.4]{MR3556527}
adapted from the mostly contracting case and describes the dynamics of
standard pairs supported near the sink; the second lemma is a
restatement of~\cite[Lemma 7.5]{MR3556527} and shows that arbitrary
standard pairs tend to the sink.

\begin{lem}\label{lem:piling-up-near-the-sink}
  Let $\clock > 0$ be sufficiently large; then there exists
  $C,\logstep_{*} > 0$ such that if $\ve$ is sufficiently small, for
  any \regular{} standard pair supported on $\bT\times H_{\bandh}$ and any
  $\logstep\ge \logstep_{*}$:
  \begin{align*}
    \mu_{\ell}\big(\theta_{\lfloor V\log\vei\rfloor\clockN }\in
    B(\theta_{-},C\sqrt{\ve})\big)\ge \frac23.
  \end{align*}
\end{lem}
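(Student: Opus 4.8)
The plan is to work at the timescale of $\clockN$ iterates and to track the pushforward measures $\mu_{k}:=F^{k\clockN}_{\ve*}\mu_{\ell}$ for $0\le k\le n_{*}$, where $n_{*}:=\lfloor\logstep\log\vei\rfloor$. By Remark~\ref{rem:regular-pair-attracting}, $F^{\clockN}_{\ve*}$ sends families of \regular{} standard pairs to families of \regular{} standard pairs, so each $\mu_{k}$ admits a disintegration $\mu_{k}=\int\mu_{\ell_{\alpha}}\,d\prob(\alpha)$ into \regular{} standard pairs. Set $\phi(\theta):=\min\{d(\theta,\theta_{-})^{2},\bandh^{2}\}$ and $\Phi_{k}:=\int_{\bT^{2}}\phi\circ\pi_{2}\,d\mu_{k}$. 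By Markov's inequality it is enough to prove $\Phi_{n_{*}}\le C_{0}\ve$ for some constant $C_{0}$: then, for $\ve$ so small that $C\sqrt{\ve}<\bandh$, we get $\mu_{n_{*}}(\{d(\cdot,\theta_{-})\ge C\sqrt{\ve}\})\le\Phi_{n_{*}}/(C^{2}\ve)\le C_{0}/C^{2}\le 1/3$ as soon as $C\ge\sqrt{3C_{0}}$.

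The core of the argument is the one-block recursion
\begin{equation}\label{eq:pilingup-recursion}
  \Phi_{k+1}\le\kappa^{2}\,\Phi_{k}+C_{1}\ve,\qquad 0\le k<n_{*},
\end{equation}
for suitable $\kappa=\kappa(\clock,\bandh)\in(0,1)$ and $C_{1}>0$. To prove it I disintegrate $\mu_{k}=\int\mu_{\ell_{\alpha}}\,d\prob(\alpha)$ and split the family according to whether $\supp\ell_{\alpha}\subset\bT\times H_{\bandh}$ or not. Since a \regular{} standard pair has $\theta$-extent at most $\ve\scc1\delta$, any pair meeting $\bT\times H_{\bandh}^{c}$ lies entirely in $\bT\times H_{3\bandh/4}^{c}$; combining this with Lemma~\ref{lem:large-dev-near-sink} one checks, by induction on $k$, that the total mass of the pairs not supported in $\bT\times H_{\bandh}$ is at most $\exp(-\const\vei)$ for every $k\le n_{*}$ (each block adds at most $\exp(-\const\vei)$ to this mass, and $n_{*}=\cO(\log\vei)$); those pairs contribute at most $\bandh^{2}\exp(-\const\vei)\le\ve$ to $\Phi_{k+1}$. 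Hence~\eqref{eq:pilingup-recursion} reduces to the per-pair estimate: for every \regular{} standard pair $\ell'$ with $\supp\ell'\subset\bT\times H_{\bandh}$,
\begin{equation}\label{eq:pilingup-perpair}
  \int\phi\circ\pi_{2}\,d(F^{\clockN}_{\ve*}\mu_{\ell'})\le\kappa^{2}\int\phi\circ\pi_{2}\,d\mu_{\ell'}+C_{1}\ve .
\end{equation}

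To establish~\eqref{eq:pilingup-perpair} I would combine two facts. First, since $\theta_{-}$ is a hyperbolic sink of the averaged flow~\eqref{avgODE} and $\bandh$ is small, the time-$\clock$ map $\theta_{*}\mapsto\bar\theta(\clock,\theta_{*})$ leaves $H_{\bandh}$ invariant and obeys $d(\bar\theta(\clock,\theta_{*}),\theta_{-})\le\kappa_{0}\,d(\theta_{*},\theta_{-})$ with $\kappa_{0}=e^{\bar\omega'(\theta_{-})\clock/2}<1$, so $\kappa_{0}$ can be made as small as we wish because $\clock$ has been fixed sufficiently large. Second, a second-moment tracking bound $\int|\theta_{\clockN}-\bar\theta(\clock,\pi_{2})|^{2}\,d\mu_{\ell'}\le\const\ve$, which follows — as in~\cite[Section 7]{MR3556527} — from the large deviation principle~\cite[Theorem 6.1]{MR3556527} by integrating its Gaussian-type tail along the averaged path; together with $\mu_{\ell'}(\theta_{\clockN}\notin H_{\bandh})\le\exp(-\const\vei)$, which is Lemma~\ref{lem:large-dev-near-sink}. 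On $\{\theta_{\clockN}\in H_{\bandh}\}$ one has $\phi(\theta_{\clockN})=d(\theta_{\clockN},\theta_{-})^{2}$; using $d(\theta_{\clockN},\theta_{-})\le d(\bar\theta(\clock,\pi_{2}),\theta_{-})+|\theta_{\clockN}-\bar\theta(\clock,\pi_{2})|$ and $2ab\le\tau a^{2}+\tau^{-1}b^{2}$, then integrating, adding $\|\phi\|_{\infty}\exp(-\const\vei)\le\ve$ for the complementary event, and finally choosing $\tau$ small enough that $\kappa^{2}:=(1+\tau)\kappa_{0}^{2}<1$ yields~\eqref{eq:pilingup-perpair} (here one also uses that $\supp\ell'\subset\bT\times H_{\bandh}$, so $\int\phi\circ\pi_{2}\,d\mu_{\ell'}=\int d(\pi_{2},\theta_{-})^{2}\,d\mu_{\ell'}$).

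Iterating~\eqref{eq:pilingup-recursion} from $\Phi_{0}\le\bandh^{2}$ gives $\Phi_{n_{*}}\le\kappa^{2n_{*}}\bandh^{2}+C_{1}\ve/(1-\kappa^{2})$; since $n_{*}\ge\logstep_{*}\log\vei-1$, taking $\logstep_{*}:=1/\log\kappa^{-1}$ forces $\kappa^{2n_{*}}\le\kappa^{-2}\ve^{2}\le\ve$ for small $\ve$, so $\Phi_{n_{*}}\le C_{0}\ve$ with $C_{0}=\bandh^{2}+C_{1}/(1-\kappa^{2})$, and the lemma follows with $C=\sqrt{3C_{0}}$. I expect the main obstacle to be~\eqref{eq:pilingup-perpair}: one must marry the contraction of the averaged flow near the sink with a genuinely quantitative, $L^{2}$-scale control of the fluctuations of the slow variable over an $\cO(1)$ window of slow time — this is precisely where the strength of the large deviation estimate matters — while carrying the disintegration bookkeeping (in particular the $\cO(\ve)$-thinness argument that keeps the escaped mass exponentially small) through every one of the $\cO(\log\vei)$ blocks.
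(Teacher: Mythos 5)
The paper does not prove this lemma from scratch: its ``proof'' is a translation argument that reduces the statement to~\cite[Lemma~7.4]{MR3556527}, noting that the cited proof only concerns the dynamics of the slow variable $\theta$ and is therefore insensitive to whether the centre direction is mostly contracting or mostly expanding.  Your submission is therefore not a rival to an argument in the present paper; it is a reconstruction of the cited proof.  With that caveat, the strategy you propose --- iterate a drift inequality $\Phi_{k+1}\le\kappa^{2}\Phi_{k}+C_{1}\ve$ for the truncated second moment $\Phi_{k}=\int\min\{d(\pi_{2},\theta_{-})^{2},\bandh^{2}\}\,dF^{k\clockN}_{\ve*}\mu_{\ell}$ over $\cO(\log\vei)$ blocks of length $\clockN$, with the averaged flow supplying the factor $\kappa_{0}<1$ and the LDP supplying the $\cO(\ve)$ second-moment fluctuation term, then conclude by Markov --- is exactly in the spirit of what~\cite{MR3556527} does, and the bookkeeping steps you sketch (controlling the escaped mass block by block via Lemma~\ref{lem:large-dev-near-sink}, using the $\cO(\ve)$-thinness of standard pairs, and choosing $\logstep_{*}=1/\log\kappa^{-1}$) all go through.

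The one point that deserves more care is the second-moment tracking bound
$\int|\theta_{\clockN}-\bar\theta(\clock,\pi_{2})|^{2}\,d\mu_{\ell'}\le\Const\ve$.
You invoke the large deviation estimate~\cite[Theorem~6.1]{MR3556527} and then ``integrate the Gaussian tail,'' but this presupposes that the Gaussian bound
$\mu_{\ell'}\bigl(\sup_{t\le\clock}|\theta_{\ve}(t)-\bar\theta(t,\pi_{2})|>a\bigr)\le C_{1}e^{-C_{2}a^{2}/\ve}$
holds uniformly down to the diffusive scale $a\asymp\sqrt\ve$.  Large deviation principles are typically only stated (and only needed elsewhere in the paper, e.g.\ in Lemma~\ref{lem:large-dev-near-sink}) for deviations $a$ of order one; if the estimate in~\cite{MR3556527} carried a lower cutoff on $a$ at a scale larger than $\sqrt\ve$, the tail integral would give $\cO(\ve)$ only up to an extra polylogarithmic factor, which would propagate through the iteration and force a weaker choice of $\logstep_{*}$.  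The cited Theorem~6.1 does in fact deliver the Gaussian bound at the required scale --- it is close in spirit to the local CLT of~\cite[Theorem~6.8]{MR3556527} used in Lemma~\ref{lem:lclt_corol} --- but this range of validity is precisely what you should state and verify explicitly rather than fold into the phrase ``integrating its Gaussian-type tail.''  With that step spelled out, the argument is complete and gives a correct self-contained proof of the lemma.
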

\begin{proof}
  The statement follows immediately by~\cite[Lemma 7.4]{MR3556527},
  which is stated in a slightly different language.
  In~\cite{MR3556527}, the following notion was introduced: we say
  that a standard pair is \emph{located at $U\subset\bT$} if the
  average of the random variable $\theta(\cdot)$ with respect to
  $\mu_{\ell}$ belongs to $U$.  Clearly, if a standard pair is
  supported on $\bT\times U$, then it is located at $U$; on the other
  hand, if it is located at $U$, then it is supported on
  $\bT\times \hat U$ where $\hat U$ is an $O(\ve)$-neighbourhood of
  $U$.

  Next, observe that the symbol $H_k$ appearing in~\cite[Lemma
  7.4]{MR3556527} is defined above~\cite[(6.14)]{MR3556527} as an
  appropriate $\cO(1)$-neighbourhood of $\theta_{k,-}$, where
  $\theta_{k,-}$ is the $\theta$-coordinate of the $k$-th sink.  In
  the present paper we only have one sink, so we can take
  $\theta_{k,-} = \theta_{-}$.  The choice of the size of the
  neighbourhood is made in~\cite[Lemma 6.14]{MR3556527}, which is the
  analogue of Lemma~\ref{lem:large-dev-near-sink} in this paper; as in
  Lemma~\ref{lem:large-dev-near-sink}, the choice is made to ensure
  that $\bar\Psi$ is sufficiently close to the value of the same
  function at the sink.  This choice, on the other hand, does not play
  any role in the proof of~\cite[Lemma 7.4]{MR3556527}, which only
  deals with the dynamics of the variable $\theta$.  We conclude that
  the statement of~\cite[Lemma 7.4]{MR3556527} holds for our system,
  when we replace $H_{k}$ with
  $H_\bandh=\{\theta\ :\ |\theta - \theta_-|<\bandh\}$ defined above.

  Combining these observations then yields
  Lemma~\ref{lem:piling-up-near-the-sink}.
\end{proof}
\begin{lem}\label{lem:capture-by-the-sink}
  Let $\clock > 0$ be sufficiently large: under
  assumption~\ref{a_noNearCobo}, there exists $\beta > 0$ and
  $\logstep_{0} > 0$ such that if $\ve$ is sufficiently small, for any
  \regular{} standard pair $\ell$ and any
  $n \ge {\lfloor\logstep_{0}\log\vei\rfloor\clockN}$ we have:
  \begin{align*}
    \mu_{\ell}\big(\theta_n \nin H_{3h/4}\big) < \ve^{\beta}.
  \end{align*}
\end{lem}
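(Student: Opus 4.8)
The plan is to follow the strategy of~\cite[Lemma 7.5]{MR3556527}, the structural simplification here being that the averaged dynamics $\dot\theta=\bar\omega(\theta)$ has a single sink $\theta_-$ and a single non-degenerate source $\theta_+$, so the target region is simply $H_{3\bandh/4}$. Throughout, Remark~\ref{rem:regular-pair-attracting} lets us regard each $F_{\ve*}^{k\clockN}\mu_\ell$ as a family of \regular{} pairs, and we keep track of the total weight of those pairs whose support is \emph{not} contained in $\bT\times H_{3\bandh/4}$. Everything will follow by combining two facts, bookkept at the scale of blocks of $\clockN$ iterates. \emph{Confinement}: by Lemma~\ref{lem:large-dev-near-sink} together with Remark~\ref{rem:regular-pair-attracting}, a \regular{} pair supported on $\bT\times H_\bandh$ sends, after $\clockN$ iterates, all but a weight $\le\exp(-\const\vei)$ of its mass into \regular{} pairs supported on $\bT\times H_{3\bandh/4}\subset\bT\times H_\bandh$. \emph{Capture}: there are $\logstep_0,\kappa>0$ such that any \regular{} pair $\ell$ satisfies $\mu_\ell(\theta_{\lfloor\logstep_0\log\vei\rfloor\clockN}\in H_{\bandh/2})\ge 1-\ve^\kappa$.

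Granting these, the lemma is proved as follows. Run $\lfloor\logstep_0\log\vei\rfloor\clockN$ iterates: by \emph{Capture}, all but an $\ve^\kappa$-fraction of the mass now lives in \regular{} pairs supported on $\bT\times H_{\bandh/2}$. For all later times, \emph{Confinement} leaks weight out of $\bT\times H_{3\bandh/4}$ at rate at most $\exp(-\const\vei)$ per block, while applying \emph{Capture} afresh to the escaped \regular{} pairs returns all but an $\ve^\kappa$-fraction of any leaked mass to $\bT\times H_{\bandh/2}$ within $\cO(\log\vei)$ blocks. Summing the resulting geometric series, the weight outside $\bT\times H_{3\bandh/4}$ never exceeds $\ve^\kappa+\cO(\log\vei)\exp(-\const\vei)$ once $n\ge\lfloor\logstep_0\log\vei\rfloor\clockN$ (after possibly enlarging $\logstep_0$ so that the $\cO(\log\vei)$ return blocks for the initial leaked mass are already absorbed), which is $<\ve^\beta$ for any $\beta\in(0,\kappa)$ and all small $\ve$.

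It remains to prove \emph{Capture}, which is the heart of the matter. Since a standard curve varies by $\cO(\ve)$ in the $\theta$-coordinate, $\mu_\ell$ is essentially concentrated at a single value $\theta_0$. If $|\theta_0-\theta_+|\ge\ve^{1/2}$, then, $\theta_+$ being the unique non-degenerate source, the averaged orbit $\bar\theta(\cdot,\theta_0)$ leaves any fixed neighbourhood of $\theta_+$ within time $\tfrac{1+o(1)}{2\bar\omega'(\theta_+)}\log\vei$ and enters $H_{\bandh/4}$ after a further $\cO(1)$ time; the Large Deviation Principle \cite[Theorem 6.1]{MR3556527} (which applies with $\psi$ in place of $\psi_*$ by Remark~\ref{rmk:psi_regularity}) guarantees that $\theta_\ve$ shadows $\bar\theta$ within $\bandh/4$ over this whole time with probability $\ge 1-\exp(-\const\vei)$, so after $\asymp\vei\log\vei$ iterates the mass of $\ell$ lies in $\bT\times H_{\bandh/2}$ up to $\exp(-\const\vei)$.

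The remaining case $|\theta_0-\theta_+|<\ve^{1/2}$ requires showing that the orbit genuinely escapes the degenerate zone around $\theta_+$. Writing $Y_n=\theta_n-\theta_+$ and stopping at the first $n$ with $|Y_n|>\eta_{0}$ for a fixed small $\eta_{0}$, one has $Y_{n+1}=(1+\ve\bar\omega'(\theta_+))Y_n+\ve\phi_n+\cO(\ve Y_n^2+\ve^2)$ with $\phi_n=\omega\circ F_\ve^n-\bar\omega(\theta_n)$. Because assumption~\ref{a_noNearCobo} with $(a,b)=(1,0)$ forces the asymptotic variance of $\sum_{k<n}\omega(\cdot,\theta_+)\circ f_{\theta_+}^k$ under $\rho_{\theta_+}$ to be strictly positive, a second-moment estimate for the de-trended process $(1+\ve\bar\omega'(\theta_+))^{-n}Y_n$—using the decay of correlations of~\cite{MR3842060} to control the increments $\phi_n$, together with the anticoncentration of its limiting distribution—gives $|Y_n|\asymp\sqrt{\ve}\,(1+\ve\bar\omega'(\theta_+))^{n}$ with overwhelming probability; hence, for $C_{\mathrm{esc}}$ large enough, all but an $\ve^\kappa$-fraction of the mass of $\ell$ has left $B(\theta_+,\eta_{0})$ within $C_{\mathrm{esc}}\vei\log\vei$ iterates, after which it is absorbed into $\bT\times H_{\bandh/2}$ by the previous paragraph; one then takes $\logstep_0$ larger than $C_{\mathrm{esc}}$ plus the $\cO(1)$ contributions. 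This escape-from-the-source estimate is the principal obstacle, since the averaging principle is degenerate at $\theta_+$ and one really needs the diffusive behaviour supplied by~\ref{a_noNearCobo}.
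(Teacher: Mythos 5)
The paper disposes of this lemma by citing \cite[Lemma 7.5]{MR3556527} and translating notation (one sink instead of many, \(\hat{\mathbb H}\mapsto H_{3\bandh/4}\), parameter matching), then extending to all \(n\ge\lfloor\logstep_0\log\vei\rfloor\clockN\) by pushing \(\mu_\ell\) forward \(j\) times and applying the fixed-time cited lemma pair-by-pair. You instead reconstruct the content of that cited lemma from scratch, which is a genuinely different and more self-contained route; the Confinement + Capture decomposition is exactly the structure underlying \cite[Lemma 7.5]{MR3556527}, and you correctly identify that the escape-from-source estimate near \(\theta_+\) is the heart of the matter (the averaging principle is degenerate there, and you need the diffusion supplied by assumption~\ref{a_noNearCobo}). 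Two caveats on your sketch. First, the escape argument is over-compressed: a second-moment bound plus anticoncentration gives \(|Y_n|\gtrsim\sqrt\ve(1+\ve\bar\omega'(\theta_+))^n\) only with \emph{constant} probability, not ``overwhelming'' probability — the boost to failure probability \(\ve^\kappa\) comes from iterating \(\cO(\log\vei)\) roughly-independent escape attempts, which you allude to but do not spell out; moreover the anticoncentration needs a uniform CLT for the weighted sums \(\sum_k(1+\ve\bar\omega'(\theta_+))^{-k}\phi_k\), which is genuine technical work (this is precisely what \cite[Section 6]{MR3556527} supplies and why the paper cites rather than reproves). Second, and more mildly, your Confinement + geometric-series bookkeeping for the ``all \(n\)'' extension works but is more cumbersome than the paper's reduction (iterate \(j\) times first, then apply the fixed-time lemma to the resulting regular-pair family); using the latter would have spared you the renewal-style accounting of leaked-and-recaptured mass. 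Overall the proposal captures the right mechanism, but rigorously filling in the escape estimate would amount to re-deriving a substantial portion of \cite[Sections 6--7]{MR3556527}, and the paper's choice to cite is the economical one.
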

\begin{proof}
  By applying the same observations presented in the proof of
  Lemma~\ref{lem:piling-up-near-the-sink}, this lemma follows by
  applying~\cite[Lemma 7.5]{MR3556527}. Notice that~\cite[Lemma
  7.5]{MR3556527} is stated in terms of $\hat{\mathbb{H}}$, defined
  immediately below~\cite[(6.14)]{MR3556527}; since in this paper we
  only deal with one sink, we can replace $\hat{\mathbb{H}}$ with
  $H_{3\bandh/4}$.  Lemma~\ref{lem:capture-by-the-sink} follows
  by~\cite[Lemma 7.5]{MR3556527} as follows.  Set
  $T_{0} = T_{\textrm{S}}$, $V_{0} = \mathcal{R}_{\textrm{A}}$; then
  for any
  $n = j+{\lfloor\logstep_{0}\log\vei\rfloor\clockN} \ge
  {\lfloor\logstep_{0}\log\vei\rfloor\clockN}$, applying~\cite[Lemma
  7.5]{MR3556527} to each pair in a standard family representation of
  $F^{j}_{\ve*}\mu_{\ell}$ yields the required estimate.
\end{proof}
Combining Lemmata~\ref{lem:piling-up-near-the-sink}
and~\ref{lem:capture-by-the-sink} we obtain the following corollary,
that will be used in the sequel.
\begin{cor}\label{cor:convenient}
  There exists $C, \logstep > 0$ so that for any
  $n\ge \lfloor V\log\vei\rfloor\clockN$ and any regular standard pair
  $\ell$ we have:
\begin{align*}
  \mu_{\ell}\left(\theta_{n}\in B\left(\theta_{-},C\sqrt{\ve}\right)\right)\ge \frac12.
\end{align*}
\end{cor}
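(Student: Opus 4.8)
The plan is to combine Lemmata~\ref{lem:piling-up-near-the-sink} and~\ref{lem:capture-by-the-sink} through a two-stage argument: a long ``capture'' phase that drives almost all of the mass of an arbitrary \regular{} standard pair to be supported near the sink, followed by a ``piling-up'' phase of \emph{fixed} length. Let $C,\logstep_{*}$ be the constants furnished by Lemma~\ref{lem:piling-up-near-the-sink} and $\beta,\logstep_{0}$ those furnished by Lemma~\ref{lem:capture-by-the-sink}, and set $\logstep = \logstep_{0}+\logstep_{*}+1$. Given $n\ge\lfloor\logstep\log\vei\rfloor\clockN$, I would write $n = m_{0}+m_{1}$ with $m_{1} = \lfloor\logstep_{*}\log\vei\rfloor\clockN$ and $m_{0} = n-m_{1}$; the elementary inequality $\lfloor a+b+c\rfloor\ge\lfloor a\rfloor+\lfloor b\rfloor+\lfloor c\rfloor$ applied with $a=\logstep_{0}\log\vei$, $b=\logstep_{*}\log\vei$, $c=\log\vei>0$ shows that $m_{0}\ge\lfloor\logstep_{0}\log\vei\rfloor\clockN$, so Lemma~\ref{lem:capture-by-the-sink} is applicable at time $m_{0}$.

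In the capture phase, Lemma~\ref{lem:capture-by-the-sink} gives $(F_{\ve*}^{m_{0}}\mu_{\ell})(\{\theta\nin H_{3h/4}\}) = \mu_{\ell}(\theta_{m_{0}}\nin H_{3h/4}) < \ve^{\beta}$. By Remark~\ref{rem:regular-pair-attracting} I may disintegrate $F_{\ve*}^{m_{0}}\mu_{\ell} = \int_{\mathcal A}\mu_{\ell_{\alpha}}\,d\nu(\alpha)$ as a family of \regular{} standard pairs. The geometric ingredient I would use is that a \regular{} standard pair is supported on a $2$-standard curve, whose associated function $G$ satisfies $\|G^{(1)}\|_{\infty}\le\ve\scc1$ over an interval of length $\le\delta$; hence the $\theta$-extent of $\supp\ell_{\alpha}$ is at most $\ve\scc1\delta$. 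Thus, once $\ve$ is small enough that $\ve\scc1\delta < h/4$, each $\ell_{\alpha}$ is \emph{either} supported inside $\bT\times H_{\bandh}$ \emph{or} supported entirely outside $\bT\times H_{3h/4}$. Letting $\mathcal A_{\mathrm{bad}}$ denote the second family, one has $\mu_{\ell_{\alpha}}(\theta\nin H_{3h/4}) = 1$ for $\alpha\in\mathcal A_{\mathrm{bad}}$, and therefore $\nu(\mathcal A_{\mathrm{bad}})\le(F_{\ve*}^{m_{0}}\mu_{\ell})(\{\theta\nin H_{3h/4}\})<\ve^{\beta}$.

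In the piling-up phase, for each $\alpha\notin\mathcal A_{\mathrm{bad}}$ the pair $\ell_{\alpha}$ is a \regular{} standard pair supported on $\bT\times H_{\bandh}$, so Lemma~\ref{lem:piling-up-near-the-sink} (applied with $\logstep=\logstep_{*}$) gives $\mu_{\ell_{\alpha}}(\theta_{m_{1}}\in B(\theta_{-},C\sqrt{\ve}))\ge\tfrac23$. Since $F_{\ve*}^{n}\mu_{\ell} = \int_{\mathcal A}F_{\ve*}^{m_{1}}\mu_{\ell_{\alpha}}\,d\nu(\alpha)$, integrating over $\mathcal A$ and discarding the contribution of $\mathcal A_{\mathrm{bad}}$ yields
\begin{align*}
  \mu_{\ell}\big(\theta_{n}\in B(\theta_{-},C\sqrt{\ve})\big)\ \ge\ \tfrac23\big(1-\nu(\mathcal A_{\mathrm{bad}})\big)\ \ge\ \tfrac23(1-\ve^{\beta})\ \ge\ \tfrac12
\end{align*}
for $\ve$ sufficiently small. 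This is the assertion, with $C$ as in Lemma~\ref{lem:piling-up-near-the-sink} and $\logstep$ as fixed above.

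The one point that needs a little care is the conversion, inside the capture phase, of the measure-level estimate of Lemma~\ref{lem:capture-by-the-sink} into a bound on the $\nu$-mass of the bad standard pairs; this is exactly where the $\cO(\ve)$ bound on the $\theta$-extent of standard curves is decisive, since it forbids any standard pair from straddling $\partial H_{3h/4}$ and thereby makes the crude Markov-type estimate above sufficient. All remaining steps are direct applications of the two lemmata, so I do not expect any serious obstacle.
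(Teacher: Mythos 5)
Your proof is correct and fills in exactly the combination the paper has in mind: the paper states the corollary without explicit proof, simply citing Lemmata~\ref{lem:piling-up-near-the-sink} and~\ref{lem:capture-by-the-sink}. Your choice to split $n = m_{0} + m_{1}$ with the \emph{fixed} tail $m_{1} = \lfloor\logstep_{*}\log\vei\rfloor\clockN$ is the right way to reconcile the ``any $n$'' statement of Lemma~\ref{lem:capture-by-the-sink} with the ``time of the specific form $\lfloor V\log\vei\rfloor\clockN$'' statement of Lemma~\ref{lem:piling-up-near-the-sink}, and the observation that a \regular{} standard pair has $\theta$-extent $\cO(\ve)$ — hence cannot straddle the annulus $H_{\bandh}\setminus H_{3\bandh/4}$ — is precisely the ingredient needed to convert the measure bound into a bound on the mass of the bad pairs.
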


Finally we need the following result, which is an immediate
consequence of the Local Central Limit Theorem (\cite[Theorem
6.8]{MR3556527}) and the fact that $\theta_{-}$ is a sink:
\begin{lem}\label{lem:lclt_corol}
  Let $\clock$ be sufficiently large and $\eps$ sufficiently small.
  Under assumption~\ref{a_noNearCobo}, for any $C>0$ there exists
  $p > 0$ such that for any \regular{} standard pair supported on
  $\bT\times B(\theta_-,C\sqrt{\eps})$ and any interval
  $I\subset B(\theta_-,C\sqrt{\eps})$ such that $|I|\ge \eps$ we have
  $ \mu_{\ell}(\theta_{\clockN}\in I)\ge p\eps^{1/2}$.
\end{lem}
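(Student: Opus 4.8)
The plan is to obtain Lemma~\ref{lem:lclt_corol} as a direct consequence of the Local Central Limit Theorem for our system, i.e. \cite[Theorem 6.8]{MR3556527}, which applies here because the pair $(\omega,\psi)$ satisfies the non-degeneracy condition \cite[(A1')]{MR3556527}, as recorded in the proof of Lemma~\ref{lem:large-dev-near-sink} (using Remark~\ref{rmk:psi_regularity} and assumption~\ref{a_noNearCobo}). Applied to a \regular{} standard pair $\ell$ at the timescale $\clockN = \lfloor \clock\vei\rfloor$, that theorem tells us that, under $\mu_{\ell}$, the law of $\theta_{\clockN}$ is Gaussian at scale $\sqrt\ve$: there exist $m_{\ell}\in\bT$ and $\sigma_{\ell}^{2}>0$ so that, writing $\theta_{\clockN}=m_{\ell}+\sqrt\ve\,\xi_{\ell}$, the density of $\xi_{\ell}$ differs from the Gaussian density $s\mapsto(2\pi\sigma_{\ell}^{2})^{-1/2}e^{-s^{2}/2\sigma_{\ell}^{2}}$ by at most $\kappa_{\ve}$ in sup norm, with $\kappa_{\ve}\to0$ as $\ve\to0$, uniformly over all \regular{} standard pairs supported on $\bT\times B(\theta_{-},C\sqrt\ve)$ (for the last uniformity one applies the theorem to each pair of a standard family decomposition and uses that the constants in \cite[Theorem 6.8]{MR3556527} depend only on $f$, $\omega$ and $C$).

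First I would pin down the centering $m_{\ell}$ and the variance $\sigma_{\ell}^{2}$. Since $\theta_{-}$ is a sink for the averaged flow~\eqref{avgODE}, for $\bandh$ small the set $H_{\bandh}$ is forward invariant and contracted by the flow; hence, for $\ve$ small, $B(\theta_{-},C\sqrt\ve)\subset H_{\bandh}$ is forward invariant and $\bar\theta(\clock,\theta_{0})\in B(\theta_{-},C\sqrt\ve)$ for every $\theta_{0}\in B(\theta_{-},C\sqrt\ve)$. As the Local CLT identifies $m_{\ell}$, up to an error $\cO(\ve)$, with the $\mu_{\ell}$-average of $\theta_{0}\mapsto\bar\theta(\clock,\theta_{0})$, and $\supp\ell\subset\bT\times B(\theta_{-},C\sqrt\ve)$, we get $|m_{\ell}-\theta_{-}|\le 2C\sqrt\ve$ for $\ve$ small, hence $|m_{\ell}-\theta|\le 3C\sqrt\ve$ for every $\theta\in I\subset B(\theta_{-},C\sqrt\ve)$. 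For the variance, assumption~\ref{a_noNearCobo} applied with $(a,b)=(1,0)$ says $\omega(\cdot,\theta)$ is not cohomologous to a constant with respect to $f_{\theta}$, which forces the limiting variance in \cite[Theorem 6.8]{MR3556527} to be strictly positive; by continuity and compactness (and taking $\clock$ large) there are constants $0<\sigma_{-}\le\sigma_{+}$ with $\sigma_{\ell}^{2}\in[\sigma_{-}^{2},\sigma_{+}^{2}]$ for all such $\ell$. Consequently the Gaussian density above is bounded below by a constant $p_{0}=p_{0}(C)>0$ on the ball $B(0,3C)$.

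The conclusion is then a short computation. Given $I\subset B(\theta_{-},C\sqrt\ve)$ with $|I|\ge\ve$, set $\tilde I=\ve^{-1/2}(I-m_{\ell})$; then $\tilde I\subset B(0,3C)$ and $\Leb(\tilde I)=\ve^{-1/2}|I|\ge\sqrt\ve$. Using the sup-norm density estimate,
\begin{align*}
  \mu_{\ell}(\theta_{\clockN}\in I)=\mu_{\ell}(\xi_{\ell}\in\tilde I)
  \ge \int_{\tilde I}\frac{1}{\sqrt{2\pi\sigma_{\ell}^{2}}}e^{-s^{2}/2\sigma_{\ell}^{2}}\,ds-\Leb(\tilde I)\,\kappa_{\ve}
  \ge \Leb(\tilde I)\,(p_{0}-\kappa_{\ve}).
\end{align*}
For $\ve$ so small that $\kappa_{\ve}\le p_{0}/2$ this gives $\mu_{\ell}(\theta_{\clockN}\in I)\ge \tfrac12 p_{0}\,\Leb(\tilde I)\ge \tfrac12 p_{0}\sqrt\ve$, which is the assertion with $p=p_{0}/2$.

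The only genuine obstacle is bookkeeping about the cited statement: one must verify that \cite[Theorem 6.8]{MR3556527} is a bona fide \emph{local} limit theorem --- i.e. a statement at the level of densities of the rescaled slow variable, so that the error it produces is proportional to the length of the window $\tilde I$ (and hence negligible compared to the main term even for windows $I$ of size only $\gtrsim\ve$) --- and that this conclusion, together with the control on $m_{\ell}$ and $\sigma_{\ell}^{2}$, is uniform over all \regular{} standard pairs supported on $\bT\times B(\theta_{-},C\sqrt\ve)$. The non-degeneracy of the limiting variance, which is what keeps the Gaussian density bounded below, is precisely where assumption~\ref{a_noNearCobo} is used; everything else is elementary.
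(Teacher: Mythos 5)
Your proof is correct and coincides with the route the paper indicates: the paper offers no argument beyond remarking that the lemma is an immediate consequence of the Local CLT (\cite[Theorem 6.8]{MR3556527}) and the fact that $\theta_{-}$ is a sink, and you have filled in exactly those details. In particular you correctly identify where each hypothesis enters — nondegeneracy of the limiting variance from~\ref{a_noNearCobo} with $(a,b)=(1,0)$, control of the centering $m_\ell$ near $\theta_-$ from the sink/forward-invariance of $H_\bandh$, and the density-level conclusion of the local (as opposed to integral) CLT to handle windows of size comparable to $\eps$.
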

We hereby fix $\bandh$ to be small enough so that
Lemma~\ref{lem:large-dev-near-sink} holds.
\section{Patch families}\label{sec:patch-families}
We now define patch families, which relate to standard patches in the
same way that families of standard pairs relate to standard
pairs. Recall that Remark~\ref{rem:regular-pair-attracting} implies
that families of \regular{} standard pairs are invariant under the
dynamics.

We call a standard patch $\plaque=(\fstdr,\rho)$ \textit{regular} if
every standard pair obtained by disintegrating $\rho$ along the
foliation is\regular{} (that is, $\plaque$ is a
$((2,\shor),(3\rough_*/2),\Rough)$-standard patch for some
$\shor\ge \minshor$ and $\Rough>0$, where $\rough_{*}$ is given by
Proposition~~\ref{proposition:invariance}).  We denote with
$\plaqueSet$ the collection of all \regular{} standard patches.
\begin{mydef}
  A \textit{patch family} is a discrete\footnote{ The assumption about
    the discreteness of the probability space $\cG$ has the sole
    purpose of simplifying the exposition, as it will not be necessary
    in this paper to consider families that are not discrete.  Of
    course it is an inessential assumption. } probability space
  $\cG = (\mathcal{A},\lambda_{\mathcal{G}})$ together with a map
  $\plaque: \mathcal{A}\to \plaqueSet$. We will abuse terminology by
  referring to elements of $\plaque(\mathcal{A})$ as ``standard
  patches in $\cG$''.
\end{mydef}
Each patch family $\cG$ induces a Borel probability measure on $\T^2$
defined by
\begin{equation*}
	\mu_{\cG}(g) = \int_{\mathcal{A}} \mu_{\plaque(\alpha)}(g)d\lambda_{\cG}[\alpha].
\end{equation*}
for all continuous functions $g:\T^2 \to \R$.  Once again, it is
helpful to keep in mind that patch families can be regarded as random
elements in the space of regular standard patches.

We say that a Borel probability measure $\mu$ on $\T^2$ \textit{admits
  a disintegration as a patch family} if there exists a patch family
$\cG$ such that $\mu = \mu_{\cG}$. We then let $[\mu]$ denote the
equivalence class of all patch families $\cG$ such that
$\mu_{\cG} = \mu$.  Given a patch family $\cG$, we also introduce the
(mildly abusing) notation $[\cG] = [\mu_{\cG}]$ and
$[F^j_{\eps *} \cG] = [F^j_{\eps*} \mu_{\cG}]$.  Given a patch family
$\cG=((\mathcal{A},\lambda_{\cG}),\plaque)$ and a set
$\mathcal{A}'\subset \mathcal{A}$ with
$\lambda_{\cG}(\mathcal{A}')>0$, we define \textit{the subfamily
  conditioned on $\mathcal{A}'$} to be
$\cG|\mathcal{A}'=((\mathcal{A}',\lambda_{\cG|\mathcal{A}'}),\plaque|_{\mathcal{A}'})$,
where $\lambda_{\cG|\mathcal{A}'}(\cdot)=\lambda(\cdot|\mathcal{A}')$
and $\plaque|_{\mathcal{A}'}$ is the restriction of $\plaque$ to
$\mathcal{A}'$.

Given an (at most countable) collection of patch families
$\cG_i = (\mathcal{A}_i,\lambda_{\cG_i})$ with associated maps
$\plaque_i: \mathcal{A}_i\to \plaqueSet$, and weights $c_{i} \in [0,1]$
such that $\sum_i c_{i}=1$, the \textit{convex combination}
$\sum_i c_{i} \cG_i$ is the patch family
$\cG=(\mathcal{A}, \lambda_{\cG})$ defined by ``choosing the patch
family $\cG_i$ randomly with probability $c_{i}$". More precisely,
$\mathcal{A}= \{(\alpha,i): \alpha\in \mathcal{A}_i\}$ and
$\lambda_{\cG}$ and $\plaque$ are defined by
$\lambda_{\cG}(\{(\alpha,i)\})=c_{i} \lambda_{\cG_i}(\{\alpha\})$ and
$\plaque((\alpha,i))=\plaque_i(\alpha)$ for all
$(\alpha,i)\in \mathcal{A}$.

Let $\plaque = (\fstdr,\rho)$ be a \regular{} standard patch. We
define $\mathcal{\shor}(\plaque)$ to be the minimum
$\shor\ge \minshor$ such that $\fstdr$ is a foliated
$(2,\shor)$-standard rectangle; likewise, we define
$\mathcal{\Rough}(\plaque)$ to be the minimum $\Rough>0$ such that
$\rho$ is an $\Rough$-standard density.  Finally, we set
$\mathcal{M}(\plaque) =
\max\{\mathcal{\shor}(\plaque),\mathcal{\Rough}(\plaque)\}$.
\nomenclature[4aa]{$\mathcal{\shor}(\plaque)$}{Minimal $\shor$ such
  that $\plaque$ is a $((\narr,\shor),(\rough,\Rough))$-standard patch}
\nomenclature[4ab]{$\mathcal{\Rough}(\plaque)$}{Minimal $\Rough$ such
  that $\plaque$ is a $((\narr,\shor),(\rough,\Rough))$-standard patch}%
Given a patch family $\cG = ((\mathcal{A}, \lambda_\cG),\plaque)$ we
can naturally regard $\mathcal{\shor}$, $\mathcal{\Rough}$ and $\cM$
as random variables on the probability space
$(\mathcal{A}, \lambda_\cG)$ (by composing with the function
$\plaque$).

\begin{rmk}
  Let $\plaque$ be a regular standard patch; if we apply
  Lemma~\ref{lem:cutting-patches} to $\plaque$ with $m = \clockN$, we
  obtain a patch family $\cG$ of $\clockN$-adapted patches.  Observe
  that since $\minshor > e^{\centreMaxExp\clock}$ (assumption made in
  Proposition~\ref{lem:invariance_patches}), for any patch
  $\tilde{\plaque}$ in $\cG$, we have
  $\mathcal{\shor}(\tilde{\plaque}) = \mathcal{\shor}(\plaque)$ (and
  likewise for $\mathcal{\Rough}$ and $\cM$). In particular, if $\cG$
  is a patch family, then applying Lemma~\ref{lem:cutting-patches} to
  each patch in $\cG$ yields a family $\tilde\cG$ that is
  $\clockN$-adapted and so that the distributions of the random
  variables $\mathcal{\shor}$, $\mathcal{\Rough}$ and $\cM$ do not
  change.
\end{rmk}

Observe that the pushforward of a patch family by $F_{\ve}$ of is
itself a patch family (by Proposition~\ref{lem:invariance_patches}).
The ideal situation would be that patch families that satisfy a
certain \emph{uniform} bound on $\cM$ were invariant for the dynamics
(similarly to what was observed in
Remark~\ref{rem:regular-pair-attracting} for standard pairs).
However, due to the effectively random nature of the slow dynamics, we
need to pare down our expectations, and aim for a much weaker (but
still useful!) $L^{1}$-bound.  The following lemma is a first step in
establishing the invariance of a class of patch families that satisfy
a suitable $L^{1}$-bound (the curious reader can have a look at
Definition~\ref{def:proper_patch_family})
\begin{lem}\label{lem:patch_family_a_priori_bd}
  Assume $\clock$ is sufficiently large; then there exist constants
  $\alpha_1,\alpha_2>0$ such that the following holds for all $\eps$
  sufficiently small. Let $\plaque_0$ be a \regular{} standard patch,
  for any $n\ge 0$:
	\begin{enumerate}
    \item There exists a patch family $\cH\in [F_{\eps*}^n \plaque_0]$
      such that for any standard patch $\plaque$ in $\cH$, we have
      $\cM(\plaque)\le e^{\alpha_1 n\ve}(1+\cM(\plaque_0))$.
    \item If moreover $n \le \eps^{-3/2}$ and $\plaque_0$ is supported
      on $\bT\times H_\bandh$ (recall the definition of $H_{\bandh}$
      given above Lemma~\ref{lem:large-dev-near-sink}), then $\cH$ can
      be chosen such that
      \begin{align*}
        \lambda_{\cH}(\cM(\plaque)) \le \Const (e^{-\alpha_2
        n\ve}\cM(\plaque_0) + 1).
      \end{align*}
	\end{enumerate}
\end{lem}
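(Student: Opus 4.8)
The plan is to obtain both estimates by iterating Proposition~\ref{lem:invariance_patches} over blocks of $\clockN$ steps, since that proposition is our only tool and only controls the dynamics for $n\le\clockN$; write $n=m\clockN+r$ with $0\le r<\clockN$. A standard patch is in particular a prestandard patch (one has $\Stdc_{\narr}\subset\pStdc_{\narr}$, and the rectangle–density classes coincide), so the standard patches produced by one application of Proposition~\ref{lem:invariance_patches} can be fed back in; before each application I would cut into $\clockN$-adapted patches by Lemma~\ref{lem:cutting-patches} (legitimate because $\minshor>4e^{\centreMaxExp\clock}$), which changes none of $\shor,\Rough,\cM$. Since $\narr=2$ and $\rough=\tfrac32\rough_{*}$ are fixed points of the recursions in~\eqref{eq:std_patch_inv_weak_bds}, every patch appearing after the first block — and hence every standard pair obtained by disintegrating it along the foliation — is \regular{}; this both keeps the iteration going and, later, lets us invoke Lemma~\ref{lem:large-dev-near-sink}.

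For part~(a) I would just track the worst case block by block. One application with $n'\le\clockN$ steps multiplies $\shor$ by at most $e^{\centreMaxExp\clock}$, and — bounding $\zeta_{n'}\ge-\ve n'\|\psi\|\ge-\clock\|\psi\|$ in the expression for $M_{n',j}$ and using $\fakeCentreConst<\tfrac14$ — multiplies $\Rough$ by at most $e^{C_{0}\clock}$ (with $C_{0}$ depending only on $f,\omega$) and adds a $\clock$-dependent constant. Thus each block multiplies $\cM$ by at most $\Lambda:=e^{C_{0}\clock}\vee e^{\centreMaxExp\clock}$ and adds a constant, so $\cM(\plaque)\le\Lambda^{m+1}(\cM(\plaque_{0})+C_{\clock})\le 2\Lambda^{m+1}\cM(\plaque_{0})$, using $\cM(\plaque_{0})\ge\minshor\ge C_{\clock}$. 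Because $\clockN\ve\in[\tfrac12\clock,\clock]$ for $\ve$ small, one has $m+1\le\const\, n\ve/\clock$ up to the harmless partial block, whence $\Lambda^{m+1}\le\const\, e^{\alpha_{1}n\ve}$ with $\alpha_{1}=\const\, C_{0}$; every patch produced this way satisfies the asserted bound.

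For part~(b) the new ingredient is the large-deviation estimate of Lemma~\ref{lem:large-dev-near-sink}, which forces $\cM$ to contract on most of the mass near the sink. Starting from $\plaque_{0}$ (regular, supported on $\bT\times H_{\bandh}$), apply one block to a regular patch $\plaque$ supported on $\bT\times H_{\bandh}$ and examine the resulting $\bplaque_{j}$ with $\varphi_{j}\colon\supp\bplaque_{j}\to\supp\plaque$. On each preimage rectangle $\varphi_{j}\supp\bplaque_{j}$, $\zeta_{\clockN}$ oscillates by $\cO(\ve)$ (it is $\cO(\ve)$-constant along the exponentially contracted backward images of standard curves, and varies by $\cO(\ve)$ across the $\cO(\ve)$-tall centre direction by~\eqref{eq:dzeta_n_bd}), while $\theta_{\clockN}=\pi_{2}\circ F^{\clockN}_{\ve}$ maps it onto $\pi_{2}\bar\stdr_{j}$, an interval of length $\cO(\ve)$; hence whether $\varphi_{j}\supp\bplaque_{j}$ meets $\{\zeta_{\clockN}\ge\tfrac{9}{16}\clock\}\cap\{\theta_{\clockN}\in H_{3\bandh/4}\}$ — call such $\bplaque_{j}$ ``good'' — is all-or-nothing up to an $\cO(\ve)$ fuzz, and by Lemma~\ref{lem:large-dev-near-sink} applied to the \regular{} standard pairs disintegrating $\plaque$ (with its constants shrunk by $\cO(\ve)$ to absorb the fuzz) the bad patches carry total weight $\le e^{-\const\vei}$. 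For a good patch, $\sup_{\supp\bplaque_{j}}\zeta_{\clockN}\circ\varphi_{j}\ge\tfrac{9}{16}\clock$, so by Proposition~\ref{lem:invariance_patches}(b) and $\fakeCentreConst<\tfrac14$ we get $M_{\clockN,j}\le e^{-\clock/4}$ for $\clock$ large, hence $\shor_{\clockN,j}\le\max\{e^{-\clock/4}\shor,\minshor\}$ and $\Rough_{\clockN,j}\le\tfrac13\Rough+\Rough_{*}$, so $\cM(\bplaque_{j})\le\tfrac13\cM(\plaque)+\const$; moreover good patches are again regular and supported on $\bT\times H_{3\bandh/4}\subset\bT\times H_{\bandh}$, so the construction iterates.

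Running $m$ blocks this way, the ``good lineage'' obeys $\lambda(\cM)\le\tfrac13\lambda(\cM)_{\mathrm{prev}}+\const$, hence contributes $\le 3^{-m}\cM(\plaque_{0})+\const$ to $\lambda_{\cH}(\cM)$; the complementary ``bad lineage'' splits off with total weight $\le m\,e^{-\const\vei}$ and, by part~(a), consists of patches with $\cM\le e^{\alpha_{1}\clock m}(1+\cM(\plaque_{0}))$. Since $n\le\ve^{-3/2}$ forces $m\le\const\,\ve^{-1/2}/\clock$, the bad-lineage contribution is at most $m\,e^{-\const\vei}e^{\alpha_{1}\clock m}(1+\cM(\plaque_{0}))\le e^{-\alpha_{2}n\ve}(1+\cM(\plaque_{0}))$ (using $n\ve\le\ve^{-1/2}\ll\vei$); adding the two contributions and absorbing the final $r<\clockN$ steps by one more application of Proposition~\ref{lem:invariance_patches} (costing only a $\clock$-dependent factor) yields $\lambda_{\cH}(\cM)\le\Const(e^{-\alpha_{2}n\ve}\cM(\plaque_{0})+1)$ with $\alpha_{2}=\const/\clock$. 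The main obstacle I expect is the bookkeeping in part~(b): making the large-deviation dichotomy of Lemma~\ref{lem:large-dev-near-sink} — a statement about standard pairs and about the set where $\theta_{\clockN},\zeta_{\clockN}$ behave well — compatible with the patch decomposition coming out of Proposition~\ref{lem:invariance_patches}, which is what forces the distortion checks making $\zeta_{\clockN},\theta_{\clockN}$ essentially constant on each preimage rectangle and the re-run of the LDP with shrunk constants; and then controlling the compounding of the super-exponentially small ``bad'' weights against the merely exponential worst-case growth from part~(a), which is precisely where the hypothesis $n\le\ve^{-3/2}$ is used.
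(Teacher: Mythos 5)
Your proposal is correct and follows essentially the same route as the paper: part~(a) by block-wise iteration of Proposition~\ref{lem:invariance_patches}, and part~(b) by an inductive good/bad splitting driven by Lemma~\ref{lem:large-dev-near-sink}, with the $n\le\eps^{-3/2}$ hypothesis used exactly as you describe to make the super-exponentially small bad weight dominate the at-most-exponential worst-case growth. One small merit of your write-up is that you make explicit the distortion step showing $\zeta_{\clockN}$ and $\theta_{\clockN}$ are essentially constant on each preimage rectangle (needed to pass from the LDP for standard pairs to the patch-indexed set $\bar{\cA}'_{\plaque}$, with constants shrunk by $\cO(\eps)$), a point the paper treats somewhat implicitly.
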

\begin{proof}
  Let $i = \lfloor n/\clockN\rfloor$ so that we can write
  $n = i \clockN + (n-i\clockN)$.  Then by repeatedly applying
  Proposition~\ref{lem:invariance_patches} we obtain a patch family
  $\cH \in [F_{\eps*}^n {\plaque_0}]$ such that for any standard patch
  $\plaque$ in $\cH$,
  $\mathcal{\shor}(\plaque) \le e^{\centreMaxExp
    n\eps}\mathcal{\shor}(\plaque_0)$ and
  \begin{align*}
    \mathcal{\Rough}(\plaque) \le C_{\clock}^{i+1} + \sum_{l=0}^{i} C_{\clock}^l\Rough_*;
  \end{align*}
  this concludes the proof of (a) with a suitable choice of
  $\alpha_{1}$.  For item (b), suppose that $\plaque_0$ is supported
  on $\T\times H_\bandh$. We claim that for $\clock$ sufficiently
  large and $\eps$ sufficiently small, there exist patch families
  $\mathcal{H}^g_i$ and $\mathcal{H}^b_i$ with the following
  properties:
	\begin{itemize}
    \item[(1)]
      $w_i \mathcal{H}^g_i + (1-w_i)\mathcal{H}^b_i \in [F_{\eps*}^{i
        \clockN} {\plaque_0}]$, where
      $w_i \ge \left(1-e^{-\const \eps\inv}\right)^i$;
    \item[(2)] any standard patch $\plaque$ in $\mathcal{H}^g_i$ is
      supported on $\T \times H_\bandh$ and satisfies
      \begin{align*}
        \mathcal{M}(\plaque)\le 2^{-i}
        \mathcal{M}(\plaque_0)+2(\Rough_*+\minshor).
      \end{align*}
    \item[(3)] any standard patch $\plaque$ in $\mathcal{H}^{b}_{i}$
      satisfies
      \begin{align*}
        \cM(\plaque)\le C_{\clock}^{i}\cM(\plaque_0)+C_{\clock}^{i}
      \end{align*}
	\end{itemize}
    We will prove the claim by induction: it clearly holds for $i=0$,
    so let us suppose that it holds for $i = m$. Let $\plaque$ be a
    patch in $\cH_m^g$: applying
    Proposition~\ref{lem:invariance_patches}(b) for $n = \clockN$ to
    $\plaque$ we obtain a patch family
    $\tilde{\mathcal G_{\plaque}} =
    ((\bar{\cA}_{\plaque},\bar{\lambda}_{\plaque}),\bar{\plaque})$
    where $\bar{\cA}_{\plaque}$ denotes the index set of the
    collection $\{\bplaque_{\clockN,j}\}$,
    $\bar\lambda_{\plaque}\{j\} = c_{\clockN,j}$ and
    $\bar{\plaque}(j) = \bar{\plaque}_{\clockN,j}$.  Let us
    also denote for convenience
    $\bar{\stdr}_{\clockN,j}=\supp{\bplaque_{\clockN,j}}$ and recall
    that
    $c_{N,j} =
    \mu_{\plaque}(\varphi_{\clockN,j}(\bar{\stdr}_{\clockN,j}))$;
    define
	\begin{align*}
      \bar{\cA}'_{\plaque} &=
                                     \left\{j\in\bar{\cA}_{\plaque}:\inf_{\bar{\stdr}_{\clockN,j}}\zeta_{\clockN}\circ\varphi_{\clockN,j}\ge
                                     9\clock/16,\ \bar{\stdr}_{\clockN,j}\cap (\bT
                                     \times H_{3\bandh/4}) \neq \emptyset \right\}.
	\end{align*}
    Then by definition and Lemma~\ref{lem:large-dev-near-sink} we
    obtain\footnote{ Note that by
      Remark~\ref{rmk:standard-patches-are-families}, $\mu_{\plaque}$
      admits a disintegration as a family of \regular{} standard pairs
      supported on $\bT\times H_\bandh$.}:
    \begin{align*}
      \bar\lambda_{\plaque}(\bar{\cA}'_{\plaque})\ge \mu_{\plaque}(\theta_{\clockN} \in H_{3\bandh/4},\,\, \zeta_{\clockN} \geq 9\clock/16) \geq 1-\exp
      (-\const \eps^{-1}).
    \end{align*}
    Let $j\in \bar{\cA}'_{\plaque}$; then the quantity $M_{\clockN,j}$
    defined in Proposition~\ref{lem:invariance_patches}(b), for
    $\clock$ sufficiently large and $\eps$ sufficiently small
    satisfies the following bound (recall $\fakeCentreConst < 1/4$):
    \begin{align*}
      M_{\clockN,j}\le\exp\left(-5\clock/16+C_{\clock}\ve\log\vei +
      D\right)\le \frac{1}{2}.
    \end{align*}

    We conclude that
    \begin{align*}
      \mathcal{\shor}(\bplaque_{\clockN,j})&\le \max\{\tfrac12
                                              \mathcal{\shor}(\plaque),\minshor\},&
      \mathcal{\Rough}(\bplaque_{\clockN,j})&\le \tfrac12
                                              \mathcal{\Rough}(\plaque) + \Rough_* .
    \end{align*}
    By the inductive hypothesis, it follows that
    \begin{align*}
      \cM(\bplaque_{\clockN,j}) \le \tfrac12 \cM(\plaque) +\minshor +
      \Rough_* &\le 2^{-m-1}\cM(\plaque_0) + 2 (\minshor + \Rough_*).
    \end{align*}
	Moreover, for $\eps$ sufficiently small
    $\bar{\stdr}_{\clockN,j}\subset\bT\times H_{\bandh}$.

	Let
    $\tilde{\cG}^g_{\plaque} =
    \tilde\cG_{\plaque}|\bar{\cA}'_{\plaque}$ and\footnote{ Here we
      implicitly assume that
      $\bar{\cA}_{\plaque}\not = \bar{\cA'}_{\plaque}$; otherwise we
      can ignore $\tilde{\cG}^g_{\plaque}$ in what follows.  The
      reader can easily fill in the details.}
    $\tilde{\cG}^b_{\plaque} =
    \tilde\cG_{\plaque}|(\bar\cA_{\plaque}\setminus\bar{\cA}'_{\plaque})$;
    then by construction we have that
    $\bar\lambda_{\plaque}(\bar{\cA}')\tilde{\cG}_{\plaque}^g +
    (1-\bar\lambda_{\plaque}(\bar{\cA}'))\tilde{\cG}_{\plaque}^b \in
    [F_{\eps*}^{\clockN} {\plaque}]$ and each patch family in
    $\tilde{\cG}_{\plaque}^g$ satisfies the properties stated in (2)
    for $i = m+1$.

	By taking convex combinations of the patch families
    $\tilde{\cG}^g_{\plaque}$, $\tilde{\cG}^b_{\plaque}$ obtained for
    each $\plaque \in \cH^g_{m}$, we obtain patch families
    $\cH^g_{m+1}$ and $\cE_{m+1}$ such that
    \[c \cH^g_{m+1} + (1-c)\cE_{m+1} \in [F_{\eps*}^{\clockN}
      {\cH_m^g}],\] where
    $c\ge \min_{\plaque \in \cH^g_{m}}\bar{\lambda}_{\plaque}(\bar{\cA}'_{\plaque})\ge 1 - e^{-\Const
      \eps\inv}$.
	On the other hand, by Proposition~\ref{lem:invariance_patches} we can
    choose a patch family
    $\cE'_{m+1} \in [F_{\eps*}^{\clockN} {\cH_{m}^b}]$. It follows
    that
    \[ cw_m\cH_{m+1}^g+(1-c)w_m\cE_{m+1}+(1-w_m)\cE'_{m+1}\in
      [F_{\eps*}^{(m+1)\clockN}{\plaque_0}], \] which completes the
    proof of the first two items in our claim.  In order to prove item
    (3) notice that since the patch family $\cH^b_i$ was constructed
    by repeatedly applying Proposition~\ref{lem:invariance_patches},
    by the same reasoning used in the proof of (a) we have
    $\cM(\plaque)\le C_{\clock}^{i}\cM(\plaque_0)+C_{\clock}^{i}$
    for any standard patch $\plaque$ in $\cH_i^b$.

	We can now complete the proof of (b). We apply the above claim
    with the previously defined $i = \lfloor n/\clockN\rfloor$.  Then:
	\begin{align*}
      \lambda_{\widehat{\cG}_{i}}(\cM(\plaque)) &\le 2^{-i} \mathcal{M}(\plaque_0)+2(\Rough_*+\minshor) + (1 - w_i)(C_{\clock}^i\cM(\plaque_0)+C_{\clock}^i)\\
                                                  &\le 2^{-i} \mathcal{M}(\plaque_0)+2(\Rough_*+\minshor) + i e^{-\Const \eps\inv}(C_{\clock}^i\cM(\plaque_0)+C_{\clock}^i).
	\end{align*}
	Now since $i\le \eps^{-1/2}$, for $\eps$ sufficiently small we
    have $ie^{-\Const \eps\inv}C^i_{\clock}\le 2^{-i}$, so
	\begin{equation}\label{eq:lasota_yorke_multiple_N}
      \lambda_{\widehat{\cG}_{i}}(\cM(\plaque))\le 2\cdot 2^{-i} \cM(\plaque_0)+2 (\Rough_* + \minshor) + 2^{-i}.
	\end{equation}
	Finally, by part (a) we can choose a patch family
    $\cH \in [F_{\eps*}^{n - i\clockN}\widehat{\cG}]_{i}$ such
    that
    \[\lambda_{\cH}(\cM(\plaque))\le
      C_{\clock}\lambda_{\widehat{\cG}_{i}}(\cM(\plaque))+C_{\clock}.\]
    The proof of part (b) follows by combining this
    with~\eqref{eq:lasota_yorke_multiple_N}.
\end{proof}
We can now once and for all determine $\clock$ to be large enough so
that
Lemmata~\ref{lem:large-dev-near-sink},~\ref{lem:piling-up-near-the-sink},~\ref{lem:capture-by-the-sink}
and the above Lemma~\ref{lem:patch_family_a_priori_bd} hold.
\begin{prp}\label{prop:geometric_drift}
  There exist constants $\lyapFnExp\in(0,1)$, $\lyapFnDrift>0$ and
  $\logstep_{1}>0$ such that the following holds for all $\eps$
  sufficiently small. Let $\plaque_0$ be a \regular{} standard
  patch; then for all $n\ge \logstep_{1} \vei \log\vei$ there exists a
  patch family $\cH\in [F_{\eps*}^n {\plaque_{0}}]$ such that
  \begin{align*}
    \lambda_{\cH}(\cM(\plaque)^{\lyapFnExp}) \le \eps\,
    \cM(\plaque_{0})^{\lyapFnExp}+\lyapFnDrift.
  \end{align*}
\end{prp}
\begin{proof}
  Fix $\nu>0$ to be chosen later and let
  $q = \lfloor\logstep_{0}\log\vei\rfloor\clockN+\lfloor\nu
  \vei\log\vei\rfloor$, where $\logstep_{0}$ is as in
  Lemma~\ref{lem:capture-by-the-sink}. Let $q\le n\le 2q$ and choose
  $\lfloor\logstep_{0}\log\vei\rfloor\clockN\le i\le
  2\lfloor\logstep_{0}\log\vei\rfloor\clockN$ and
  $\lfloor\nu \vei\log\vei\rfloor\le j \le 2\lfloor\nu
  \vei\log\vei\rfloor$ such that $n = i+j$.

  By Lemma~\ref{lem:patch_family_a_priori_bd}(a), there exists a patch
  family $\cH\in [F_{\eps*}^i {\plaque_{0}}]$ such that for any
  standard patch $\plaque$ in $\cH$ we have
	\begin{equation}\label{eq:first_step_lyapFn_bd}
		\cM(\plaque)^{\lyapFnExp} \le \eps^{-2\lyapFnExp\alpha_1 \logstep_{0}\clock}(1+\cM(\plaque_0)^{\lyapFnExp}).
	\end{equation}
	By Lemma~\ref{lem:capture-by-the-sink} and
    Remark~\ref{rmk:standard-patches-are-families},
    $\mu_{\plaque_0}(\theta_j\nin H_{3\bandh/4})<\eps^\beta$. For
    $\eps$ sufficiently small, any standard patch that intersects
    $\bT\times H_{3\bandh/4}$ will be supported on
    $\bT\times H_{\bandh}$. It follows that
    $p = \lambda_{\cH}(\plaque \in S)>1-\eps^\beta$, where
    $S = \{\plaque\in \plaqueSet:\supp\plaque \subset \bT\times
    H_{\bandh}\}$.

	Since any standard patch $\plaque$ in $\cH|\{\plaque \in S\}$
    satisfies~\eqref{eq:first_step_lyapFn_bd}, by
    Lemma~\ref{lem:patch_family_a_priori_bd}(b) and Jensen's
    inequality, there exists a patch family
    $\cG_1\in [F_{\eps*}^j({\cH|\{\plaque \in S\}})$ such that
	\begin{align*}
      \lambda_{\cG_1}(\cM(\plaque)^{\lyapFnExp})&\le \lambda_{\cG_1}(\cM(\plaque))^{\lyapFnExp}\le \Const (\eps^{\lyapFnExp \alpha_2 \nu}\lambda_{\cH|\{\plaque \in S\}}(\cM(\plaque))^{\lyapFnExp}+1)\\
                                                &\le \Const(\eps^{\lyapFnExp(\alpha_2 \nu - 2\alpha_1 \logstep_{0}\clock)}\cM(\plaque_0)^{\lyapFnExp} + \eps^{\lyapFnExp(\alpha_2 \nu - 2\alpha_{1}\logstep_{0}\clock)} + 1).
	\end{align*}
	Similarly, by Lemma~\ref{lem:patch_family_a_priori_bd}(a), there
    exists a patch family
    $\cG_2 \in [F_{\eps*}^j ({\cH|\{\plaque \notin S\}})]$ such that
	\begin{align*}
      \lambda_{\cG_2}(\cM(\plaque)^{\lyapFnExp})&\le (\eps^{-2\lyapFnExp \alpha_1 (\nu +\logstep_{0}\clock)}\cM(\plaque_0)^{\lyapFnExp}+2\eps^{-2\lyapFnExp \alpha_1 (\nu +\logstep_{0}\clock)}).
	\end{align*}
	Let $\cG = p\,\cG_1 + (1-p)\cG_2$. Since $p\, \cH|\{\plaque \in S\}+(1-p)\cH|\{\plaque \notin S\}\in [\mu_{\cH}]$, we have $\cG \in [F_{\eps*}^j {\cH}]=[F_{\eps*}^n {\plaque_0}]$. Let $\nu$ be sufficiently large and $\lyapFnExp$ be sufficiently small such that
	\[\tau = \min\{\lyapFnExp(\alpha_2 \nu - 2\alpha_1 \logstep_{0}\clock),{\beta -2\lyapFnExp \alpha_2 (\nu +\logstep_{0}\clock)}\}>0.\]
	Then we obtain that
	\begin{align*}
		\lambda_{\cG}(\cM(\plaque)^{\lyapFnExp})&= p\lambda_{\cG_1}(\cM(\plaque)^{\lyapFnExp})+(1-p)\lambda_{\cG_2}(\cM(\plaque)^{\lyapFnExp})\\
		&\le (\Const + 1)\eps^\tau \cM(\plaque_0)^{\lyapFnExp} + \Const \le \eps^{\tau/2}\cM(\plaque_0)^{\lyapFnExp} + \Const
	\end{align*}
	for $\eps$ sufficiently small. Since $n \in [q,2q]$ was arbitrary,
    by iterating this argument it follows that for all $n\ge q$, there
    exists a patch family $\tilde\cG\in [F_{\eps*}^n \mu_{\plaque_0}]$
    such that
	\begin{align*}
		\lambda_{\tilde\cG}(\cM(\plaque)^{\lyapFnExp})\le \eps^{\lfloor n/q\rfloor\tau/2}\cM(\plaque_0)^{\lyapFnExp} + \sum_{l=0}^\infty \eps^{l\tau/2}\Const \le \eps^{\lfloor n/q\rfloor\tau/2}\cM(\plaque_0)^{\lyapFnExp}+\Const.
	\end{align*}
	Since $q\le (\nu + \logstep_{0}\clock)\vei \log\vei$, the result
    follows by choosing $\logstep_{1}$ large enough so that
    $\logstep_{1}\tau\ge2q$.
\end{proof}
Proposition~\ref{prop:geometric_drift} leads us to consider the
following definition: \nomenclature[4ac]{$\El(\plaque)$}{Lyapunov
  function describing the regularity of standard patches}%
\begin{mydef}\label{def:proper_patch_family}
  We call a patch family $\cG$ \textit{proper} if
  $\lambda_{\cG}(\El(\plaque))\le 2\lyapFnDrift$, where
  $\El(\plaque)=\cM(\plaque)^{\lyapFnExp}$ and $\lyapFnExp$ and
  $\lyapFnDrift$ are as in Proposition~\ref{prop:geometric_drift}.
\end{mydef}
Observe that the set of proper patch families is closed by convex
combination.  Proposition~\ref{prop:geometric_drift} shows that the
class of proper patch families has some (weak) invariance properties
(invariance holds only after a sufficiently long time)
\begin{cor}\label{cor:proper_invariant}
  Let $\cG$ be a proper patch family. Then for all
  $n\ge \logstep_{1} \vei\log\vei$, $F_{\eps*}^n \mu_{\cG}$ admits a
  disintegration as a proper patch family.
\end{cor}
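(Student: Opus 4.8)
The plan is to reduce the statement to Proposition~\ref{prop:geometric_drift} by decomposing $\cG$ into its constituent regular standard patches, applying the drift estimate to each of them, and reassembling the results by a convex combination. Write $\cG = ((\mathcal A,\lambda_\cG),\plaque)$; since $\cG$ is a patch family, $\mathcal A$ is (at most countable and) discrete, and $\plaque(\alpha)\in\plaqueSet$ is a \regular{} standard patch for each $\alpha\in\mathcal A$. Fix $n\ge \logstep_{1}\vei\log\vei$. Applying Proposition~\ref{prop:geometric_drift} to each $\plaque(\alpha)$ produces a patch family $\cH_\alpha\in[F^n_{\eps*}\plaque(\alpha)]$ such that
\begin{align*}
  \lambda_{\cH_\alpha}(\cM(\plaque)^{\lyapFnExp})\le \eps\,\cM(\plaque(\alpha))^{\lyapFnExp}+\lyapFnDrift.
\end{align*}

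Next I would form the convex combination $\cH = \sum_{\alpha\in\mathcal A}\lambda_\cG(\{\alpha\})\,\cH_\alpha$ in the sense of Section~\ref{sec:patch-families}; this is again a patch family, and by the defining property of convex combinations together with linearity of the pushforward,
\begin{align*}
  \mu_{\cH}=\sum_{\alpha}\lambda_\cG(\{\alpha\})\,\mu_{\cH_\alpha}=\sum_{\alpha}\lambda_\cG(\{\alpha\})\,F^n_{\eps*}\mu_{\plaque(\alpha)}=F^n_{\eps*}\mu_{\cG},
\end{align*}
so $\cH\in[F^n_{\eps*}\mu_\cG]$. It then remains to verify that $\cH$ is proper. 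Using $\El(\plaque)=\cM(\plaque)^{\lyapFnExp}$ and the per-patch estimate above,
\begin{align*}
  \lambda_{\cH}(\El(\plaque))=\sum_{\alpha}\lambda_\cG(\{\alpha\})\,\lambda_{\cH_\alpha}(\cM(\plaque)^{\lyapFnExp})\le \eps\,\lambda_\cG(\cM(\plaque)^{\lyapFnExp})+\lyapFnDrift=\eps\,\lambda_\cG(\El(\plaque))+\lyapFnDrift.
\end{align*}
Since $\cG$ is proper, $\lambda_\cG(\El(\plaque))\le 2\lyapFnDrift$, hence $\lambda_{\cH}(\El(\plaque))\le (2\eps+1)\lyapFnDrift\le 2\lyapFnDrift$ provided $\eps\le \tfrac12$. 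Therefore $\cH$ is a proper patch family, which is exactly the assertion. (Note that the weighted sums above converge since $\lambda_\cG$ is a probability measure and $\lambda_\cG(\cM(\plaque)^{\lyapFnExp})$ is finite.)

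There is no serious obstacle here: the argument is a soft averaging of Proposition~\ref{prop:geometric_drift} over the discrete index set of $\cG$. The only point deserving a word of care is checking that the convex combination of the at-most-countably-many patch families $\cH_\alpha$ is itself a well-defined patch family and that its induced measure is the one claimed; both facts are immediate from the construction of convex combinations recorded in Section~\ref{sec:patch-families} and from the discreteness of $\mathcal A$.
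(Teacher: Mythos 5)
Your argument is correct and is precisely the (implicit) argument the paper intends: the corollary is stated without proof as an immediate consequence of Proposition~\ref{prop:geometric_drift}, obtained exactly by applying the proposition patch-by-patch, taking the convex combination, and using $\lambda_{\cG}(\El(\plaque))\le 2\lyapFnDrift$ together with $\eps\le\tfrac12$ to close the estimate. Nothing is missing.
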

\begin{rmk}
  Let $\cG$ be an arbitrary proper patch family; notice that its
  push-forward $F_{\ve*}\mu_{\cG}$ does not necessarily admit a
  decomposition as a proper patch family; hence the class of proper
  patch families is not –strictly speaking– invariant.  The weaker
  invariance stated above is, however, enough for our future purposes.
  It is also the main reason behind our bound on $c_{\ve}$: since we
  will only be able to run the coupling argument on proper patch
  families, the argument will only work with a timestep of order
  $\vei\log\vei$.
\end{rmk}
We conclude this section by observing that the class of proper patch
families is large enough to contain smooth measures on $\bT^{2}$.
\begin{lem}\label{lem:smooth_density_proper}
  Let $\kappa>0$ and $\nu$ be an absolutely continuous probability
  measure on $\bT^2$ with smooth density and such that
  $\|d\log (d\nu/d\Leb)\|_{\cC^1}\le \kappa$.  Then for $\ve$
  sufficiently small, $\nu$ admits a disintegration as a proper patch
  family.
\end{lem}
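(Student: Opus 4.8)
The plan is to write down, by hand, a disintegration of $\nu$ as a family of \regular{} standard patches all of which have $\cM$ equal to $\minshor$; such a family is automatically proper.

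\textbf{Step 1: the patches.} Set $\rho = d\nu/d\Leb$. Partition the slow circle into intervals $[\theta_m,\theta_{m+1}]$ of length $\asymp\eps$, chosen so that every local centre manifold spanning one of them has $\height\Wc\in[\patchheight\eps/\minshor,\patchheight\eps]$ (possible since centre manifolds have slope $<\centreConeConst$, so this height is comparable to $\theta_{m+1}-\theta_m$, and $\minshor$ is large). For each $m$ and each $x$-interval $I$ of length $\delta$, let $\stdr_{I,m}$ be the region bounded above and below by $\{\theta=\theta_{m+1}\}$, $\{\theta=\theta_m\}$ (which are constant graphs, hence standard curves) and on the left and right by the local centre manifolds through the endpoints of $I\times\{\theta_m\}$; for $\eps$ small these are disjoint, so $\stdr_{I,m}$ is a $(2,\minshor)$-standard rectangle. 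Foliate it by the horizontal curves $\{\theta=\mathrm{const}\}$ and equip it with the \emph{compensated density}
\begin{align*}
  \rho_{I,m}(x,\theta)=\frac{\rho(x,\theta)}{\delta\,\bar\rho_m(x)},\qquad \bar\rho_m(x)=\int_{\theta_m}^{\theta_{m+1}}\rho(x,\theta)\,\deh\theta,
\end{align*}
which (up to $O(\eps)$ corrections from the tilt of the centre-manifold boundaries) is a probability density on $\stdr_{I,m}$; note it does not depend on $I$, only its support does. Call $\plaque_{I,m}$ the resulting foliated rectangle with density.

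\textbf{Step 2: reconstruction.} For fixed $m$, index the windows by the $\theta=\theta_m$-intercept $a$ of the left boundary, writing $\stdr_{a,m}$, $\plaque_{a,m}$. Since $\rho_{a,m}$ is independent of $a$, one has, for any test $g$,
\begin{align*}
  \sum_m\int c_m(a)\,\mu_{\plaque_{a,m}}(g)\,\deh a=\sum_m\int\!\!\int g(x,\theta)\,\frac{\rho(x,\theta)}{\delta\bar\rho_m(x)}\Big(\int_{x-\delta}^{x}c_m(a)\,\deh a\Big)\deh x\,\deh\theta,
\end{align*}
so it suffices to pick weights $c_m(a)\ge0$ solving the averaging identity $\int_{x-\delta}^{x}c_m(a)\,\deh a=\delta\bar\rho_m(x)$ for all $x$; then the right-hand side collapses to $\nu(g)$ and the total mass is $\sum_m\nu(\bT\times[\theta_m,\theta_{m+1}])=1$. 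A smooth non-negative solution $c_m$ exists — e.g. for a Diophantine choice of $\delta$, using that $\rho$ is smooth, or after a geometrically convergent iteration which also makes the index set countable; the assumption that a patch family be indexed by a discrete space is, as the paper notes, inessential.

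\textbf{Step 3: regularity and properness.} Because $\theta_{m+1}-\theta_m=O(\eps)$, the $\theta$-oscillation over $[\theta_m,\theta_{m+1}]$ of any fixed derivative of $\log\rho$ is $O(\eps)$. From $\partial_x\log\rho_{I,m}(x,\theta)=\partial_x\log\rho(x,\theta)-\langle\partial_x\log\rho(x,\cdot)\rangle_m$ (the bracket being the $\bar\rho_m$-weighted $\theta$-average), assumption $\|d\log\rho\|_{\cC^1}\le\kappa$ gives $\|\partial_x\log\rho_{I,m}\|_\infty\le\Const\kappa\eps$, while smoothness of $\rho$ gives $\|\partial_{xx}\log\rho_{I,m}\|_\infty\le C_{\nu}\eps$; also $\partial_\theta\log\rho_{I,m}=\partial_\theta\log\rho$ and $\partial_{x\theta}\log\rho_{I,m}=\partial_{x\theta}\log\rho$, so after the $\chvar$-rescaling of the slow coordinate every entry of $d(\log\rho_{I,m}\circ\chvar)$ and of $H(\log\rho_{I,m}\circ\chvar)$ is $\le C_{\nu}\eps$. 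Hence, for $\eps$ small in terms of $\nu$, $\rho_{I,m}$ is an $\Rough$-standard density with $\Rough\le C_{\nu}\eps\le 3\rough_*/2$, and (since the horizontal leaves have $\partial_{\folis}G_{\folis}$ constant) its disintegration along $\foli$ is $\rough$-standard on every leaf with $\rough\le C_{\nu}\eps\le 3\rough_*/2$. So each $\plaque_{I,m}$ is a \regular{} standard patch with $\mathcal{\shor}(\plaque_{I,m})=\minshor$ and $\mathcal{\Rough}(\plaque_{I,m})\le C_{\nu}\eps<\minshor$, whence $\cM(\plaque_{I,m})=\minshor$, and the family $\cG$ of Step 2 satisfies $\lambda_{\cG}(\El(\plaque))=\minshor^{\lyapFnExp}$. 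Since $\cM\ge\minshor$ for every standard patch, $\minshor^{\lyapFnExp}$ is the least value $\lambda(\El)$ can take over any patch family; because proper patch families exist (apply Proposition~\ref{prop:geometric_drift} to a single uniform-density standard patch) this forces $2\lyapFnDrift\ge\minshor^{\lyapFnExp}$. Therefore $\lambda_{\cG}(\El(\plaque))\le 2\lyapFnDrift$, i.e. $\cG$ is proper, and $\mu_{\cG}=\nu$ by Step 2.

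\textbf{Main obstacle.} The only delicate point is the regularity of the leaf densities. The naive disintegration — restricting $\nu$ to standard rectangles — produces leaf densities of roughness $\sim\kappa$, and for $\kappa>3\rough_*/2$ this cannot be fixed by any choice of foliation: a width-$\delta$ standard curve cannot carry a $\kappa$-rough density as a convex combination of $(3\rough_*/2)$-rough ones (compare the ratios of the endpoint values). The device above is the compensating factor $1/\bar\rho_m(x)$, which flattens the $x$-profile of each patch down to the $O(\eps)$ residual forced by the $\theta$-thinness of the rectangles, coupled with the \emph{overlapping} sliding-window structure, which is precisely what lets the weights $c_m$ re-sum to $\nu$ after compensation. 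The remaining points — non-negativity and discretization of the $c_m$, and the $O(\eps)$ geometric corrections from the tilted centre-manifold boundaries — are routine and absorbed by choosing $\delta$ (and then $\eps$) appropriately.
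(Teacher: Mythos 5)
You take a genuinely different and more ambitious route than the paper. The paper simply partitions $\bT^2$ into non-overlapping standard foliated rectangles $\stdr_{i,j}$ (horizontal $\narr$-intervals of length $\approx\delta$ times $\theta$-intervals of length $\asymp\patchheight\ve$, with centre-manifold side boundaries), restricts $\rho$ to each $\stdr_{i,j}$ and normalizes, and checks via Lemma~\ref{lem:disintegration-regularity} that the resulting patches are \regular{}. You correctly identify that this naive restriction produces leaf densities of roughness $\sim\kappa$ (the horizontal leaf density is $\propto\rho(\cdot,\theta)$ so $\rho_\folis'/\rho_\folis=\partial_x\log\rho$), and so the paper's argument really only works for $\kappa\le\Const\rough_*$; the lemma is indeed only invoked for such small $\kappa$ in the proof of Theorem~\ref{thm:doc_mostly_expanding}, where $\kappa$ is shrunk at will by passing from $A$ to $A+\alpha$. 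Your sliding-window construction with the compensated density $\rho/(\delta\bar\rho_m(x))$ flattens the $x$-profile to $O(\kappa\ve)$ and would, if completed, handle arbitrary $\kappa$ — a genuinely stronger statement.

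There is however a real gap in Step~2. You need smooth \emph{non-negative} weights $c_m$ solving the box-deconvolution $\int_{x-\delta}^x c_m(a)\,\deh a = \delta\bar\rho_m(x)$, and you assert existence by invoking a Diophantine $\delta$ or a ``geometrically convergent iteration''. A Diophantine choice of $\delta$ controls the small divisors $\hat\phi(k)=\frac{1-e^{-2\pi ik\delta}}{2\pi ik\delta}$ and thus gives existence and smoothness of $c_m$, but says nothing about its sign. Positivity can genuinely fail: since $|\hat\phi(k)|\lesssim 1/(k\delta)$, a harmonic of $\bar\rho_m$ of modest amplitude $\eta$ at wavenumber $k\gtrsim 1/(\delta\eta)$ forces $|\hat c_m(k)|\gtrsim 1$, which is incompatible with $c_m\ge 0$ and $\int c_m=1$; densities with $\|d\log\bar\rho_m\|_\infty\gtrsim 1/\delta$ (recall $\delta$ is a fixed constant of the construction, not at your disposal) realize this. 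The ``geometrically convergent iteration'' is not spelled out and would need to confront exactly this sign obstruction at every step. The remaining issues you flag — the $O(\ve)$ geometric corrections from the tilted centre-manifold boundaries, the discretization of the index — are indeed routine, and your side observation that $\minshor^{\lyapFnExp}\le 2\lyapFnDrift$ is a nice way to close the properness check. But the non-negative deconvolution is the load-bearing step, and as written it does not go through; the clean fix is the paper's: accept that $\kappa$ must be small, and use a non-overlapping partition with the naive restriction, where Lemma~\ref{lem:disintegration-regularity} with $\partial_\theta\foli=\mathrm{const}$ gives leaf roughness $\Const(\kappa+\kappa^2)\le 3\rough_*/2$ directly.
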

\begin{proof}
  Let us first construct a partition (mod 0) of $\bT^{2}$ into
  foliated standard rectangles.  First, partition (the horizontal)
  $\bT$ into intervals $I_{i}$ of equal length between $3\delta/5$ and
  $4\delta/5$ (which is always possible if $\delta < 5/12$); let
  $\bar x_{i}$ denote the endpoints of such intervals.  Then, partition
  (the vertical) $\bT$ into intervals $\{J_{j}\}$ of equal length
  between $\patchheight\ve/2$ and $\patchheight\ve$; let $\bar\theta_{j}$
  denote the endpoints of such intervals.  For each $i,j$, denote
  $\cW_{i,j}$ the centre manifold connecting the point
  $(\bar x_{i},\bar\theta_{j})$ to $\bT\times\{\bar\theta_{j+1}\}$.

  Then, provided $\ve$ is sufficiently small, for any $j$, the
  collection of curves $\{\cW_{i,j}\}_{i}$ partitions the strip
  $\bT\times J_{j}$ into standard rectangles (since horizontal
  segments are –in particular– standard curves).  Moreover it is
  immediate to check that the horizontal foliation is a standard
  foliation.  We thus obtain a partition of $\bT^{2}$ into standard
  foliated rectangles $\fstdr_{i,j} = (\stdr_{i,j},\foli_{i,j})$.

  By our assumption on $\nu$, we know that $d\nu = \rho\cdot d\Leb$
  where $\rho$ is a density function so that
  $\|d\log\rho\|_{\cC^{1}} < \kappa$.  Let
  $\rho_{i,j} = c_{i,j}^{-1}\rho|_{\stdr_{i,j}}$, where
  $c_{i,j} = \int_{\stdr_{i,j}}\rho d\Leb$.

  We claim that $\plaque_{i,j} = (\fstdr_{i,j}, \rho_{i,j})$ is a
  regular standard patch and that the convex combination of
  $\plaque_{i,j}$ with weights $c_{i,j}$ is a proper patch family.  In
  fact, by definition
  \begin{align*}
    \mathcal\shor(\plaque_{i,j}) &= \minshor &\mathcal\Rough(\plaque_{i,j}) &\le \Const{\Econst\inv( \kappa+\kappa^{2})\ve}.
  \end{align*}
  Moreover, $\foli_{i,j}$ is a foliation by horizontal segments, hence
  $\partial_\theta\foli_{i,j} = \textrm{const}$ and
  Lemma~\ref{lem:disintegration-regularity} implies that
  $\plaque_{i,j}$ is regular (for sufficiently small $\ve$).  Finally
  $\El(\plaque_{i,j}) = \minshor^{\gamma}$ for $\ve$ small enough,
  hence the patch family obtained by the above construction is proper.
\end{proof}
\begin{rmk}
  In fact it is immediate to check that the above results holds also
  if $\|d\log \frac{d\nu}{d\Leb} \circ \chvar\|_{\cC^1}\le \kappa$ for
  any sufficiently small $\kappa$.
\end{rmk}

\section{Coupling}\label{sec:coupling}
We present in this section the coupling argument that is key to prove
exponential convergence and decay of correlations: we state the
crucial result as the following theorem.
\begin{thm}[Coupling]\label{thm:bootstrap}
  There exist constants $c\in(0,1)$ and $\logstep_\Co>0$ such that the
  following holds.  Let $\mathcal{G}_1$ and $\mathcal{G}_2$ be proper
  patch families; then for any $n\ge \logstep_\Co\,\eps\inv \log \eps\inv$
  there exist proper patch families $\widetilde{\mathcal{G}}_1$ and
  $\widetilde{\mathcal{G}}_2$ such that
	\[ F_{\eps*}^{n}( \mu_{\mathcal{G}_1} - \mu_{\mathcal{G}_2}) = c(\mu_{\widetilde{\mathcal{G}}_1} - \mu_{\widetilde{\mathcal{G}}_2}). \]
\end{thm}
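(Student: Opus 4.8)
The plan is to run a coupling scheme at the $\vei\log\vei$ timescale forced on us by Corollary~\ref{cor:proper_invariant}. Given $n\ge\logstep_\Co\,\vei\log\vei$, with $\logstep_\Co$ large (to be fixed at the end, together with $\patchheight$), I would write $n=n_1+n_2+n_3$ where $n_1\ge\logstep_1\vei\log\vei$, $n_2=\bigo(\vei)$ is the (fixed, $\ve$-independent) number of iterates needed for the spreading step below, and $n_3=n-n_1-n_2\ge\logstep_1\vei\log\vei$. The three steps are: \textbf{(i)} iterate for $n_1$ steps so that both push-forwards become proper patch families whose mass is mostly carried by patches of bounded geometry sitting near the sink; \textbf{(ii)} iterate for $n_2$ further steps and use the local central limit theorem near the sink together with the unstable expansion to produce, for each of the two families, one and the same absolutely continuous common piece $\bar c_0\,\nu_*$; \textbf{(iii)} subtract the common piece, iterate for $n_3$ further steps, and invoke Proposition~\ref{prop:geometric_drift} to turn the two remainders into proper patch families.

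\textbf{Step 1.} Since $\mathcal G_i$ is proper and $n_1\ge\logstep_1\vei\log\vei$, Corollary~\ref{cor:proper_invariant} yields proper patch families $\mathcal K_i\in[F_{\eps*}^{n_1}\mu_{\mathcal G_i}]$. By Markov's inequality applied to $\lambda_{\mathcal K_i}(\El)\le2\lyapFnDrift$, the set of patches $\plaque$ in $\mathcal K_i$ with $\cM(\plaque)\le M_0:=(8\lyapFnDrift)^{1/\lyapFnExp}$ has $\lambda_{\mathcal K_i}$-measure at least $3/4$. On the other hand each such $\plaque$ disintegrates as a family of \regular{} standard pairs (Remark~\ref{rmk:standard-patches-are-families}), so Corollary~\ref{cor:convenient} shows that the patches $\plaque$ with $\supp\plaque\subset\bT\times B(\theta_-,C\sqrt\eps)$ carry $\lambda_{\mathcal K_i}$-measure at least $1/2$ (here one uses that a patch has height $\bigo\eps\ll\sqrt\eps$, so control of the $\theta$-barycentre upgrades to control of the whole support after slightly enlarging $C$). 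Let $S_i$ denote the set of \emph{good} patches -- those satisfying both conditions; then $\lambda_{\mathcal K_i}(S_i)\ge1/4$.

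\textbf{Step 2 (the crux).} For a good patch $\plaque$, Lemma~\ref{lem:split_density_leb} splits its density as $\rho=\tau\,\Leb(\stdr)^{-1}+(1-\tau)\tilde\rho$ with $\tau=\tfrac12 e^{-2\patchheight M_0}$ a fixed constant, $\tilde\rho$ an $\Rough'$-standard density with $\Rough'\le 2M_0+4\Econst\inv M_0^2$ fixed, and remainder patch satisfying $\cM(\tilde\plaque)\le M_0':=\max\{M_0,\Rough'\}$. Collecting the uniform components over $\plaque\in S_i$ yields a sub-probability measure $\mathrm A_i$ of mass $\tau\lambda_{\mathcal K_i}(S_i)\ge\tau/4$, supported near the sink and (after $\bigo(1)$ iterates to reach the regular regime of Proposition~\ref{lem:invariance_patches}) expressible as a patch family of $\ve$-uniformly bounded geometry. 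Pushing forward $n_2=\bigo(\vei)$ steps, the local central limit theorem near the sink (Lemma~\ref{lem:lclt_corol}) controls the spreading in the slow variable -- putting $\ge p\sqrt\eps$ of mass on every subinterval of length $\ge\eps$ of $B(\theta_-,C\sqrt\eps)$ -- while the unstable expansion of Proposition~\ref{proposition:invariance} fills up $\bT$ in the fast variable on an $\bigo(1)$ timescale; combining these with the bounded-geometry density estimates of Lemma~\ref{lem:split_density_leb} one obtains a fixed probability measure $\nu_*$, comparable to normalised Lebesgue on $\bT\times B(\theta_-,C\sqrt\eps)$, and a fixed $\bar c_0\in(0,1)$ such that
\[
  F_{\eps*}^{n_1+n_2}\mu_{\mathcal G_i}=\bar c_0\,\mu_{\mathcal F}+(1-\bar c_0)\,\mu_{\mathcal E_i}\qquad(i=1,2),
\]
where $\mathcal F\in[\nu_*]$ is a fixed proper patch family (its existence and properness is the content of the construction in Lemma~\ref{lem:smooth_density_proper}, adapted to the strip), and $\mathcal E_i$ is a patch family obtained by re-grouping the remaining uniform components, the remainder patches $\tilde\plaque$ and the non-good patches of $\mathcal K_i$, all pushed forward $n_2$ steps. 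Because $n_2\ve=\bigo(1)$, the push-forward multiplies $\cM$ only by a fixed factor (Lemma~\ref{lem:patch_family_a_priori_bd}(a)); since moreover $\El(\tilde\plaque)\le(M_0')^{\lyapFnExp}$ and the non-good patches are controlled by the properness of $\mathcal K_i$, we get the a priori bound $\lambda_{\mathcal E_i}(\El)\le C_*$ for a fixed constant $C_*$.

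\textbf{Step 3.} Applying $F_{\eps*}^{n_3}$ and using that $\bar c_0\mu_{\mathcal F}$ is common to the two decompositions,
\[
  F_{\eps*}^{n}(\mu_{\mathcal G_1}-\mu_{\mathcal G_2})=(1-\bar c_0)\bigl(F_{\eps*}^{n_3}\mu_{\mathcal E_1}-F_{\eps*}^{n_3}\mu_{\mathcal E_2}\bigr).
\]
Since $\lambda_{\mathcal E_i}(\El)\le C_*$ and $n_3\ge\logstep_1\vei\log\vei$, applying Proposition~\ref{prop:geometric_drift} to each patch of $\mathcal E_i$ and averaging produces a patch family $\widetilde{\mathcal G}_i\in[F_{\eps*}^{n_3}\mu_{\mathcal E_i}]$ with $\lambda_{\widetilde{\mathcal G}_i}(\El)\le\eps\,C_*+\lyapFnDrift\le 2\lyapFnDrift$ for $\eps$ small, i.e.\ $\widetilde{\mathcal G}_i$ is proper; setting $c=1-\bar c_0\in(0,1)$ gives the claim. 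At this point one fixes $\patchheight$ large enough (for Lemma~\ref{lem:split_density_leb} and for the rectangle construction representing $\mathcal F$) and $\logstep_\Co$ large enough to accommodate the decomposition $n=n_1+n_2+n_3$.

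The main obstacle is Step 2: one must control the spreading of the measure \emph{simultaneously} in the slow direction -- where near the sink the natural scale is $\sqrt\eps$ and the tool is the local central limit theorem of Lemma~\ref{lem:lclt_corol} -- and in the fast direction -- where the unstable expansion of Proposition~\ref{proposition:invariance} acts on an $\bigo(1)$ timescale -- and then combine these with the bounded-geometry estimates of Lemma~\ref{lem:split_density_leb} to extract a genuinely common component of $\eps$-uniform mass whose removal leaves patch families with an a priori bounded Lyapunov function. Keeping every object expressible as a (proper) patch family throughout, controlling the multiplicity with which the images of the good patches cover the strip $\bT\times B(\theta_-,C\sqrt\eps)$, and reconciling the $\sqrt\eps$ versus $1$ mismatch of scales, are the delicate points; the need for the $n_2=\bigo(\vei)$ window to be short (so that geometry does not degrade before the common piece is extracted) and for $n_3\gtrsim\vei\log\vei$ (so that Proposition~\ref{prop:geometric_drift} can restore properness) is exactly what forces the $\vei\log\vei$ timescale in the statement.
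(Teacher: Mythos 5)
Your overall architecture -- reduce to the neighbourhood of the sink via Corollary~\ref{cor:convenient} and Corollary~\ref{cor:proper_invariant}, isolate a good sub-family of bounded $\El$ supported near $\theta_-$, extract a common component there, then run Proposition~\ref{prop:geometric_drift} to restore properness of the remainders -- matches the paper's strategy, and Steps~1 and~3 are essentially identical to the reduction performed in the proof of Theorem~\ref{thm:bootstrap} and in the closing paragraph of Theorem~\ref{thm:coupling_near_sink}. The genuine divergence, and the genuine gap, is in Step~2.

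You claim that after $n_2 = \bigo(\vei)$ further steps both push-forwards simultaneously dominate $\bar c_0\,\nu_*$ for a \emph{fixed} reference measure $\nu_*$ and a fixed $\bar c_0 \in (0,1)$, so that a single universal common piece $\bar c_0\,\mu_{\mathcal F}$ can be subtracted from both. This is a strictly stronger assertion than the paper proves, and the tools you cite do not deliver it. Lemma~\ref{lem:lclt_corol} gives only \emph{integral} lower bounds of order $\sqrt\eps$ on the mass of each $\theta$-bin of width $\eps$; it says nothing about pointwise lower bounds on the density of $F_{\eps*}^{n_1+n_2}\mu_{\mathcal G_i}$, and after $\bigo(\vei)$ iterates that density can still be very non-uniform inside $B(\theta_-,C\sqrt\eps)$. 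Moreover, which $\Theta(\eps)$-thick sub-strips of a given bin actually carry the guaranteed mass depends on the family: patches in $\mathcal K_1$ and in $\mathcal K_2$ landing in the same bin need not have overlapping supports, and a priori there is no reason for the Lebesgue-uniform components that Lemma~\ref{lem:split_density_leb} extracts from them to agree. So "$\nu_*$ comparable to normalised Lebesgue on the strip" is not justified, and extracting a fixed-fraction common piece is not automatic.

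The paper avoids this obstacle entirely by doing \emph{pairwise} coupling rather than coupling to a universal reference. Sub-lemma~\ref{sublem:finding_good_bins} selects $\Theta(\eps^{-1/2})$ bins $I_j$ of width $L_\eps=\Theta(\eps)$ and \emph{simultaneously} guarantees, for each $j$, that a $\Theta(\sqrt\eps)$-mass subset of patches in \emph{each} of $\widehat{\cH}_1,\widehat{\cH}_2$ has support containing $\bT\times I_j$; then Proposition~\ref{prop:coupling_on_good_bins} is applied to each pair $(\plaque^{(1)},\plaque^{(2)})$ of such patches: after $\bigo(1)$ unstable iterates their images both cover $\bT\times B(\theta_0,L_\eps/4)$, one carves out a common rectangle $S$ in the overlap, and splits off $\tau\,\Leb(S)^{-1}\Leb|_S$ from each via Lemma~\ref{lem:split_density_leb}. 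Crucially, the reference measure $m_S$ depends on the pair $(\plaque^{(1)},\plaque^{(2)})$ -- but that does not matter, because one immediately takes the difference $F_{\eps*}^{n}\mu_{\plaque^{(1)}} - F_{\eps*}^{n}\mu_{\plaque^{(2)}}=p'(\mu_{\cG_1}-\mu_{\cG_2})$, where the pair-dependent $(1-p')m$ cancels. Averaging over pairs and over bins, with the matched weight $w=\Theta(\sqrt\eps)$ per bin and $\Theta(\eps^{-1/2})$ bins, yields a cancelled fraction $\Theta(1)$ without ever constructing a universal $\nu_*$. If you want to push through your Step~2 as stated, you would effectively have to recreate this bin-by-bin, pair-by-pair argument anyway; the shortcut of a single fixed $\nu_*$ is not supported by the local limit theorem as given.

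A second, smaller point: the timescale bookkeeping in your $n_2=\bigo(\vei)$ window does not quite work either. Proposition~\ref{prop:coupling_on_good_bins} -- the analogue of the coupling operation your Step~2 performs -- already requires $n\ge \logstep_4\,\vei\log\vei$, because after splitting off $m_S$ the remainders are only $((90,\Const),(\Const,\Const))$-\emph{pre}standard patches and one needs $\Theta(\vei\log\vei)$ more iterates (via Proposition~\ref{prop:geometric_drift}) to restore properness \emph{before} the decomposition~\eqref{eq:coupling_on_good_bins_via_ref_measure} is of the advertised form. In other words, the $\vei\log\vei$ scale is also intrinsic to the coupling step itself, not only to your $n_3$.
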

The above result will be proved in
Subsections~\ref{sec:reduct-dynam-near}
and~\ref{sec:coupling_near_sink}; we now show how to use it for
proving our Main Theorem.
\begin{proof}[Proof of Theorem~\ref{thm:doc_mostly_expanding}]
  Let $\cG_1$, $\cG_2$ be proper patch families and let $n\ge 1$.
  Repeated applications of Theorem~\ref{thm:bootstrap} guarantee the
  existence of patch families $\cH_1,\cH_2$ such that
  \begin{align*}
    F_{\eps*}^n (\mu_{\cG_1}-\mu_{\cG_2})=c^{\lfloor \eps
    n/(\logstep_\Co \log \vei)\rfloor}(\mu_{\cH_1}-\mu_{\cH_2}),
  \end{align*}
  so in particular
  \begin{equation}\label{eq:memory_loss_iterated}
	\|F_{\eps*}^n (\mu_{\cG_1}-\mu_{\cG_2})\|_{\TV} \le 2c^{\lfloor \eps n/(\logstep_\Co \log \vei)\rfloor},
  \end{equation}
  where $\|\cdot\|_{\TV}$ denotes the total variation norm.  Let $\cG$
  be an arbitrary proper patch family;
  Corollary~\ref{cor:proper_invariant} implies that, for any $m$
  sufficiently large, $F_{\eps*}^m \mu_{\cG}$ admits a disintegration
  as a proper patch family.  Applying~\eqref{eq:memory_loss_iterated}
  to $\cG$ and one of such image patch families implies that
  \begin{align*}
    \|F_{\eps*}^n \mu_{\cG}-F_{\eps*}^{n+m}
    \mu_{\cG}\|_{\TV}=\|F_{\eps*}^n (\mu_{\cG}-F_{\eps*}^m
    \mu_{\cG})\|_{\TV}\le 2 e^{-c_{\ve}n},
  \end{align*}
  where $c_{\ve} = -\logstep_{\Co}\inv\log c \cdot \eps /\log \vei$.
  The above bound shows that the sequence $F_{\eps*}^n \mu_{\cG}$ is
  Cauchy and thus converges to a limit probability measure; moreover,
  by~\eqref{eq:memory_loss_iterated}, the limit of
  $F_{\eps*}^n \mu_{\cG}$ is independent of the initial proper patch
  family $\cG$.  We denote this common limit probability measure by
  $\mu_\eps$: observe that $\mu_\eps$ is $F_\eps$-invariant by design
  and, by definition of the total variation norm:
  \begin{equation}\label{eq:equidistribution}
	|\mu_{\cG}(B\circ F_\eps^n)-\mu_\eps(B)|\le 2 e^{-c_{\ve}n}
  \end{equation}
  for any measurable function $B:\T^2 \to \R$ such that
  $\sup|B|\le 1$.  Since the measures $F_{\eps*}^n \mu_{\cG}$ are
  absolutely continuous with respect to Lebesgue for all $n$, so is
  $\mu_\eps$. It follows that the bound~\eqref{eq:equidistribution}
  applies whenever $\|B\|_{L^\infty(\Leb)}\le 1$.

  Next we show decay of correlations for observables
  $A\in \cC^2(\bT^2,\R)$ and $B\in L^\infty(\Leb)$. First consider the
  case where $\|d\log A\|_{\cC^1}\le \kappa$, where $\kappa$ is as in
  Lemma~\ref{lem:smooth_density_proper}. Then
  $\Leb(A\cdot B\circ F_\eps^n)=\Leb(A)\nu(B\circ F_\eps^n)$, where
  $\nu$ is the absolutely continuous probability measure defined by
  $d\nu/d\Leb = A/\Leb(A)$. By Lemma~\ref{lem:smooth_density_proper},
  $\nu$ admits a disintegration as a proper patch family, so
  by~\eqref{eq:equidistribution}, we have
  \begin{align*}
    |\Leb(A\cdot B\circ F_\eps^n)-\Leb(A)\mu_\eps(B)|\le
    2|\Leb(A)|\|B\|_{L^\infty(\Leb)} e^{-c_{\ve}n}.
  \end{align*}
  Next we consider the case of an arbitrary $\cC^2$ observable
  $A$. Without loss of generality, we can restrict to the case where
  $\|A\|_{\cC^2}\le 1$. Let $\alpha > 0$ be sufficiently big such that
  $\|d\log(A+\alpha)\|_{\cC^1}\le \kappa$ whenever
  $\|A\|_{\cC^2}\le 1$. Then $A= (A+\alpha)-\alpha$ can be written as
  the difference of two observables $A_i$ that satisfy
  $\|d\log A_i\|_{\cC^1}\le \kappa$.  Thus for any $A\in \cC^2$ and
  any $B\in L^\infty(\Leb)$, we have
  \begin{equation*}
	|\Leb(A\cdot B\circ F_\eps^n)-\Leb(A)\mu_\eps(B)|\le \Const
    \|A\|_{\cC^2}\|B\|_{L^\infty(\Leb)}\, e^{-c_{\ve} n}.
  \end{equation*}

  It remains to show that $\mu_\eps$ is the unique physical measure
  for $F_\eps$.
  Since $\cC^2$ is dense in $L^1(\Leb)$, by a standard approximation
  argument we have
  $\Leb(v\cdot w\circ F_\eps^n)\to \Leb(v)\mu_\eps(w)$ for all
  $v\in L^1(\Leb), w \in L^\infty(\Leb)$.  In particular, since
  $\mu_{\ve}\ll\Leb$, the above shows that $\mu_{\ve}$ is mixing, hence
  ergodic; it is then immediate to conclude, once again by absolute
  continuity, that $\mu_{\ve}$ is a physical measure.

  Let $\nu_{\ve}$ be a (possibly different) physical measure for
  $F_{\ve}$ and let $B(\nu_{\ve})$ denote its basin.\ignore{\footnote{add
    definition of basin}} On the one hand,
  by setting $v =\mathbf{1}_{B(\nu_{\ve})}/\Leb(B(\nu_{\ve}))$ –where
  $\mathbf{1}_{B(\nu_{\ve})}$ denotes the indicator function of
  $B(\nu_{\ve})$– we have
  \begin{align*}
    \frac1n\sum_{j = 0}^{n-1}\Leb(v\cdot w\circ F_{\ve}^{j}) = \frac{1}{\Leb(B(\nu_{\ve}))}\int_{B(\nu_{\ve})}\frac{1}{n}\sum_{j=0}^{n-1}w\circ
    F_\eps^j \, d\Leb \to \mu_{\ve}(w).
  \end{align*}
  On the other hand, by the Dominated Convergence Theorem and the
  definition of the basin of $\nu_{\ve}$ we have for all
  $w \in \cC^0$:
  \begin{align*}
    \frac{1}{\Leb(B(\nu_{\ve}))}\int_{B(\nu_{\ve})}\frac{1}{n}\sum_{j=0}^{n-1}w\circ
    F_\eps^j \, d\Leb\to \nu_{\ve}(w).
  \end{align*}
  It follows that $\nu_{\ve} = \mu_{\eps}$.
\end{proof}
\begin{rmk}
  It is worthwhile to observe that, if a measure $\mu_{\ve}$
  satisfies~\eqref{eq:decay-of-correlations}, it is necessarily a
  physical measure, regardless of absolute continuity  with respect to
  Lebesgue.  In fact~\cite[Theorem 3.3]{castorrini-liverani}
  guarantees that $F_\eps$ admits (possibly several) ergodic physical
  measures; call one of them $\nu_{\ve}$; then the closing argument of
  the proof shows that $\mu_{\ve}$ and $\nu_{\ve}$ must coincide,
  hence $\mu_{\ve}$ is physical.
\end{rmk}
\subsection{Proof of the Coupling Theorem – Bootstrap}\label{sec:reduct-dynam-near}
We now begin with the proof of Theorem~\ref{thm:bootstrap}; the first
important observation is that it suffices to prove the result under
more favourable assumptions.
\begin{thm}\label{thm:coupling_near_sink}
  Let $C > 0$ be as in Corollary~\ref{cor:convenient}. For any $Q > 0$
  sufficiently large, there exist constants $c'\in(0,1)$ and
  $\logstep_2>0$ such that the following holds. For $i=1,2$, let
  $\cH_i$ be a patch family such that any standard patch $\plaque$ in
  $\cH_i$ is supported on $\bT\times B(\theta_{-},2C\sqrt{\eps})$ and
  satisfies $\El(\plaque)\le Q$.  Then, for any
  $n\ge \logstep_{2} \vei \log\vei$, there exist proper patch families
  $\widetilde{\cH}_1$ and $\widetilde{\cH}_2$ such that
  \begin{align*}
    F_{\eps*}^{n}( \mu_{\cH_1} - \mu_{\cH_2}) =
    c'(\mu_{\widetilde{\cH}_1} - \mu_{\widetilde{\cH}_2}).
  \end{align*}
\end{thm}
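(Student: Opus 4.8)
The plan is to prove Theorem~\ref{thm:coupling_near_sink} by a coupling argument whose coupled part is a fixed multiple of Lebesgue measure on a thin strip around the sink; the uniformity of the constant $c'$ comes from the quantitative equidistribution of standard patches near $\theta_{-}$. \emph{Step 1 (reduction to regular, near-sink patches of uniformly bounded roughness).} First, using Lemma~\ref{lem:cutting-patches} we replace the patches in $\cH_{i}$ by $\clockN$-adapted ones. Since $\El(\plaque)\le Q$ means $\cM(\plaque)$ is bounded by a constant depending only on $Q$, pushing forward a bounded (in $\ve$) number of iterates, Proposition~\ref{lem:invariance_patches} makes $\rough_{n}\to\tfrac32\rough_{*}$ and $\narr_{n}\to 2$ geometrically while Lemma~\ref{lem:patch_family_a_priori_bd}(a) keeps $\cM$ bounded; so after $\cO(1)$ steps the families consist of \regular{} patches with $\cM\le\Const(Q)$. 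Next we run a number $m_{1}\asymp\vei\log\vei$ of iterates. Since the patches stay in $\bT\times H_{\bandh}$ (as $B(\theta_{-},2C\sqrt\ve)\subset H_{\bandh}$ for $\ve$ small) and $m_{1}\le\ve^{-3/2}$, Lemma~\ref{lem:patch_family_a_priori_bd}(b) yields a disintegration $\cH_{i}'$ of $F_{\ve*}^{m_{1}}\mu_{\cH_{i}}$ with $\lambda_{\cH_{i}'}(\cM(\plaque))\le\Const$ \emph{uniformly} — it is here that the initial value $\cM\le\Const(Q)$ is forgotten. Applying Corollary~\ref{cor:convenient} to the pairs in the disintegration (Remark~\ref{rmk:standard-patches-are-families}) shows that at least half of the mass of $F_{\ve*}^{m_{1}}\mu_{\cH_{i}}$ sits in $\bT\times B(\theta_{-},C\sqrt\ve)$; combining this with Markov's inequality applied to $\cM$, a definite fraction of the mass is carried by patches $\plaque$ with $\cM(\plaque)\le Q'$ and $\supp\plaque\subset\bT\times B(\theta_{-},C\sqrt\ve)$, for a fixed constant $Q'$.

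\emph{Step 2 (the common Lebesgue component).} We claim that, after $\cO(1)$ further iterates, there exist $\kappa>0$ and an index $m\asymp\vei\log\vei$, independent of $\cH_{i}$ and $\ve$, such that $F_{\ve*}^{m}\mu_{\cH_{i}}\ge\kappa\,\ve^{-1/2}\,\Leb|_{\bT\times B(\theta_{-},\frac{C}{2}\sqrt\ve)}$ as measures for $i=1,2$; this is consistent with total mass one because $\ve^{-1/2}\cdot\Leb(\bT\times B(\theta_{-},\frac{C}{2}\sqrt\ve))\asymp 1$. To see it, fix $(x,\theta)$ with $|\theta-\theta_{-}|<\tfrac{C}{2}\sqrt\ve$; the density of $F_{\ve*}^{m}\mu_{\cH_{i}}$ at $(x,\theta)$ is at least $\vei$ times the total weight of the family-patches with $\cM\le Q'$ whose support contains $(x,\theta)$, by Lemma~\ref{lem:split_density_leb} together with the bound $\Leb\stdr\le 2\delta\patchheight\ve$ of Lemma~\ref{lem:straight_rect_in_std_rect}. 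This weight is estimated by separating the slow and fast directions: partitioning $B(\theta_{-},C\sqrt\ve)$ into $\ve$-intervals, Lemma~\ref{lem:lclt_corol} (applied to the pairs after one more $\clockN$-step) gives that the weight of patches whose $\theta$-projection meets a given such interval is $\gtrsim\sqrt\ve$; and pushing forward one more $\clockN$-step, the image of any standard curve has length $\asymp 3^{\clockN}\delta$ and decomposes into $\asymp 3^{\clockN}$ standard subcurves whose $x$-projections cover $\bT$, so by the bounded-distortion estimates underlying Proposition~\ref{proposition:invariance} and the lower weight bound of Lemma~\ref{lem:lower_bd_weights} ($c_{n,j}\gtrsim\lowerBoundWeight^{\clockN}/\shor$), the conditional weight of descendants whose $x$-projection contains a given $x_{0}$ is $\gtrsim\delta$. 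Multiplying these bounds together with the $\cM\le Q'$ fraction gives a total weight $\gtrsim\sqrt\ve$, hence density $\gtrsim\ve^{-1/2}$, proving the claim. Equivalently, the measure $\underline\nu:=\kappa\,\ve^{-1/2}\Leb|_{\bT\times B(\theta_{-},\frac{C}{2}\sqrt\ve)}$, of mass $\asymp\kappa$, is dominated by both pushforwards.

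\emph{Step 3 (coupling, remainders, and the timescale).} Writing $F_{\ve*}^{m}\mu_{\cH_{i}}=\underline\nu+(F_{\ve*}^{m}\mu_{\cH_{i}}-\underline\nu)$ with $\underline\nu$ common, we get $F_{\ve*}^{m}(\mu_{\cH_{1}}-\mu_{\cH_{2}})=\rho_{1}-\rho_{2}$ where $\rho_{i}:=F_{\ve*}^{m}\mu_{\cH_{i}}-\underline\nu\ge 0$. Since $\underline\nu$ is a fixed multiple of Lebesgue on a strip that can be tiled by standard rectangles, and since each good patch splits off a Lebesgue piece by Lemma~\ref{lem:split_density_leb} ($\rho=\tau\,\Leb(\stdr)^{-1}+(1-\tau)\tilde\rho$ with $\tilde\rho\in\Stdd_{\Rough'}(\stdr)$, $\Rough'=2\Rough+4\Econst\inv\Rough^{2}$), distributing the subtraction of $\underline\nu$ among the good patches leaves a patch-family disintegration of $\rho_{i}$ whose $\cM$ is still bounded by a constant, at the cost of slightly worse roughness. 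Finally, for arbitrary $n\ge\logstep_{2}\vei\log\vei$ with $\logstep_{2}$ chosen large (so that $m\le\tfrac12\logstep_{2}\vei\log\vei$ and $n-m\ge\logstep_{1}\vei\log\vei$), we push $\rho_{i}$ forward the remaining $n-m$ steps: Proposition~\ref{prop:geometric_drift} (applied patchwise and then combined convexly), using $\cM(\rho_{i})\le\Const$, produces proper patch families $\widetilde\cH_{i}$ disintegrating $F_{\ve*}^{n-m}\rho_{i}$, so that $F_{\ve*}^{n}(\mu_{\cH_{1}}-\mu_{\cH_{2}})=c'(\mu_{\widetilde\cH_{1}}-\mu_{\widetilde\cH_{2}})$ with $c':=\|\rho_{1}\|_{L^{1}}=1-\|\underline\nu\|_{L^{1}}\in(0,1)$; the $\vei\log\vei$ timescale is forced precisely by this last properness step.

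\emph{Main obstacle.} The delicate point is Step 2: one must combine the local central limit behaviour in the slow variable, which provides the $\sqrt\ve$-scale spreading of mass near $\theta_{-}$, with the bounded-distortion equidistribution in the fast variable, while keeping track of the patch and foliation structure, to obtain a \emph{pointwise} lower bound of order $\ve^{-1/2}$ for the density of the pushed-forward measures on a fixed $\sqrt\ve$-strip, uniformly over all admissible input families. The accompanying bookkeeping — handling the three simultaneous conditionings (support near a given $\theta$-slice, $x$-projection covering a given point, and $\cM\le Q'$) via successive Markov estimates, and checking in Step 3 that removing $\underline\nu$ still leaves a genuine patch family — is routine but must be carried out with care.
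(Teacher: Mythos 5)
Your high-level strategy — find a common Lebesgue component near the sink, subtract it, and use Proposition~\ref{prop:geometric_drift} to make the remainders proper — matches the spirit of the paper's proof, but Step~2 has a genuine gap. The pointwise density lower bound $F_{\ve*}^{m}\mu_{\cH_i}\ge\kappa\,\ve^{-1/2}\,\Leb|_{\bT\times B(\theta_-,\frac{C}{2}\sqrt{\ve})}$ does not follow from Lemma~\ref{lem:lclt_corol}. That lemma gives $\mu_{\ell}(\theta_{\clockN}\in I)\ge p\sqrt{\ve}$ for every $\ve$-interval $I$, which only controls the total mass of the pushforward in $\bT\times I$, not its pointwise density. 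Your argument silently upgrades this to a lower bound on the weight of patches whose $\pi_2$-support \emph{contains} a given $\theta$, but the patches that carry the mass into $\bT\times I$ need not contain any fixed $\theta\in I$: their $\pi_2$-supports have width $\cO(\ve)\asymp|I|$ and can all cluster on one side of $I$, leaving the density near the other endpoint arbitrarily small. This is exactly the obstruction that the paper's Sub-lemma~\ref{sublem:finding_good_bins} is designed to sidestep — via a pigeonhole argument over five adjacent $L_\ve$-intervals, it extracts $\asymp\ve^{-1/2}$ \emph{specific} intervals, each fully contained in the $\pi_2$-support of a $\gtrsim\sqrt{\ve}$ fraction of patches in both families; it does not, and cannot cheaply, give a pointwise bound over the entire strip. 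The paper then couples two individual patches sharing such an interval via Proposition~\ref{prop:coupling_on_good_bins}, using a local reference measure $\Leb(S)^{-1}\Leb|_S$ on a small overlapping rectangle $S$, rather than a global strip measure.

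A second problem in Step~2 is the extra ``$\clockN$-step'' you take to cover $\bT$ in the $x$-direction: $\clockN\asymp\vei$ iterates displace $\theta$ by $\cO(\ve\clockN)=\cO(1)$, so the image is no longer localized at scale $\sqrt{\ve}$ near $\theta_-$. The paper uses only $n_1=\cO(1)$ iterates for the $x$-covering (Step~1 of the proof of Proposition~\ref{prop:coupling_on_good_bins}), during which the $\theta$-displacement is $\cO(\ve)$. Relatedly, invoking Lemma~\ref{lem:lower_bd_weights} with $n=\clockN$ gives $c_{n,j}\gtrsim\lowerBoundWeight^{\clockN}/\shor$, which is exponentially small in $\vei$ and therefore cannot by itself yield the $\gtrsim\delta$ conditional weight you assert; what is needed is a bounded-distortion summation over the $\asymp 3^{\clockN}\delta$ descendants covering $x_0$, not a bound on individual weights. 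Finally, in Step~3 the subtraction of $\underline{\nu}$ and re-assembly into a patch family is not automatic: Lemma~\ref{lem:split_density_leb} splits off a multiple of $\Leb|_{\stdr}$ from each individual (curvy) rectangle $\stdr$, while $\underline{\nu}$ is a fixed multiple of Lebesgue on a straight strip; reconciling the two requires the tiling-by-standard-rectangles argument that the paper carries out in Steps~4--6 of the proof of Proposition~\ref{prop:coupling_on_good_bins}, and without the pointwise density bound from Step~2 the difference $F_{\ve*}^m\mu_{\cH_i}-\underline{\nu}$ need not even be a nonnegative measure.
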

\ignore{the assumption that the patch families $\cG_{i}$ involved in
  the statement belong to a neighbourhood of the sink.  In fact by
  Corollary~\ref{cor:convenient}, there exists $C, \logstep > 0$ so
  that the pushforward of any patch family by $V(\vei\log\vei)$
  iterates of the dynamics assigns at least half of its mass to
  $\bT\times B(\theta_{-},C\sqrt{\eps})$. Thus our main challenge\
  reduces to prove that we can couple standard patches supported on
  $\bT\times B(\theta_-,C\sqrt{\eps})$; this is the content of the
  following theorem.  }

We will prove Theorem~\ref{thm:coupling_near_sink} in
Subsection~\ref{sec:coupling_near_sink}.  We now show how it implies
Theorem~\ref{thm:bootstrap}.  Recall that in
Section~\ref{sec:std_pairs} we fixed a timescale $\clock$ (and set
$\clockN = \lfloor \clock\vei \rfloor$).
\begin{proof}[Proof of Theorem~\ref{thm:bootstrap}]
  Let $i\in \{1,2\}$. By
  Remark~\ref{rmk:standard-patches-are-families}, $\mu_{\cG_i}$ admits
  a representation as a family of regular standard pairs.  Thus by
  Corollary~\ref{cor:convenient}, there exists $\logstep$ so that
  whenever $m'\ge \lfloor\logstep\log\vei\rfloor\clockN$ we have:
  \begin{align*}
    \mu_{\cG_i}(\theta_{m'} \in B(\theta_-,C\sqrt{\eps}))\ge \tfrac12.
  \end{align*}
  Moreover, by Corollary~\ref{cor:proper_invariant}, for
  $m'\ge \logstep_{1}\vei\log\vei$, we can choose a proper patch family
\  $\cH_i \in [F_{\eps*}^{m'}{\cG_i}]$. For $\eps$ small enough, any
  standard rectangle that intersects
  $\T\times B(\theta_-,C\sqrt{\eps})$ is contained in
  $\T\times B(\theta_-,2C\sqrt{\eps})$; we thus obtain that
  \begin{align*}
    \lambda_{\cH_i}(\supp\plaque\subset\T\times B(\theta_-,2C\sqrt{\eps}))\ge \tfrac{1}{2}.
  \end{align*}
  Let
  $S = \{\plaque \in \plaqueSet:\supp\plaque \subset
  B(\theta_-,2C\sqrt{\eps}),\El(\plaque)\le 8\lyapFnDrift\}$, where
  $\lyapFnDrift$ is given by Proposition~\ref{prop:geometric_drift},
  and define $p_i = \lambda_{\cH_i}(\plaque \in S)$.  Since $\cH_i$ is
  proper, $\lambda_{\cH_i}(\El(\plaque))\le 2\lyapFnDrift$, so
  Markov's inequality implies that
  \begin{align*}
    p_i \ge \tfrac{1}{2} - \lambda_{\cH_i}(\El(\plaque)> 8\lyapFnDrift)\ge \tfrac14.
  \end{align*}
  Consider the patch family
  ${\cH}^*_i = \tfrac{1}{4}\cH_i|\{\plaque\in
  S\}+\tfrac{3}{4}\cE_i$, where
  \begin{align*}
    \cE_i = \tfrac{4}{3}(p_i-\tfrac{1}{4})\cH_i|\{\plaque\in
    S\}+\tfrac43 (1-p_i)\cH_i|\{\plaque \notin S\}.
  \end{align*}
  Observe that ${\cH}^{*}_i \in [\cH_i]$ and
  $\lambda_{{\cH}^{*}_i}(\El(\plaque))=\lambda_{\cH_i}(\El(\plaque))$,
  so in particular
  \begin{align*}
    \lambda_{\cE_i}(\El(\plaque))\le \tfrac43
    \lambda_{{\cH}^{*}_i}(\El(\plaque))\le \tfrac83
    \lyapFnDrift.
  \end{align*}
  By Proposition~\ref{prop:geometric_drift}, it follows that, for
  $\eps$ small enough and for any $m''\ge \logstep_{1}\vei\log\vei$,
  we can choose a proper patch family
  $\widehat{\cE}_i\in [F_{\eps*}^{m''} {\cE_i}]$.

  We can now apply Theorem~\ref{thm:coupling_near_sink} to
  ${\cH}^{*}_{i}$, with $Q = \frac83B$ and obtain, for
  $m''\ge \logstep_2 \vei\log\vei$, proper patch families
  $\widetilde{\cH}^{*}_i$  such that
  \begin{align*}
    F_{\eps*}^{m''}( \mu_{\cH_1|\{\plaque \in S\}} -
    \mu_{\cH_2|\{\plaque \in S\}}) = c'(\mu_{\widetilde{\cH}^{*}_1} - \mu_{\widetilde{\cH}^{*}_2}).
  \end{align*}
  Thus we obtain that
  \begin{align*}
    F_{\eps*}^{m'+m''}(\mu_{\cG_1}-\mu_{\cG_2}) &=F_{\eps*}^{m''}(\mu_{{\cH}^{*}_1}-\mu_{{\cH}^{*}_2})=\tfrac34(\mu_{\widetilde{\cE}_1}-\mu_{\widetilde{\cE}_2})+\tfrac14 c'(\mu_{\widetilde{\cH}^{*}_1} - \mu_{\widetilde{\cH}^{*}_2})\\
                                               &= c(\mu_{\widetilde{\cG}_1}-\mu_{\widetilde{\cG}_2}),
	\end{align*}
	where $c = (\tfrac 34 + \tfrac14 c')$ and
    \begin{align*}
      \widetilde{\cG}_i = \frac{3}{3 + c'}\,\widetilde{\cE}_i + \frac{ c'}{3 + c'}\,\widetilde{\cH}^{*}_i.
    \end{align*}
	Note that $\widetilde{\cG}_i$ is proper since it is a convex
    combination of proper patch families.  The theorem follows
    choosing $\logstep_{\Co} =
    2\max\{\logstep\clock,\logstep_1,\logstep_2\}$ so that if $n >
    \logstep_{\Co}\vei\log\ve\inv$ we can find $m', m''$ as above so that $n = m'+m''$.
\end{proof}
\subsection{Proof of the Coupling Theorem –
  Conclusion}\label{sec:coupling_near_sink}
In this section we prove Theorem~\ref{thm:coupling_near_sink},
concluding the proof of the Coupling argument.  We prove this theorem
by pushing forward $\cH_1$ and $\cH_2$ to obtain standard patches
whose supports overlap.  Let $I\subset B(\theta_-,C\sqrt{\eps})$ be an
arbitrary interval such that $|I|\ge\eps$; by
Lemma~\ref{lem:lclt_corol} there exists $p > 0$ so that, for $\clock$
sufficiently large:
$\mu_{\cH_i}(\theta_{\clockN}\in I)\ge p\eps^{1/2}$ for $i=1,2$.
However, if two standard patches intersect only near their boundaries,
then only a very small portion of their mass can be
coupled. Consequently, given a standard patch $\plaque$, we shall
consider intervals $I$ for which $\supp \plaque$ has a substantial
overlap with $\bT \times I$.

Let us now be more precise. By
Lemma~\ref{lem:patch_family_a_priori_bd}(a), for $i=1,2$ we can choose
patch families $\widehat{\cH}_i\in [F_{\eps*}^{\clockN}{\cH_i}]$
such that $\cM(\plaque)\le \widehat{Q}:=e^{\alpha_1 \clock}(1+Q)$ for
all $\plaque$ in $\widehat{\cH}_i$. In particular, by item (b) in the
definition of standard rectangle,
$|\pi_2 \supp \plaque|\ge \widehat{Q}\inv \patchheight\eps$ for all
$\plaque$ in $\widehat{\cH}_i$. Moreover, by the above paragraph we
have
\begin{equation}\label{eq:llt_patch_family}
	\lambda_{\widehat{\cH}_i}(\pi_2\supp\plaque\text{ intersects }I)\ge p\eps^{1/2}\text{ for }i=1,2,
\end{equation}
for any interval $I\subset B(\theta_{-},C\sqrt{\eps})$ such that $|I|\ge \eps$.

To ensure `substantial overlap' with a given standard patch $\plaque$, we consider intervals $I$ of length $L_\eps=\widehat{Q}\inv\patchheight\eps/10$ such that $I\subset \pi_2 \supp\plaque$. Next we show that we can find many such intervals that can be used for coupling:
\begin{sublem}\label{sublem:finding_good_bins}
	Let $\patchheight\ge 10\widehat Q$ . Then there exists $s>0$ and a collection of $\lfloor s\eps^{-1/2}\rfloor$ intervals $\{I_j\}$ of length $L_\eps$ such that\footnote{For sets $S_1, S_2 \subset \T$ we denote $d(S_1,S_2)=\inf_{p_1\in S_1, p_2\in S_2}d(p_1,p_2)$.} $d(I_j,I_k)\ge 2\patchheight \eps$ for $j\ne k$ and $\lambda_{\widehat{\cH}_i}(I_j \subset \pi_2 \supp \plaque)\ge \frac{1}{2}p\eps^{1/2}$ for $i=1,2$ and all $j$.
\end{sublem}
\begin{proof}
  Let $\{S_k\}_{k=1}^5$ be adjacent intervals of length $L_\eps$ in
  $B(\theta_,C\sqrt{\eps})$. We claim that
  \begin{equation*}
    \#\{k\in\{1,\cdots,5\}:\lambda_{\widehat{\cH}_i}(S_k \subset \pi_2
    \supp \plaque)\ge \tfrac12 p\eps^{1/2}\}\ge 3
  \end{equation*}
  for $i=1,2$. It follows by the pigeon-hole principle that we can
  choose $k\in\{1,\cdots,5\}$ such that
  \[\lambda_{\widehat{\cH}_i}(S_k \subset \pi_2 \supp \plaque)\ge
    \tfrac12 p\eps^{1/2}\quad\text{ for $i=1,2$.}\]

  The proof of the sub-lemma follows since we can choose
  $\lfloor 2C\eps^{-1/2}/(3\patchheight)\rfloor$ intervals $\{J_j\}$
  of length $5L_\eps$ in $B(\theta_-,C\sqrt{\eps})$ such that
  $d(J_j,J_k)\ge 2\patchheight\eps$ for $j\ne k$, then choose
  intervals $I_j\subset J_j$ of length $L_\eps$ such that
  $\lambda_{\widehat\cH_i}(I_j \subset \pi_2\supp\plaque)\ge \tfrac12
  p\eps^{1/2}$ for $i=1,2$.

  It remains to prove the claim. Fix $i\in \{1,2\}$ and let $\plaque$
  be a standard patch in $\widehat\cH_i$.  As observed before,
  $\pi_2 \supp\plaque$ is an interval of length at least
  $\widehat Q\inv\patchheight\eps=10L_\eps$; thus if
  $\pi_2 \supp\plaque$ intersects $S_3$ then either
  $S_1\cup S_2 \subset \pi_2\supp\plaque$ or
  $S_4\cup S_5 \subset \pi_2\supp\plaque$. Since
  $\patchheight\ge 10\widehat Q$, we have $L_\eps\ge \eps$. Hence
  by~\eqref{eq:llt_patch_family}, it follows that either
  \[\lambda_{\widehat\cH_i}(S_1\cup S_2 \subset \pi_2\supp\plaque)\ge
    \tfrac12 p\eps^{1/2}\text{ or }\lambda_{\widehat\cH_i}(S_4\cup S_5
    \subset \pi_2\supp\plaque)\ge \tfrac12 p\eps^{1/2}.\] Without
  loss, we may suppose that
  $\lambda_{\widehat\cH_i}(S_1\cup S_2 \subset \pi_2\supp\plaque)\ge
  \tfrac12 p\eps^{1/2}$. By applying the same reasoning as above with
  $S_4$ in place of $S_3$, either
  \[\lambda_{\widehat\cH_i}(S_3 \subset \pi_2\supp\plaque)\ge \tfrac12
    p\eps^{1/2}\text{ or }\lambda_{\widehat\cH_i}(S_5 \subset
    \pi_2\supp\plaque)\ge \tfrac12 p\eps^{1/ 2},\] which completes the
  proof of the claim.
\end{proof}
\begin{prp}\label{prop:coupling_on_good_bins}
	For $\patchheight$ large enough, there exists $p'>0$, $\logstep_{4}>0$ such that the following holds for all $\eps$ sufficiently small. Let $\plaque^{(i)}$, $i=1,2$ be \regular{} standard patches such that $\cM(\plaque_i)\le \widehat{Q}$. Suppose that there exists an interval $I$ of length $L_\eps$ such that $I \subset \pi_2 \supp\plaque_1 \cap \pi_2 \supp\plaque_2$. Then for all $n\ge \logstep_{4}\vei\log\vei$ there exists a Borel probability measure $m$ on $\T^2$ and proper patch families $\cG_1$, $\cG_2$ such that
	\begin{equation}\label{eq:coupling_on_good_bins_via_ref_measure}
		F_{\eps*}^{n}\mu_{\plaque_i} = (1-p')m + p' \mu_{\cG_{i}} \quad\text{ for $i=1,2$.}
	\end{equation}
\end{prp}
\begin{proof}
	The proof of this proposition splits into several steps.
	\begin{enumerate}[label=\textbf{Step \arabic*}:,leftmargin=0pt, labelwidth=*, labelsep=0.5em, align=left]
		\item\label{step:strip_inside} Write $\stdr^{(i)} = \supp \plaque^{(i)}$. There exists $n_1=\cO(1)$ such that $F_\eps^{n_1}\stdr^{(1)}\cap F_\eps^{n_1}\stdr^{(2)}$ contains a strip for $\patchheight$ large enough.
	\end{enumerate}

	Write $I = B(\theta_0,L_\eps/2)$. More precisely, we claim that for $\patchheight$ sufficiently large, there exists $n_1$ such that for all $\eps>0$ we have $\bT \times B(\theta_0,L_\eps/4) \subset F_\eps^{n_1} \stdr^{(i)}$ for $i=1,2$. Indeed, fix $i\in \{1,2\}$ and let $\bG_0^{(i)}$ and $\bG_1^{(i)}$ denote the bottom and top boundary standard curves of $\stdr^{(i)}$, respectively. Choose $n_1$ such that $3^{n_1} \delta/2>1$. Then since any standard curve is an unstable curve, by~\eqref{eq:unstable_cone_expansion}, $\pi_1 F_\eps^{n_1} \bG_0 = \pi_1 F_\eps^{n_1} \bG^{(i)}_1=\bT$. Observe that $\bG^{(i)}_0$ and $\bG^{(i)}_1$ are of height at most $\eps\delta\scc1$ and $|\pi_2 F^{n_1}_\eps p - \pi_2 p|\le \Const {n_1}\eps$ for any $p\in \bT^2$. Thus for $\patchheight$ sufficiently large, $F_\eps^{n_1} \bG^{(i)}_0$ and $F_\eps^{n_1} \bG^{(i)}_1$ lie below and above $\bT\times B(\theta_0,L_\eps/4)$, respectively.
	Let $p\in \bT\times B(\theta_0,L_\eps/4)$. Choose a local centre
    manifold $\Wc$ from $p$ to a point $q\in F_\eps^{n_1} \bG^{(i)}_0$ and
    let $r\in \bG^{(i)}_0$ be such that $F_\eps^{n_1} r=q$. Then $\Wc$ and
    $F_\eps^{n_1}\Wc_{\stdr^{(i)}}(r)$ are both local centre manifolds
    with endpoint $q$, so by unique integrability
    (Theorem~\ref{thm:unique-integrability-centre-mfolds}) either
    $\Wc\subseteq F_\eps^{n_1}\Wc_{\stdr^{(i)}}(r)$ or
    $F_\eps^{n_1}\Wc_{\stdr^{(i)}}(r)\subseteq \Wc$. Since
    $F_\eps^{n_1}\Wc_{\stdr^{(i)}}(r)$ intersects $F_\eps^{n_1} \bG^{(i)}_1$
    and $\Wc$ does not, it follows that
    $\Wc \subset F_\eps^{n_1}\Wc_{\stdr^{(i)}}(r)$. Hence
    $p \in F_\eps^{n_1}\Wc_{\stdr^{(i)}}(r)\subset F_\eps^{n_1} \stdr^{(i)}$;
    since $p$ was arbitrary, this concludes the proof of step 1.
	\begin{enumerate}[label=\textbf{Step \arabic*}:, resume,leftmargin=0pt, labelwidth=*, labelsep=0.5em, align=left]
		\item Let $i\in\{1,2\}$. There exist regular standard patches $\bar{\plaque}^{(i)}_j$ such that $\mathcal\Rough(\bar{\plaque}^{(i)}_j)\le \Const$ and $\mathcal\shor(\bar{\plaque}^{(i)}_j)\le 2\widehat{Q}$ along with weights $c_j^{(i)}>0$ so that
		\begin{equation}\label{eq:pushforward_explicit_decomp}
			F_{\eps*}^{n_1} \mu_{\plaque^{(i)}} = \sum_j c_j^{(i)}\mu_{\bplaque_j^{(i)}}.
		\end{equation}
		Moreover, there exists a constant $\mathfrak{b}>0$ uniform in $\eps$ such that $c_j^{(i)}> \mathfrak{b}$.
	\end{enumerate}


	Since $\cM(\plaque^{(i)})\le \widehat{Q}$ and $n_1=O(1)$ this step follows immediately from Proposition~\ref{lem:invariance_patches} and Lemma~\ref{lem:lower_bd_weights}.

	\begin{enumerate}[label=\textbf{Step \arabic*}:, resume,leftmargin=0pt, labelwidth=*, labelsep=0.5em, align=left]
		\item There exist indices $j_1,j_2$ such that $\supp \plaque^{(1)}_{j_1}\cap \supp \plaque^{(1)}_{j_1}$ contains $A''\times B''$ where $A''$ and $B''$ are intervals such that $|A''|\ge \delta/9$ and $|B''|\ge L_{\eps}/20$.
	\end{enumerate}

	Since
	$\bT\times B(\theta_0,L_\eps/4)\subset F_\eps^{n_1} \stdr_1$ we can
	choose $j_1$ such that $\supp \bplaque_{j_1}^{(1)}$ intersects
	$\{\theta=\theta_0\}$. Now by
	Lemma~\ref{lem:straight_rect_in_std_rect}, we can choose intervals
	$A$, $B$ such that $A\times B\subset \supp \bplaque_{j_1}^{(1)}$,
	$|A|\ge \delta/3$, $|B|\ge \widehat Q\inv \Delta\eps/4$ and
	$d(\theta_0,B)\le \scc1 \eps\delta$. Observe that
	$B' = B \cap B(\theta_0,L_\eps/4)$ satisfies $|B'|\ge L_\eps/5$
	for $\patchheight$ large enough. Now choose $j_2$ such that $\supp \plaque_{j_2}^{(2)}$ intersects the midpoint of $A\times B'$. Then by the same argument there exist intervals $A''$, $B''$ such that \[A'' \times B'' \subset \supp \bplaque_{j_1}^{(1)}\cap \supp \bplaque_{j_2}^{(2)},\]
	$|A''|= \delta/9$, $|B''|= L_\eps/20$.
	To ease notation, let us write $\widetilde{\plaque}^{(i)}=\bar{\plaque}_{j_i}^{(i)}$ and $\widetilde{\stdr}^{(i)}=\supp \widetilde{\plaque}^{(i)}$ for $i=1,2$.

	\begin{enumerate}[label=\textbf{Step \arabic*}:, resume,leftmargin=0pt, labelwidth=*, labelsep=0.5em, align=left]
		\item Use $A'' \times B''$ to construct a (mod 0) partition of $\widetilde{\stdr}^{(i)}$ into $(30,\Const)$-standard rectangles for $i\in \{1,2\}$, including an element $S$ that is common to both partitions.
	\end{enumerate}

	Let $A'''$ and $B'''$ denote the middle third of $A''$ and $B''$, respectively. Let $b_1$, $b_2$ be the endpoints of $B'''$ and $q_1$, $q_2$ be the endpoints of $A''' \times \{b_1\}$. Then $\Wc_{\widetilde{\stdr}^{(i)}}(p_1)$, $\Wc_{\widetilde{\stdr}^{(i)}}(p_2)$, $\T\times \{b_1\}$ and $\T\times \{b_2\}$ partition $\widetilde{\stdr}^{(i)}$ into nine closed regions $P_j^{(i)}$ (see Figure~\ref{fig:splitting_into_standard}).
	\begin{figure}[ht]
		\centering
		\begin{tikzpicture}
			\coordinate (p1_curve_top) at (2.6,0);
			\coordinate (p1_curve_bottom) at (3,-3.8);

			\coordinate (p2_curve_top) at (4.2,0.7);
			\coordinate (p2_curve_bottom) at  (4.3,-3.9);

			\draw [thick, black] (-0.2,0) to [out=35,in=180] (p1_curve_top)
			to [out=15,in=165] (p2_curve_top) to [out=0,in=180] (6.5,1.2); 

			\draw [thick, black] (-0.2,-4) to [out=35,in=180] (p1_curve_bottom) to [out=-10,in=160] (p2_curve_bottom) to [out=0,in=160] (6.9,-3.6) ; 

			\draw [thick, black] (0,0.4) to [out=-35,in=85] (0,-1) to [out=-105,in=100] (0,-2) to [out=-80,in=100] (0.5,-3) to [out=-90,in=75] (0,-4.2) ; 

			\draw [thick, black] (6.2,1.4) to [out=-35,in=85] (6.2,-1) to [out=-105,in=100] (6.7,-2) to [out=-80,in=100] (6.5,-3) to [out=-80,in=75] (6.5,-3.7) ; 


			\draw[dashed, gray, thick] (0,-1) -- (6.2,-1);
			\draw[dashed, gray, thick] (0,-2) -- (6.7, -2);
			\node [left] at (2.9,-3.3) {$\Wc_{\widetilde{\stdr}^{(i)}}\!(p_1)$};
			\node [above right] at (4.5,-3.6) {$\Wc_{\widetilde{\stdr}^{(i)}}\!(p_2)$};

			\draw [thick, black]
			(p1_curve_top)
			to [out=-35,in=85] (2.5,-0.8)
			to [out=-105,in=100] (3,-1.8)
			to [out=-80,in=100] (2.8,-3)
			to [out=-90,in=75] (p1_curve_bottom);

			\draw [thick, black]
			(p2_curve_top)
			to [out=-35,in=85] (4.2,-0.7)
			to [out=-105,in=100] (4.7,-1.7)
			to [out=-80,in=100] (4.5,-3)
			to [out=-90,in=75] (p2_curve_bottom);

			\node[below left] at (2.9,-2) {$p_1$};
			\node[below right] at (4.6,-2) {$p_2$};
			\node at (3.8,-1.5) {$S$};
		\end{tikzpicture}
		\caption{Partition of $\widetilde{\stdr}^{(i)}$ into nine closed regions, including the central region $S$}
		\label{fig:splitting_into_standard}
	\end{figure}
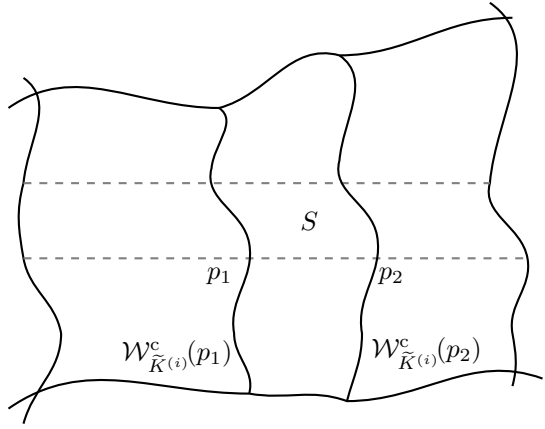

	Observe that the central region, which we denote by $S$, is common to the partitions of $\widetilde{\stdr}^{(1)}$ and $\widetilde{\stdr}^{(2)}$.

	It remains to show that the regions $P_j^{(i)}$ are standard rectangles. First note that one of the top/bottom boundary curves of $P_j^{(i)}$ is a horizontal line segment of width at least $|A'''|=\frac{1}{27}\delta$ and the other is either a horizontal line segment or a subcurve of a standard curve. Since any centre manifold $\Wc$ in $\widetilde{\stdr}^{(i)}$ satisfies $|\pi_2\Wc|\le \centreConeConst\patchheight\eps$ it follows that the other one of these boundary curves is of width at least $|A'''|-2\centreConeConst\patchheight\eps\ge \frac{1}{30}\delta$ for $\eps$ small enough, so both curves are $30$-standard curves. Next we note that the left and right boundary curves of $P_j^{(i)}$ are centre manifolds, one of which is of at height at least $|B'''|$. Hence by~\eqref{eq:difference-in-height-for-Wc}, for any $q\in P_j^{(i)}$ we have $\height \Wc_{P_j^{(i)}}(q)\ge |B'''|-2\pscc1\delta\ve\ge \Const \eps$ for $\patchheight$ large enough, so $P_j^{(i)}$ is a $(30,\Const)$-standard rectangle.
	\begin{enumerate}[label=\textbf{Step \arabic*}:, resume,leftmargin=0pt, labelwidth=*, labelsep=0.5em, align=left]
		\item Let $m = \Leb(S)\inv\Leb|_S$. There exists $\tau\in(0,1)$ uniform in $\eps$ such that for $i\in \{1,2\}$ we can write $\mu_{\wplaque^{(i)}}(S)\inv \mu_{\wplaque^{(i)}}|_S$ as a convex combination
		\begin{equation}\label{eq:coupling_on_S}
			\mu_{\wplaque^{(i)}}(S)\inv \mu_{\wplaque^{(i)}}|_S = \tau m_S + \sum_j d_j^{(i)}\mu_{\bS^{(i)}_j},
		\end{equation}
		where $\bS^{(i)}_j$ are $((90,\Const),(\Const,\Const))$-prestandard patches.
	\end{enumerate}
		Let $\rho^{(i)}$ denote the density of $\mu_{\wplaque^{(i)}}(S)\inv \mu_{\wplaque^{(i)}}|_S$. Then $\rho^{(i)}\in \Stdd_{\Rough}(S)$ where $\Rough = \mathcal{\Rough}(\wplaque^{(i)})\le \Const$, so by Lemma~\ref{lem:split_density_leb}, there exists $\breve{\rho}^{(i)}\in \Stdd_{\Const}(S)$ and $\tau\in (0,1)$ uniform in $\eps$ such that
	\begin{equation}\label{eq:S_density_decomp_split_density}
		\rho^{(i)}=\tau \frac{1}{\Leb(S)}+(1-\tau)\breve \rho^{(i)}.
	\end{equation}
	Now since $S$ is a $(30,\Const)$ standard rectangle, Lemma~\ref{lem:create-foliation} implies that $S$ can be partitioned into $(90,\Const)$-prestandard rectangles $\mathcal{S}_j=(S_j,\eta_j)$ such that
	\[ \|d\log \partial_\theta \foli_j\|_\infty \le \Const ,\quad \|H\log \partial_\theta \foli_j\|_\infty \le \Const .\]
	Let $\hat{\rho}_j^{(i)}\in \Stdd_{\Const}(S_j)$ be the restriction of $\breve{\rho}^{(i)}$ to $S_j$, normalized to be a probability density.
	Then by Lemma~\ref{lem:disintegration-regularity}, it follows that $\mathbb{S}^{(i)}_j=(\mathcal{S}_j,\hat{\rho}_j^{(i)})$ is a $((90,\Const),(\Const,\Const))$-standard patch. Hence by~\eqref{eq:S_density_decomp_split_density} we gather that~\eqref{eq:coupling_on_S} holds with $d_j^{(i)}={(1-\tau)}\int_S \breve\rho^{(i)} d\Leb$.
	\begin{enumerate}[label=\textbf{Step \arabic*}:, resume,leftmargin=0pt, labelwidth=*, labelsep=0.5em, align=left]
		\item Conclusion of the proof.
	\end{enumerate}
	Next consider the prestandard rectangles $P_j^{(i)}$ that partition $\widetilde{\stdr}^{(i)}_j$ other than $S$.
	Applying Lemmata~\ref{lem:create-foliation} and~\ref{lem:disintegration-regularity} to the standard rectangles $P_j^{(i)}$ with associated densities $d\mu_{\wplaque^{(i)}}/d\Leb$ normalized to be probability densities yields that there exist $((90,\Const),(\Const,\Const))$-prestandard patches $\bP_l^{(i)}$ and constants $e_l^{(i)}>0$ such that
	\begin{align*}
		\mu_{\wplaque^{(i)}}&=
		\mu_{\wplaque^{(i)}}|_S + \sum_{j\colon P_j^{(i)}\ne S} e^{(i)}_j \mu_{\bP_j^{(i)}}\\ &=
		\mu_{\wplaque^{(i)}}|_S + \sum_l e^{(i)}_l \mu_{\bP_l^{(i)}}\\
		&=\mu_{\wplaque^{(i)}}(S)\tau m_S + \mu_{\wplaque^{(i)}}(S)\sum_j d_j^{(i)}\mu_{\bS^{(i)}_j}+\sum_l e^{(i)}_l \mu_{\bP_l^{(i)}}\\
		&=: \mu_{\wplaque^{(i)}}(S)\tau m_S + (1-\mu_{\wplaque^{(i)}}(S)\tau)\nu^{(i)}.
	\end{align*}
	It follows that for any
    $0 < p' \le \tau\min_i c_{j_i}^{(i)}\mu_{\wplaque^{(i)}}(S)$
    we can write
    \begin{align*}
      c^{(i)}_{j_i}\mu_{\wplaque^{(i)}}=p' m_S +(c^{(i)}_{j_i}-p')\tilde \nu^{(i)}
    \end{align*}
    where $\tilde \nu^{(i)}$ is a convex combination of $\mu_{\wplaque^{(i)}}$ and $\nu^{(i)}$. Now by~\eqref{eq:patch_density_bds},
	\[ \mu_{\wplaque^{(i)}}(S)\ge \inf
      \frac{d\mu_{\wplaque^{(i)}}}{d\Leb}\Leb(S)\ge
      e^{-\Const}\frac{\Leb(S)}{\Leb(K)} \] and
    by~\eqref{eq:bound-on-Lebesgue-patch} it follows that
    $\mu_{\wplaque^{(i)}}(S)$ is bounded away from $0$ uniformly in
    $\eps$, so $p'>0$ can be chosen uniformly in $\eps$.

	Recall that $\wplaque^{(i)}=\bar{\plaque}_{j_i}^{(i)}$. Thus by~\eqref{eq:pushforward_explicit_decomp} we obtain that
	\[ F_{\eps*}^{n_1} \mu_{\plaque^{(i)}}= p'm_S+(1-p')\sigma^{(i)}, \]
    where $\sigma^{(i)}$ is a convex combination of the probability
    measures $\{\mu_{\bS_j^{(i)}}\}_j$, $\{\mu_{\bP_j^{(i)}}\}_j$ and
    $\{\mu_{\bar{\plaque}_j^{(i)}}\}_j$. Note that $\{\bS_j^{(i)}\}$,
    $\{\bP_j^{(i)}\}$, $\{\bar{\plaque}_j^{(i)}\}$ are all
    $((90,\Const),(\Const,\Const))$-prestandard
    patches.\footnote{Indeed, $\sigma^{(j)}$ would admit a
      representation as a standard family if not for the fact that
      these patches are not regular and only prestandard.} Hence by
    Lemma~\ref{lem:invariance_patches}, for $\eps$ sufficiently small,
    there exists a patch family
    $\cF_i \in [F_{\eps*}^{\lfloor\vei\rfloor} \sigma^{(i)}]$ such
    that $\cM(\plaque)\le \Const$ for any standard patch $\plaque$ in
    $\cF_i$. Finally, by Proposition~\ref{prop:geometric_drift}, for
    $\eps$ sufficiently small and $n_2\ge \logstep_{1}\vei\log\vei$
    there exists a proper patch family
    $\cG_i \in [F_{\eps*}^{n_2}\mu_{\cF_i}]$, so we gather that
	\[ F_{\eps*}^{n+[\vei]+n_2}\mu_{\plaque^{(i)}} = p' F_{\eps*}^{[\vei]+n_2}m_S+(1-p')\mu_{\cG_i}. \]
	Set $\logstep_{4}=2(n_1 + 1 + \logstep_1)$. Then for $\eps$ sufficiently small, any $n\ge \logstep_4\vei \log\vei$ can be written in the form $n=n_1+[\vei]+n_2$ with $n_2\ge \logstep_1 \vei \log\vei$ so the proposition follows with $m = F_{\eps*}^{[\vei]+n_2}m_S$.
\end{proof}
We are now in a position to complete the proof of
Theorem~\ref{thm:coupling_near_sink}. Let $s>0$ and
$\{I_j\}_{1\le j \le \lfloor s\eps^{-1/2}\rfloor}$ be as defined in
Sub-lemma~\ref{sublem:finding_good_bins}. Set
$\beta_\eps = \lfloor s\eps^{-1/2}\rfloor$ and for
$1\le j \le \beta_\eps$ define
$C_j = \{\plaque \in \plaqueSet: I_j \subset \pi_2
\supp\plaque\}$. Now by~\eqref{eq:height_of_rect_upper_bd}, for any
$\plaque \in \plaqueSet$ we have
$ |\pi_2 \supp\plaque|\le \frac32\patchheight\ve $ for $\patchheight$
sufficiently large. Since $d(I_j,I_{j'})\ge 2\patchheight\eps$ for
$j\ne j'$, it follows that the sets $C_j$ are disjoint. Thus for
$i\in \{1,2\}$ we have
\begin{equation}\label{eq:hat_H_i_conditioned_decomp}
  \sum_{j=1}^{\beta_\eps}p_{i,j}\widehat{\cH}_i|\{\plaque \in C_j\}+\left(1-\sum_{j=1}^{\beta_\eps}p_{i,j}\right)\widehat{\cH}_i\bigg|\bigg\{\plaque \nin \bigcup_{j=1}^{\beta_\eps} C_j\bigg\}\in [\widehat{\cH}_i],
\end{equation}
where $p_{i,j} = \lambda_{\widehat{\cH}_i}(\plaque \in C_j)\ge \frac12 p\eps^{1/2}$.

Next, observe that Proposition~\ref{prop:coupling_on_good_bins}
implies that for any $n'\ge \logstep_{4} \vei \log\vei$ there exist
proper patch families $\cD_{1,j}$, $\cD_{2,j}$ such that
\begin{equation}\label{eq:coupling_on_good_bins_for_families}
	F_{\eps*}^{n'}(\mu_{\widehat{\cH}_i|\{\plaque \in C_j\}}-\mu_{\widehat{\cH}_i|\{\plaque \in C_j\}}) =p'(\mu_{\cD_{1,j}} - \mu_{\cD_{2,j}}).
\end{equation}
Indeed,~\eqref{eq:coupling_on_good_bins_via_ref_measure} implies that
$F_{\eps*}^n(\mu_{\plaque^{(1)}}-\mu_{\plaque^{(2)}})=p'(\mu_{\cG_1}-\mu_{\cG_2})$. Let
$\mathcal{A}_{i,j}$ denote the index set of
$\widehat{\cH}_i|\{\plaque \in C_j\}$ and
$\plaque_{i,j}:\mathcal{A}_{i,j}\to \plaqueSet$ denote the map onto
$\plaqueSet$ associated with $\widehat{\cH}_i|\{\plaque \in
C_j\}$. Then writing
$\mu_{\widehat{\cH}_1|\{\plaque \in
  C_j\}}-\mu_{\widehat{\cH}_2|\{\plaque \in C_j\}}$ as a convex
combination of
$\mu_{\plaque_{1,j}(\alpha_1)}- \mu_{\plaque_{2,j}(\alpha_2)}$ over
$(\alpha_1,\alpha_2)\in \mathcal{A}_{1,j}\times \mathcal{A}_{2,j}$ and
applying Proposition~\ref{prop:coupling_on_good_bins} with
$\plaque^{(i)}=\plaque_{i,j}(\alpha_i)$ and $n=n'$
proves~\eqref{eq:coupling_on_good_bins_for_families}.

Choose $w \in (0,\frac12 p\eps^{1/2})$ such that $w\beta_\eps = \frac14 ps$. By combining~\eqref{eq:hat_H_i_conditioned_decomp} and~\eqref{eq:coupling_on_good_bins_for_families} we obtain that
\[
  F_{\eps*}^{n'}(\mu_{\widehat{\cH}_1}-\mu_{\widehat{\cH}_2})=\sum_{j=1}^{\beta_\eps}wp'(\mu_{\cD_{1,j}}
  -
  \mu_{\cD_{2,j}})+(1-w\beta_\eps)F_{\eps*}^{n'}(\mu_{\cE_1}-\mu_{\cE_2}), \]
where $\cE_i$ is a convex combination of
$\widehat{\cH}_i|\{\plaque \in C_j\}$ and
$\widehat{\cH}_i|\{\plaque \notin \cup_j C_j\}$.
Now any standard patch $\plaque$ in $\widehat{\cH}_i$ satisfies
$\cM(\plaque)\le \widehat{Q}$ so in particular this holds for any
standard patch in $\cE_i$. Thus by
Proposition~\ref{prop:geometric_drift}, for $\eps$ small enough and
$n'\ge \max\{\logstep_{1},\logstep_{4}\}\vei\log\vei$ we can choose a
proper patch family $\widetilde{\cE}_i\in [F^{n'}_{\eps*}\cE_i]$. It
follows that
\[ F^{n'}_{\eps*}(\mu_{\widehat{\cH}_1}-\mu_{\widehat{\cH}_2})=c'(\mu_{\widetilde{\cH}_1}-\mu_{\widetilde{\cH}_2}) \]
where $c' = 1 - p'(1- w\beta_\eps)=1-p'(1-\frac14 ps)$ and
\[ \widetilde{\cH}_i =
  \sum_{j=1}^{\beta_\eps}\frac{wp'}{c'}\cD_{i,j}+\frac{1-w\beta_\eps}{c'}\,
  \widetilde{\cE}_i. \] Note that $\widetilde{\cH}_i$ is proper since
it is a convex combination of proper patch families. Finally, recall
that $\widehat{\cH}_i \in [F_{\eps*}^{\clockN}{\cH_i}]$ so
$F^{\clockN+n'}_{\eps*}(\mu_{\cH_1}-\mu_{\cH_2})=c'(\mu_{\widetilde{\cH}_1}-\mu_{\widetilde{\cH}_2})$. The
theorem follows by choosing
$\logstep_{2} = 2(\clock+\max\{\logstep_{1},\logstep_{4}\})$ so that for
any $n\ge \logstep_{2}\vei\log\vei$ we can find $n'$ as above such
that $n = \clockN + n'$.

The above discussion at last allows to determine the value of
$\patchheight$ which had been fixed in
Section~\ref{sec:standard-patches}; we need to choose it large enough
so that Sub-lemma~\ref{sublem:finding_good_bins} (with $Q = 8B/3$, and
$B$ as in Proposition~\ref{prop:geometric_drift}) and
Proposition~\ref{prop:coupling_on_good_bins} hold.

\section{Conclusions}\label{sec:conclusions}
As mentioned in the introduction, the main purpose of this paper is to
illustrate a technique that can be used to obtain relatively sharp
bounds on the decay of correlations for systems of the
form~\eqref{eq:map} in the mostly expanding case.  Of course it is
tempting to ask what else can be done using this technique; for
instance, in~\cite{MR3556527} it is studied the case of multiple sinks
under the assumption that every sink is mostly contracting.  Arguments
parallel to those given in the paper would likely allow to obtain with
moderate effort similar results in the case of multiple sinks under
the assumption that every sink is mostly expanding.  However, we
believe that a much more interesting situation to study is the generic
case of multiple sinks, in which there may be some sinks that are
mostly expanding, while the others are mostly contracting.  We plan to
address this case in a follow-up paper.

Another natural question concerns the sharpness of our bounds on the
rate of decay of correlations.  We believe that the factor $\log\vei$
present in the bound~\eqref{e_lowerBoundRate-weaker} for $c_{\ve}$ in
our Main Theorem is artificial, and that a more efficient coupling
argument could provide a bound $c_{\ve}\ge C_{2}\ve$, provided that we
take $C_{1} = \ve^{-c_{\#}}$.  Potentially, this improved coupling
argument might also allow to improve the estimate on the rate of decay
in the multiple-sink scenario.  We also plan to work towards this in a
follow-up paper.

Finally, we would like to add a remark about the case in which the
averaged system has no zeros.  This is of course an extremely
interesting situation, and also related to the case in which
$\bar\omega$ is identically equal to $0$, which is relevant from the
point of view of statistical mechanics (as it would correspond to the
system at equilibrium, with no currents).  It would appear that in
this case there should be a unique physical measure enjoying
exponential decay of correlations with rate $c_{\ve} = \Const\ve^{2}$,
but substantially more work is needed before we can improve our
techniques to the extent of obtaining sharp results in this situation.
The missing ingredient –if we wanted to pursue the same strategy as in
this paper– would be a local limit theorem at timescales $\ve^{-2}$,
which appears unfeasible at the moment.
\appendix
\appendix
\section{Second derivative bounds}\label{sec:second-deriv-bounds}
Recall that $\chvar:\T\times\bR/(\vei\bZ) \to \T^2$ is a change of
variable given by $\chvar(x,y) = (x,\eps y)$.  Given
$q = (x,y)\in\T^{2}$ and a neighbourhood $V\ni q$ (sufficiently
small), for any $n > 0$, we say that a diffeomorphism
$\varphi_{n}:V\to\T^{2}$ is a \emph{local inverse of $F_{\ve}^{n}$ at
  $q$} if $F^{n}_{\ve}\circ\varphi_{n} = \textrm{Id}$.  This appendix
is dedicated to proving the following results about local inverses.
Recall the notation $\Upsilon_{n}^{\eps}$ introduced
in~\eqref{epsnStepCenter}.
Recall our conventions for the differential and Hessian operators
outlined in Section~\ref{sec:conventions}.
\begin{lem}\label{lem:dFeps_inv_conj_bd} There exists a constant $D>0$
  such that for all $n\ge 0$, all $q\in\T^{2}$, all local inverses
  $\varphi_{n}$ at $q$, and all $\eps>0$ sufficiently small:
  \begin{align*}
    \| d_{\chvar^{-1}(q)}(\chvar^{-1} \circ \varphi_n\circ \chvar)\|&\le D\,\Upsilon_n^\eps(\varphi_{n}(q)).
  \end{align*}
\end{lem}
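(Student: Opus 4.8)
The plan is to unravel the conjugation by the chain rule and then use the explicit description of $[d_pF_\ve^n]^{-1}$ recorded in the proof of Lemma~\ref{lem:lyapunov}. First, since $\chvar(x,y)=(x,\eps y)$ is linear, $d\chvar=\operatorname{diag}(1,\eps)$ and $d\chvar^{-1}=\operatorname{diag}(1,\vei)$ at every point; and differentiating $F_\ve^n\circ\varphi_n=\mathrm{Id}$ gives $d_q\varphi_n=[d_pF_\ve^n]^{-1}$ with $p=\varphi_n(q)$. Hence
\[
  d_{\chvar^{-1}(q)}(\chvar^{-1}\circ\varphi_n\circ\chvar)=d\chvar^{-1}\circ[d_pF_\ve^n]^{-1}\circ d\chvar ,
\]
so it suffices to bound the operator norm of $d\chvar^{-1}[d_pF_\ve^n]^{-1}d\chvar$ by $D\,\Upsilon_n^\eps(p)$.

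For the main step I would take a unit vector $u=(u_1,u_2)$ and decompose $d\chvar\,u=(u_1,\eps u_2)$ along the basis used in Lemma~\ref{lem:lyapunov}: writing $(u_1,\eps u_2)=v_1(0,1)+v_2(1,\eps w_n^\eps(p))$ one gets $v_2=u_1$ and $v_1=\eps\bigl(u_2-w_n^\eps(p)u_1\bigr)$, and then, by that lemma's proof,
\[
  [d_pF_\ve^n]^{-1}(d\chvar\,u)=v_1\,\Upsilon_n^\eps(p)\,(s_n^\eps(p),1)+v_2\,[\Gamma_n^\eps(p)]^{-1}(1,0).
\]
Applying $d\chvar^{-1}=\operatorname{diag}(1,\vei)$, the second coordinate becomes $\vei v_1\,\Upsilon_n^\eps(p)=(u_2-w_n^\eps(p)u_1)\,\Upsilon_n^\eps(p)$, which is $\le(1+\unstableConeConst)\Upsilon_n^\eps(p)$ because $|w_n^\eps|<\unstableConeConst$; the first coordinate is $v_1\Upsilon_n^\eps(p)s_n^\eps(p)+u_1[\Gamma_n^\eps(p)]^{-1}$, which is $\le\eps(1+\unstableConeConst)\centreConeConst\,\Upsilon_n^\eps(p)+[\Gamma_n^\eps(p)]^{-1}$ because $|s_n^\eps|<\centreConeConst$. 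The key point is that the vertical component $v_1$ of $d\chvar\,u$ carries an extra factor $\eps$ which exactly absorbs the $\vei$ from $d\chvar^{-1}$ — this is the sense in which $\chvar$ is adapted to the fact that the centre direction is $O(\eps)$-close to vertical and the unstable direction $O(\eps)$-close to horizontal. Collecting the two coordinates yields $\bigl\|d\chvar^{-1}[d_pF_\ve^n]^{-1}d\chvar\bigr\|\le\Const\,\Upsilon_n^\eps(p)+[\Gamma_n^\eps(p)]^{-1}$, with $\Const$ depending only on $\unstableConeConst,\centreConeConst$, hence on $f,\omega$.

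It then remains to show $[\Gamma_n^\eps(p)]^{-1}\le\Const\,\Upsilon_n^\eps(p)$ uniformly in $n\ge0$, $p$ and $\eps$ small, and this is the one place where a genuine (if mild) dynamical input enters. Evaluating $\det d_pF_\ve^n$ in the basis $\{(1,0),(s_n^\eps(p),1)\}$ and using \eqref{epsnStepUnstable}, \eqref{epsnStepCenter} gives the identity $\det d_pF_\ve^n=\Gamma_n^\eps(p)\,[\Upsilon_n^\eps(p)]^{-1}$, i.e. $[\Gamma_n^\eps(p)]^{-1}=\bigl(\det d_pF_\ve^n\cdot\Upsilon_n^\eps(p)\bigr)^{-1}$. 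On the other hand \eqref{DerivF} gives $\det d_{p'}F_\ve=\partial_xf(1+\eps\partial_\theta\omega)-\eps^2\partial_x\omega\,\partial_\theta f>3$ for all $p'$ and $\eps$ small, so $\det d_pF_\ve^n=\prod_{k=0}^{n-1}\det d_{F_\ve^k(p)}F_\ve\ge3^n$; while the one-step ratios in \eqref{NstepExp}, together with $\Upsilon_0^\eps\equiv1$, give $\Upsilon_n^\eps(p)\ge e^{-\Const\eps n}$. Combining, $[\Gamma_n^\eps(p)]^{-1}/\Upsilon_n^\eps(p)\le 3^{-n}\bigl(\Upsilon_n^\eps(p)\bigr)^{-2}\le 3^{-n}e^{2\Const\eps n}=(e^{2\Const\eps}/3)^n\le1$ once $\eps$ is small enough that $e^{2\Const\eps}\le3$. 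This closes the estimate (the case $n=0$ is trivial, $\chvar^{-1}\circ\varphi_0\circ\chvar=\mathrm{Id}$ and $\Upsilon_0^\eps\equiv1$), and $D$ can be read off from the constants above. The hard part is thus just the bookkeeping that makes the $\vei$ harmless and verifying the inequality $[\Gamma_n^\eps]^{-1}\le\Const\,\Upsilon_n^\eps$ uniformly in $n$; everything else is routine linear algebra.
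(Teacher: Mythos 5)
Your proof is correct and follows essentially the same route as the paper: conjugate to isolate $d\chvar^{-1}[d_pF_\ve^n]^{-1}d\chvar$, decompose along the adapted bases $\{(0,1),(1,\eps w_n^\eps)\}$ and $\{(1,0),(s_n^\eps,1)\}$, observe that the $\eps^{\pm1}$ factors from $\chvar$ cancel against the $O(\eps)$ tilt of the centre/unstable directions, and reduce to the key inequality $[\Gamma_n^\ve(p)]^{-1}\le\Const\,\Upsilon_n^\ve(p)$. Your derivation of that last inequality via the determinant identity $\det d_pF_\ve^n=\Gamma_n^\ve(p)/\Upsilon_n^\ve(p)$ (which the paper itself records later in the appendix) is a clean way to spell out what the paper only cites from~\eqref{NstepExp} and~\eqref{UnstableExp}; the only slip is the harmless typo $\det dF_\ve=\partial_xf(1+\eps\partial_\theta\omega)-\eps\,\partial_x\omega\,\partial_\theta f$ (the cross term carries $\eps$, not $\eps^2$), which does not affect the bound $\det dF_\ve>3$.
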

\begin{lem}\label{lem:C2_bds} For any $T > 0$ let $n\le T\vei$,
  $q\in \T^{2}$, $\varphi_{n}$ be a local inverse at $q$ and $\ve$ be
  sufficiently small; we have
  \begin{enumerate}
    \item\label{item:log_jacob_C2_bd}
    $ \|d_{\chvar^{-1}(q)}(\log \det [d
    \varphi_{n}]\circ{\chvar})\|\le C_{T} $ and
    $\| H_{\chvar^{-1}(q)}(\log \det [d
    \varphi_{n}]\circ{\chvar})\|\le C_{T}$,
    \item\label{item:chvar_hessian_bd}
    $ \| H_{\chvar^{-1}(q)}(\chvar^{-1} \circ \varphi_n\circ
    \chvar)\|\le C_{T}.$
	\end{enumerate}
\end{lem}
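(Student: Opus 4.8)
\medskip
\noindent\textbf{Proof strategy.}
The plan is to establish part~\ref{item:chvar_hessian_bd} first and then deduce part~\ref{item:log_jacob_C2_bd} from it. Fix $T$, let $n\le T\vei$, and write $C_T$ for a constant depending only on $f,\omega,T$ (never on $\ve$). The key manoeuvre is conjugation by $\chvar$: set $\Phi_j=\chvar\inv\circ F_\ve^{j}\circ\chvar$, so that $\Phi_1(x,y)=(f(x,\ve y),\,y+\omega(x,\ve y))$ and $\chvar\inv\circ\varphi_n\circ\chvar$ is a local inverse of $\Phi_n=\Phi_1^{\,n}$. The matrix $d\Phi_1$, evaluated at $(x,\ve y)$, has first row $(\partial_x f,\ \ve\partial_\theta f)$ and second row $(\partial_x\omega,\ 1+\ve\partial_\theta\omega)$, with determinant $\partial_x f+\cO(\ve)\ge\lambda-\Const\ve$. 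Two consequences are crucial: (i) the single-step local inverse $\tilde\varphi_1=\chvar\inv\circ\varphi_1\circ\chvar$ satisfies $\|d\tilde\varphi_1\|\le\Const$, with $\partial_x(\tilde\varphi_1)_1\le\lambda\inv+\Const\ve<1$, $\partial_y(\tilde\varphi_1)_1=\cO(\ve)$ and $\partial_y(\tilde\varphi_1)_2=1+\cO(\ve)$; and (ii) because the $\theta$-derivatives are rescaled by $\chvar$, the mixed and slow–slow entries of the Hessians $(H\Phi_1)_k$ — hence, via the chain rule of Lemma~\ref{lem:hessian_chain_rule} applied to $\Phi_1\circ\tilde\varphi_1=\textup{Id}$, of $(H\tilde\varphi_1)_k$ — carry factors $\ve$ and $\ve^2$ respectively, while $\|H\Phi_1\|,\|H\tilde\varphi_1\|\le\Const$. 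Since $F_\ve$ is a local diffeomorphism, $\chvar\inv\circ\varphi_n\circ\chvar$ factors as $\tilde\varphi_1^{(0)}\circ\cdots\circ\tilde\varphi_1^{(n-1)}$ with each $\tilde\varphi_1^{(j)}$ a local inverse of $\Phi_1$; put $\Psi_m=\tilde\varphi_1^{(n-m)}\circ\cdots\circ\tilde\varphi_1^{(n-1)}$, which is a local inverse of $\Phi_m$.

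For part~\ref{item:chvar_hessian_bd} I would iterate $\Psi_m=\tilde\varphi_1^{(n-m)}\circ\Psi_{m-1}$. The Hessian chain rule gives, for $k=1,2$,
\[
  H(\Psi_m)_k=[d\Psi_{m-1}]^{\top}(H\tilde\varphi_1^{(n-m)})_k(\Psi_{m-1})\,[d\Psi_{m-1}]+\sum_{l}\partial_l(\tilde\varphi_1^{(n-m)})_k(\Psi_{m-1})\,H(\Psi_{m-1})_l .
\]
Set $a_m=\|H(\Psi_m)_1\|$, $b_m=\|H(\Psi_m)_2\|$. From Lemma~\ref{lem:dFeps_inv_conj_bd}, $\|d\Psi_m\|\le D\,\Upsilon_m^\ve\le C_T$; moreover, decomposing $(1,0)$ and $(0,1)$ along the unstable and centre directions of Section~\ref{sec:Hyper} (using~\eqref{epsnStepUnstable} and~\eqref{epsnStepCenter}) shows that the first row of $d\Psi_m$ equals $(\cO(\lambda^{-m}+\ve),\ \cO(\ve))$, with $T$-dependent implied constants. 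Combining this with feature~(ii), the sandwich term above is seen to have norm $\cO(\lambda^{-2(m-1)}+\ve^2)$, and feature~(i) identifies the linear part, so that $(a_m,b_m)\le A_m(a_{m-1},b_{m-1})+\cO(\lambda^{-2(m-1)}+\ve^2)$ with $(A_m)_{11}\le\lambda\inv+\Const\ve$, $(A_m)_{12}=\cO(\ve)$, $(A_m)_{21}=\cO(1)$, $(A_m)_{22}=1+\cO(\ve)$.

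The decisive observation is that in the adapted norm $\|(u,v)\|_{*}=|u|+\tau|v|$, for a small fixed $\tau>0$, one has $\|A_m\|_{*}\le 1+\Const\ve$ — the potentially large entry $(A_m)_{21}$ is tamed because the adapted norm down-weights the fast coordinate. Hence over $n\le T\vei$ steps the homogeneous part of the recursion is amplified by at most $(1+\Const\ve)^{T\vei}\le C_T$, and the accumulated source is at most $C_T\sum_{m\le n}(\lambda^{-2(m-1)}+\ve^2)\le C_T(\Const+T\vei\,\ve^2)\le C_T$; therefore $\|H\Psi_n\|\le C_T$, which is part~\ref{item:chvar_hessian_bd}. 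Running the same recursion for $a_m$ alone (where the self-coefficient is $\le\lambda\inv+\Const\ve$ and it is fed only by an $\cO(\ve)$ multiple of $b_{m-1}$) gives the refinement $\|H(\Psi_n)_1\|=\cO(\ve\,C_T)$, which I record for the next step.

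Finally, for part~\ref{item:log_jacob_C2_bd} write $\det d\varphi_n=(\det dF_\ve^{n}\circ\varphi_n)\inv$ and $\det dF_\ve^{n}=\prod_{j=0}^{n-1}\det dF_\ve\circ F_\ve^{j}$; since $F_\ve\in\cC^4$, the function $g=\log\det dF_\ve$ lies in $\cC^2(\T^2)$, and $\log\det[d\varphi_n]\circ\chvar=-\sum_{j=0}^{n-1}(g\circ\chvar)\circ\tilde\psi_j$, where $\tilde\psi_j=\chvar\inv\circ(F_\ve^{j}\circ\varphi_n)\circ\chvar$ is a local inverse of $\Phi_{n-j}$. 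Differentiating term by term (once, then twice via Lemma~\ref{lem:hessian_chain_rule}) and using Lemma~\ref{lem:dFeps_inv_conj_bd}, the first-row bound on $d\tilde\psi_j$, part~\ref{item:chvar_hessian_bd} and the refinement $\|H(\tilde\psi_j)_1\|=\cO(\ve\,C_T)$, each summand is found to contribute $\cO(\lambda^{-(n-j)}+\ve\,C_T)$ to both the gradient and the Hessian of $\log\det[d\varphi_n]\circ\chvar$ (the $\lambda^{-(n-j)}$ coming from the fast contraction of $d\tilde\psi_j$, the $\ve C_T$ from the $\chvar$-rescaling of $g$'s $\theta$-derivative together with the refined Hessian bound); summing over $j$ yields $\cO(1)+\cO(n\,\ve\,C_T)=\cO(1)+\cO(T\,C_T)\le C_T$. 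The step I expect to be the main obstacle is precisely the Hessian recursion in part~\ref{item:chvar_hessian_bd}: a careless version amplifies by a constant factor $>1$ at each iterate, which over $\cO(\vei)$ iterates diverges; the componentwise bookkeeping, the $\ve$-gains produced by the $\chvar$-rescaling, and the adapted norm are exactly what bring the per-step amplification down to $1+\cO(\ve)$ and make the source terms summable.
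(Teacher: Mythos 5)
Your argument is correct, but it runs along a genuinely different track than the one in the paper, and the comparison is worth spelling out. For part~(b), the paper works directly from the explicit matrix expression~\eqref{eq:dFn_inv_conj_matrix} for $d(\chvar^{-1}\circ\varphi_n\circ\chvar)$ — an identity block $\Gamma_n^{-1}$ plus $\Upsilon_n^\ve$ times a bounded matrix built from the convergents $s_n^\eps,w_n^\eps$ — and bounds the Hessian entry by entry, feeding in the gradient bounds on $s_l^\eps,\tilde w_l^\eps$ from Sub-lemma~\ref{sublem:det_grad_bds}. You instead factor $\chvar^{-1}\circ\varphi_n\circ\chvar$ into $n$ single-step rescaled inverses and run a two-component Lasota--Yorke--type recursion for the Hessians, where the adapted norm $|u|+\tau|v|$ tames the $\cO(1)$ off-diagonal feeding from the fast to the slow component and reduces the per-step amplification to $1+\cO(\ve)$. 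This bypasses the $s_l^\eps,\tilde w_l^\eps$ derivative machinery entirely, at the cost of needing the precise $\ve$-scaling structure of $H\tilde\varphi_1$ and the first-row magnitude $(\cO(\lambda^{-m}+\ve),\cO(\ve))$ of $d\Psi_m$ (which you derive from the same cone-field decomposition the paper encodes in~\eqref{eq:dFn_inv_conj_matrix}, so both still rest on Lemma~\ref{lem:dFeps_inv_conj_bd} and the invariant-cone picture). For part~(a), the paper splits $\log\det d\varphi_n\circ\chvar$ as $\log\Upsilon_n^\eps\circ G_n^\eps-\log\Gamma_n^\eps\circ G_n^\eps$ and telescopes each over the $k$-dependent one-step factors $A_k,B_k$; your cocycle identity $\log\det d\varphi_n=-\sum_j g\circ F_\ve^{j}\circ\varphi_n$ with the \emph{fixed} $\cC^2$ function $g=\log\det dF_\ve$ is cleaner and avoids having to control $ds_k^\eps$ and $d\tilde w_{n-k}^\eps$ uniformly in $k$. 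One small imprecision: the refinement from the $a_m$-recursion is $\|H(\Psi_m)_1\|=\cO\big(C_T(\ve+\lambda^{-m})\big)$, not $\cO(\ve C_T)$ — the sandwich source $\lambda^{-2(m-1)}$ geometrically sums against the $\lambda^{-1}$ self-coefficient to leave a residual $\lambda^{-m}$. This does not break part~(a), since $\sum_{j=0}^{n-1}\lambda^{-(n-j)}=\cO(1)$ remains summable, but it should be stated accurately; and for the sandwich term in part~(a) you must use the entry-wise scaling of $H(g\circ\chvar)$ against the row structure of $d\tilde\psi_j$, not the naive operator-norm version of Lemma~\ref{lem:hessian_chain_rule}, as otherwise the sum over $j\le T\vei$ diverges — you clearly have this in mind, but the write-up would benefit from making that explicit.
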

We now consider $T > 0$ to be fixed throughout this appendix; we also
consider $q$ and $V\ni q$ to be fixed arbitrarily and a local inverse
$\varphi_{n}$ at $q$ also to be fixed arbitrarily.  To simplify
notation, throughout this appendix we write
$G_n^\eps = \varphi_{n}\circ \chvar$; observe that this function is
defined on $\chvar^{-1}V$.

We will also need to establish some auxiliary results (recall the definition of $\Gamma_n^\eps$ from~\eqref{epsnStepUnstable}):
\begin{lem}\label{lem:log_jacob_auxiliary} Let $n$ be as above and
  $\eps>0$ be sufficiently small.
  Then
	\begin{enumerate}
      \item\label{item:log_jacob_formula}
      $\log \det [d\varphi_{n}]\circ \chvar = \log \Upsilon_n^\eps
      \circ G_n^\eps - \log \Gamma_n^\eps \circ G_n^\eps.$
      \item\label{item:dlog_Upsilon_Gamma}
      $\|d(\log \Upsilon_n^\eps \circ G_n^\eps) \|_{\infty}\le C_{T}$
      and
      $\|d(\log \Gamma_n^\eps \circ G_n^\eps) \|_{\infty}\le C_{T}$,
      \item\label{item:Hlog_Upsilon_Gamma}
      $\|H(\log \Upsilon_n^\eps \circ G_n^\eps) \|_{\infty}\le C_{T}$
      and
      $\|H(\log \Gamma_n^\eps \circ G_n^\eps) \|_{\infty}\le C_{T}$.
	\end{enumerate}
\end{lem}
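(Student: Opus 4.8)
Part~(a) I would obtain by a direct linear-algebra identity. Fixing $p=\varphi_n(q)$ and working in the frame adapted to the splitting of Lemma~\ref{lem:lyapunov}, I would use that by~\eqref{epsnStepUnstable} we have $d_pF_\ve^n(1,0)=\Gamma_n^\ve(p)(1,\ve w_n^\ve(p))$ and by~\eqref{epsnStepCenter} we have $d_pF_\ve^n(s_n^\ve(p),1)=[\Upsilon_n^\ve(p)]\inv(0,1)$; since the base-change matrix from $\{(1,0),(s_n^\ve(p),1)\}$ to the standard basis has determinant $1$, this gives $\det d_pF_\ve^n=\Gamma_n^\ve(p)/\Upsilon_n^\ve(p)$. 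For $\ve$ small $\det dF_\ve=\partial_xf(1+\ve\partial_\theta\omega)-\ve\partial_\theta f\,\partial_x\omega>0$, so $F_\ve^n$ and $\varphi_n$ are orientation preserving and $\det d_q\varphi_n=\Upsilon_n^\ve(\varphi_n(q))/\Gamma_n^\ve(\varphi_n(q))$; taking logarithms and precomposing with $\chvar$ gives~(a), since $\varphi_n\circ\chvar=G_n^\ve$.

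For (b) and (c) the plan is to telescope in $n$. Uniqueness of local inverses gives $\varphi_n=\varphi_{n-k}\circ\varphi_k$, so $F_\ve^{k}\circ G_n^\ve=G_{n-k}^\ve$ with the natural nesting of local inverses. Rewriting~\eqref{NstepExp} as $\Upsilon_n^\ve(p)=\Upsilon_{n-1}^\ve(F_\ve p)(1+\ve\hat\psi_n(p))\inv$ with $\hat\psi_n:=\partial_\theta\omega+\partial_x\omega\, s_n^\ve$, and combining $\det dF_\ve^n=\prod_kJ\circ F_\ve^{k}$ (with $J:=\det dF_\ve$) with~(a), I would obtain, after composing with $G_n^\ve$ and iterating,
\begin{align*}
  \log\Upsilon_n^\ve\circ G_n^\ve &= -\sum_{j=1}^{n}\log(1+\ve\,\hat\psi_j\circ G_j^\ve), &
  \log\Gamma_n^\ve\circ G_n^\ve &= \sum_{j=1}^{n}\log J\circ G_j^\ve\ +\ \log\Upsilon_n^\ve\circ G_n^\ve .
\end{align*}
Differentiating once or twice term by term, each summand of the first sum has size $\cO(\ve)$ times the $\cC^2$-size of $\hat\psi_j\circ G_j^\ve$, so that sum is $\cO(T)$ once $\|\hat\psi_j\circ G_j^\ve\|_{\cC^2}\le C_T$ uniformly in $j\le T\vei$ (using $n\ve,\,n\ve^2\le T$); in the second sum each $\log J\circ G_j^\ve$ is controlled by $dG_j^\ve$ and $HG_j^\ve$, which I would show split into an $x$-directed part of size $\cO(\lambda^{-j/2})$ (strongly contracted by $\varphi_j$) plus a centre-directed part of size $\cO(\ve)$, and summing the first against a geometric series and the second against $\le T\vei$ terms again gives $C_T$.

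This leaves two supporting estimates. First, $\|\hat\psi_j\circ G_j^\ve\|_{\cC^2}\le C_T$: writing $\hat\psi_j\circ G_j^\ve=\partial_\theta\omega\circ G_j^\ve+(\partial_x\omega\circ G_j^\ve)(s_j^\ve\circ G_j^\ve)$ and using~\eqref{EpsCNstepRecur} in the form $s_j^\ve\circ G_j^\ve=\Xi^-(G_j^\ve,\,s_{j-1}^\ve\circ G_{j-1}^\ve)$ with $|\partial_s\Xi^-|\le\sigma<1$, the $\cC^1$- and $\cC^2$-norms of $s_j^\ve\circ G_j^\ve$ satisfy \emph{contractive} recursions whose forcing is bounded by $\|G_j^\ve\|_{\cC^2}$, hence stay bounded once the latter does. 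Second --- and this is the crux --- I need $\|G_j^\ve\|_{\cC^2}\le C_T$ for $j\le T\vei$, with the $\cO(\lambda^{-j/2})+\cO(\ve)$ splitting above. The natural object is $\Phi_j:=\chvar\inv\circ\varphi_j\circ\chvar=\Phi_1^{\circ j}$: the point is that $\Phi_1=\chvar\inv\circ F_\ve\inv\circ\chvar$ has $\cC^2$-norm bounded \emph{uniformly in $\ve$}, because conjugating by $\chvar$ multiplies the $\theta$-derivatives of the slow component of $F_\ve\inv$ by $\ve$, cancelling the $\vei$ from $\chvar\inv$ (here one uses that the $\theta$-component of $F_\ve$ is $\theta+\cO(\ve)$, so has $\cO(\ve)$ Hessian). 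Iterating, the uniform expansion $\partial_xf\ge\lambda>3$ suppresses the $x$-directed contributions to $H\varphi_j$ by $\lambda^{-j}$ --- transparently via $[dF_\ve^j]\inv HF_\ve^j(\xi,\zeta)=-H\varphi_j(dF_\ve^j\xi,dF_\ve^j\zeta)$ together with the fact that $\varphi_j$ contracts along unstable curves --- while the $\cO(\ve)$-per-step drift of the centre direction keeps the remaining contributions of size $\cO(\ve)$; these $\cC^2$-bounds on $\Phi_j$ are essentially Lemma~\ref{lem:dFeps_inv_conj_bd} and Lemma~\ref{lem:C2_bds} transported through $\chvar$.

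I expect the main obstacle to be precisely this control of the second derivatives of the $j$-fold local inverse for $j\sim T\vei$: naively $\|HF_\ve^j\|$ and $\|H\varphi_j\|$ grow like $\lambda^{2j}$, which is hopeless at that timescale, and the argument only goes through because the two compensating mechanisms --- the $\vei\leftrightarrow\ve$ cancellation built into the change of variables $\chvar$ in the slow direction, and the $\lambda^{-j}$ contraction of the fast direction under the inverse --- can be bookkept (via a parallel induction on the cone-decomposed components of $dG_j^\ve$ and $HG_j^\ve$) precisely enough that every non-decaying residual is $\cO(\ve)$ and hence survives summation over the $\cO(\vei)$ steps appearing in the $\Gamma$-identity.
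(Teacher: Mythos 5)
Your proposal is correct and follows essentially the same route as the paper: part~(a) via the determinant identity on the adapted frame, and parts~(b),(c) by telescoping $\log\Upsilon_n^\eps\circ G_n^\eps$ and $\log\Gamma_n^\eps\circ G_n^\eps$ into sums of one--step terms composed with $G_j^\eps$ and then exploiting (i) the $\lambda^{-j}$--decay of the $x$--component of $dG_j^\eps$, $HG_j^\eps$, (ii) the $\cO(\eps)$ remainder in the $\theta$--component after the $\chvar$ conjugation, and (iii) the $\eps$ prefactor in the $\Upsilon$--terms. The only cosmetic differences are that you obtain the $\Gamma$--telescope from $\det dF_\eps^n=\prod_k J\circ F_\eps^k$ combined with~(a) rather than from the direct product formula~\eqref{UnstableExp}, and that you estimate the derivatives of the composites $s_j^\eps\circ G_j^\eps$, $w_j^\eps\circ G_j^\eps$ via a contractive $\Xi^\pm$--recursion on the conjugated objects, whereas the paper bounds $\|ds_l^\eps\|_\infty$, $\|d\tilde w_l^\eps\|_\infty$ on $\bT^2$ (Sub-lemma~\ref{sublem:det_grad_bds}) and then composes with $dG_k^\eps$ --- both bookkeepings rest on the same one--step contraction $|\partial_s\Xi^\pm|\leq\sigma<1$ and the same cancellations, so the content is identical.
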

We also need the following lemma:
\begin{lem}\label{lem:Fn_inv_chvar_bds} We have that:
  \begin{enumerate}
    \item\label{item:Fn_inv_chvar_grad}
    $\| dG_n^\eps\|_{\infty} \le C_{T} (\lambda^{-n}+\eps)$,
    \item\label{item:Fn_inv_chvar_hessian}
    $\| HG_n^\eps\|_{\infty} \le C_{T} (\lambda^{-n}+\eps)$.
	\end{enumerate}
\end{lem}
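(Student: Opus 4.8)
The plan is to work directly from the defining relation $F_\ve^n\circ G_n^\eps=\chvar$ on $\chvar^{-1}V$ (which just says $F_\ve^n\circ\varphi_n=\mathrm{Id}$ composed with $\chvar$), differentiating it once for \ref{item:Fn_inv_chvar_grad} and twice for \ref{item:Fn_inv_chvar_hessian}. Differentiating once and inverting gives $d_{\tilde q}G_n^\eps=[d_{G_n^\eps(\tilde q)}F_\ve^n]^{-1}\circ d\chvar$, so \ref{item:Fn_inv_chvar_grad} reduces to estimating $[d_pF_\ve^n]^{-1}(1,0)$ and $[d_pF_\ve^n]^{-1}(0,\ve)$ uniformly in $p$. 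Decomposing in the adapted frame $\{(1,\ve w_n^\ve(p)),(s_n^\ve(p),1)\}$ (uniformly transversal, the first vector in the unstable cone and the second in the centre cone) and using \eqref{epsnStepUnstable}, \eqref{epsnStepCenter} exactly as in the proof of Lemma~\ref{lem:lyapunov}, one obtains
\begin{align*}
  [d_pF_\ve^n]^{-1}(1,0)&=-\ve\,w_n^\ve(p)\,\Upsilon_n^\ve(p)\,(s_n^\ve(p),1)+[\Gamma_n^\ve(p)]^{-1}(1,0),&
  [d_pF_\ve^n]^{-1}(0,\ve)&=\ve\,\Upsilon_n^\ve(p)\,(s_n^\ve(p),1).
\end{align*}
Since $n\le T\vei$, \eqref{UnstableExp} gives $[\Gamma_n^\ve(p)]^{-1}\le C_{T}\lambda^{-n}$ and \eqref{NstepExp} gives $\Upsilon_n^\ve(p)\le C_{T}$; together with $|w_n^\ve|\le\unstableConeConst$, $|s_n^\ve|\le\centreConeConst$ this yields $\|[d_pF_\ve^n]^{-1}(1,0)\|\le C_{T}(\lambda^{-n}+\ve)$ and $\|[d_pF_\ve^n]^{-1}(0,\ve)\|\le C_{T}\ve$, hence part \ref{item:Fn_inv_chvar_grad}.

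For part \ref{item:Fn_inv_chvar_hessian} I would differentiate $F_\ve^n\circ G_n^\eps=\chvar$ a second time: since $\chvar$ is affine its Hessian vanishes, so the chain rule for Hessians (cf.\ Lemma~\ref{lem:hessian_chain_rule}) gives, at $\tilde q$ with $p=G_n^\eps(\tilde q)$ and for all tangent vectors $v,w$,
\begin{align*}
  HG_n^\eps(v,w)=-[d_pF_\ve^n]^{-1}\bigl(H_pF_\ve^n(dG_n^\eps v,\,dG_n^\eps w)\bigr).
\end{align*}
The crucial step is a telescoping of $H_pF_\ve^n$ along the orbit: iterating $F_\ve^n=F_\ve^{n-1}\circ F_\ve$ in the chain rule and cancelling the outer differentials (using $d_pF_\ve^n=d_{F_\ve^{j+1}(p)}F_\ve^{\,n-1-j}\circ d_pF_\ve^{j+1}$) yields the pointwise identity
\begin{align*}
  [d_pF_\ve^n]^{-1}H_pF_\ve^n(X,Y)=\sum_{k=0}^{n-1}[d_pF_\ve^{k+1}]^{-1}\Bigl(H_{F_\ve^k(p)}F_\ve\bigl(d_pF_\ve^k X,\ d_pF_\ve^k Y\bigr)\Bigr).
\end{align*}
I would then use that $F_\ve^k\circ\varphi_n$ is itself a local inverse of $F_\ve^{\,n-k}$ at $q$, so writing $G_{n-k}^\eps$ for $(F_\ve^k\circ\varphi_n)\circ\chvar=F_\ve^k\circ G_n^\eps$, part \ref{item:Fn_inv_chvar_grad} applies to it and moreover $d_pF_\ve^k\,dG_n^\eps=dG_{n-k}^\eps$, $F_\ve^k(p)=G_{n-k}^\eps(\tilde q)$. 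Substituting $X=dG_n^\eps v$, $Y=dG_n^\eps w$ turns the Hessian identity into
\begin{align*}
  HG_n^\eps(v,w)=-\sum_{k=0}^{n-1}[d_{G_n^\eps(\tilde q)}F_\ve^{k+1}]^{-1}\Bigl(H_{G_{n-k}^\eps(\tilde q)}F_\ve\bigl(dG_{n-k}^\eps v,\ dG_{n-k}^\eps w\bigr)\Bigr).
\end{align*}

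To bound the $k$-th summand I would use $F_\ve(x,\theta)=(f(x,\theta),\theta+\ve\omega(x,\theta))$, so that $H_qF_\ve(\xi,\chi)=H_qf(\xi,\chi)\,(1,0)+\ve\,H_q\omega(\xi,\chi)\,(0,1)$; applying $[d_pF_\ve^{k+1}]^{-1}$ and invoking the frame estimates of the first paragraph with $n$ replaced by $k+1$, namely $\|[d_pF_\ve^{k+1}]^{-1}(1,0)\|\le C_{T}(\lambda^{-(k+1)}+\ve)$ and $\|[d_pF_\ve^{k+1}]^{-1}(0,1)\|\le C_{T}$, together with $\|Hf\|_\infty,\|H\omega\|_\infty\le\Const$ and part \ref{item:Fn_inv_chvar_grad} for $\|dG_{n-k}^\eps\|\le C_{T}(\lambda^{-(n-k)}+\ve)$, the $k$-th summand has norm at most $C_{T}(\lambda^{-(n-k)}+\ve)^2(\lambda^{-(k+1)}+\ve)$. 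Expanding the square and summing the resulting geometric-type series over $k$ (with $n\le T\vei$ used to absorb the $n\ve^3$ term) produces $\|HG_n^\eps\|_\infty\le C_{T}(\lambda^{-n}+\ve)$, which is part \ref{item:Fn_inv_chvar_hessian}.

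The main obstacle is precisely this last estimate. Treating $H_pF_\ve^n$ as a generic tensor of bounded operator norm would only give $\|HG_n^\eps\|_\infty\le C_{T}$; squeezing out the sharper factor $\lambda^{-n}+\ve$ forces one to keep track of the hyperbolic splitting, exploiting both the factor $\ve$ in the $\theta$-component of $HF_\ve$ and the contraction of the unstable component by $[dF_\ve^{k+1}]^{-1}$. The hypothesis $n\le T\vei$ is used repeatedly to keep products of the form $\prod(1+\Const\ve)$ and sums of the form $n\ve^2$ bounded by $C_{T}$.
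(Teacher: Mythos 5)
Your proof is correct, and for part~\ref{item:Fn_inv_chvar_grad} it coincides with the paper's: both decompose $(1,0)$ and $(0,\ve)$ in the adapted frame $(1,\ve w_n^\ve),(s_n^\ve,1)$ and read off the bound from $[\Gamma_n^\ve]^{-1}\le C_T\lambda^{-n}$ and $\Upsilon_n^\ve\le C_T$. For part~\ref{item:Fn_inv_chvar_hessian}, however, you take a genuinely different route. The paper differentiates the explicit $2\times 2$ matrix formula~\eqref{eq:dG_n_matrix} for $dG_n^\eps$ entry by entry, which requires separate $\cC^1$ bounds on $[\Gamma_n^\ve]^{-1}\circ G_n^\eps$, $\Upsilon_n^\ve\circ G_n^\eps$ and $M_n\circ G_n^\eps$, resting in turn on Sub-lemma~\ref{sublem:det_grad_bds} (bounds on $ds_l^\ve$ and $d\tilde w_l^\ve$ obtained by iterating the recursions~\eqref{eq:dw_expansion},~\eqref{eq:ds_expansion}) and Lemma~\ref{lem:log_jacob_auxiliary}\ref{item:dlog_Upsilon_Gamma}. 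You instead differentiate the identity $F_\ve^n\circ G_n^\eps=\chvar$ twice and telescope $[d_pF_\ve^n]^{-1}H_pF_\ve^n$ along the orbit, extracting the factor $\lambda^{-n}+\ve$ from the structural decomposition $H_qF_\ve=H_qf\cdot(1,0)+\ve H_q\omega\cdot(0,1)$ together with the frame estimates $\|[d_pF_\ve^{k+1}]^{-1}(1,0)\|\le C_T(\lambda^{-(k+1)}+\ve)$ and $\|[d_pF_\ve^{k+1}]^{-1}(0,1)\|\le C_T$ (valid since $k+1\le n\le T\vei$). This makes part~\ref{item:Fn_inv_chvar_hessian} self-contained once part~\ref{item:Fn_inv_chvar_grad} is established, bypassing the derivative bounds on $s_l^\ve$, $\tilde w_l^\ve$, $\log\Gamma_n^\ve$, $\log\Upsilon_n^\ve$ entirely; your summation step, where $n\ve\le T$ absorbs both the $n\ve\lambda^{-n-1}$ cross term and the $n\ve^3$ tail, is carried out correctly. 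The trade-off is that the paper's auxiliary machinery is needed anyway for Lemma~\ref{lem:log_jacob_auxiliary}\ref{item:Hlog_Upsilon_Gamma} and Lemma~\ref{lem:C2_bds}\ref{item:log_jacob_C2_bd}, so its heavier route costs little extra in the full appendix, whereas your argument is cleaner if this lemma is taken in isolation and makes the source of the factor $\lambda^{-n}+\ve$ more transparent.
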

Before proceeding any further, we find convenient, for the remainder
of this section, to lift the locally invertible map $F_{\ve}$ on
$\bT^{2}$ to a (globally) invertible map $\hat F_{\ve}$ on the
universal cover $\bR^{2}$.

Let $U = \varphi_{n}V$ and let $\hat U$ denote an arbitrary lift of
$U$ to $\bR^{2}$; for $0\le k\le n$, let
$\hat U_{k} = \hat F_{\ve}^{k}\hat U \subset \bR^{2}$ and denote with
$\hat V = \hat U_{n}$.  Similarly, we consider a lift of $\chvar$ to
$\hat\chvar:\bR^{2}\to\bR^{2}$ and we define for $0\le k\le n$ the
maps $\hat G_{k}^{\ve} = F^{-k}_{\ve}\circ\hat\chvar$.  In particular,
if $\bar q\in \hat\chvar\inv U_{n}$, then
$\hat G_{k}^{\ve}\bar q = F^{-k}_{\ve}\circ\chvar \bar q\in \hat
U_{n-k}$.  All functions defined on $\bT^{2}$ (\eg $\Upsilon_{k}$,
$\Gamma_{k}$, $\cdots$) will also be considered to be lifted to
$\bR^{2}$ (\eg $\hat\Upsilon_{k}$, $\hat\Gamma_{k}$, $\cdots$).

Working with the lifted system allows for a more compact formulation of
the computations; since we will only deal with the lifted system in
the remainder of this section, we will abuse notation and drop all hats
from our notation for lifts.

Let us first obtain an expression for $dF_\eps^{-n}$ on $V$.  Let
$q\in V$ and $p = F^{-n}_{\ve}q\in U$; of course
$d_{q}F_{\ve}^{-n} = [d_pF_{\ve}^{n}]\inv$. Recall now that
by~\eqref{epsnStepCenter}, we have:
\[ [d_pF_{\ve}^{n}]\inv (0,1) = \Upsilon^\ve_{n}(p)(s^\ve_{n}(p),1)\]
and by~\eqref{epsnStepUnstable}, we have
\[ [d_pF_{\ve}^{n}]\inv(1,\ve w^\ve_n(p)) = \Gamma_n^\ve(p)\inv (1,0). \]
By writing $(1,0) = (1,\ve w^\ve_n(p)) - \ve (0,w^\ve_n(p))$, it
follows that
\[ [d_pF_{\ve}^{n}]\inv(1,0)= (\Gamma_n^\ve(p)\inv - \ve w^\ve_n(p)\Upsilon^\ve_{n}(p)s^\ve_{n}(p), -\ve w^\ve_n(p)\Upsilon^\ve_{n}(p)). \]
Thus
\begin{equation}\label{Fn_inv}
  d_q F_\eps^{-n} = \begin{pmatrix}
    \Gamma_n^\ve(p)\inv - \ve w^\ve_n(p)\Upsilon^\ve_{n}(p)s^\ve_{n}(p) & \Upsilon^\ve_{n}(p)s^\ve_{n}(p) \\
    \qquad -\ve w^\ve_n(p)\Upsilon^\ve_{n}(p) & \Upsilon^\ve_{n}(p)
	\end{pmatrix}.
\end{equation}
We are now in a position to prove Lemma~\ref{lem:dFeps_inv_conj_bd}
and Lemma~\ref{lem:Fn_inv_chvar_bds}\ref{item:Fn_inv_chvar_grad}.
\begin{proof}[Proof of Lemma~\ref{lem:dFeps_inv_conj_bd} and Lemma~\ref{lem:Fn_inv_chvar_bds}\ref{item:Fn_inv_chvar_grad}]
  Given $q\in V$, let $\bar q = \chvar\inv q$ and
  $p = F^{-n}_{\ve} q = G_n^\eps \bar q$. Then by~\eqref{Fn_inv},
\begin{align}
  d_{\bar q}G_n^\eps &=
                       d_qF_{\ve}^{-n}\begin{pmatrix} 1 & 0\\ 0 & \eps
			\end{pmatrix} \nonumber \\ &= \begin{pmatrix}
\Gamma_n^\ve(p)\inv - \ve w^\ve_n(p)\Upsilon^\ve_{n}(p)s^\ve_{n}(p) &
\ve\Upsilon^\ve_{n}(p)s^\ve_{n}(p) \\ \qquad -\eps
w^\ve_n(p)\Upsilon^\ve_{n}(p) & \ve\Upsilon^\ve_{n}(p)
			\end{pmatrix} \nonumber\\ &= \begin{pmatrix}
\Gamma_n^\ve(p)^{-1} & 0 \\ 0 & 0
			\end{pmatrix} +
\eps\Upsilon^\ve_{n}(p)M_n(p), \label{eq:dG_n_matrix}
\intertext{where} M_n(p)&=\begin{pmatrix} - w^\ve_n(p)s^\ve_{n}(p) &
s^\ve_{n}(p) \\ \qquad -w^\ve_n(p) & 1
\end{pmatrix}. \nonumber \intertext{It follows that} d_{\bar q}(\chvar\inv \circ G_n^\eps) &= \begin{pmatrix}
  \Gamma_n^\ve(p)^{-1} & 0 \\ 0 & 0
						\end{pmatrix} +
\Upsilon^\ve_{n}(p)\begin{pmatrix} \eps M^{11}_n(p) & \eps M^{12}_n(p)
\\ M^{21}_n(p) & M^{22}_n(p)
						\end{pmatrix}. \label{eq:dFn_inv_conj_matrix}
\end{align}
Since $|s_n^\eps|\le \centreConeConst$ and
$|w_n^\eps|\le \unstableConeConst$, the entries of $M_n$ are uniformly
bounded. By~\eqref{NstepExp} and~\eqref{UnstableExp}, for $ \ve $
sufficiently small we have
$\Gamma_n^\ve(p)\inv\le \Upsilon^\ve_{n}(p)$ for all $n\ge 0$. This
completes the proof of Lemma~\ref{lem:dFeps_inv_conj_bd} since
$d(\chvar\inv\circ F^{-n}_{\ve}\circ\chvar) = d(\chvar\inv\circ
\varphi_{n}\circ\chvar)$.

For $n\le T \eps\inv$, note that we also have
$\Gamma_n^\ve(p)\inv\le C_{T}\lambda^{-n}$ and
$\Upsilon_n^\eps(p)\le C_{T}$; hence~\eqref{eq:dG_n_matrix} concludes
the proof of
Lemma~\ref{lem:Fn_inv_chvar_bds}\ref{item:Fn_inv_chvar_grad}.
\end{proof}
\begin{proof}[Proof of Lemma~\ref{lem:log_jacob_auxiliary}(a) and (b)]
  We start by obtaining an expression for $\log \det dF_\eps^{-n}$.
  As before, given $q\in V$, we let $p = F^{-n}_{\ve} q\in U$; by
  construction (or inspecting~\eqref{Fn_inv}):
  \begin{equation*}
    \det d_q F_\eps^{-n} = \Upsilon^\ve_{n}(p)/\Gamma_n^\ve(p).
  \end{equation*}
  In particular
  \begin{align*}
    \log \det [dF_\eps^{-n}]\circ \chvar &= \log\Upsilon_n^\eps \circ
                                           G_n^\eps -
                                           \log\Gamma_n^\eps \circ
                                           G_n^\eps,
  \end{align*}
  which proves
  Lemma~\ref{lem:log_jacob_auxiliary}\ref{item:log_jacob_formula}.
  Now, let $p_{k} = F^{k}_{\ve} p\in U_{k}$; by~\eqref{NstepExp}, we
  have that
  \[ \Upsilon_n^\ve(p) = \prod_{k=0}^{n-1} \bigg(1 + \ve
    \Big(\partial_\theta\omega(p_{k})+\partial_x\omega(p_{k})s^\ve_{n-k}(p_{k})\Big)\bigg)^{-1}. \]
  Moreover, for any $0\le l \le n$, if $q\in U_{l}$, let us define
  $\tilde w^{\ve}_{l}(q) = w^{\ve}_{l}(F^{-l}_{\ve}q)$;
  by~\eqref{UnstableExp}, we have
  \[ \Gamma_n^\ve(p)=\Lambda_n (p) \prod_{k=0}^{n-1}\left(1+\ve
      \frac{\partial_\theta f(p_{k})}{\partial_x f(p_{k})} \tilde
      w^\ve_{k}(p_{k})\right),\] where
  $\Lambda_n(p) = \prod_{k=0}^{n-1} \partial_x f(p_{k})$.  Taking
  logarithms of the above expressions, we can rewrite them as:
  \begin{align}
    \log\Upsilon_n^\eps \circ G_n^\eps &= \sum_{k =1}^{n}A_k\circ G_k^\eps, &\log\Gamma_n^\eps \circ G_n^\eps &= \sum_{k =1}^{n}B_k\circ G_k^\eps \label{eq:log-Gamma-Upsilon}
  \end{align}
  where:
  \begin{align*}
	A_k &= -\log \left(1+\ve({\partial_{\theta}\omega}+{\partial_{x}\omega}\,s^{\ve}_{k})\right),\\
    B_k &= \log \partial_{x}f%
          + \log \left(1+\ve\frac{\partial_{\theta}f}{\partial_{x}f}\tilde w^{\ve}_{n-k}\right).
  \end{align*}
  In order to complete the proof of
  part~\ref{item:dlog_Upsilon_Gamma}, we need the following sub-lemma:
  \begin{sublem}\label{sublem:det_grad_bds}The following estimates
    hold for $0\le l \le n \le T \vei$:
	\begin{enumerate}
    \item $ \|d\tilde w_l^\eps \|_{\infty}\le C_{T} $ and $
      \|ds_l^\eps \|_{\infty}\le C_{T} l$,
    \item $ \|dA_l \|_{\infty} \le C_{T} $ and
      $ \|dB_l \|_{\infty} \le C_{T}$.
	\end{enumerate}
  \end{sublem}
  \begin{proof}
	We begin by bounding $\| d\tilde{w}_l^\eps
    \|_{\infty}$. By~\eqref{RecurUnstable}, for any
    $0 < l \le n$, $q\in U_{l}$ and $-l\le j \le n-l$, let
    $q_{j} = F^{j}_{\ve}q\in U_{l+j}$.  We can write
	\begin{equation*}
      \tilde{w}_l^\eps(q) = \Xi^{+}\big(g_{l}(q)\big),\quad \text{ where }g_{l}(q) = \big(F_\eps\inv(q),\, \tilde{w}^\eps_{l-1}(F_\eps\inv(q))\big).
	\end{equation*}
	Differentiating the above expression we gather:

    \begin{align}
      d_{q}\tilde w_{l}^{\ve}%
      &= \partial_{1}\Xi^{+}\big(g_{l}(q)\big)d_q F_\eps\inv +\nonumber 
      \partial_{2}\Xi^{+}\big(g_{l}(q)\big)\left[ d_{q_{-1}}\tilde
      w_{l-1}^{\ve}\right] d_{q}F\inv_{\ve}
      \intertext{and iterating:}
      d_{q}\tilde w_{l}^{\ve}%
      &= \sum_{j = 0}^{l-1}
		\Bigg[\prod_{i=0}^{j-1} \partial_2\Xi^{+}\big(g_{l-i}(q_{-i})\big)\Bigg]
		\partial_{1}\Xi^{+}(g_{l-j}\big(q_{-j})\big)d_{q}F_{\ve}^{-(j+1)}.\label{eq:dw_expansion} 
	\end{align}
    Using~\eqref{eq:def_Xi}, since $\tilde w_{l}^{\ve}$ is uniformly
	bounded, we obtain that
	\begin{equation}
      \begin{aligned}\label{eq:Xi_plus_derivative_bds}
		\|\partial_{1}\Xi^{+}\circ g_{l-i}\|_{\infty} &\le \Const,&
		\|\partial_{2}\Xi^{+}\circ g_{l-i}\|_{\infty} &\le (1+\Const\ve)\lambda^{-1},\\
		\|H\Xi^{+}\circ g_{l-i}\|_{\infty} &\le \Const.
      \end{aligned}
	\end{equation}
	(We shall use the bound on $H\Xi^{+}\circ g_{l-i}$ later.)  By
    Lemma~\ref{lem:lyapunov}, we also know that
    $\|dF_{\ve}^{-k}\|_{\infty}\le\Const (1+\Const\ve)^{k}$;
    collecting the above estimates, by the arbitrariness of $q$ we
    conclude:
	\begin{align*}
      \|d\tilde w_{l}^{\ve}\|_{\infty}%
      &\le \Const\sum_{j =0}^{l-1}\lambda^{-j}(1+\Const\ve)^{2j+1} = C_{T}.
	\end{align*}
    The computations for $s_{l}^{\ve}$ are similar: recall
    from~\eqref{EpsCNstepRecur} that we can write
	\begin{align*}
      s_{l}^{\ve}(q) &= \Xi^{-}\big(h_l(q)\big),\quad \text{ where }h_l(q)=\big(q, s_{l-1}^{\ve}(F_{\ve}(q))\big).
	\end{align*}
	Differentiating the above expression, and iterating, we obtain
	\begin{align}
      d_{q} s_{l}^{\ve}
      &= \partial_{1}\Xi^{-}\big(h_l(q)\big) + \partial_{2}\Xi^{-}\big(h_l(q)\big)\left[d_{q_{1}}s^{\ve}_{l-1}\right]d_{q}F_{\ve}\nonumber\\
      &= \sum_{j = 0}^{l-1}
		\Bigg[\prod_{i=0}^{j-1} \partial_2\Xi^{-}\big(h_{l-i}(q_i)\big)\Bigg]
		\partial_{1}\Xi^{-}(h_{l-j}\big(q_j)\big)d_{q}F_{\ve}^{j}.\label{eq:ds_expansion}
	\end{align}

	Using the definition of $\Xi^{-}$ and the fact that $s_l^\eps$ is uniformly bounded we gather:
	\begin{equation}
      \begin{aligned}\label{eq:Xi_minus_derivative_bds}
		\|\partial_{1}\Xi^{-}\circ h_{l-i}\|_{\infty} &\le \Const,&
		\|\partial_{2}\Xi^{-}(h_{l-i}(q_i))\| &\le (1+\Const\ve)\partial_{x}f(q_i)\inv,\\
		\|H\Xi^{-}\circ h_{l-i}\|_{\infty} &\le \Const.
      \end{aligned}
	\end{equation}
	(We shall use the bound on $H\Xi^{-}\circ h_{l-i}$ later.) 	By Lemma~\ref{lem:lyapunov}, we have
    $\|\Lambda_{j}^{-1}\cdot d F^{j}_{\ve}\|\le \Const
    (1+\Const\ve)^{j}$; we conclude by the arbitrarity of $q$ that
 	\begin{align*}
      \|d s_{l}^{\ve}\|_{\infty} &\le \Const\sum_{j = 0}^{l-1}(1+\Const
                                   \ve)^{j}\le \Const l(1+\Const\ve)^{l}\le C_{T}l.
	\end{align*}
	This completes the proof of (a). By the definition of $A_l$ and
    $B_l$ we have
	\begin{align*}
      \|dA_l\|_{\infty}&\le \Const\eps(1+\|ds^\eps_l \|_{\infty})\le C_{T}
                    \intertext{and}
      \|dB_l\|_{\infty}&\le \Const + \Const\eps(1+\|d_q \tilde{w}^\eps_{n-l})\|_{\infty}\le C_{T},
	\end{align*}
	which proves part (b).
  \end{proof}
  We now complete the proof of
  Lemma~\ref{lem:log_jacob_auxiliary}\ref{item:dlog_Upsilon_Gamma}. Combining~\eqref{eq:log-Gamma-Upsilon}
  with Sub-lemma~\ref{sublem:det_grad_bds}(b) and
  Lemma~\ref{lem:Fn_inv_chvar_bds}\ref{item:Fn_inv_chvar_grad} yields
  that
  \[ \|d(\log \Upsilon_n^\eps \circ G_n^\eps) \|_{\infty}\le
    \sum_{k=1}^n \|dA_k \|_{\infty}\|dG_k^\eps \|_{\infty} \le
    \sum_{k=1}^n \Const(\lambda^{-n} + \eps)\le C_{T}, \]

  where we have used the assumption $n\le T\eps\inv$. By exactly the
  same argument,
  $\|d(\log \Gamma_n^\eps \circ G_n^\eps) \|_{\infty}\le C_{T}$.
\end{proof}

\begin{proof}[Proof of
  Lemma~\ref{lem:Fn_inv_chvar_bds}\ref{item:Fn_inv_chvar_hessian} and
  Lemma~\ref{lem:C2_bds}\ref{item:chvar_hessian_bd}]
  We proceed by using~\eqref{eq:dG_n_matrix}
  and~\eqref{eq:dFn_inv_conj_matrix} to bound the derivative of the
  entries of $dG_n^\eps$ and $d(\chvar\inv\circ G_n^\eps)$. Since
  $\Gamma_n^\eps(p)^{-1}\le \Const \lambda^{-n}$ and
  $\Upsilon_n^\eps(p)\le C_{T}$,
  Lemma~\ref{lem:log_jacob_auxiliary}\ref{item:dlog_Upsilon_Gamma}
  implies that
\begin{align*}
  \|d([\Gamma_n^\eps]^{-1}\circ G_n^\eps) \|_{\infty} = \|[\Gamma_n^\eps]^{-1}\circ G_n^\eps \ d(\log\Gamma_n^\eps\circ G_n^\eps) \|_{\infty} \le C_{T}\lambda^{-n}
\end{align*}
and $\|\Upsilon^\eps_n \circ G_n^\eps \|_{\cC^1} \le \Const$. Hence it suffices to show that
\begin{align*}
	\|M_n^{1j}\circ G_n^\eps\|_{\cC^1}&\le C_{T} \eps\inv(\lambda^{-n}+\eps), & \|M_n^{2j}\circ G_n^\eps \|_{\cC^1}\le C_{T}
\end{align*}
for $j=1,2$.

By Sub-lemma~\ref{sublem:det_grad_bds}(a),
\begin{align*}
	\|d(w_n^\eps \circ G_n^\eps) \|_{\infty} = \|d(\tilde{w}_n^\eps \circ \chvar) \|_{\infty}\le \|d\tilde{w}_n^\eps\|_{\infty} \|d\chvar\|_{\infty}\le C_{T}
\end{align*}
so
$\|M^{21}_n\circ G_n^\eps \|_{\cC^1}=\|w_n^\eps \circ G_n^\eps
\|_{\cC^1}\le C_{T}$. Moreover, by
Lemma~\ref{lem:Fn_inv_chvar_bds}\ref{item:Fn_inv_chvar_grad} and
Sub-lemma~\ref{sublem:det_grad_bds}(a),
\begin{align*}
  \|d(M^{12}_n \circ G_n^\eps) \|_{\infty}=\|d(s_n^\eps \circ G_n^\eps) \|_{\infty}\le C_{T} \eps\inv(\lambda^{-n}+\eps).
\end{align*}
Since $M^{11}_n = s^\eps_n w^\eps_n$, the desired bound on
$ \|M^{11}_n \circ G_n^\eps\|_{\cC^1}$ follows from the product rule.
\end{proof}
In order to conclude the proofs in this appendix, we will make
repeated use of the following bound on the Hessian of composite
functions:
\begin{lem}\label{lem:hessian_chain_rule} Let $\mathcal U\subseteq \R^m$ and
  $\mathcal V\subseteq \R^k$ be open sets and let
  $f:\mathcal U \to \mathcal V$ and $g:\mathcal V\to \R$ be $\cC^2$
  functions. Then for all $p\in \mathcal U$,
  \begin{align*}
    \|H_p(g \circ f) \|\le \|H_{f(p)}g \|\|d_p f\|^2 +k\|d_{f(p)}g
    \|\|H_p f \|.
  \end{align*}
\end{lem}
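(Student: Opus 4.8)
The final statement to prove is Lemma~\ref{lem:hessian_chain_rule}, the Hessian chain rule: for $\cC^2$ maps $f:\mathcal U\subseteq\R^m\to\mathcal V\subseteq\R^k$ and $g:\mathcal V\to\R$, one has $\|H_p(g\circ f)\|\le \|H_{f(p)}g\|\,\|d_pf\|^2 + k\,\|d_{f(p)}g\|\,\|H_pf\|$.

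Let me think about how to prove this. The composite $g\circ f$ is scalar-valued, so its Hessian at $p$ is a bilinear form on $\R^m$. We need to compute $H_p(g\circ f)(u,v)$ for unit vectors $u,v\in\R^m$. By the chain rule, $d_p(g\circ f) = d_{f(p)}g\circ d_pf$, i.e., $\partial_i(g\circ f)(p) = \sum_a (\partial_a g)(f(p))\,\partial_i f^a(p)$. Differentiating again with respect to $x_j$: $\partial_{ij}(g\circ f)(p) = \sum_{a,b}(\partial_{ab}g)(f(p))\,\partial_i f^a(p)\,\partial_j f^b(p) + \sum_a (\partial_a g)(f(p))\,\partial_{ij}f^a(p)$. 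In coordinate-free notation this reads $H_p(g\circ f)(u,v) = H_{f(p)}g(d_pf\,u, d_pf\,v) + \sum_{a=1}^k (\partial_a g)(f(p))\, H_pf^a(u,v)$, where $f^a$ is the $a$-th component of $f$.

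Now I estimate. For the first term, $|H_{f(p)}g(d_pf\,u,d_pf\,v)|\le \|H_{f(p)}g\|\,\|d_pf\,u\|\,\|d_pf\,v\|\le \|H_{f(p)}g\|\,\|d_pf\|^2$ since $\|u\|=\|v\|=1$. For the second term, $\big|\sum_{a=1}^k(\partial_a g)(f(p))H_pf^a(u,v)\big|\le \sum_{a=1}^k |(\partial_a g)(f(p))|\,|H_pf^a(u,v)|$. We bound $|(\partial_a g)(f(p))|\le \|d_{f(p)}g\|$ (each partial derivative is a component of the gradient, hence bounded by its norm), and $|H_pf^a(u,v)|\le \|H_pf\|$ — here one must be slightly careful about what $\|H_pf\|$ means for a vector-valued $f$: by the conventions in Section~\ref{sec:conventions}, $H_pf$ is the Hessian of $f$ viewed as an operator from $\R^m$ to the space of linear functionals, so $\|H_pf\|$ is its operator norm, and each scalar component Hessian satisfies $\|H_pf^a\|\le\|H_pf\|$. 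Summing over $a=1,\dots,k$ gives the factor $k\,\|d_{f(p)}g\|\,\|H_pf\|$. Combining the two bounds and taking the supremum over unit $u,v$ yields the claim.

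I don't anticipate a genuine obstacle here — this is a routine computation. The only point requiring a little care is bookkeeping with the norm conventions for the vector-valued Hessian $H_pf$ (reconciling the componentwise bound $|(\partial_a g)(f(p))\,H_pf^a(u,v)|$ with the operator-norm notation), and making sure the constant $k$ (the dimension of the target of $f$) appears exactly as stated rather than, say, $\sqrt k$; this comes from using the crude bound $|(\partial_a g)(f(p))|\le\|d_{f(p)}g\|$ for each of the $k$ terms separately. The plan, then, is: (i) write the second-order chain rule identity in coordinates; (ii) recast it coordinate-free as $H_p(g\circ f)(u,v)=H_{f(p)}g(d_pf\,u,d_pf\,v)+\sum_a(\partial_ag)(f(p))H_pf^a(u,v)$; (iii) estimate the two pieces using operator norms and $\|u\|=\|v\|=1$; (iv) conclude by taking the supremum over unit vectors.
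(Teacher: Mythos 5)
Your proof is correct and follows essentially the same route as the paper: write the second-order chain rule in coordinates, split into the Hessian-of-$g$ term and the gradient-of-$g$ term, and bound each piece by operator norms, using $\|H_pf_s\|\le\|H_pf\|$ for each component to obtain the factor $k$. The only cosmetic difference is that you evaluate against unit vectors $u,v$ and take a supremum, while the paper bounds the matrix norm directly; the estimate is identical.
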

\begin{proof}
	Observe that for $i,j = 1,\cdots,m$:
	\begin{align*}
		\partial_{ij}(g\circ f)(p) &= \sum_{s,t=1}^k \partial_j f_t(p) \partial_{st}g(f(p)) \partial_i f_s(p) + \sum_{s=1}^k \partial_s g(f(p)) \partial_{ij} f_s(p) \\
		&= \big[(d_{p}f)^T H_{f(p)}g \, d_{p}f\big]_{ij} + \sum_{s=1}^k \partial_s g(f(p))[H_p f_s]_{ij}.
	\end{align*}
	Hence
    \begin{align*}
      \|H_p(g\circ f)\| \le \|d_{p}f\| \|H_{f(p)}g \| \|d_{p}f
      \|+\|d_{f(p)}g\|\sum_{s=1}^k \| H_p f_s\|,
    \end{align*}
    and the result
    follows.
\end{proof}
\begin{proof}[Proof of
  Lemma~\ref{lem:log_jacob_auxiliary}\ref{item:Hlog_Upsilon_Gamma}] We
  proceed by using~\eqref{eq:log-Gamma-Upsilon}. Note that we can
  write
	\begin{align*}
      A_k(q)&= \alpha(q, \eps s^\eps_k(q)), & B_k(q) = \beta(q, \ve\tilde{w}^\eps_{n-k}(q)),
	\end{align*}
	where $\alpha$ and $\beta$ are $\cC^2$ functions with norms that
    are uniformly bounded in $\ve$. By
    Lemma~\ref{lem:hessian_chain_rule}, it follows that
	\begin{align*}
		\|H(A_k \circ G_k^\eps)\|_{\infty} \le &(1+\eps\|ds_k^\eps\|_{\infty})^2 \|dG_k^\eps \|_{\infty}^2 \|H\alpha\|_{\infty}
		\\ & \quad +3\|d\alpha\|_{\infty}(\|HG_k^\eps \|_{\infty} + \eps \|H({s}_k^\eps \circ G_k^\eps)\|_{\infty})
	\end{align*}
	Hence by applying Lemma~\ref{lem:Fn_inv_chvar_bds} in combination
    with Sub-lemma~\ref{sublem:det_grad_bds}(a), we obtain that
	\begin{align*}
		\| H(A_k \circ G_k^\eps)\|_\infty &\le C_{T}(\lambda^{-k}+\eps)^2 + C_{T} (\lambda^{-k}+\eps)+\Const \eps\| H(s_k^\eps \circ G_k^\eps)\|_\infty
		\intertext{Similarly, we have}
		\| H(B_k \circ G_k^\eps)\|_\infty &\le C_{T}(\lambda^{-k}+\eps)^2 + C_{T} (\lambda^{-k}+\eps)+\Const \eps\| H(\tilde{w}_{n-k}^\eps \circ G_k^\eps)\|_\infty.
	\end{align*}
	Since $\sum_{k=1}^n (\lambda^{-k}+\eps)\le C_{T}$, it follows that
    it suffices to prove that
	\begin{align*}
		\| H(s_k^\eps \circ G_k^\eps)\|_\infty&\le C_{T}, & \| H(\tilde{w}_{n-k}^\eps \circ G_k^\eps)\|_\infty\le C_{T}
	\end{align*}
	for $1\le k\le n$.

    Note that $F_\eps^i \circ G_k^\eps = G_{k-i}^\eps$ for $i\le k$ so
    by using~\eqref{eq:ds_expansion} with $l=k$, we obtain that
\begin{align*}
	d (s_k^\eps \circ G_k^\eps) =\sum_{j=0}^{k-1}u_jV_jdG_{k-j}^\eps,
\end{align*}
where
\begin{align*}
	u_j &= \prod_{i=0}^{j-1} \partial_2\Xi^{-}\circ h_{k-i}\circ G_{k-i}^\eps, & V_j = \partial_1\Xi^{-}\circ h_{k-j}\circ G_{k-j}^\eps.
\end{align*}
Recall that $h_m(q) = \big(q,s_{m-1}^\eps(F_\eps(q))\big)$ for $1\le m\le n$. By Sub-lemma~\ref{sublem:det_grad_bds}(a) and Lemma~\ref{lem:Fn_inv_chvar_bds} it follows that
\begin{align*}
	\|d(h_m \circ G_m^\eps) \|_{\infty} &\le (1+\Const \|ds_{m-1}^\eps\|_{\infty})\|dG_m^\eps\|_{\infty}\le C_{T} m(\lambda^{-m}+\eps)\le C_{T}.
\end{align*}
Hence using~\eqref{eq:Xi_plus_derivative_bds} yields that
\begin{equation}\label{eq:dv_j-bound}
	\begin{aligned}
		\| d v_j \|_{\infty} &\le j\max_m\|\partial_2\Xi^{-}\circ h_m\|_{\infty}^{j-1}\max_m\|H\Xi^{-}\circ h_m\|_{\infty} \|d(h_m\circ G^\eps_m)\|_{\infty}\\
		&\le C_{T} j(1+C_{T}\ve)^{j-1}\lambda^{-(j-1)}\le C_{T}
	\end{aligned}
\end{equation}
and $\| V_j \|_{\cC^1}\le C_{T}$. Thus by Lemma~\ref{lem:Fn_inv_chvar_bds}, it follows that
\begin{equation}
\begin{aligned}\label{eq:Hs_k-eps_final_calc}
	\|H(s_k^\eps \circ G_k^\eps) \|_{\infty}&=\|d(d(s_k^\eps \circ G_k^\eps)^T) \|_{\infty} \le C_{T} \sum_{j=0}^{k-1}\|u_j\|_{\cC^1}\|V_j\|_{\cC^1}\|dG_{k-j}^\eps\|_{\cC^1}\\
	&\le C_{T}\sum_{j=0}^{k-1}(\lambda^{-(k-j)}+\eps)\le C_{T}.
\end{aligned}
\end{equation}

	It remains to show that $\| H(\tilde{w}_{n-k}^\eps \circ G_k^\eps)\|_\infty\le C_{T}$. Since $F_\eps^{-i}\circ G_k^\eps = G_{i+k}^\eps$, applying~\eqref{eq:dw_expansion} with $l=n-k$ yields that
		\begin{align*}
		d(\tilde w_{n-k}^{\ve}\circ G_k^\eps)%
		&= \sum_{j = 0}^{n-k-1}
		\Bigg[\prod_{i=0}^{j-1} \partial_2\Xi^{+}\circ g_{n-k-i}\circ G_{i+k}^\eps\Bigg]
		\partial_1\Xi^{+}\circ g_{n-k-j}\circ G_{j+k}^\eps \ dG_{j+k+1}^\eps.
	\end{align*}
	Recall that
    $g_m(q)=\big(F_\eps\inv(q),\,
    \tilde{w}^\eps_{m-1}(F_\eps\inv(q))\big)$ so
    $\|dg_m \|_\infty\le C_{T}$ for $1\le m\le n$ by
    Lemma~\ref{sublem:det_grad_bds}(a). Hence
    by~\eqref{eq:Xi_plus_derivative_bds} and calculations similar
    to~\eqref{eq:dv_j-bound} and~\eqref{eq:Hs_k-eps_final_calc}, it
    follows that
    $\| H(\tilde{w}_{n-k}^\eps \circ G_k^\eps)\|_\infty\le C_{T}$, as
    required.
\end{proof}

\printnomenclature
\bibliographystyle{abbrv}
\bibliography{rw}

\end{document}